\documentclass[10pt]{article}

\setlength{\parskip}{1.5pt}
\usepackage[dvipdf]{graphicx}
\usepackage[latin1]{inputenc}
\usepackage{latexsym}
\usepackage{amsmath,amssymb,amsfonts,amsthm}
\usepackage{xcolor}
\usepackage{mathabx}
\usepackage[colorlinks=true,citecolor=red,linkcolor=blue,urlcolor=blue,pdfstartview=FitH]{hyperref}
\usepackage{pdfsync}
\numberwithin{equation}{section}

\usepackage{stmaryrd}

\definecolor{rp}{rgb}{0.25, 0, 0.75}
\definecolor{dg}{rgb}{0, 0.5, 0}

\newcommand{\gw}{g_{_W}}

\newcommand{\der}{\delta}

\newcommand{\id}{\text{Id}}

\newcommand{\wt}{\widetilde}

\newcommand{\1}{\mathbf{1}}
\newcommand{\2}{\mathbf{2}}
\newcommand{\xd}{{\bf X^{2}}}

\newcommand{\Uti}{\widetilde{U}}
\newcommand{\Kti}{\widetilde{K}}
\newcommand{\Qti}{\widetilde{Q}}
\newcommand{\Rti}{\widetilde{R}}
\newcommand{\yti}{\widetilde{y}}
\newcommand{\wti}{\widetilde{w}}


\newcommand{\cb}{{\mathcal B}}
\newcommand{\cac}{{\mathcal C}}
\newcommand{\cd}{{\mathcal D}}
\newcommand{\ce}{{\mathcal E}}

\newcommand{\ch}{{\mathcal H}}

\newcommand{\ck}{{\mathcal K}}
\newcommand{\cl}{{\mathcal L}}
\newcommand{\cm}{{\mathcal M}}

\newcommand{\cn}{{\mathcal N}}

\newcommand{\cp}{{\mathcal P}}

\newcommand{\ct}{{\mathcal T}}

\newcommand{\al}{\alpha}

\newcommand{\ep}{\varepsilon}

\newcommand{\ga}{\gamma}

\newcommand{\ka}{\kappa}
\newcommand{\la}{\lambda}
\newcommand{\laa}{\Lambda}
\newcommand{\om}{\omega}
\newcommand{\oom}{\Omega}

\newcommand{\si}{\sigma}

\newcommand{\vp}{\varphi}


\newcommand{\E}{{\mathbb E}}

\newcommand{\R}{{\mathbb R}}


\newcommand{\be}{\mathbf{E}}



\newcommand{\lcl}{\left\{}
\newcommand{\rcl}{\right\}}
\newcommand{\lp}{\left(}
\newcommand{\rp}{\right)}
\newcommand{\lc}{\left[}
\newcommand{\rc}{\right]}
\newcommand{\lln}{\left|}
\newcommand{\rrn}{\right|}

\newtheorem{theorem}{Theorem}[section]

\newtheorem{prop}[theorem]{Proposition}

\newtheorem{corollary}[theorem]{Corollary}

\newtheorem{definition}[theorem]{Definition}

\newtheorem{lemma}[theorem]{Lemma}
\newtheorem{notation}[theorem]{Notation}

\newtheorem{proposition}[theorem]{Proposition}
\theoremstyle{remark}
\newtheorem{remark}[theorem]{Remark}







\newcommand{\ER}{\mathbb {R}}
\newcommand{\EN}{\mathbb {N}}

\newcommand{\PE}{\mathbb {P}}
\newcommand{\ES}{\mathbb{E}}

\newcommand{\bqn}{\begin{equation}}
\newcommand{\bqne}{\begin{equation*}}
\newcommand{\eqn}{\end{equation}}
\newcommand{\eqne}{\end{equation*}}

\newcommand{\FA}{F}
\newcommand{\AD}{A}
\textheight = 23 cm 
\textwidth = 15 cm 
\footskip = 0,5 cm
\topmargin = 0 cm 
\headheight = 0 cm 
\headsep =0 cm 
\oddsidemargin= 0 cm 
\evensidemargin = -0.3 cm 
\marginparwidth = 0 cm
\marginparsep = 0 cm \topskip = 0 cm
\usepackage[left=1in,right=1in,top=1in,bottom=1in]{geometry}

\theoremstyle{definition}
\theoremstyle{remark}

\author{Aurélien Deya\footnote{Institut Elie Cartan, Universit\'e de Lorraine, B.P. 239, 54506 Vandoeuvre-l\`es-Nancy, Cedex, France. E-mail: \texttt{aurelien.deya@univ-lorraine.fr}} ,
Fabien Panloup\footnote{LAREMA, Universit\'e d'Angers, 2, Bd Lavoisier, 49045 Angers Cedex 01, France. E-mail: \texttt{fabien.panloup@univ-angers.fr}} , Samy Tindel\footnote{Department of Mathematics, Purdue University,
150 N. University Street,  West Lafayette, IN 47907, United States. E-mail: \texttt{stindel@purdue.edu}}}

\title{Rate of convergence to equilibrium of fractional driven stochastic differential equations with rough multiplicative noise}
\begin{document}
\maketitle

\begin{abstract}
We investigate the problem of the rate of convergence to equilibrium for ergodic stochastic differential equations driven by fractional Brownian motion with Hurst parameter $H\in(1/3,1)$ and multiplicative noise component $\sigma$. When $\sigma$ is constant and for every $H\in(0,1)$, it was proved in \cite{hairer} that, under some mean-reverting assumptions, such a process converges to its equilibrium at  a rate of order $t^{-\alpha}$ where $\alpha\in(0,1)$ (depending on $H$). In \cite{fontbona-panloup}, this result has been extended to the multiplicative case when $H>1/2$. In this paper, we  obtain these types of  results in the rough setting $H\in(1/3,1/2)$. Once again, we retrieve the rate orders of the additive setting. Our methods also extend the multiplicative results of \cite{fontbona-panloup}  by deleting  the gradient assumption on the noise coefficient $\sigma$. The main theorems include some existence and uniqueness results for the invariant distribution.
\end{abstract}
\bigskip
\noindent \textit{Keywords}: Stochastic Differential Equations; Fractional Brownian Motion; Multiplicative noise; Ergodicity; Rate of convergence to equilibrium; Lyapunov function; Total variation distance.

\medskip
\noindent \textit{AMS classification (2010)}: 60G22, 37A25.

\section{Introduction}

Convergence to an equilibrium distribution is one of the most natural and most studied problems concerning Markov processes. This holds true in particular for diffusions processes, seen as solutions to stochastic differential equations (SDEs in the sequel) driven by a Brownian motion. More specifically, consider the $\ER^d$-valued process $(Y_t)_{t\ge0}$ solving the following SDE:
\begin{equation}\label{wiener-SDE0}
dY_t=b(Y_t)dt+ \sigma(Y_t)\, dW_t
\end{equation}
where $b:\ER^{d}\rightarrow\ER^d$, $\sigma:\ER^{d}\rightarrow \mathbb{M}_{d,d}$  are smooth enough functions, where $ \mathbb{M}_{d,d}$ is the  set of $d\times d$ real matrices, and where $W$ is a $d$-dimensional Wiener process. Assume for simplicity that $\si(x)$ is invertible for every $x\in\ER^{d}$ and that $\si^{-1}$ is a bounded function. 

In the context of equation \eqref{wiener-SDE0}, {a simple assumption which} ensures ergodicity of the process $Y$ is the following reinforcing condition on the drift $b$ (see Hypothesis (H2) below for further details): 
There exist $C_1,C_2 >0$ such that for every $v\in \R^d$, one has
\begin{equation}\label{eq:hyp-drift-intro}
\langle v,b(v)\rangle \leq C_1-C_2 \lVert v\rVert^2 \quad .
\end{equation}
Under condition \eqref{eq:hyp-drift-intro} ({and the non-degeneracy of $\sigma$}), exponential convergence of the probability law $\cl(Y_{t})$ to a unique invariant measure $\mu$ in total variation is a classical fact, and can be {mainly} obtained via two different methods:

\noindent
\emph{(i) Functional inequalities.} Starting from Poincaré type inequalities (or further refinements) for the solution of \eqref{wiener-SDE0}, and invoking Dirichlet form  techniques, exponential and sub-exponential rates of convergence are obtained e.g in \cite{bakry,rockner}.

\noindent
{\emph{(ii) Lyapunov/Coupling techniques.} In these methods (see $e.g.$ \cite{DownMeynTweedie}), the idea is to try to  stick some solutions of \eqref{wiener-SDE0} (in an exponential time) with the following strategy:   taking advantage of the Lyapunov assumption~\eqref{eq:hyp-drift-intro} (which can be strongly alleviated in the context of \eqref{wiener-SDE0})  leads to some exponential bounds on the return-time of the (coupled) process into compact subsets of $\ER^d$ (or more generally \textit{petite sets}). Then, classical coupling techniques (involving the non-degeneracy of $\sigma$)  allow to attempt the sticking of the paths when  being in the compact subset.}

\noindent
Notice that in the setting of equation \eqref{wiener-SDE0}, the convergence analysis relies heavily on the Markov property for $Y$, or equivalently on the semi-group property for the transition probability. It also hinges on the irreducibility of $Y$, which can be seen as a non-degeneracy condition on the noisy part of the equation. Finally, observe that the first approach generally leads to sharper exponents but may require stronger assumptions.

Convergence to equilibrium being a relatively well understood phenomenon for equations li\-ke~\eqref{wiener-SDE0}, recent developments in ergodic theory for stochastic equations have focused on deviations from the irreducible Markov setting. The reference \cite{hairer-mattingly-scheutzow} handles for instance infinite dimensional situations where only asymptotic couplings of the process (starting from different initial conditions) are available. Let us also mention \cite{hairer-mattingly}, about a situation where the strong Feller property is fulfilled as $t\to\infty$, due to the degeneracy of the noise.

The current contribution is more directly related to another line of investigation, which aims at handling cases deviating from the fundamental Markov assumption. A general setting for this kind of situation is provided in the landmark of random dynamical systems~\cite{Arnold98,crauel,GKN09}. However, the type of information one can retrieve with these techniques seldom include rates of convergence to an equilibrium measure. Alternatively, one can also consider differential systems driven by a fractional Brownian motion (fBm) as a canonical example on which  non standard Markovian approaches to convergence can be elaborated. This point of view is ours, and is justified by the fact that fBm is widely used in applications (see e.g~\cite{Gua06,Odde-al96,Jeon-al11,Kou08}), and also by the fact that fBm can be seen as one of the simplest processes exhibiting long range dependence.

In this paper, we are thus concerned by the long time behavior of an equation which is similar to~\eqref{wiener-SDE0}, except for the fact that the noisy input is a fractional Brownian motion. Specifically, we consider the following SDE:
\begin{equation}\label{fractionalSDE0}
dY_t=b(Y_t)dt+ \sigma(Y_t)\, dX_t,
\end{equation}
where the coefficients $b$ and $\sigma$ satisfy the same assumptions as above (in particular relation \eqref{eq:hyp-drift-intro}), and where $(X_t)_{t\ge0}$ is  a $d$-dimensional $H$-fBm  with Hurst parameter $H\in(\frac{1}{3},1)$. Notice that in the case $ H>\frac{1}{2}$  equation~\eqref{fractionalSDE0} makes sense owing to Young integration techniques, whereas the case $H\in(1/3,1/2)$ requires elements of rough paths theory (see Section \ref{subsec:classic-topos}).

The study of  ergodic properties for fractional SDEs (under the stability assumption \eqref{eq:hyp-drift-intro}) has been undertaken by Hairer \cite{hairer},  Hairer and Ohashi \cite{hairer2}, and by Hairer and Pillai \cite{hairer-pillai}, respectively in the  additive noise, multiplicative noise with $H>1/2$ and multiplicative hypoelliptic noise with $H\in(1/3,1/2)$.  Except \cite{hairer} which also deals with rate of convergence to equilibrium, these papers mainly focus on a way to define stationary solutions, and on extending tools of the ergodic Markovian theory to the fBm setting. In particular, criteria for uniqueness of the invariant distribution are proved in increasingly demanding settings. Let us also mention the references \cite{cohen-panloup,cohen-panloup-tindel} for some results on approximations of stationary solutions. In all those articles, the Markovian formalism is based on  the Mandelbrot-Van Ness representation of the fractional Brownian motion, namely:
\bqn\label{eq:mandel}
X_t=\alpha_H\int_{-\infty}^{0} (-r)^{H-\frac{1}{2}} \left(dW_{r+t}-dW_r\right),\quad t\ge0,
\eqn
where $(W_t)_{t\in\ER}$ is a two-sided $\ER^d$-valued Brownian motion and $\alpha_H$ is a normalization coefficient depending on $H$. It is then shown that
$(Y_t, (X_{s+t})_{s\le 0})_{t\ge0}$ can be realized through a Feller transformation $({\cal Q}_t)_{t\ge0}$ whose definition is recalled below (see Section \ref{markov-structure}). In particular,
an initial distribution of the dynamical system $(Y,X)$ is a distribution $\mu_0$ on $\ER^d\times{\cal W}_{-}$, where ${\cal W}_{-}$ is an appropriate Hölder space (see Section \ref{subsec:inva-distri} for more details). Rephrased in more probabilistic terms, an initial distribution is the distribution of  a couple $(Y_0, (X_s)_{s\le0})$ where $(X_s)_{s\le0}$ is an $\ER^d$-valued fBm on $(-\infty,0]$.
Then, such an initial distribution is called an
invariant distribution if it is invariant by the transformation ${\cal Q}_t$ for every $t\ge0$. As mentioned above, the uniqueness of such an invariant distribution is investigated in \cite{hairer,hairer2,hairer-pillai}.

Let us now go back to our original question concerning the rate of convergence to equilibrium, which is obviously a natural problem when uniqueness holds for the invariant distribution. This problem has been first considered in \cite{hairer}, for equation \eqref{fractionalSDE0} with an additive noise. In this context it is shown that the law of $Y_{t}$ converges in total variation to the stationary regime, with a rate upper-bounded by  $ C_\varepsilon t^{-(\alpha-\varepsilon)}$ for any $\varepsilon>0$,  
where 
\bqn\label{eq:rate-cvgce}
\alpha=\begin{cases}\frac{1}{8}&\textnormal{if $H\in (\frac{1}{4},1)\backslash\left\{\frac{1}{2}\right\}$}\\
 H(1-2H)&\textnormal{if $H\in(0,\frac{1}{4}]$.}
\end{cases}
\eqn
The upper bound above is believed to be non-optimal, though its sub-exponential character can be interpreted as an effect of the non-Markovianity of the fBm $X$. Referring to our previous discussion on methods to achieve rates of convergence, functional inequalities tools are ruled out in the fBm setting, due to the absence of a real semi-group related to equation \eqref{fractionalSDE0}. The method chosen in \cite{hairer} is thus based on coupling of solutions starting from different initial conditions. More specifically, the problem is reduced to a coupling between two paths starting from some initial conditions $\mu_0$ and $\mu$, where the second one denotes an invariant distribution of $({\cal Q}_t)_{t\ge0}$. The main step  consists (classically) in finding a stopping time ${\tau_{\infty}}$
such that $(Y_{t+{\tau_{\infty}}}^{\mu_0})_{t\ge0}= (Y_{t+{\tau_{\infty}}}^{\mu})_{t\ge0}$. The rate of convergence in total variation is then obtained by means of an accurate bound on 
$\PE({\tau_{\infty}}>t)$, $t\ge0$.

Within the general framework recalled above, the next challenge consists in extending the rate~\eqref{eq:rate-cvgce} to multiplicative noises.
This has been achieved in Fontbona and Panloup \cite{fontbona-panloup}, where the order of convergence~\eqref{eq:rate-cvgce} is obtained in the case  $H>\frac{1}{2}$, with the additional assumption that the diffusion component $\si$ is invertible and satisfies the following gradient type assumption: its inverse $\si^{-1}$ is a Jacobian matrix. Our paper has thus to be seen as an improvement of \cite{fontbona-panloup} in two different directions:

\noindent\emph{(i)} We get rid of the gradient type hypothesis assumed in \cite{fontbona-panloup}, which extends the scope of application of our result.\smallskip

\noindent\emph{(ii)} We treat the case of an irregular fBm, with Hurst parameter $H\in(1/3,1/2)$, which means that equation~\eqref{fractionalSDE0} has to be understood in the rough paths sense. Our main goal (see Theorem \ref{theo:principal} for a precise statement) is then to obtain the rate of convergence \eqref{eq:rate-cvgce} under those general conditions on $\si$ and in the rough case.

\noindent
One point should be made clear right now: the techniques displayed in this paper can cover both the case $H\in (1/3,1/2)$ (as mentionned in point \emph{(ii)} above) and the case $H>1/2$ (thus extending the results of \cite{fontbona-panloup} beyond the gradient type assumption, as reported in point \emph{(i)}). This being said, for the sake of conciseness, we shall only express our analysis within the rough setting, that is when $H\in (1/3,1/2)$, and therefore leave to the reader the details of the extension to the  (simpler) Young situation $H>1/2$ (see Remark \ref{remarque:cashsup12} for a few additional comments on this topic).

In order to achieve our claimed rate of convergence, we shall implement the coupling strategy alluded to above. Let us briefly recall how this coupling strategy is divided in 3 steps. As a preliminary step, one  waits that the two paths (starting respectively from $\mu$ and $\mu_{0}$) get close. This is ensured by the reinforcing condition \eqref{eq:hyp-drift-intro}. Then, at each trial, the coupling attempt is divided in two steps. First, one tries in Step~1 to cluster the positions on an interval of length $1$. Then, in Step 2, one tries to ensure that the paths stay clustered until $+\infty$.  Actually, oppositely to the Markovian case where the paths stay naturally together after a clustering (by putting the same noise on each coordinate), the main difficulty here is that, due to the memory, staying together is costly. In other words, this property can be guaranteed only with the help of a  non trivial coupling of the noises.  If one of the two previous steps fails, a new attempt will be made after a (long) waiting time which is called Step $3$. During this step one waits again for the paths to get close, but one also expects the memory of the coupling cost to vanish sufficiently.

In our general rough setting with non constant coefficient $\si$, the implementation of the coupling strategy requires some non trivial adaptations of the general scheme. Let us highlight our main contributions in order to achieve the desired convergence rate:
\begin{enumerate}
\item 
The binding preliminary step relies on Lyapunov type properties of the differential equation \eqref{fractionalSDE0}. We will invoke here some rough paths techniques based on discretization schemes.

\item
In the additive case, two paths driven by the same fBm differ from a drift term, which leads to a straightforward way of sticking the paths in Step 1. We are no longer able to use this trick here, and our coupling is based on a linearization of equation \eqref{fractionalSDE0}. The analysis of such a linearization turns out to be demanding, and is one of our main efforts in this article.

\item
The different trials we have to make in a context where nontrivial correlations occur force us to consider conditioning procedures. For these conditionings, we have chosen to decompose $X$ into a Liouville fBm plus a smooth process with singularity at $t=0$. The rough path formalism has to be adapted to this new setting. 

\end{enumerate}
Those steps are sometimes delicate, and will be detailed in the remainder of the article.

{Our paper is organized as follows:
In  Section \ref{section2} we detail our assumptions and state our main result, namely Theorem \ref{theo:principal}, which provides existence and uniqueness of an invariant distribution for the rough equation \eqref{fractionalSDE0}, as well as a rate of convergence towards this distribution. The Markov setting for this equation, as well as Lyapunov type inequalities, are given in Section \ref{sec:exist-invariant}, leading to the proof of the existence statement. Our global strategy to get uniqueness and the convergence rate is explained at Section \ref{sec:sketch-strategy}. The end of the proof of Theorem \ref{theo:principal} can thus be found in Section \ref{sec:prooftheoprinc}, slightly anticipating the technical results of the subsequent sections. The singular rough equations needed for the first step of the coupling (the \emph{clustering} or \emph{hitting} step) are detailed in Section \ref{sec:rougheqinholder}, and then applied at Section \ref{sec:roughcoupling} to some specific \emph{hitting} system. Eventually, the controls associated with Step 2 and Step 3 of the procedure are exhibited in Sections \ref{sec:step2} and \ref{sec:Kadmis}, respectively.}

\section{Setting and main result}\label{section2}

We recall here the minimal amount of rough paths considerations allowing to define and solve equation \eqref{fractionalSDE0} driven by a fBm with Hurst parameter $H>1/3$. These preliminaries will be presented using terminology taken from the so-called algebraic integration theory, which is a variant of the rough paths theory introduced in~\cite{Gu} (see also \cite{friz-hairer}). Then we shall state precisely the main result of this article.

\subsection{Hölder spaces, rough paths and rough differential equations}\label{subsec:classic-topos}

For an arbitrary real interval $I$, a vector space $V$ and an integer $k\ge1$, we denote by $\cac_k(I;V)$ the set of functions $g : I^{k} \to
V$ such that $g_{t_1 \cdots t_{k}} = 0$ whenever $t_i = t_{i+1}$ for
some $i\le k-1$. Such a function is called a $(k-1)$-increment.
Then, for every $f\in\cac_1(I;V)$ and $g\in\cac_2(I;V)$, we successively define
\begin{equation*}
  (\der f)_{st} := f_t - f_s \quad \text{and} \quad
(\der g)_{sut} = g_{st}-g_{su}-g_{ut} \quad ,
\end{equation*}
for any $s<u<t\in I$. Besides, throughout the paper, we will use the notation $(\delta\ct)_{st}=t-s$ for any $s<t$.

\smallskip

Our analysis will rely on some regularity considerations related to H\"older spaces. We thus start by introducing H\"older type norms for 1-increments: for every $f \in \cac_2(I;V)$, we set
\begin{align*} 
\cn[f;\cac_2^\mu(I;V)]:= 
\sup_{s,t\in I}\frac{\lVert f_{st}\rVert}{|t-s|^\mu},
\quad\text{and}\quad
\cac_2^\mu(I;V)=\lcl f \in \cac_2(I;V);\, \cn[f;\cac_2^\mu(I;V)]<\infty  \rcl.
\end{align*}
Observe now that the usual H\"older spaces $\cac_1^\mu(I;V)$  are determined in the following way: for a continuous function $f\in\cac_1(I;V)$, define
\begin{equation*} 
\cn[f;\cac_1^\mu(I;V)]=\cn[\delta f;\cac_2^\mu(I;V)],
\quad\text{and}\quad
\cac_1^\mu(I;V)=\lcl f \in \cac_1(I;V);\, \cn[f;\cac_1^\mu(I;V)]<\infty  \rcl.
\end{equation*}
We shall also use the supremum norm on spaces $\cac_k(I;V)$, which will be denoted by $\cn[\cdot \,;\cac_k^0(I;V)]$. Notice that when the context is clear, we will simply write $\cac^\mu_k(I)$ for $\cac^\mu_k(I;V)$.

The rough path theory can be seen as a differential calculus with respect to a H\"older continuous noise $x$, under a set of abstract assumptions. These assumptions are summarized in the following definition.

\begin{definition}\label{defi-rough-path}
Let $\ga$ be a constant greater than $1/3$ and consider a $\R^d$-valued $\ga$-H\"older path $x$ on some fixed interval $[0,T]$. We call a \emph{L\'evy area above $x$} any two-index map $\mathbf{x}^{\2}\in\cac_2^{2\ga}([0,T];\R^{d, d})$, which satisfies, for all $s<u<t\in [0,1]$ and all $i,j\in \{1,\ldots,d\}$,
\begin{equation}\label{levy-area-def}
  \der\mathbf{x}^{\mathbf{2};ij}_{sut}=\der x^{i}_{su} \, \der x^{j}_{ut} \quad \text{and}\quad
\mathbf{x}^{\mathbf{2};ij}_{st}+\mathbf{x}^{\mathbf{2};ji}_{st}=\der x^{i}_{st} \, \der x^{j}_{st} \ .
\end{equation}
The couple $\mathbf{x}:=(x,\mathbf{x}^{\2})$ is then called a \emph{$\ga$-rough path} above $x$, and we will use the short notation
$$\| \mathbf{x}\|_{\ga;I}:=\cn[ x;\cac_1^\ga(I;\R^d)]+\cn[ \mathbf{x}^{\2};\cac_2^{2\ga}(I;\R^{d, d})] \quad ,$$
for any interval $I\subset [0,T]$.
\end{definition}

When the rough path $\mathbf{x}$ can be approximated by smooth functions, one talks about a canonical lift, whose precise definition is given below.

\begin{definition}\label{defi-canonic-rp}
Given a path $x\in \cac_1^{\ga}([0,1];\R^d)$, we denote by $x^n=x^{\mathcal{P}_n}$ the sequence of (piecewise) smooth paths obtained through the linear interpolation of $x$ along the dyadic partition $\mathcal{P}_n$ of $[0,1]$. Then we will say that $x$ can be \emph{canonically lifted into a rough path} if there exists a $\ga$-rough path $\mathbf{x}:=(x,\mathbf{x}^{\2})$ above $x$ such that the sequence $\mathbf{x}^n:=(x^n,\mathbf{x}^{\mathbf{2},n})$ defined by
\begin{equation}\label{approxi-levy-area}
\mathbf{x}^{\mathbf{2},n}_{st}:=\int_s^t (\der x^n)_{su} \otimes dx^n_u
\end{equation}
 converges to $\mathbf{x}$ with respect to the norm 
$$\| \mathbf{x}\|_{(0,\ga');[0,1]}:=\cn[ x;\cac_1^0([0,1];\R^d)]+\cn[ x;\cac_1^{\ga'}([0,1];\R^d)]+\cn[ \mathbf{x}^{\2};\cac_2^{2\ga'}([0,1];\R^{d, d})] \quad ,$$
for every $0<\ga'<\ga$. In this case, we will also denote this (necessarily unique) limit $\mathbf{x}$ as $\mathfrak{L}(x)$.
\end{definition}

We finally give the definition of solution to a noisy differential equation, such as our main object of interest~\eqref{fractionalSDE0}. We are adopting here Davie's point of view (see \cite{Da07}). Namely, we characterize the solution $y$ by a Taylor expansion up to a remainder term whose H\"older regularity is strictly greater than 1.

\begin{definition}\label{def:sol-davie}$\mathbf{(Davie)}$
Let $\mathbf{x}:=(x,\mathbf{x}^{\2})$ be a $\ga$-rough path. Then, for all smooth vector fields
$$b:\R^d \to \R^d \quad \text{and} \quad \sigma:\R^d \to \mathcal{L}(\R^n;\R^d) \ , $$
we call $y\in \cac_1^\ga(I;\R^d)$ a solution (on $I$) of the equation
\begin{equation}\label{general-equation-davie}
dy_t =b(y_t) \, dt+\sigma(y_t) \, d\mathbf{x}_t \quad , \quad y_{t_0}=a \ ,
\end{equation}
if the two-parameter path $R^y$ defined as
$$R^y_{st}:=(\der y)_{st}-b(y_s) \, (\delta\ct)_{st}-\sigma_j(y_s) \, (\der x^j)_{st}-(D\sigma_j \cdot \sigma_k)(y_s) \, \mathbf{x}^{\mathbf{2},jk}_{st} \ $$
belongs to $\cac_{2}^\mu(I;\R^d)$, for some parameter $\mu>1$. Here, the notation $D\sigma_j \cdot \sigma_k$ stands for
\begin{equation}\label{nota-secd-order}
(D\sigma_i \cdot \sigma_k)(v):=(D\sigma_j)(v)(\sigma_k(v)) \ , \ \text{for every} \ v \in \R^d \ .
\end{equation}
\end{definition}

Applications of the abstract rough paths setting to a fractional Brownian motion $X$ depends on a proper construction of the L\'evy area $\xd$. The reader is referred to \cite[Chapter 15]{FV-bk} for a complete review of the methods enabling this construction. It can be summarized in the following way:

\begin{proposition}\label{prop:levy-fbm}
Let $1/3<H<1/2$ be a fixed Hurst parameter. Then the fBm $X$ belongs almost surely to any space $\cac_1^{\ga}$ for $\ga<H$, and can be lifted as a canonical rough path according to Definitions~\ref{defi-rough-path} and~\ref{defi-canonic-rp}. Furthermore, for any $0\le s<t\le T$, the random variable $\mathbf{X}^{\2}_{st}$ satisfies the following inequality:
\begin{equation*}
\be\lc  \lln\mathbf{X}^{\2}_{st}\rrn^{p}\rc \le c_p \, (t-s)^{2Hp}, \quad p\ge 1.
\end{equation*}
\end{proposition}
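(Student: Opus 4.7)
The statement gathers three properties of the fBm $X$, and I would organise the proof around them.

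The Hölder regularity is a direct consequence of the Kolmogorov continuity criterion: since $X$ is a centered Gaussian process with $\E[|X_t-X_s|^2]=|t-s|^{2H}$, Gaussian moment equivalence yields $\E[|X_t-X_s|^p]\le C_p|t-s|^{Hp}$ for every $p\ge 1$, and Kolmogorov's criterion then gives $X\in\cac_1^{\gamma}([0,T];\R^d)$ almost surely for every $\gamma<H$.

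The heart of the proposition is the construction of the canonical lift. I would work with the dyadic piecewise linear interpolants $X^n$ of Definition~\ref{defi-canonic-rp}, and set $\mathbf{X}^{\2,n}_{st}:=\int_s^t (\delta X^n)_{su}\otimes dX^n_u$, with the goal of proving that $\{\mathbf{X}^{\2,n}\}$ is Cauchy in $\cac_2^{2\gamma'}([0,T];\R^{d,d})$ for every $\gamma'<H$. The diagonal entries are handled at once by Chen's relation, which forces $\mathbf{X}^{\2,n;ii}_{st}=\tfrac12(\delta X^{n,i})_{st}^2$; the algebraic identities of Definition~\ref{defi-rough-path} will propagate to the limit automatically since they hold at the approximating level. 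For the off-diagonal entries, each component of $\mathbf{X}^{\2,n;ij}_{st}-\mathbf{X}^{\2,m;ij}_{st}$ belongs to the second Wiener chaos of the underlying white noise driving $X$, so by hypercontractivity all $L^p$ estimates reduce to $L^2$ ones. The relevant second moment is then controlled through the 2D $\rho$-variation of the covariance $R(s,t)=\E[X_sX_t]$, which is finite for $\rho=1/(2H)$, hence strictly less than $2$ since $H>1/3>1/4$. Combining these chaos estimates with a Kolmogorov-type criterion for two-parameter increments provides simultaneously the Cauchyness of $\mathbf{X}^{\2,n}$ in $\cac_2^{2\gamma'}$ and the uniform moment bound $\E[|\mathbf{X}^{\2,n}_{st}|^p]\le c_p(t-s)^{2Hp}$. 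Taking the limit one obtains $\mathbf{X}^{\2}=\lim_n\mathbf{X}^{\2,n}$ and $\mathbf{X}=\mathfrak{L}(X)$.

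The announced moment bound for $\mathbf{X}^{\2}$ then follows directly by passing to the limit in the uniform estimate above, or alternatively by combining the self-similarity and stationarity of the increments of $X$ (which give $\mathbf{X}^{\2}_{st}\stackrel{d}{=}(t-s)^{2H}\mathbf{X}^{\2}_{01}$, thanks to uniqueness of the lift) with the fact that $\mathbf{X}^{\2}_{01}$, as an $L^2$-limit of elements of the second Wiener chaos, admits moments of all orders.

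The main obstacle is clearly concentrated in the second step, namely the control of the Wiener chaos approximations in the rough regime $H\in(1/3,1/2)$: this is what makes the argument sensitive to the threshold on $H$, since the 2D $\rho$-variation estimate for $R$ breaks down at $H=1/4$ (where $\rho=2$), in agreement with the well-known failure of the canonical lift below that value. These chaos and $\rho$-variation computations are by now classical; rather than reproducing them I would simply refer the reader to \cite[Chapter~15]{FV-bk} for the technical details.
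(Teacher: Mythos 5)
The paper gives no self-contained proof of this proposition; it simply refers to \cite[Chapter 15]{FV-bk} for the construction. Your sketch (Kolmogorov for the H\"older regularity, dyadic piecewise-linear approximation, Wiener chaos with hypercontractivity, 2D $\rho$-variation of the covariance with $\rho=1/(2H)<2$, scaling/stationarity for the moment bound) is exactly the standard argument laid out in the reference the paper cites, so it is consistent with the paper's treatment.
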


As we shall see in the next section, Proposition \ref{prop:levy-fbm} will allow us to solve equation~\eqref{fractionalSDE0} under reasonable assumptions on the coefficients $b$ and $\si$.

\subsection{Assumptions and Main Result}\label{subsec:assump-main-results}

Having defined the notion of solution to equation \eqref{fractionalSDE0}, we can now proceed to a description of our main result. We first have to introduce a set of hypothesis on $b$ and $\si$, beginning with a boundedness assumption.

\smallskip

\noindent \textbf{Hypothesis (H1):}  $b:\R^d \to \R^d$ and $\si:\R^d \to \mathcal{L}(\R^d,\R^d)$ are smooth vector fields such that
\begin{equation}
\sup_{v\in \R^d} \lVert (D^{(1)}b)(v)\rVert \ < \ \infty\ , \quad \text{and for every} \ \ell \geq 0\ , \ \sup_{v\in \R^d} \lVert (D^{(\ell)}\si)(v)\rVert \ < \ \infty \quad  .
\end{equation}

\smallskip

The second hypothesis is the Lyapunov-type assumption alluded to in the introduction, which is classically needed for the existence of an invariant distribution.

\smallskip

\noindent \textbf{Hypothesis (H2):} There exist $C_1,C_2 >0$ such that for every $v\in \R^d$, one has
\begin{equation}
\langle v,b(v)\rangle \leq C_1-C_2 \lVert v\rVert^2 \quad .
\end{equation}

Finally, one needs a non-degeneracy assumption on $\sigma$.

\smallskip

\noindent \textbf{Hypothesis (H3):} For every $x\in\ER^d$, $\sigma(x)$ is invertible  and 
\begin{equation}\label{eq:hyp-bnd-inverse-sigma}
\sup_{x\in\ER^d}\|\sigma(x)^{-1}\|<+\infty.
\end{equation}

We are now in a position to state our main result. One denotes by ${\cal L}((Y_{t}^{\mu_0})_{t\ge0})$ the distribution of the process $Y$ on ${\cal C}([0,+\infty),\ER^d)$ starting from a (generalized) initial condition $\mu_0$ (see Subsection \ref{subsec:inva-distri} below 
 for detailed definitions of initial condition and invariant distribution). We also denote by $\bar{\cal Q}\mu$ the distribution of the stationary solution (starting from an invariant distribution $\mu$). The distribution $\bar{\mu}_0(dx)$ stands for the first marginal of $\mu_0(dx,dw)$. Finally, the total variation norm is classically denoted by $\|\,.\,\|_{TV}$.
 
\begin{theorem}\label{theo:principal} 
Let $H\in(1/3,1/2)$, and  assume  ${\bf (H1)}$, ${\bf (H2)}$, ${\bf (H3)}$ hold true. Then:

\noindent
\emph{(i)} 
There exists a unique solution of equation \eqref{fractionalSDE0} in the sense of Definition~\ref{def:sol-davie}.

\noindent
\emph{(ii)} 
Existence and uniqueness hold for the invariant distribution $\mu$.

\noindent
\emph{(iii)} 
Let $\mu_0$ be an initial distribution such that there exists $r>0$ satisfying { $\int |x|^r \bar{\mu}_0(dx)<\infty$}. Then for each $\varepsilon>0$ there exists $C_{\varepsilon}>0$ such that 
\begin{equation}\label{main-result:rate-of-conv}
\| {\cal L}((Y_{t+s}^{\mu_0})_{s\ge0})-\bar{\cal Q}\mu\|_{TV}\le C_\varepsilon t^{-(\frac{1}{8}-\varepsilon)} \ .
\end{equation}
In particular, 
$$
\| {\cal L}(Y_{t}^{\mu_0})-\bar{\mu}\|_{TV}\le C_\varepsilon t^{-(\frac{1}{8}-\varepsilon)} \ .
$$
where $\bar{\mu}$ denotes the first marginal of $\mu$.
\end{theorem}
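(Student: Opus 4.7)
Under \textbf{(H1)}, $b$ is globally Lipschitz and $\sigma$ is smooth with bounded derivatives of every order, so Proposition~\ref{prop:levy-fbm} combined with Davie's fixed-point argument gives local existence and uniqueness of~\eqref{fractionalSDE0} in the sense of Definition~\ref{def:sol-davie}. Global existence then follows from a Lyapunov estimate on $V(y)=1+|y|^2$: \textbf{(H2)} provides the required dissipation, while the boundedness of the derivatives of $\sigma$ controls the rough corrections on short intervals and thus rules out blow-up in finite time. For the existence of an invariant distribution $\mu$ of the Feller transformation $\cq_t$ on $\R^d\times\cw_-$ built from the Mandelbrot--Van Ness representation in Section~\ref{sec:exist-invariant}, the same Lyapunov bound yields $\sup_{t\ge0}\E[|Y^{\mu_0}_t|^2]<\infty$; combined with the tightness of the Gaussian marginal on $\cw_-$, a Krylov--Bogoliubov averaging of $\cq_t\mu_0$ produces an invariant limit point.

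\textbf{Uniqueness of $\mu$ and the rate in (iii).} I would reduce both assertions simultaneously to the construction of a stopping time $\tau_\infty$ such that two solutions $Y^{\mu_0}$ and $Y^{\mu}$, realized on a common probability space through a joint law on the driving fBm, satisfy $Y^{\mu_0}_{t+\tau_\infty}=Y^{\mu}_{t+\tau_\infty}$ for all $t\ge0$, with tail $\PE(\tau_\infty>t)\le C_\varepsilon t^{-1/8+\varepsilon}$. Indeed the inequality
\[
\|\cl((Y^{\mu_0}_{t+s})_{s\ge0})-\bar{\cal Q}\mu\|_{TV}\le 2\,\PE(\tau_\infty>t)
\]
delivers both the uniqueness of $\mu$ (by sending $t\to\infty$) and the quantitative bound~\eqref{main-result:rate-of-conv}. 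The construction of $\tau_\infty$ is iterative, each trial being organized as a \emph{preliminary phase} in which one drives both systems by the same noise and exploits \textbf{(H2)} to bring $|Y^{\mu_0}-Y^{\mu}|$ into a small set in a time with exponential moments; a \emph{clustering phase} (Step~1) on a unit interval in which one modifies the driver of $Y^{\mu_0}$ by a Girsanov-type shift, constructed from a rough linearization of~\eqref{fractionalSDE0} around $Y^{\mu}$ and from the bounded inverse $\sigma^{-1}$ of \textbf{(H3)}, aiming at $Y^{\mu_0}_1=Y^{\mu}_1$; a \emph{sticking phase} (Step~2) in which, because the past histories of the two drivers now differ, an additional compensating shift has to be added to one of the fBm's in order to maintain $Y^{\mu_0}=Y^{\mu}$ for all subsequent times; and, upon failure of Step~1 or Step~2, a long \emph{waiting phase} (Step~3) allowing the memory of the current attempt to decay before the next trial.

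\textbf{Main obstacles.} The genuinely new difficulty, compared to~\cite{hairer,fontbona-panloup}, is to implement Step~1 in the rough regime $H\in(1/3,1/2)$ in the absence of any gradient-type assumption on $\sigma^{-1}$. The candidate Girsanov shift must be defined through the inversion of a rough linearized equation and then estimated in the $\ga$- and $2\ga$-H\"older norms that enter the L\'evy area; this is the most technical part of the work. Moreover, since successive trials are strongly correlated through the past of $X$, the whole analysis has to be carried out conditionally, which forces the decomposition $X=\tilde X+\Psi$ of the fBm into a Liouville piece $\tilde X$ and a smooth-but-singular-at-zero remainder $\Psi$, and the development of a rough calculus adapted to drivers with an initial-time singularity (Section~\ref{sec:rougheqinholder}, applied to the coupling system in Section~\ref{sec:roughcoupling}). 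Granting this technology, the tail $\PE(\tau_\infty>t)\lesssim t^{-1/8+\varepsilon}$ follows from (a) exponential moments on the preliminary phase coming from the Lyapunov function (using the $r$-th moment assumption on $\bar\mu_0$ to initialize the iteration), (b) a uniform lower bound on the success probability in Step~1, (c) a sub-Gaussian control of the cost of Step~2 in terms of the mismatch between the past histories (Section~\ref{sec:step2}), and (d) a calibration of the Step~3 length in Section~\ref{sec:Kadmis} balancing the polynomial decay of fBm correlations against the admissibility of successive Girsanov shifts. The exponent $1/8$ emerges from this trade-off and coincides with the additive rate of~\cite{hairer}, reflecting that it is dictated by the long-range dependence of $X$ rather than by the multiplicative or rough features of the equation.
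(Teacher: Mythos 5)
Your sketch correctly reproduces the paper's strategy: Lyapunov estimates plus Krylov--Bogoliubov averaging for existence, reduction of uniqueness and the rate to the tail of a coupling time $\tau_\infty$ via the bound $\|\cl(Y^{\mu_0}_{t+\cdot})-\bar{\cal Q}\mu\|_{TV}\le 2\,\PE(\tau_\infty>t)$, a Step~1 Girsanov shift built from a rough linearization of the equation together with (\textbf{H3}), the past--innovation decomposition $X=D+Z$ and the attendant singular rough calculus, and a Step~3 waiting calibration that yields the exponent $1/8$. One small structural remark: the paper does not run a separate ``preliminary phase'' with an exponential-moment return time; instead the coming-close and memory-decay are both absorbed into the deterministic geometric waiting $\Delta_3(k,\ell)=c_3\varsigma^k 2^{\beta\ell}$ of Step~3, and the Lyapunov bound is used only to ensure that the $(K,\alpha,\gamma)$-admissibility event occurs with probability at least $1/2$ at each trial, so the polynomial tail comes entirely from the $\varsigma^k$ growth rather than from any return-time distribution.
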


\begin{remark}
Item (i) in Theorem \ref{theo:principal} is classical in rough path theory, since Proposition~\ref{prop:levy-fbm} holds true for our fBm $X$. We refer to \cite{FV-bk} for the general theory of differential equations driven by a rough path. We prove {existence of the invariant distribution} below in Theorem \ref{theo:lyapou} and Corollary \ref{coro:lyapou-fbm}.  It is worth noting that even though this type of result is classical, its proof is highly technical in our rough and non-Markovian context. The main part of our work is then obviously to prove item (iii), which in turns implies uniqueness and achieves the proof of (ii).
Also notice that the reinforcing assumption ${\bf (H2)}$ is fundamental for both the Lyapunov and the  coupling steps in our proofs.
\end{remark}

\begin{remark}\label{remarque:cashsup12} As mentioned before, when $H>1/2$, Theorem \ref{theo:principal} has already be shown in \cite{fontbona-panloup}. However, an additional gradient type assumption on $\sigma$ was needed therein, that is: $\sigma^{-1}$ is the Jacobian matrix of a function $h:\ER^d\rightarrow\ER^d$. Up to slight adaptations (involving in particular  the non-integrability of $u\rightarrow u^{-H-\frac{1}{2}}$ when $H>1/2$), the proof developed in this paper  when $H\in(1/3,1/2)$ (especially in Step 1) extends to the case $H>1/2$ (and does not require the gradient assumption). In other words, the above result is still true when $H>1/2$. For the sake of simplicity, we however choose  to  only consider the real new case $H<1/2$ in the sequel.  
\end{remark}

\section{Existence of invariant distribution}\label{sec:exist-invariant}
The main result of this section is  Theorem \ref{theo:lyapou} where we establish a new Lyapunov property for rough equations and deduce that existence holds for the invariant distribution under
$\mathbf{(H1)}$ and $\mathbf{(H2)}$. Before, we need to recall some background about ergodic theory for rough equations. We assume that $H<1/2$.

\subsection{Markovian structure and invariant distribution}
\subsubsection{Background on the Markov structure above the solutions}\label{markov-structure}

As shown in \cite{hairer-pillai} (going back to \cite{hairer} and \cite{hairer2}), the system~\eqref{fractionalSDE0} can be endowed with a Markovian structure. Let us briefly recall
the construction. The starting point is to build an appropriate Hölder space on which $(B_t^H)_{t\in\ER}$ can be realized through a Markov transformation. Let ${\cal C}_0^\infty(\ER_{-})$ be the space of ${\cal C}^\infty$-functions $w$, with compact support on $\ER_{-}$ and with values in $\ER^d$, satisfying $w(0)=0$. Let ${\cal W}_\gamma$ denote the Hölder-type space being the (Polish) closure  of  
${\cal C}_0^\infty(\ER_{-})$ for the norm $\|\,.\,\|_{_{{\cal W}_\gamma}}$ defined by
\begin{equation}\label{espace:wgamma}
\|x\|_{_{{\cal W}_\gamma}}:=\sup_{s,t\in\ER_{-}}\frac{|x(t)-x(s)|}{|t-s|^\gamma(1+|t|+|s|)^{\frac{1}{2}}}.
\end{equation}

For any $\gamma\in(1/3,H)$, there exists a probability $\PE_{-}$ on ${\cal W}_\gamma$ such that the canonical process is a standard $d$-dimensional $H$-fBm indexed by $\ER_{-}$.
In the following, we set ${\cal W}_{-}:={\cal W}_\gamma$ and consider $x_{-}\in{\cal W}_{-}$. Set  ${\cal W}_+:={\cal D}_g^{0,\gamma}([0,1])$ the closure of ${\cal C}_0^\infty([0,1])$ with respect to the norm $\|\cdot\|_{\gamma;[0,1]}$. Then, with the help of operators related to the Mandelbrot representation (see \cite{hairer-pillai} for more precise statements), one can define a Feller transition kernel $\hat{\cal P}$ on ${\cal W}_{-}\times{\cal W}_+$ such that (with a slight abuse of notation), $\hat{\cal P}(x_{-},d{\bf x}_{+})=\PE(({\bf B}_t^H)_{t\in[0,1]}\in d{\bf x}_{+}| (B_t^H)_{t\in\ER_{-}}=x_{-})$.  Then, denoting by 
${\cal W}:={\cal W}_{-}\times {\cal W}_+$, $\Pi:{\cal W}\mapsto {\cal C}((-\infty,1],\ER^d)$ the map that concatenates $x_{-}$ with the path component $x_{+}$ of ${\bf x}_+$, and $\PE$ the probability measure on ${\cal W}$ defined by $\PE (dx_{-}\times dx_+):=\PE_-(dx_{-})\hat{\cal P}(x_{-},dx_+)$, $\Pi^*\PE$ corresponds to the law of $(B_t^H)_{t\in(-\infty,1]}$ on ${\cal C}((-\infty,1],\ER^d)$.  Denoting by $\Theta$ the $-1$-time shift from  ${\cal C}((-\infty,1],\ER^d)$ to ${\cal C}((-\infty,0],\ER^d)$, the previous construction implies that a two-sided fBm (on $\ER^d$) can be realized through the (discrete-time) Feller Markov transition kernel ${\cal P}$ on ${\cal W}$ defined by
$${\cal P}(x,.):=\delta_{\Theta(x)}\otimes \hat{{\cal P}}(\Theta(x),.).$$
The triplet $({\cal W},\PE,{\cal P})$ is called the noise space. Then, for a given initial condition $z$ and a given realization $x=(x_{-},{\bf x}_{+})$ of the driving noise, we denote by $({\Phi}_t(z,{\bf x}_+))_{t\in[0,1]}$, the unique solution to \eqref{fractionalSDE0} with initial condition $z$. Owing to \cite{FV-bk} and Assumption $\mathbf{(H1)}$, $(z,{\bf x}_+)\mapsto {\Phi}_t(z,{\bf x}_+)$ is continuous  on $\ER^d\times{\cal W}_+$. It follows that the solution to \eqref{fractionalSDE0} can be viewed as a Feller Markov process on $\ER^d\times{\cal W}$ with transition kernel ${\cal Q}$ defined by : ${\cal Q}(z,x,.):=\Psi_{z}^*{\cal P}(x,.)$ where $\Psi_{z}(x):=(\Phi_1(z,{\bf x}_+),x)$.
\begin{remark} 
In \cite{hairer}, the construction of the Markov structure is directly realized  with the underlying Wiener process. Note that such a  construction would be closer to the coupling viewpoint which is  introduced below.\smallskip

\noindent The reader can observe that the above construction only ensures the Markovian structure above the discrete-time process $(X_n)_{n\in\EN}$ and not for the whole process $(X_t)_{t\ge0}$.  However, an adaptation of the previous strategy leads to the construction of  a Feller Markov semi-group  $({\cal Q}_t)_{t\ge0}$ above $(Y_t)_{t\ge0}$ (on $\ER^d\times {\cal W}_\gamma$).  
\end{remark}   
\subsubsection{Invariant distribution}\label{subsec:inva-distri}
Following \cite{hairer2}, a probability $\mu$ on $\ER^d\times {\cal W}_{-}$ is called a \textit{generalized initial condition}  if $\Pi_{{\cal W}_{-}}\mu=\PE_{-}$ (defined in the previous section).
\begin{definition} Let $\nu$ be a generalized initial condition.  We say that $\nu$ is an invariant distribution for $(Y_t)_{t\ge0}$ if 
for every $t\ge0$, $\nu{\cal Q}_t=\nu$.
\end{definition}

\begin{definition} We say that $V:\ER^d\mapsto\ER$ is a Lyapunov function for ${\cal Q}$ if $V$ is continuous and positive, if $\lim_{|x|\rightarrow+\infty} V(x)=+\infty$ and if there exist $C>0$ and $\rho\in(0,1)$ such that for every $t\in[0,1]$,
\begin{equation}\label{lyapou-ineq}
\int V(x)(\mu{\cal Q}_t)(dx,dw)\le C+\rho\int V(x)\mu(dx,dw)
\end{equation}
for any generalized initial condition $\mu$ on $\ER^d\times {\cal W}_{-}$.
\end{definition}
We have the following (classical) result:
\begin{prop}\label{prop:existence22} The existence of a Lyapunov function $V$ for  the Feller semi-group $({\cal Q}_t)_{t\ge0}$ implies the existence of an invariant distribution for $(Y_t)_{t\ge0}$. Furthermore, for any generalized initial condition $\mu$ such that 
$\int V(x)\mu(dx,dw)<+\infty$,
$\sup_{t\ge0} \ES_{\mu}[V(Y_t)]<+\infty.$
\end{prop}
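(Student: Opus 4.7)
The strategy is a classical Krylov-Bogoliubov argument in a non-Markovian setting, powered by a moment estimate derived from iterating (\ref{lyapou-ineq}). The key structural fact used throughout is that if $\mu$ is a generalized initial condition (i.e., its ${\cal W}_{-}$-marginal equals $\PE_{-}$), then so is $\mu\cq_{t}$ for every $t\ge0$: this is built into the Markovian construction recalled in Section \ref{markov-structure}. In particular, only the $\ER^{d}$-marginal of $\mu\cq_{t}$ can drift, and this marginal is precisely the one that $V$ controls.

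\emph{Moment bound.} Iterating (\ref{lyapou-ineq}) with $t=1$ on the generalized initial conditions $\mu,\mu\cq_{1},\dots,\mu\cq_{n-1}$ gives, by straightforward induction,
\begin{equation*}
\int V(x)(\mu\cq_{n})(dx,dw)\le C\sum_{k=0}^{n-1}\rho^{k}+\rho^{n}\int V(x)\mu(dx,dw)\le\frac{C}{1-\rho}+\int V(x)\mu(dx,dw).
\end{equation*}
For arbitrary $t\ge0$, writing $t=n+s$ with $n=\lfloor t\rfloor$, $s\in[0,1)$, and applying (\ref{lyapou-ineq}) once more to the generalized initial condition $\mu\cq_{n}$ yields $\sup_{t\ge0}\int V\,d(\mu\cq_{t})<+\infty$, which is the second claim of the proposition.

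\emph{Tightness of Ces\`aro averages.} Pick any generalized initial condition $\mu_{0}$ with $\int V\,d\mu_{0}<+\infty$ (for instance $\delta_{x_{0}}\otimes\PE_{-}$) and define
\begin{equation*}
\nu_{n}:=\frac{1}{n}\int_{0}^{n}\mu_{0}\cq_{s}\,ds.
\end{equation*}
The ${\cal W}_{-}$-marginal of each $\nu_{n}$ equals $\PE_{-}$, which is tight as a single probability measure on the Polish space ${\cal W}_{-}$. The $\ER^{d}$-marginal satisfies $\int V\,d\nu_{n}\le C/(1-\rho)+\int V\,d\mu_{0}$ by the previous step, so the coercivity of $V$ delivers its tightness on $\ER^{d}$. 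Tightness of both marginals forces tightness of $(\nu_{n})$ on $\ER^{d}\times{\cal W}_{-}$, and Prokhorov's theorem extracts a weakly convergent subsequence $\nu_{n_{k}}\Rightarrow\bar\nu$; the ${\cal W}_{-}$-marginal of $\bar\nu$ is still $\PE_{-}$, so $\bar\nu$ is a generalized initial condition.

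\emph{Passage to the invariant limit.} For any $f\in C_{b}(\ER^{d}\times{\cal W}_{-})$ and any $t\ge0$, an elementary manipulation of the time integral defining $\nu_{n}$ yields
\begin{equation*}
\left|\int f\,d(\nu_{n}\cq_{t})-\int f\,d\nu_{n}\right|=\frac{1}{n}\left|\left(\int_{n}^{n+t}-\int_{0}^{t}\right)\int f\,d(\mu_{0}\cq_{s})\,ds\right|\le\frac{2t\|f\|_{\infty}}{n}\to0.
\end{equation*}
The Feller property ensures $\cq_{t}f\in C_{b}$, so $\int f\,d(\nu_{n_{k}}\cq_{t})\to\int f\,d(\bar\nu\cq_{t})$; combined with $\int f\,d\nu_{n_{k}}\to\int f\,d\bar\nu$, this forces $\bar\nu\cq_{t}=\bar\nu$ for every $t\ge0$, so $\bar\nu$ is an invariant distribution. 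The only non-routine point would have been tightness on the infinite-dimensional path space ${\cal W}_{-}$, but this is bypassed here since the ${\cal W}_{-}$-marginal remains frozen at $\PE_{-}$ under the dynamics; the entire non-trivial tightness load thus falls on $\ER^{d}$ and is handled by $V$.
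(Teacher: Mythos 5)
Your proof is correct and follows essentially the same Krylov--Bogoliubov route as the paper: iterate the Lyapunov inequality for the moment bound, average over time, observe that the ${\cal W}_{-}$-marginal is frozen at $\PE_{-}$ so tightness reduces to the $\ER^{d}$-component controlled by $V$, and pass to a weak limit using the Feller property. You merely write out the iteration and the limiting manipulation more explicitly than the paper does; there is no substantive difference.
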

\begin{proof} Let $\mu$ denote an initial condition on $\ER^d\times{\cal W}_{-}$ such that $\int V(x)\mu(dx,dw)<+\infty$ and denote by $(\mu_t)_{t\ge1}$ the sequence defined by 
\begin{equation}\label{mut222}
\mu_t=\frac{1}{t}\int_{0}^t \mu {\cal Q}_s ds, \quad t\ge 1.
\end{equation}
By construction and by the Feller property, every weak limit of $(\mu_t)_{t\ge1}$ is an invariant distribution for $({\cal Q}_t)_{t\ge0}$. It is thus enough to prove the tightness of $(\mu_t)_{t\ge1}$: owing to the stationarity of the increments of the fBm, the second marginal of $\mu_t$ does not depend on $t$. ${\cal W}_{-}$ being Polish, we are thus reduced to prove the 
tightness of $(\nu_t)_{t\ge0}$, $\nu_t$ being  the first marginal of $\mu_t$. But the definition of the Lyapunov function implies (by an iteration) 
$\sup_{t\ge1}\nu_t(V)<+\infty$ which in turn implies the tightness (using that $V^{-1}([0,K])$ is compact for any $K>0$).
\end{proof}

The aim of the next subsection is the exhibition of such a Lyapunov function $V$ for $Q$. The result will actually be derived from a general (deterministic) Lyapunov property for rough differential equations.

\subsection{A Lyapunov property for rough differential equations}\label{subsec:lyapou}

We go back here to the general case of a rough equation
\begin{equation}\label{rgh-std-eq}
dy_{t} = b(y_{t}) \, dt + \si(y_{t})\, d\mathbf{x}_{t} \quad , \quad t\in [0,1] \quad , \quad y_0=a\in \R^d \ ,
\end{equation}
where $\mathbf{x}$ is a given (deterministic) $\ga$-rough path on $[0,1]$, for some fixed parameter $\ga\in (\frac13,\frac12)$. In what follows, we will write $\lVert \mathbf{x}\rVert_{\ga}$ for $\lVert \mathbf{x}\rVert_{\ga;[0,1]}$.

\begin{theorem}\label{theo:lyapou}
Under Hypothesis $\mathbf{(H1)}$ and for every initial condition $y_0\in \R^d$, Equation (\ref{rgh-std-eq}) admits a unique solution $y$ on $[0,1]$, in the sense of Definition \ref{def:sol-davie}. Besides, if we assume in addition that Hypothesis $\mathbf{(H2)}$ holds true, then there exists a constant $C$ (which depends on $b,\sigma,\ga,C_1,C_2$, but not on $\mathbf{x}$) such that
\begin{equation}\label{ineq-lyapoun}
\lVert y_1\rVert^2 \leq e^{-C_2/2} \lVert y_0\rVert^2+C \big\{1+\lVert \mathbf{x}\rVert_\ga^\mu\big\} \quad , \quad \text{with} \ \mu:=\frac{8}{3\ga-1}\quad .
\end{equation}
\end{theorem}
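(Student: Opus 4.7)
Existence and uniqueness of a Davie solution to \eqref{rgh-std-eq} under Hypothesis $\mathbf{(H1)}$ is a classical consequence of the fixed-point theory for rough differential equations with smooth, bounded-derivative coefficients (see \cite{FV-bk}), so I would only recall this argument briefly. The main task is therefore to establish \eqref{ineq-lyapoun}; my strategy for this is a discretization scheme. I introduce the uniform partition $t_k:=k/N$ of $[0,1]$ with mesh $h=1/N$, where the integer $N$ will be chosen as a polynomial function of $\|\mathbf{x}\|_\gamma$ at the very end, and I expand $\|y_{t_{k+1}}\|^2$ around $\|y_{t_k}\|^2$ step by step by means of the Davie expansion, extracting contraction from $\mathbf{(H2)}$ and absorbing all rough-path contributions via Young's inequality.

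More concretely, on each subinterval $[t_k,t_{k+1}]$ Definition~\ref{def:sol-davie} gives
\begin{equation*}
y_{t_{k+1}}-y_{t_k}=b(y_{t_k})\,h+\sigma_j(y_{t_k})\,\delta x^j_{t_kt_{k+1}}+(D\sigma_j\cdot\sigma_i)(y_{t_k})\,\mathbf{x}^{\mathbf{2};ij}_{t_kt_{k+1}}+R^y_{t_kt_{k+1}},
\end{equation*}
where standard sewing-based a priori estimates, combined with the bounds on $\sigma,D\sigma,\ldots$ from $\mathbf{(H1)}$, yield $|R^y_{t_kt_{k+1}}|\leq C(1+\|y_{t_k}\|)\,P(\|\mathbf{x}\|_\gamma)\,h^{3\gamma}$ for some polynomial $P$, at least once $h$ has been taken small enough with respect to $\|\mathbf{x}\|_\gamma^{-1/\gamma}$ (a condition that can be secured by a preliminary greedy subpartition if $N$ is not already large enough). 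Setting $\Delta_k:=y_{t_{k+1}}-y_{t_k}$ and expanding $\|y_{t_{k+1}}\|^2=\|y_{t_k}\|^2+2\langle y_{t_k},\Delta_k\rangle+\|\Delta_k\|^2$, the drift contribution $2\langle y_{t_k},b(y_{t_k})\rangle h$ is dominated, thanks to $\mathbf{(H2)}$, by $-2C_2 h\|y_{t_k}\|^2+2C_1 h$, while each other term is a product of some $\|y_{t_k}\|^p$, a polynomial in $\|\mathbf{x}\|_\gamma$, and a positive power of $h$. Applying Young's inequality to each cross term with a parameter of order $C_2 h$ converts it into $\tfrac{C_2}{2}h\|y_{t_k}\|^2$ plus a pure rough-path error, producing a one-step recursion
\begin{equation*}
\|y_{t_{k+1}}\|^2\leq\bigl(1-\tfrac{C_2}{2}h\bigr)\|y_{t_k}\|^2+C\,h+C\sum_\ell\|\mathbf{x}\|_\gamma^{a_\ell}\,h^{b_\ell},
\end{equation*}
for a finite collection of explicit exponents $(a_\ell,b_\ell)$ tracked along the way.

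Iterating this recursion and summing the geometric series produces
\begin{equation*}
\|y_1\|^2\leq\bigl(1-\tfrac{C_2}{2}h\bigr)^{N}\|y_0\|^2+\frac{2C}{C_2}\Bigl(1+\sum_\ell\|\mathbf{x}\|_\gamma^{a_\ell}h^{b_\ell-1}\Bigr),
\end{equation*}
and the elementary bound $(1-C_2 h/2)^N\leq e^{-C_2/2}$ produces exactly the prefactor stated in \eqref{ineq-lyapoun}. It then remains to minimize the error sum in $h$: after identifying the worst exponents $(a_\ell,b_\ell)$ (the dominant contribution being expected to come from squaring the noise term $\sigma(y_{t_k})\delta x$ and from the quadratic rough term $(D\sigma\cdot\sigma)\mathbf{x}^{\mathbf{2}}$ after two applications of Young), the optimal choice $h\sim\|\mathbf{x}\|_\gamma^{-1/\gamma}$ substituted back in the error bound yields, after straightforward bookkeeping, the polynomial exponent $\mu=8/(3\gamma-1)$ announced in the theorem.

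The main technical obstacle will be the careful bookkeeping of all the contributions in $\|\Delta_k\|^2$ and in the cross term $2\langle y_{t_k},R^y\rangle$, since each has its own scaling exponent and each must survive the factor $h^{-1}$ produced by the summation before the optimization in $h$ is performed. Securing the bound on the two-parameter remainder $R^y$ that is polynomial in $\|\mathbf{x}\|_\gamma$ and only linear in $\|y_{t_k}\|$ is itself the key delicate point, and it is exactly where the small-interval/greedy-partition condition $h\lesssim\|\mathbf{x}\|_\gamma^{-1/\gamma}$ becomes essential: without it, the Davie local expansion cannot be closed uniformly in the starting point $y_{t_k}$.
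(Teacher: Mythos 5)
Your overall architecture is the same as the paper's: discretize $[0,1]$, expand $\|y_{t_{k+1}}\|^2-\|y_{t_k}\|^2$ with the one-step Davie expansion, extract contraction from $\mathbf{(H2)}$ via the term $\langle y_{t_k},b(y_{t_k})\rangle$, absorb the noise contributions with Young's inequality, iterate the affine recursion and finally make the step size depend polynomially on $\|\mathbf{x}\|_\gamma$. (The paper runs this through the Euler scheme $y^n$ on dyadic partitions and passes to the limit, rather than through the exact solution $y$ directly, but that is a cosmetic difference once existence is in hand.)

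There is, however, a genuine gap in the step-size calibration, and it is not a minor bookkeeping slip because it is exactly what produces the claimed exponent $\mu=8/(3\gamma-1)$. You assert that the Davie remainder can be controlled once $h\lesssim\|\mathbf{x}\|_\gamma^{-1/\gamma}$, and then that plugging $h\sim\|\mathbf{x}\|_\gamma^{-1/\gamma}$ into the error sum gives $\mu=8/(3\gamma-1)$. These two statements are not mutually consistent. Running your own recursion with the dominant error term $\|\mathbf{x}\|_\gamma^2\,h^{2\gamma}$ per step, summing $N=h^{-1}$ of them and substituting $h\sim\|\mathbf{x}\|_\gamma^{-1/\gamma}$ gives an exponent of order $2/\gamma$, which for $\gamma\in(1/3,1/2)$ is far smaller than $8/(3\gamma-1)$. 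The reason you cannot actually take $h$ as large as $\|\mathbf{x}\|_\gamma^{-1/\gamma}$ is that the a priori bound $|R^y_{t_kt_{k+1}}|\lesssim(1+\|y_{t_k}\|)\,h^{3\kappa}$ (with $3\kappa>1$, necessarily $\kappa>1/3$, and in particular $\kappa<\gamma$) closes on a step only when $h^{\gamma-\kappa}\|\mathbf{x}\|_\gamma\lesssim 1$, i.e. $h\lesssim\|\mathbf{x}\|_\gamma^{-1/(\gamma-\kappa)}$. Since $\gamma-\kappa<\gamma$, this is a strictly smaller admissible mesh than $\|\mathbf{x}\|_\gamma^{-1/\gamma}$; the paper's choice $\kappa=\tfrac12(\tfrac13+\gamma)$ gives $h\lesssim\|\mathbf{x}\|_\gamma^{-6/(3\gamma-1)}$, and it is this constraint that yields $\mu=8/(3\gamma-1)$ after the bookkeeping. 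A second, smaller gap is the existence claim: under $\mathbf{(H1)}$ the drift $b$ is not bounded (only its first derivative is), so global existence on $[0,1]$ is not a direct citation of standard bounded-vector-field rough-paths theory; the paper flags this explicitly and obtains existence from the same discrete scheme that produces the Lyapunov bound.
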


\

Injecting this result into the stochastic setting of Section \ref{markov-structure} (where $\mathbf{x}:=\mathbf{X}$ is the canonical rough path above the fBm), the derivation of (\ref{lyapou-ineq}) is immediate. It is indeed a well-known fact (see for instance \cite[Theorem 15.33]{FV-bk}) that the random variable $\lVert \mathbf{X}\rVert_\ga$ admits finite moments of any order, and we are therefore in a position to state the desired property: 

\begin{corollary}\label{coro:lyapou-fbm}
In the setting of Section \ref{markov-structure} and assuming that both Hypotheses $\mathbf{(H1)}$ and $\mathbf{(H2)}$ hold true, the map $V:x\mapsto \lVert x\rVert^p$ defines a Lyapunov function for $\mathcal{Q}$, for any $p\geq 1$. As a consequence, there exists at least one invariant distribution $\nu$ for $(Y_t)_{t\geq 0}$, which additionally admits finite moments of any order.
\end{corollary}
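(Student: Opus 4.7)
The plan is to upgrade the deterministic Lyapunov estimate \eqref{ineq-lyapoun} into a moment contraction at time $t=1$ and then invoke Proposition~\ref{prop:existence22}. The only probabilistic input needed is a Fernique-type integrability statement: by \cite[Theorem 15.33]{FV-bk}, the homogeneous $\gamma$-rough-path norm $\|\mathbf{X}\|_{\gamma;[0,1]}$ of the canonical lift of the fractional Brownian motion with $H\in(1/3,1/2)$ (and $\gamma<H$) enjoys Gaussian integrability. In particular, $M_q:=\E\bigl[(1+\|\mathbf{X}\|_\gamma^\mu)^{q}\bigr]$ is finite for every $q\geq 1$, where $\mu=8/(3\gamma-1)$ is the exponent appearing in Theorem~\ref{theo:lyapou}.

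Fix $p\ge 1$ and $y_0\in\R^d$. Applying Theorem~\ref{theo:lyapou} pathwise with $\mathbf{x}=\mathbf{X}$ gives
\[
\|Y_1\|^2\le e^{-C_2/2}\|y_0\|^2+C\bigl(1+\|\mathbf{X}\|_\gamma^\mu\bigr).
\]
Raising both sides to the power $p/2$ and using the elementary inequality $(a+b)^q\le(1+\eta)\,a^q+K_{q,\eta}\,b^q$ (for $a,b\ge 0$, $q\ge 1$, $\eta>0$), one obtains
\[
\|Y_1\|^p\le(1+\eta)\,e^{-pC_2/4}\,\|y_0\|^p+K_\eta C^{p/2}\bigl(1+\|\mathbf{X}\|_\gamma^\mu\bigr)^{p/2}.
\]
Since $e^{-pC_2/4}<1$ for every $p\ge 1$, pick $\eta>0$ so small that $\rho_p:=(1+\eta)e^{-pC_2/4}<1$, take conditional expectation given the past path $x_-$, and integrate against any generalized initial condition $\mu$ on $\R^d\times\mathcal{W}_{-}$: the fact that the second marginal of $\mu$ is prescribed to be $\PE_{-}$ means that the remaining noise expectation reduces, by the tower property, to the unconditioned quantity $M_{p/2}$, and the explicit Markov structure from Section~\ref{markov-structure} yields
\[
\int V\,d(\mu\mathcal{Q}_1)\le\rho_p\int V\,d\mu+C_p,\qquad V(x):=\|x\|^p,
\]
with $C_p:=K_\eta C^{p/2}M_{p/2}<\infty$.

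This is the Lyapunov inequality at $t=1$; together with the obvious continuity and coercivity of $V$, it certifies $V$ as a Lyapunov function for $\mathcal{Q}$. For $t\in(0,1)$ the same reasoning can be replayed on $[0,t]$, but only the $t=1$ instance is used by the iteration argument inside the proof of Proposition~\ref{prop:existence22}. That proposition then provides an invariant distribution $\nu$ as the weak limit, along a subsequence $(t_k)$, of the time averages $\mu_{t}:=\tfrac{1}{t}\int_0^{t}(\delta_0\otimes\PE_{-})\mathcal{Q}_s\,ds$. The crucial point is that the sequence $(\mu_t)$ itself does not depend on $p$: the Lyapunov inequality for a single $p_0\ge 1$ already delivers tightness, and once a convergent subsequence has been fixed, the uniform bound $\sup_k \int \|x\|^p\,d\mu_{t_k}<\infty$ — valid for \emph{every} $p\ge 1$ — passes to $\nu$ by Fatou's lemma. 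Hence this single $\nu$ has finite moments of every order.

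The only point of genuine care is the interplay between the convexity loss $(1+\eta)^{p/2}$ arising from the $(a+b)^{p/2}$ expansion and the deterministic contraction $e^{-pC_2/4}$; since the latter is strict for every fixed $p\ge 1$, the former can always be absorbed by taking $\eta$ sufficiently small. All other ingredients (Gaussian integrability of $\|\mathbf{X}\|_\gamma$ and the pathwise Lyapunov estimate of Theorem~\ref{theo:lyapou}) are already in place, so the reduction to Proposition~\ref{prop:existence22} is straightforward.
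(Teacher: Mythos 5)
Your proof is correct and follows the same route as the paper: the paper's argument is exactly to inject the pathwise bound \eqref{ineq-lyapoun} of Theorem~\ref{theo:lyapou} into the Markov setting of Section~\ref{markov-structure}, invoke the finiteness of all moments of $\lVert\mathbf{X}\rVert_\gamma$ from \cite[Theorem 15.33]{FV-bk}, and conclude via Proposition~\ref{prop:existence22}. You simply make explicit the elementary convexity-absorption step ($(a+b)^{p/2}\le(1+\eta)a^{p/2}+K_{\eta}b^{p/2}$ with $\eta$ small enough to preserve $\rho_p<1$) and the conditioning/tower argument that the paper labels ``immediate,'' together with the Fatou passage to the limit for the moments of $\nu$.
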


\

The rest of this section is devoted to the proof of Theorem \ref{theo:lyapou}. Under Hypothesis $\mathbf{(H1)}$, the fact that there exists at most one solution to (\ref{rgh-std-eq}) (in other words, the uniqueness part of our statement) is a standard result, which can for instance be found in \cite[Theorem 3.3]{Da07}. On the opposite, due to the unboundedness of $b$, it seems that the proof of existence of a global solution on $[0,1]$ cannot be found as such in the literature, and we shall therefore provide a few details below.

\smallskip

In brief, our strategy towards Theorem \ref{theo:lyapou} is based on a careful analysis of the natural discrete numerical scheme associated with (\ref{rgh-std-eq}), in the same spirit as in \cite{Da07}. Let us thus introduce the sequence of dyadic partitions $\cp_n:=\{t_i=t_i^n:=\frac{i}{2^n} \, ; \ i=0,\ldots,2^n\}$ of $[0,1]$, and consider the discrete path $y^n$ defined on $\cp_n$ along the iterative formula 
\begin{equation}\label{defi-scheme-y-n}
y^n_0:=a \quad , \quad \delta y_{t_{i}t_{i+1}}^{n}
=
b(y_{t_{i}}^{n})\, \delta\ct_{t_{i}t_{i+1}} + \si(y_{t_{i}}^{n})\, \delta x_{t_{i}t_{i+1}}
+ (D\si\cdot \si)(y_{t_{i}}^{n})\,  \mathbf{x}_{t_{i}t_{i+1}}^{\2} \ ,
\end{equation}
where we recall that $\delta\ct_{st}=t-s$.
We shall also be led to handle the following quantities associated with $y^n$: for $s,t\in\cp_{n}$,
\begin{eqnarray*}
L_{st}^{y,n}
&:=&
\delta y_{st}^{n} - \si(y_{s}^{n})\, \delta x_{st}
-  (D\si\cdot \si)(y_{s}^{n})\,  \mathbf{x}_{st}^{\2} \\
R_{st}^{y,n}
&:=&
\delta y_{st}^{n} - b(y_{s}^{n}) \, \delta\ct_{st} - \si(y_{s}^{n})\, \delta x_{st}
-  (D\si\cdot \si)(y_{s}^{n})\, \mathbf{x}_{st}^{\2}  \\
Q_{st}^{y,n}
&:=&
\delta y_{st}^{n} - \si(y_{s}^{n})\, \delta x_{st} \ .
\end{eqnarray*}
For every $s<t\in [0,1]$, we will write $\llbracket s,t\rrbracket=\llbracket s,t\rrbracket_n:=[s,t]\cap \cp_n$, and we extend the norms introduced in Section \ref{subsec:classic-topos} to discrete paths in a natural way, namely 
$$\cn[f;\cac_{2}^{\mu}(\llbracket \ell_1,\ell_2\tau\wedge 1 \rrbracket)]:=\sup_{s<t\in \llbracket \ell_1,\ell_2\rrbracket} \frac{\lVert f_{st}\rVert}{|t-s|^\mu} \quad , \quad \cn[f;\cac_{1}^{\mu}(\llbracket \ell_1,\ell_2\tau\wedge 1 \rrbracket)]:=\cn[\delta f;\cac_{2}^{\mu}(\llbracket \ell_1,\ell_2\tau\wedge 1 \rrbracket)] \ .$$ 

\smallskip

The starting point of our analysis is the following local estimate for $R^{y,n}$, which can be obtained as a straightforward application of our forthcoming general Proposition \ref{prop:bound-vf1}:
\begin{proposition}\label{prop:bnd-Rny}
Fix $\ka:=\frac12 \big( \frac13+\ga\big)$. Then, under Hypothesis $(\mathbf{H1})$, there exists a constant $c_0$ (which depends only on $b,\si,\ga$) such that if we set 
$$T_0=T_0(\lVert \mathbf{x}\rVert):=\min\Big(1,\big( c_0\{1+\lVert \mathbf{x}\rVert_{\ga}\} \big)^{-1/(\ga-\ka)} \Big) \ ,$$
one has, for every $\tau\in \mathcal{P}_n$ satisfying $0<\tau \leq T_0$ and every $k\leq 1/\tau$,
\begin{equation}\label{r-y-n-lyapou}
\cn[R^{y,n};\cac_{2}^{3\ka}(\llbracket k\tau,(k+1)\tau\wedge 1 \rrbracket)] \leq c_0  \big\{1+\|y^n_{k\tau}\| \big\} \ .
\end{equation}
\end{proposition}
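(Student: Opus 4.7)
The plan is to run a discrete sewing (Davie-type) argument directly on the scheme \eqref{defi-scheme-y-n}, restricted to the subinterval $\llbracket k\tau,(k+1)\tau\wedge 1\rrbracket_n$. The starting observation is that, by the very definition of the scheme, $R^{y,n}_{t_i t_{i+1}}=0$ for consecutive grid points; hence on multi-step intervals $R^{y,n}$ is entirely determined by its coboundary $\delta R^{y,n}$, and it will suffice to control this coboundary by a two-parameter function of H\"older exponent strictly greater than $1$.

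First, I would compute $\delta R^{y,n}_{sut}$ for $s<u<t$ on the grid. Using $\delta y^n_{st}=\delta y^n_{su}+\delta y^n_{ut}$ together with the Chen relation $\delta\mathbf{x}^{\2}_{sut}=\delta x_{su}\otimes \delta x_{ut}$, a direct calculation gives
\begin{align*}
\delta R^{y,n}_{sut}=\ &[b(y^n_u)-b(y^n_s)]\,\delta\ct_{ut}+[(D\sigma\cdot\sigma)(y^n_u)-(D\sigma\cdot\sigma)(y^n_s)]\,\mathbf{x}^{\2}_{ut}\\
&+\Big\{[\sigma(y^n_u)-\sigma(y^n_s)]\,\delta x_{ut}-(D\sigma\cdot\sigma)(y^n_s)\,\delta x_{su}\otimes \delta x_{ut}\Big\}.
\end{align*}
A first-order Taylor expansion of $\sigma$ at $y^n_s$, combined with the scheme identity $\delta y^n_{su}=\sigma(y^n_s)\,\delta x_{su}+\text{higher-order terms}$, reveals that the bracketed expression produces a leading cancellation of exponent $\min(1+\gamma,3\gamma,3\kappa+\gamma)>1$. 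The first two terms, bounded via $\|D^{(1)}b\|_\infty, \|D^{(1)}(D\sigma\cdot\sigma)\|_\infty<\infty$ from $(\mathbf{H1})$, inherit respectively the H\"older exponents $1+\gamma$ and $3\gamma$, both $>1$.

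Next, I would inject the \emph{a priori} bound
$$\|\delta y^n_{su}\|\leq \|b(y^n_s)\|\,\tau+\|\sigma\|_\infty\|\mathbf{x}\|_\gamma|u-s|^\gamma+\|D\sigma\cdot\sigma\|_\infty\|\mathbf{x}\|_\gamma^2|u-s|^{2\gamma}+\|R^{y,n}_{su}\|.$$
Here the linear-growth estimate $\|b(y^n_s)\|\leq c(1+\|y^n_s\|)$, deduced from $(\mathbf{H1})$ through $\|b(v)\|\leq \|b(0)\|+\|D^{(1)}b\|_\infty\|v\|$, together with a short preliminary argument showing that $\|y^n_s-y^n_{k\tau}\|$ remains controlled on the slice of length $\tau\leq 1$, lets me replace $\|y^n_s\|$ by $\|y^n_{k\tau}\|$ up to a universal constant. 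The discrete sewing lemma (applied with H\"older exponent $3\kappa$, exploiting $R^{y,n}_{t_it_{i+1}}=0$) then yields a self-referential estimate of the form
$$\cn[R^{y,n};\cac_2^{3\kappa}(\llbracket k\tau,(k+1)\tau\wedge 1\rrbracket)]\leq c_0\{1+\|y^n_{k\tau}\|\}\{1+\|\mathbf{x}\|_\gamma\}^{\beta}\,\tau^{\gamma-\kappa}\bigl(1+\cn[R^{y,n};\cac_2^{3\kappa}]\bigr),$$
for some exponent $\beta>0$. The smallness condition $\tau\leq T_0:=(c_0\{1+\|\mathbf{x}\|_\gamma\})^{-1/(\gamma-\kappa)}$ is precisely what is needed to absorb the $\cn[R^{y,n};\cac_2^{3\kappa}]$ factor back into the left-hand side, yielding \eqref{r-y-n-lyapou}.

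The delicate point, compared with the standard Davie estimate for bounded coefficients, will be to keep the dependence on $\|y^n_{k\tau}\|$ genuinely linear and not amplified by the iterative sewing step. The remedy is to confine the whole analysis to a single slice of length $\tau\leq T_0$ on which the drift is evaluated only at left endpoints lying in a bounded neighborhood of $y^n_{k\tau}$, so that a single evaluation $\|b(y^n_{k\tau})\|\lesssim 1+\|y^n_{k\tau}\|$ suffices throughout. The specific choice $\kappa=\tfrac{1}{2}(\tfrac{1}{3}+\gamma)$ is dictated by two requirements: $3\kappa>1$ for summability in the sewing step, and $\gamma-\kappa>0$ for the contraction producing $T_0$. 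This matches the statement of the general Proposition \ref{prop:bound-vf1} alluded to in the excerpt, of which the present estimate is then an immediate instance.
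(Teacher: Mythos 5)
Your plan reproduces, for the particular equation (\ref{rgh-std-eq}), the inductive sewing argument that the paper develops for the more general Proposition~\ref{prop:bound-vf1}; the paper simply invokes that result while you unroll the computation, and you correctly note the correspondence at the end. The decomposition of $\delta R^{y,n}$ via Chen's relation, the Taylor cancellation in the $\sigma$-increments, the role of $\kappa=\tfrac12(\tfrac13+\gamma)$, and the reduction to the discrete sewing Lemma~\ref{lem:discr-sewing} all match the paper's mechanism.

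There is, however, one flaw to repair. Your displayed self-referential estimate
$$\cn[R^{y,n};\cac_2^{3\kappa}]\leq c_0\{1+\|y^n_{k\tau}\|\}\{1+\|\mathbf{x}\|_\gamma\}^\beta\,\tau^{\gamma-\kappa}\bigl(1+\cn[R^{y,n};\cac_2^{3\kappa}]\bigr)$$
cannot be absorbed by the stated $T_0$, because the coefficient multiplying $\cn[R^{y,n}]$ still contains $\{1+\|y^n_{k\tau}\|\}$, whereas $T_0$ depends only on $\|\mathbf{x}\|_\gamma$ and cannot be shrunk against $\|y^n_{k\tau}\|$. The structural fact you gesture at in your last paragraph has to show up in the estimate itself: under $\mathbf{(H1)}$, only $b$ grows linearly, and $b$ is paired with the smooth integrator $t$, so its contribution is purely additive; the feedback of $\cn[R^{y,n}]$ into $\delta R^{y,n}$ occurs through the $Q^{y,n}$- and $y^n$-increments, whose prefactors involve only the uniformly bounded $\sigma$-derivatives and $\|\mathbf{x}\|_\gamma$. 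The correct shape is therefore
$$\cn[R^{y,n};\cac_2^{3\kappa}]\leq c_0\{1+\|y^n_{k\tau}\|\}\{1+\|\mathbf{x}\|_\gamma\}^\beta\,\tau^{\gamma-\kappa}+c_1\{1+\|\mathbf{x}\|_\gamma\}^{\beta'}\,\tau^{\gamma-\kappa}\,\cn[R^{y,n};\cac_2^{3\kappa}],$$
after which the definition of $T_0$ does close the argument. Alternatively, the paper proceeds by induction over the grid points of the slice (as in the proof of Proposition~\ref{prop:bound-vf1}), which avoids the bootstrap altogether and supplies the preliminary control on $\|y^n_s-y^n_{k\tau}\|$ that you mention as a ``short preliminary argument'' directly from the inductive hypothesis rather than as a separate step.
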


\

\begin{corollary}\label{coro:bound-part-y}
In the setting of Proposition \ref{prop:bnd-Rny}, there exists a constant $c_1$ (which depends only on $b,\si,\ga$) such that for every $\tau\in \mathcal{P}_n$ satisfying $0<\tau \leq T_0$ and every $k\leq 1/\tau$, one has
\begin{equation}\label{y-n-infty-lyapou}
\cn[y^n;\cac_1^0( \llbracket k\tau,(k+1)\tau\wedge 1 \rrbracket)] \leq c_1 \{ 1 + \lVert y^n_{k\tau}\rVert\} \ ,
\end{equation}
\begin{equation}\label{y-n-hold-lyapou}
\cn[y^n;\cac_1^\ga( \llbracket k\tau,(k+1)\tau\wedge 1 \rrbracket)] \leq c_1 \{ 1 + \lVert y^n_{k\tau}\rVert\}\{1+ \lVert \mathbf{x}\rVert_\ga\} 
\end{equation}
and
\begin{equation}\label{q-y-n-lyapou}
\cn[Q^{y,n};\cac_2^{2\ga}( \llbracket k\tau,(k+1)\tau\wedge 1 \rrbracket)] \leq c_1 \{ 1 + \lVert y^n_{k\tau}\rVert\}\{1+ \lVert \mathbf{x}\rVert_\ga\} \ .
\end{equation}
\end{corollary}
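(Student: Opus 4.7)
The three bounds all rest on the single identity obtained by rearranging the definition of $R^{y,n}$:
\begin{equation*}
\delta y^n_{st} = R^{y,n}_{st} + b(y^n_s)\,\delta\ct_{st} + \sigma(y^n_s)\,\delta x_{st} + (D\sigma\cdot\sigma)(y^n_s)\,\mathbf{x}^{\2}_{st}.
\end{equation*}
My plan is to control each summand separately on the discrete grid $\llbracket k\tau,(k+1)\tau\wedge 1\rrbracket$ using Proposition \ref{prop:bnd-Rny} together with Hypothesis $\mathbf{(H1)}$, and then to exploit the very definition of $T_0$ to absorb the rough path norm wherever it multiplies a sub-$\kappa$ power of the step size. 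Set throughout $K:=1+\lVert y^n_{k\tau}\rVert$. Observe that since $\gamma\in(1/3,1/2)$, the exponent $\kappa=\frac12(\frac13+\gamma)$ satisfies $\frac13<\kappa<\gamma<\frac12$, so that $3\kappa-2\gamma=\frac12-\frac{\gamma}{2}>0$ and $3\kappa-\gamma>0$; these inequalities will make every ratio $(t-s)^{3\kappa}/(t-s)^{\mu}$ bounded when $\mu\in\{\gamma,2\gamma\}$.

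For (\ref{y-n-infty-lyapou}), I would apply the identity with $s=k\tau$ and an arbitrary $t\in\llbracket k\tau,(k+1)\tau\wedge 1\rrbracket$, so that the right-hand side only involves $y^n_{k\tau}$. Proposition \ref{prop:bnd-Rny} bounds $\lVert R^{y,n}_{k\tau,t}\rVert$ by $c_0K\,\tau^{3\kappa}$; $\mathbf{(H1)}$ yields $\lVert b(y^n_{k\tau})\rVert\lesssim K$, $\lVert\sigma(y^n_{k\tau})\rVert\lesssim 1$ and $\lVert(D\sigma\cdot\sigma)(y^n_{k\tau})\rVert\lesssim 1$. The only terms carrying the noise norm are then $\sigma(y^n_{k\tau})\delta x_{k\tau,t}$ and $(D\sigma\cdot\sigma)(y^n_{k\tau})\mathbf{x}^{\2}_{k\tau,t}$, bounded respectively by $\lVert\sigma\rVert_\infty\lVert\mathbf{x}\rVert_\gamma\tau^{\gamma}$ and $C\lVert\mathbf{x}\rVert_\gamma\tau^{2\gamma}$. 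Writing $\lVert\mathbf{x}\rVert_\gamma\tau^{\gamma}=\lVert\mathbf{x}\rVert_\gamma\tau^{\gamma-\kappa}\cdot\tau^{\kappa}$ and using the defining inequality $\tau^{\gamma-\kappa}\leq T_0^{\gamma-\kappa}\leq (c_0\{1+\lVert\mathbf{x}\rVert_\gamma\})^{-1}$, both rough terms become independent of $\lVert\mathbf{x}\rVert_\gamma$, and the triangle inequality gives $\lVert y^n_t\rVert\leq c_1K$.

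For the H\"older bound (\ref{y-n-hold-lyapou}) and the bound on $Q^{y,n}$ in (\ref{q-y-n-lyapou}), I would apply the same identity but for arbitrary $s,t\in\llbracket k\tau,(k+1)\tau\wedge 1\rrbracket$, dividing through by $(t-s)^{\gamma}$ (resp.\ $(t-s)^{2\gamma}$). Now $y^n_s$ appears on the right-hand side, but thanks to (\ref{y-n-infty-lyapou}) it is uniformly controlled by $c_1K$, so $1+\lVert y^n_s\rVert\lesssim K$. Using Proposition \ref{prop:bnd-Rny} and the positivity of $3\kappa-\gamma$ (resp.\ $3\kappa-2\gamma$) and of $1-\gamma$ (resp.\ $1-2\gamma$), each of the polynomial/drift/rough contributions collapses either to $CK$ or to $C\lVert\mathbf{x}\rVert_\gamma$, yielding $c_1K\{1+\lVert\mathbf{x}\rVert_\gamma\}$. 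For (\ref{q-y-n-lyapou}) one just discards the $\sigma(y^n_s)\delta x_{st}$ term from the identity (by definition of $Q^{y,n}$) and repeats verbatim the same division by $(t-s)^{2\gamma}$.

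The only nontrivial point is not a real obstacle but a bookkeeping check that deserves care: one must verify that the constraint $\tau\leq T_0$ in Proposition \ref{prop:bnd-Rny} is strong enough to kill the rough norm in (\ref{y-n-infty-lyapou}) (since that bound, unlike the other two, must be free of any $\lVert\mathbf{x}\rVert_\gamma$ factor). This is where the choice of the exponent $\gamma-\kappa$ in the definition of $T_0$ plays its role, and where the strict inequality $\gamma>\kappa$ (equivalently $\gamma>1/3$) is used. All subsequent estimates are purely algebraic manipulations of H\"older exponents.
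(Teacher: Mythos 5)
Your proof is correct and follows the same route as the paper's: rearrange the definition of $R^{y,n}$ to express $\delta y^n_{st}$ (and hence $y^n_t$, $Q^{y,n}_{st}$) in terms of $R^{y,n}$ plus drift and rough-path terms, then use Proposition \ref{prop:bnd-Rny}, Hypothesis $\mathbf{(H1)}$, and the defining inequality $T_0^{\gamma-\kappa}\lesssim(1+\lVert\mathbf{x}\rVert_\gamma)^{-1}$ to absorb the rough-path norm in the supremum bound, and then feed that bound back into the H\"older estimates for the remaining two inequalities. The exponent bookkeeping ($3\kappa>2\gamma$, $1>2\gamma$, etc.) is carried out carefully and agrees with the paper.
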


\begin{proof}
For every $t\in \llbracket k\tau,(k+1)\tau \wedge 1\rrbracket$, write
\begin{equation*}
y_{t}^{n}
=
y_{k\tau}^{n} +
b(y_{k\tau}^{n})\, \delta\ct_{k\tau,t} + \si(y_{k\tau}^{n})\, \delta x_{k\tau,t}
+ (D\si \cdot\si)(y_{k\tau}^{n})\,  \mathbf{x}_{k\tau,t}^{\2} + R_{k\tau,t}^{y,n} \ ,
\end{equation*}
so that using (\ref{r-y-n-lyapou}), we get $\lVert y^n_t\rVert \lesssim 1+\lVert y^n_{k\tau}\rVert+\lVert \mathbf{x}\rVert_\ga T_0^\ga$, and (\ref{y-n-infty-lyapou}) now follows from the fact that $\lVert \mathbf{x}\rVert_\ga T_0^\ga\leq \lVert \mathbf{x}\rVert_\ga T_0^{\ga-\ka}\lesssim 1$.

\smallskip

\noindent
Then, in a more general way, we have for every $s<t\in \llbracket k\tau,(k+1)\tau\wedge 1\rrbracket$
\begin{equation*}
\delta y_{st}^{n}
=
b(y_{s}^{n})\, \delta\ct_{st} + \si(y_{s}^{n})\, \delta x_{st}
+ (D\si \cdot\si)(y_{s}^{n})\,  \mathbf{x}_{st}^{\2} + R_{st}^{y,n} 
\end{equation*}
and
\begin{equation*}
Q_{st}^{y,n}
=
b(y_{s}^{n})\, \delta\ct_{st} 
+ (D\si \cdot\si)(y_{s}^{n})\,  \mathbf{x}_{st}^{\2} + R_{st}^{y,n} \ .
\end{equation*}
Injecting (\ref{r-y-n-lyapou}) and (\ref{y-n-infty-lyapou}) into these expressions easily yields (\ref{y-n-hold-lyapou}) and (\ref{q-y-n-lyapou}).
\end{proof}

\begin{corollary}
Under Hypothesis $\mathbf{(H1)}$, Equation (\ref{rgh-std-eq}) admits a unique global solution $y$ on $[0,1]$. Besides, with the previous notations, there exists a subsequence of $(y^n)$, that we still denote by $(y^n)$, such that
\begin{equation}\label{conv-y-y-n-lyapou}
\max_{i=0,\ldots,2^n} \lVert y_{t_i}-y^n_{t_i}\rVert \stackrel{n\to \infty}{\longrightarrow} 0  \ .
\end{equation}
\end{corollary}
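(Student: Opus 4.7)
The strategy is to pass to the limit in the discrete Davie scheme $y^n$. Uniqueness being a classical consequence of $\mathbf{(H1)}$ (see \cite[Theorem 3.3]{Da07}), only existence and the subsequential convergence \eqref{conv-y-y-n-lyapou} remain to be established. The crucial observation, already built into Proposition \ref{prop:bnd-Rny}, is that the threshold $T_0=T_0(\lVert \mathbf{x}\rVert_\ga)$ depends on $\mathbf{x}$ alone and not on the running iterates of the scheme.

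I fix once and for all a dyadic step $\tau=2^{-N_0}$ with $\tau\leq T_0$, so that $k\tau\in\cp_n$ for every $n\ge N_0$ and every integer $k\le 1/\tau$. Applying \eqref{y-n-infty-lyapou} successively on the intervals $[k\tau,(k+1)\tau]$ yields $\lVert y^n_{(k+1)\tau}\rVert\leq c_1(1+\lVert y^n_{k\tau}\rVert)$; iterating over the (fixed, finite) number $\lfloor 1/\tau\rfloor$ of subintervals produces a bound $\sup_n\max_k\lVert y^n_{k\tau}\rVert\leq M$ with $M$ depending only on $c_1,\tau$ and $\lVert y_0\rVert$. Re-injecting this into \eqref{y-n-infty-lyapou}--\eqref{y-n-hold-lyapou} gives a uniform sup-norm and a uniform $\ga$-Hölder control on each subinterval, and a standard concatenation argument (splitting into the cases $|t-s|\leq\tau$ and $|t-s|>\tau$, and using sub-additivity of $u\mapsto u^\ga$ in the first case and the uniform sup-norm bound in the second) promotes these into global uniform bounds on $\llbracket 0,1\rrbracket$.

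Extend $y^n$ to $[0,1]$ by piecewise linear interpolation; this preserves the $\ga$-Hölder control up to a universal constant. The Arzelà--Ascoli theorem in $\cac_1^{\ga'}([0,1];\R^d)$, $\ga'<\ga$, then provides a subsequence (still denoted $(y^n)$) converging uniformly and in $\ga'$-Hölder norm to some $y\in\cac_1^\ga([0,1];\R^d)$; in particular \eqref{conv-y-y-n-lyapou} holds. To identify $y$ as a Davie solution, I pass to the limit in the identity defining $R^{y,n}$ on a common subinterval $[k\tau,(k+1)\tau]$: estimate \eqref{r-y-n-lyapou} gives $\lVert R^{y,n}_{st}\rVert\leq C\,|t-s|^{3\ka}$, while the continuity of $b,\si_j$ and $D\si_j\cdot \si_k$ combined with the uniform convergence $y^n\to y$ forces $R^{y,n}_{st}\to R^y_{st}$. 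Hence $R^y\in\cac_2^{3\ka}([k\tau,(k+1)\tau];\R^d)$ on each subinterval, with $3\ka>1$.

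Finally, to upgrade this subinterval-wise regularity into the global membership $R^y\in\cac_2^{3\ka}([0,1];\R^d)$, I use Chen's relation and a direct computation to express the coboundary $\delta R^y_{sut}$ as a linear combination of products of Hölder increments of $b(y),\si_j(y),D\si_j\cdot\si_k(y)$ with Hölder increments of $x$ and $\mathbf{x}^{\2}$; the $\ga$-Hölder regularity of $y$ together with $\mathbf{(H1)}$ then controls $\delta R^y$, and a standard patching argument across the finitely many subintervals $[k\tau,(k+1)\tau]$ transfers the local $3\ka$-Hölder bound to all of $[0,1]$. This identifies $y$ as a genuine global Davie solution of \eqref{rgh-std-eq}. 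The main subtlety is really the $n$-uniformity of the Lyapunov-type iteration $\lVert y^n_{k\tau}\rVert\leq M$, which rests entirely on the fact that $T_0$ in Proposition \ref{prop:bnd-Rny} does not depend on the initial value of the step.
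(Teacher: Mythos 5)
Your proof is correct and takes essentially the same approach as the paper: the same iterative exploitation of the local Lyapunov-type estimates (\ref{y-n-infty-lyapou})--(\ref{y-n-hold-lyapou}) to obtain an $n$-uniform $\cac_1^{0,\ga}$ bound, the same linear interpolation plus compactness step, and the same passage to the limit in (\ref{r-y-n-lyapou}) to identify the limit as a Davie solution, with uniqueness from \cite[Theorem 3.3]{Da07}. The paper simply delegates the concatenation and limit-identification details to \cite[Section 3.3]{Deya}, whereas you spell them out (including the Chen-relation patching of $R^y$ across subintervals), so the two arguments coincide in substance.
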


\begin{proof}
Although the two local estimates (\ref{y-n-infty-lyapou})-(\ref{y-n-hold-lyapou}) are not uniform as such (that is, the right-hand side still depends on $y^n$), they easily give rise, via an obvious iterative procedure, to a uniform estimate for $\cn[y^n;\cac_1^{0,\ga}(\llbracket 0,1\rrbracket)]:=\cn[y^n;\cac_1^{0}(\llbracket 0,1\rrbracket)]+\cn[y^n;\cac_1^{\ga}(\llbracket 0,1\rrbracket)]$. Still denoting by $y^n$ the continuous path obtained through the linear interpolation of $(y^n_{t_i})_{i=0,\ldots,2^n}$, we thus get a uniform estimate for $\cn[y^n;\cac_1^{0,\ga}([ 0,1])]$, which, by a standard compactness argument, allows us to conclude about the existence of a path $y\in \cac_1^\ga([0,1])$, as well as a subsequence of $y^n$ (that we still denote by $y^n$), such that $y^n \to y$ in $\cac_1^{0,\ga'}([0,1])$ for every $0<\ga'<\ga$.

\smallskip

The fact that $y$ actually defines a solution of (\ref{rgh-std-eq}) is then an easy consequence of the bound (\ref{r-y-n-lyapou}). The details of the procedure can for instance be found at the end of \cite[Section 3.3]{Deya}. Finally, and as we have already evoked it in the beginning of the section, the uniqueness of this solution is a standard result from the rough-path literature (see \cite[Theorem 3.3]{Da07}).
\end{proof}

Let us now turn to the proof of the second part of Theorem \ref{theo:lyapou}, that is to the proof of (\ref{ineq-lyapoun}) under Hypotheses $\mathbf{(H1)}$ and $\mathbf{(H2)}$. To this end, we introduce, for every $n\geq 0$, the additional discrete path $z^n:\mathcal{P}_n\to \R$ defined for every $t\in \mathcal{P}_n$ as
$$z^n_t:=\frac12 \lVert y^n_t\rVert^2 \ .$$
In the same vein as above, we will lean on the following quantities related to $z^n$: for every $s,t\in\cp_{n}$,
\begin{eqnarray*}
R_{st}^{z,n}
&:=&
\delta z^n_{st}-\langle y_{s}^{n},b(y_s^{n})\rangle \,  \delta\ct_{st} -\langle y_{s}^{n},\si(y_s^{n})\rangle \, \delta x_{st}
-\varSigma(y^n_s)\,   \mathbf{x}_{st}^{\2} \\
L_{st}^{z,n}
&:=&
\delta z^n_{st} -\langle y_{s}^{n},\si(y_s^{n})\rangle \, \delta x_{st}
-\varSigma(y^n_s)\,   \mathbf{x}_{st}^{\2}  \\
Q_{st}^{z,n}
&:=&
\delta z^n_{st} -\langle y_{s}^{n},\si(y_s^{n})\rangle \, \delta x_{st} \ ,
\end{eqnarray*}
where we have set
$$\varSigma(y^n_s):=\langle \si (y^n_s),\si (y^n_s) \rangle+\langle y^n_s,(D\si \cdot \si)(y^n_s)\rangle \ .$$
Just to be clear, the notation for the second-order term in $R^{z,n},L^{z,n}$ specifically refers to the sum
$$\varSigma(y^n_s)\,   \mathbf{x}_{st}^{\2}=\big\{\langle \si_j (y^n_{s}),\si_k (y^n_{s}) \rangle+\langle y^n_{s},(D\si_j \cdot \si_k)(y^n_{st})\rangle \big\}\,   \mathbf{x}_{st}^{\2,jk} \ .$$
Finally, along the same lines as in the subsequent Section \ref{sec:rougheqinholder}, we set, if $s=\frac{p}{2^n}$ and $t=\frac{q}{2^n}$ and $G:\llbracket 0,1\rrbracket \to \R^d$,
$$\cm^{\mu}\big[G;\llbracket s,t\rrbracket\big]:=\sup_{p\leq i\leq q} \frac{\| G_{t_it_{i+1}}\|}{|t_{i+1}-t_i|^\mu} \ .$$

Let us start with a few estimates on $R^{z,n}$, for which Hypothesis $\mathbf{(H2)}$ is still not required:

\begin{lemma}
Under Hypothesis $\mathbf{(H1)}$ and with the above notations, there exists a constant $c_2$ (which depends only on $b,\si,\ga$) such that for every $s<t\in \mathcal{P}_n$, one has
\begin{equation}\label{bound-m-r-y-n}
\cm^{3\ga}\big[ R^{z,n};\llbracket s,t\rrbracket \big] \leq c_2 \{1+\lVert \mathbf{x}\rVert_\ga^2 \}\{1+\cn[y^n;\cac_1^0(\llbracket s,t\rrbracket)]^2\}  \ .
\end{equation}
\end{lemma}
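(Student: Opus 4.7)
The plan is to open $\delta z^n$ via the chain rule for the square norm, inject the one-step scheme (\ref{defi-scheme-y-n}) into it, and carefully identify which terms reconstruct the ``leading'' expansion of $R^{z,n}$ and which form an admissible remainder of order $|t_{i+1}-t_i|^{3\ga}$. Since $z^n=\tfrac12\|y^n\|^2$, for any consecutive partition points we have
\[
\delta z^n_{t_it_{i+1}}=\langle y^n_{t_i},\delta y^n_{t_it_{i+1}}\rangle+\tfrac12\|\delta y^n_{t_it_{i+1}}\|^2.
\]
Substituting $\delta y^n_{t_it_{i+1}}=b(y^n_{t_i})\delta\ct+\si(y^n_{t_i})\delta x+(D\si\cdot\si)(y^n_{t_i})\mathbf{x}^{\2}$, the first summand reproduces the three ``main'' terms in the definition of $R^{z,n}$, save for the $\langle\si,\si\rangle\,\mathbf{x}^{\2}$ contribution. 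That last piece is precisely the one that comes out of $\tfrac12\|\si(y^n_{t_i})\delta x\|^2$ after applying the second algebraic rough-path identity in \eqref{levy-area-def}: writing $\|\si\delta x\|^2=\sum_{j,k}\langle\si_j,\si_k\rangle\delta x^j\delta x^k$ and exploiting the symmetry of $\langle\si_j,\si_k\rangle$ in $(j,k)$ together with $\mathbf{x}^{\2,jk}_{st}+\mathbf{x}^{\2,kj}_{st}=\delta x^j_{st}\delta x^k_{st}$ yields exactly $\langle\si,\si\rangle(y^n_{t_i})\,\mathbf{x}^{\2}_{t_it_{i+1}}$.

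After this cancellation, what remains of $R^{z,n}_{t_it_{i+1}}$ is the ``quadratic leftover''
\[
\langle\si(y^n_{t_i})\delta x,b(y^n_{t_i})\delta\ct\rangle+\langle\si(y^n_{t_i})\delta x,(D\si\cdot\si)(y^n_{t_i})\mathbf{x}^{\2}\rangle+\tfrac12\bigl\|b(y^n_{t_i})\delta\ct+(D\si\cdot\si)(y^n_{t_i})\mathbf{x}^{\2}\bigr\|^2.
\]
Each of these pieces is bounded by a direct application of the $\ga$-H\"older controls $|\delta x_{t_it_{i+1}}|\leq\|\mathbf{x}\|_\ga|t_{i+1}-t_i|^\ga$ and $|\mathbf{x}^{\2}_{t_it_{i+1}}|\leq\|\mathbf{x}\|_\ga|t_{i+1}-t_i|^{2\ga}$, combined with the fact that $\mathbf{(H1)}$ yields $\|\si(v)\|+\|(D\si\cdot\si)(v)\|\lesssim 1$ and $\|b(v)\|\lesssim 1+\|v\|$ (the latter by the bound on $D^{(1)}b$). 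One so obtains respective exponents $1+\ga$, $3\ga$, $2$, $1+2\ga$, $4\ga$ on $|t_{i+1}-t_i|$, each of which is at least $3\ga$ because $\ga<1/2$, so that $|t_{i+1}-t_i|\leq 1$ absorbs the excess and gives, uniformly in $i$,
\[
\|R^{z,n}_{t_it_{i+1}}\|\leq c_2\,\bigl\{1+\|\mathbf{x}\|_\ga^2\bigr\}\bigl\{1+\|y^n_{t_i}\|^2\bigr\}|t_{i+1}-t_i|^{3\ga}.
\]

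Taking the supremum over $t_i\in\llbracket s,t\rrbracket$ and using $\|y^n_{t_i}\|^2\leq\cn[y^n;\cac_1^0(\llbracket s,t\rrbracket)]^2$ then delivers \eqref{bound-m-r-y-n}. The only genuinely non-routine step in this program is the algebraic step in the first paragraph, where the symmetry of $\langle\si_j,\si_k\rangle$ together with the L\'evy-area identity conspire to promote a seemingly $O(|t_{i+1}-t_i|^{2\ga})$ term into its ``correct'' $\mathbf{x}^{\2}$-avatar inside $R^{z,n}$; without this cancellation one would be left with a remainder of order $2\ga<3\ga$ and the claimed estimate would fail. All the subsequent bounds are routine H\"older/polynomial estimates.
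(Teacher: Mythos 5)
Your proof is correct and follows essentially the same route as the paper: both expand $\delta z^n_{t_it_{i+1}}$ into $\langle y^n_{t_i},\delta y^n\rangle+\tfrac12\|\delta y^n\|^2$, use the second L\'evy-area identity in \eqref{levy-area-def} to cancel the $\langle\si,\si\rangle\mathbf{x}^{\2}$ term out of $\tfrac12\|\si\delta x\|^2$, and then bound the leftover $\langle\si\delta x+\tfrac12Q^{y,n},Q^{y,n}\rangle$ (which you merely write out more explicitly) using $\|Q^{y,n}_{t_it_{i+1}}\|\lesssim|t_{i+1}-t_i|\{1+\|y^n_{t_i}\|\}+|t_{i+1}-t_i|^{2\ga}\|\mathbf{x}\|_\ga$. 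The exponent check and the final absorption into $|t_{i+1}-t_i|^{3\ga}$ are exactly as in the paper.
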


\begin{proof}
We have
$$\delta z^n_{t_it_{i+1}}=\langle y^n_{t_i},\delta y^n_{t_it_{i+1}} \rangle+\frac12 \langle \delta  y^n_{t_it_{i+1}},\delta y^n_{t_it_{i+1}} \rangle \ ,$$
and so, injecting (\ref{defi-scheme-y-n}) into the first term immediately gives, thanks to the second identity in (\ref{levy-area-def}), 
\begin{equation}\label{r-z-n-t-i}
R^{z,n}_{t_it_{i+1}} = \frac12 \langle \delta  y^n_{t_it_{i+1}},\delta y^n_{t_it_{i+1}} \rangle-\langle \si(y^n_{t_i}),\si(y^n_{t_i}) \rangle \mathbf{x}^{\bf 2}_{t_it_{i+1}}=\langle \si(y^n_{t_i}) \, \delta x_{t_it_{i+1}}+\frac12 Q^{y,n}_{t_i t_{i+1}},Q^{y,n}_{t_i t_{i+1}} \rangle\ .
\end{equation}
Finally, since $Q^{y,n}_{t_it_{i+1}}=b(y^n_{t_i}) \, \delta \mathcal{T}_{t_it_{i+1}}+(D\si \cdot \si)(y^n_{t_i}) \, \mathbf{x}^{\bf 2}_{t_it_{i+1}}$, it is immediate that 
$$\lVert Q^{y,n}_{t_it_{i+1}} \rVert \lesssim |t_{i+1}-t_i| \{1+ \lVert y^n_{t_i}\rVert \}+|t_{i+1}-t_i|^{2\ga} \lVert \mathbf{x}\rVert_\ga \ .$$
Going back to (\ref{r-z-n-t-i}), we get the conclusion.
\end{proof}

\begin{proposition}\label{prop:r-z-n}
Assume Hypothesis $\mathbf{(H1)}$ holds true and let $T_0=T_0(\lVert \mathbf{x}\rVert_{\ga})$ be the time defined in Proposition \ref{prop:bnd-Rny}. Then there exists a constant $c_3$ (which depends only on $b,\si,\ga$) such that for every $\tau\in \mathcal{P}_n$ satisfying $0<\tau\leq T_0$ and every $k\leq 1/\tau$, one has
$$
\cn[R^{z,n};\cac_2^{3\ga}(\llbracket k\tau,(k+1)\tau \wedge 1\rrbracket)] \le c_3 \{1+\lVert \mathbf{x}\rVert_\ga^3 \} \{1+z_{k\tau}^{n} \}  \ .
$$ 
\end{proposition}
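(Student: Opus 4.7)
The plan is to reduce the estimate on $R^{z,n}$ to bounds on $R^{y,n}$ and $Q^{y,n}$ already established in Proposition~\ref{prop:bnd-Rny} and Corollary~\ref{coro:bound-part-y}, by exploiting the quadratic structure $z^n=\tfrac12\lVert y^n\rVert^2$. The first step is to derive a pointwise identity for $R^{z,n}_{st}$ valid at any $s<t\in\llbracket k\tau,(k+1)\tau\wedge 1\rrbracket$, not only consecutive grid points. Starting from the chain rule $\delta z^n_{st}=\langle y^n_s,\delta y^n_{st}\rangle+\tfrac12\lVert\delta y^n_{st}\rVert^2$ and substituting $\delta y^n_{st}=\sigma(y^n_s)\delta x_{st}+Q^{y,n}_{st}$, the quadratic piece $\tfrac12\lVert\sigma(y^n_s)\delta x_{st}\rVert^2$ can be rewritten as $\langle\sigma_j(y^n_s),\sigma_k(y^n_s)\rangle\mathbf{x}^{\2,jk}_{st}$ via the symmetry relation $\mathbf{x}^{\2,jk}_{st}+\mathbf{x}^{\2,kj}_{st}=\delta x^j_{st}\delta x^k_{st}$ of Definition~\ref{defi-rough-path}. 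Combined with the $\langle y^n_s,(D\sigma\cdot\sigma)(y^n_s)\rangle\mathbf{x}^{\2}_{st}$ piece, this reproduces the $\Sigma(y^n_s)\mathbf{x}^{\2}_{st}$ term in the definition of $R^{z,n}$, leading to the clean decomposition
\[
R^{z,n}_{st}=\langle y^n_s,R^{y,n}_{st}\rangle+\langle\sigma(y^n_s)\delta x_{st},Q^{y,n}_{st}\rangle+\tfrac12\lVert Q^{y,n}_{st}\rVert^2,
\]
which naturally extends identity (\ref{r-z-n-t-i}) from consecutive grid pairs to arbitrary $s<t$.

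The next step is to bound each of the three summands with the announced $\cac_2^{3\ga}$-regularity. The boundedness of $\sigma$ under $\mathbf{(H1)}$ gives $\lVert\sigma(y^n_s)\delta x_{st}\rVert\lesssim \lVert\mathbf{x}\rVert_\ga|t-s|^\ga$, while Corollary~\ref{coro:bound-part-y} yields $\lVert Q^{y,n}_{st}\rVert\lesssim \{1+\lVert y^n_{k\tau}\rVert\}\{1+\lVert\mathbf{x}\rVert_\ga\}|t-s|^{2\ga}$. The second and third terms thus produce contributions of order $\{1+\lVert y^n_{k\tau}\rVert\}\{1+\lVert\mathbf{x}\rVert_\ga\}^2|t-s|^{3\ga}$ and $\{1+\lVert y^n_{k\tau}\rVert\}^2\{1+\lVert\mathbf{x}\rVert_\ga\}^2|t-s|^{3\ga}$ respectively (the latter after absorbing a harmless factor $|t-s|^\ga\leq 1$ coming from $|t-s|^{4\ga}$). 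For the first term, Proposition~\ref{prop:bnd-Rny} together with (\ref{y-n-infty-lyapou}) gives $\lVert y^n_s\rVert\cdot\lVert R^{y,n}_{st}\rVert\lesssim \{1+\lVert y^n_{k\tau}\rVert\}^2|t-s|^{3\ka}$; to close the gap between $3\ka$ and $3\ga$, I would exploit the defining horizon inequality $T_0^{\ga-\ka}\{1+\lVert\mathbf{x}\rVert_\ga\}\leq c_0^{-1}$, combined with the fact that $R^{y,n}$ vanishes identically on consecutive grid pairs by construction of the scheme~(\ref{defi-scheme-y-n}), so that a discrete sewing-type argument applies with the cubic factor $\{1+\lVert\mathbf{x}\rVert_\ga\}^3$ arising precisely from the exponent $3(\ga-\ka)$.

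The main obstacle is indeed the handling of the $\langle y^n_s,R^{y,n}_{st}\rangle$ term, whose native regularity from Proposition~\ref{prop:bnd-Rny} lives in the strictly weaker space $\cac_2^{3\ka}$ with $\ka<\ga$, and whose promotion to $\cac_2^{3\ga}$ hinges crucially on the interplay between the vanishing of $R^{y,n}$ on consecutive dyadic intervals and the tight stopping horizon $T_0$. Once this is resolved, the three estimates are combined via Young-type inequalities such as $\{1+\lVert y^n_{k\tau}\rVert\}\{1+\lVert\mathbf{x}\rVert_\ga\}^2\leq \tfrac12\{1+\lVert y^n_{k\tau}\rVert\}^2+\tfrac12\{1+\lVert\mathbf{x}\rVert_\ga\}^4$, which together with the trivial bound $\{1+\lVert y^n_{k\tau}\rVert\}^2\lesssim \{1+z^n_{k\tau}\}$ coming from $z^n_{k\tau}=\tfrac12\lVert y^n_{k\tau}\rVert^2$, regroup the various cross terms into the target product form $c_3\{1+\lVert\mathbf{x}\rVert_\ga^3\}\{1+z^n_{k\tau}\}$.
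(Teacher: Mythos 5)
Your opening identity
\[
R^{z,n}_{st}=\langle y^n_s,R^{y,n}_{st}\rangle+\langle\sigma(y^n_s)\,\delta x_{st},Q^{y,n}_{st}\rangle+\tfrac12\lVert Q^{y,n}_{st}\rVert^2
\]
is correct and is a nice algebraic reformulation: the paper only records the restriction of this identity to consecutive grid pairs (where $R^{y,n}_{t_it_{i+1}}=0$), in display (\ref{r-z-n-t-i}), and then applies Lemma~\ref{lem:discr-sewing} directly to $R^{z,n}$; you instead isolate the three summands and propose to bound two of them pointwise. That split is sound, and indeed the pointwise estimates for $\langle\sigma(y^n_s)\delta x_{st},Q^{y,n}_{st}\rangle$ and $\tfrac12\lVert Q^{y,n}_{st}\rVert^2$ go through directly from (\ref{q-y-n-lyapou}), (\ref{y-n-infty-lyapou}) and the boundedness of $\sigma$, giving $\cac_2^{3\ga}$-bounds of the announced form. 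The final regrouping into $c_3\{1+\lVert\mathbf{x}\rVert_\ga^3\}\{1+z^n_{k\tau}\}$ is also fine, though you do not need a Young inequality there: $\{1+\lVert y^n_{k\tau}\rVert\}^2\lesssim\{1+z^n_{k\tau}\}$ and $\{1+\lVert\mathbf{x}\rVert_\ga\}^2\lesssim\{1+\lVert\mathbf{x}\rVert_\ga^3\}$ suffice, whereas Young would actually overshoot to $\lVert\mathbf{x}\rVert_\ga^4$.

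The genuine gap is in the treatment of $A_1:=\langle y^n,R^{y,n}\rangle$. Your stated mechanism for promoting it from $\cac_2^{3\ka}$ to $\cac_2^{3\ga}$ -- ``exploit $T_0^{\ga-\ka}\{1+\lVert\mathbf{x}\rVert_\ga\}\le c_0^{-1}$'' -- goes the wrong way. Since $\ka<\ga$ and $|t-s|\le T_0\le 1$, one has $|t-s|^{3\ka}\ge|t-s|^{3\ga}$, so multiplying a $3\ka$-estimate by a power of $T_0^{\ga-\ka}$ cannot upgrade it to $3\ga$; the direction of the inequality is simply incompatible. You correctly identify the right tool (the discrete sewing lemma, together with $R^{y,n}_{t_it_{i+1}}=0$ so that $\cm^{3\ga}[A_1]=0$), but the only way to close the argument is to compute the three-point increment $\delta A_1_{sut}=\langle y^n_s,\delta R^{y,n}_{sut}\rangle-\langle\delta y^n_{su},R^{y,n}_{ut}\rangle$ and bound it at genuine $3\ga$-regularity. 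The first term needs a Chen/Taylor decomposition of $\delta R^{y,n}$ (analogous to the $I,II,III$ pieces of Section~\ref{sec:rougheqinholder}, or the paper's $\delta L^{z,n}$ decomposition), from which the cubic $\{1+\lVert\mathbf{x}\rVert_\ga^3\}$ arises through terms that are \emph{products} of three $\mathbf{x}$-scaled quantities -- not from the exponent $3(\ga-\ka)$; the second term is already at exponent $\ga+3\ka\ge 3\ga$. Without this Chen-type calculation, your plan for the hardest summand does not close, and the stated heuristic would lead to a dead end if followed literally.
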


\begin{proof}
Thanks to the forthcoming Lemma \ref{lem:discr-sewing}, we can rely on the estimate
$$
\cn[R^{z,n};\cac_2^{3\ga}(\llbracket k\tau,(k+1)\tau \wedge 1\rrbracket)] \lesssim \cm^{3\ga}\big[ R^{z,n};\llbracket k\tau,(k+1)\tau \wedge 1\rrbracket \big]+\cn[\delta R^{z,n};\cac_3^{3\ga}(\llbracket k\tau,(k+1)\tau \wedge 1\rrbracket)] \ .
$$
As far as the first term is concerned, combining (\ref{bound-m-r-y-n}) and (\ref{y-n-infty-lyapou}) allows us to assert that
$$\cm^{3\ga}\big[ R^{z,n};\llbracket k\tau,(k+1)\tau \wedge 1\rrbracket \big] \lesssim \{1+\lVert \mathbf{x}\rVert_\ga^2\} \{1+z^n_{k\tau}\} \ .$$
Then, for every $s<u<t\in \llbracket k\tau,(k+1)\tau \wedge 1\rrbracket$, decompose $\delta R^{z,n}_{sut}$ as
$$\delta R^{z,n}_{sut}=-\delta (\langle y^n,b(y^n)\rangle)_{su} \, \delta \mathcal{T}_{ut}+\delta L^{z,n}_{sut} \  .$$
On the one hand, one has, by (\ref{y-n-infty-lyapou}) and (\ref{y-n-hold-lyapou}),
$$
\big| \delta (\langle y^n,b(y^n)\rangle)_{su} \big| \leq  \big| \langle \delta y^n_{su},b(y^n_u)\rangle \big|+\big| \langle y^n_s,\delta b(y^n)_{su}\rangle \big| \lesssim |u-s|^\ga \{1+\lVert \mathbf{x}\rVert_\ga\} \{1+z^n_{k\tau}\} \ .
$$
On the other hand, combining Chen's identity with elementary Taylor expansions easily leads us to the decomposition 
$$\delta L^{z,n}_{sut}=\big\{ I^i_{su}+II^i_{su}+III^i_{su}+IV^i_{su}\big\} \, \delta x^i_{ut}+\delta \varSigma_{ij}(y^n)_{su} \, \mathbf{x}^{\mathbf{2},ij}_{ut} \ ,$$
with
$$\varSigma_{ij}(y^n):=\langle \si_i (y^n),\si_j (y^n) \rangle+\langle y^n,(D\si_i \cdot \si_j)(y^n)\rangle \ ,$$
$$I^i_{su}:=\langle \delta y^n_{su},\delta \si_i(y^n)_{su} \rangle \ , \quad II^i_{su}:=\langle \si_i(y^n_s),Q^{y,n}_{su}\rangle \ , \quad III^i_{su}:=\int_0^1 d\xi \, \langle y^n_s,D\si_i(y^n_s+\xi\, \delta y^n_{su}) Q^{y,n}_{su} \rangle,
$$
and finally
$$ IV^i_{su}:=\int_0^1 d\xi \, \langle y^n_s,\big[ D\si_i(y^n_s+\xi\, \delta y^n_{su})-D\si_i(y^n_s)\big](\si_j(y^n_s))\rangle \, \delta x^j_{su}  \ .$$
With the above expressions in mind and using the three estimates (\ref{y-n-infty-lyapou}), (\ref{y-n-hold-lyapou}) and (\ref{q-y-n-lyapou}), it is not hard to check that
$$\big| \delta L^{z,n}_{sut} \big| \lesssim |t-s|^{3\ga} \{1+\lVert \mathbf{x}\rVert_\ga^3 \} \{1+z_{k\tau}^{n} \} \ ,$$
which achieves the proof of our assertion.

\end{proof}

Let us finally involve Hypothesis $\mathbf{(H2)}$ into the picture:
\begin{corollary}\label{coro:toward-lyapou}
Assume Hypotheses $\mathbf{(H1)}$ and $\mathbf{(H2)}$ hold true and let $T_0=T_0(\lVert \mathbf{x}\rVert_{\ga})$ be the time defined in Proposition \ref{prop:bnd-Rny}. Then there exist constants $c_4,c_5$ (both depending only on $b,\sigma,\ga,C_1,C_2$) such that if we set
$$T_1=T_1(\lVert \mathbf{x}\rVert_\ga):=\min \bigg(T_0,\frac{2}{C_2},\bigg(\frac{1}{c_4 \{1+\lVert \mathbf{x}\rVert_\ga^3 \}}\bigg)^{1/(3\ga-1)}  \bigg) \ ,$$
one has, for every $\tau\in \mathcal{P}_n$ satisfying $0<\tau\leq T_1$ and every $k\leq 1/\tau$,
\begin{equation}\label{eq:recursive-zn}
z_{(k+1)\tau \wedge 1}^{n}
\le
\Big( 1 - \frac{C_2}{2} \tau \Big) z_{k\tau}^{n}+c_5\{1+\lVert \mathbf{x}\rVert_\ga^2\}\tau^{2\ga-1} \ ,
\end{equation}
where we recall that the two parameters $C_1,C_2$ have been introduced in Hypothesis (H2).
\end{corollary}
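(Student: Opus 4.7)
\emph{Proof plan.} I will exploit the identity encoded in the very definition of $R^{z,n}$, namely
\begin{equation*}
\delta z^n_{k\tau,(k+1)\tau\wedge 1} = \langle y^n_{k\tau}, b(y^n_{k\tau})\rangle \, \delta\ct_{k\tau,(k+1)\tau\wedge 1} + \langle y^n_{k\tau},\sigma(y^n_{k\tau})\rangle \, \delta x_{k\tau,(k+1)\tau\wedge 1} + \varSigma(y^n_{k\tau}) \, \mathbf{x}^{\2}_{k\tau,(k+1)\tau\wedge 1} + R^{z,n}_{k\tau,(k+1)\tau\wedge 1},
\end{equation*}
and extract the dissipation directly from Hypothesis $\mathbf{(H2)}$, which, combined with $\|y\|^2=2z$, yields $\langle y^n_{k\tau},b(y^n_{k\tau})\rangle \le C_1 - 2C_2\, z^n_{k\tau}$, hence a contribution $-2C_2\tau\, z^n_{k\tau} + C_1\tau$ on the drift term (in the main case $(k+1)\tau\le 1$ where the step length equals $\tau$; the degenerate case $(k+1)\tau>1$ is treated identically up to constants). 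All remaining terms are then treated as perturbations, to be absorbed into a fraction of $C_2\tau\, z^n_{k\tau}$ plus a constant of order $\tau^{2\gamma-1}$.

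For the noise terms, Hypothesis $\mathbf{(H1)}$ gives $\|\sigma(y)\|\le c$ and $\|\varSigma(y)\|\le c(1+\|y\|)\le c(1+\sqrt{z})$, while the definition of $\|\mathbf{x}\|_\gamma$ yields $|\delta x_{st}|\le \|\mathbf{x}\|_\gamma|t-s|^\gamma$ and $|\mathbf{x}^{\2}_{st}|\le \|\mathbf{x}\|_\gamma|t-s|^{2\gamma}$. A Young inequality $ab\le \varepsilon a^2 + b^2/(4\varepsilon)$ applied with $\varepsilon = C_2\tau/4$ turns each of the two products $\sqrt{z^n_{k\tau}}\,\|\mathbf{x}\|_\gamma\tau^\gamma$ and $\sqrt{z^n_{k\tau}}\,\|\mathbf{x}\|_\gamma\tau^{2\gamma}$ into a contribution bounded by $\tfrac{C_2\tau}{4}\,z^n_{k\tau} + c'\|\mathbf{x}\|_\gamma^2\tau^{2\gamma-1}$; the residual power $\tau^{4\gamma-1}$ coming from the L\'evy-area term is absorbed into $\tau^{2\gamma-1}$ thanks to $\tau\le 1$ and $\gamma>0$.

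For the remainder, Proposition \ref{prop:r-z-n} provides $|R^{z,n}_{k\tau,(k+1)\tau\wedge 1}|\le c_3\{1+\|\mathbf{x}\|_\gamma^3\}\{1+z^n_{k\tau}\}\tau^{3\gamma}$. Setting $c_4:=4c_3/C_2$ in the definition of $T_1$ forces $c_3\{1+\|\mathbf{x}\|_\gamma^3\}\tau^{3\gamma-1}\le C_2/4$, so the $z^n_{k\tau}$-coefficient contributed by $R^{z,n}$ is at most $\tfrac{C_2\tau}{4}$, while its constant part $c_3\{1+\|\mathbf{x}\|_\gamma^3\}\tau^{3\gamma}\lesssim \tau$ is absorbed into $c_5\tau^{2\gamma-1}$ (noting that $\tau\le 1$ and $2\gamma-1\le 0$ yield $\tau^{2\gamma-1}\ge 1\ge \tau$). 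Summing the three contributions will then produce
\begin{equation*}
z^n_{(k+1)\tau\wedge 1} \le \Big(1 - 2C_2\tau + \tfrac{C_2\tau}{2} + \tfrac{C_2\tau}{4}\Big) z^n_{k\tau} + c_5\{1+\|\mathbf{x}\|_\gamma^2\}\tau^{2\gamma-1} = \Big(1-\tfrac{5C_2\tau}{4}\Big) z^n_{k\tau} + c_5\{1+\|\mathbf{x}\|_\gamma^2\}\tau^{2\gamma-1},
\end{equation*}
and the target coefficient $(1-C_2\tau/2)$ then follows from the trivial inequality $-5C_2\tau/4\le -C_2\tau/2$. The main obstacle here is not a conceptual one --- the hard analytic work is already contained in Proposition \ref{prop:r-z-n} --- but rather careful bookkeeping: the Young parameter $\varepsilon\sim C_2\tau$ must be tuned so that the noise absorption generates exactly the power $\tau^{2\gamma-1}$ (and matches the $\|\mathbf{x}\|_\gamma^2$ factor in the target), and $T_1$ must be taken smaller than in $T_0$, with a threshold tuned to $\|\mathbf{x}\|_\gamma^3$, so as to tame the cubic rough-path dependence arising in the remainder estimate.
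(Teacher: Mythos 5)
Your proposal is correct and follows essentially the same route as the paper: extract the dissipation $-C_2\tau\, z^n_{k\tau}$ from $\mathbf{(H2)}$, bound the remainder via Proposition \ref{prop:r-z-n}, tune $T_1$ so that the cubic $\lVert\mathbf{x}\rVert_\gamma^3$ contribution of the remainder is dominated by a $\frac{C_2}{4}\tau$-multiple of $(1+z^n_{k\tau})$, and absorb the $\sqrt{z^n_{k\tau}}$ noise terms via Young's inequality with threshold $\sim C_2\tau$ to produce the $\lVert\mathbf{x}\rVert_\gamma^2\tau^{2\gamma-1}$ constant. The only (inconsequential) bookkeeping differences are that you apply Young separately to the $\delta x$ and the L\'evy-area terms rather than bundling them as the paper does, and that you retain the full $-2C_2\tau$ factor from $\mathbf{(H2)}$ where the paper contents itself with $-C_2\tau$; both end at the same coefficient $1-C_2\tau/2$.
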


\begin{proof}
Using Hypothesis $\mathbf{(H2)}$, we get that for every $\tau\in \mathcal{P}_n$ and every such that $k\leq 1/\tau$,
\begin{eqnarray*}
z^n_{(k+1)\tau \wedge 1}& = & z^n_{k\tau}+\langle y_{k\tau}^{n},b(y_{k\tau}^{n})\rangle \,  \delta\ct_{k\tau,(k+1)\tau \wedge 1} \\
& & +\langle y_{k\tau}^{n},\si(y_{k\tau}^{n})\rangle \, \delta x_{k\tau,(k+1)\tau \wedge 1}
+\varSigma(y^n_{k\tau})\,   \mathbf{x}_{k\tau,(k+1)\tau \wedge 1}^{\2}+R^{z,n}_{k\tau,(k+1)\tau \wedge 1}\\
&\leq& (1-C_2 \tau) z^n_{k\tau}+C_1 \tau\\
& &+\langle y_{k\tau}^{n},\si(y_{k\tau}^{n})\rangle \, \delta x_{k\tau,(k+1)\tau \wedge 1}
+\varSigma(y^n_{k\tau})\,   \mathbf{x}_{k\tau,(k+1)\tau \wedge 1}^{\2}+R^{z,n}_{k\tau,(k+1)\tau \wedge 1} \ ,
\end{eqnarray*}
and so, thanks to Proposition \ref{prop:r-z-n}, we can conclude that for every $0< \tau \leq \min\big( T_0,\frac{2}{C_2}\big)$ and every $k\leq 1/\tau$, one has
\begin{equation}\label{a1}
z^n_{(k+1)\tau \wedge 1}\leq (1-C_2 \tau) z^n_{k\tau}+C_1\tau+c_4 \Big[ \lVert \mathbf{x}\rVert_\ga \tau^\ga \big\{1+(z^n_{k\tau})^{\frac12} \big\}+\frac{C_2}{4} \tau^{3\ga}\{1+\lVert \mathbf{x}\rVert_\ga^3 \} \{1+z_{k\tau}^{n} \} \Big] \ ,
\end{equation}
for some constant $c_4=c_4(b,\si,\ga,C_2)$. Now, by the very definition of $T_1$, we know that if $0<\tau\leq T_1$, then
$$c_4\tau^{3\ga} \{1+\lVert \mathbf{x}\rVert_\ga^3 \}\leq \tau \big( c_4 \tau^{3\ga-1} \{1+\lVert \mathbf{x}\rVert_\ga^3 \}\big) \leq \tau \ ,$$
and thus we can recast relation \eqref{a1} into:
\begin{equation*}
z^n_{(k+1)\tau \wedge 1}\leq \lp 1-\frac{3C_2}{4} \tau\rp z^n_{k\tau}+C_1\tau
+c_4  \lVert \mathbf{x}\rVert_\ga \tau^\ga \big\{1+(z^n_{k\tau})^{\frac12} \big\}  \ 
\end{equation*}
To achieve the proof, it now suffices to use the basic inequality
$$c_4 \lVert \mathbf{x}\rVert_\ga \tau^\ga (z^n_{k\tau})^{\frac12} \leq \frac{C_2}{4} \tau z^n_{k\tau}+\frac{c_4^2}{C_2} \lVert \mathbf{x}\rVert_\ga^2 \tau^{2\ga-1} \ .$$
\end{proof}

\smallskip

At this point, we are very close to (\ref{ineq-lyapoun}). With the notations of Corollary \ref{coro:toward-lyapou}, consider $n$ large enough such that we can exhibit $\tau_0\in \mathcal{P}_n$ satisfying $\frac12 T_1 \leq \tau_0\leq T_1$, and then let $K$ be the integer such that $(K-1)\tau_0 \leq 1 <K\tau_0$. Iterating the bound (\ref{eq:recursive-zn}) with $\tau=\tau_0$ yields that 
\begin{eqnarray*}
z^n_1 &\leq& \Big( 1-\frac{C_2}{2}\tau_0\Big)^Kz^n_0+c_5\, K\{1+\lVert \mathbf{x}\rVert_\ga^2\}\tau_0^{2\ga-1}\\
&\leq& \Big( 1-\frac{C_2}{2}\tau_0\Big)^{\frac{1}{\tau_0}}z^n_0+c_5\, K\{1+\lVert \mathbf{x}\rVert_\ga^2\}\tau_0^{2\ga-1}\\
&\leq& e^{-C_2/2}z^n_0+c_5\, K\{1+\lVert \mathbf{x}\rVert_\ga^2\}\tau_0^{2\ga-1} \ .
\end{eqnarray*}
Thanks to (\ref{conv-y-y-n-lyapou}), the conclusion is now immediate, by noting that $K \tau_0^{2\ga-1}\lesssim T_1^{2\ga-2}$ and then using the explicit description of $T_1,T_0$ in terms of $\lVert \mathbf{x}\rVert_\ga$.

\section{Sketch of the strategy}\label{sec:sketch-strategy}
We now turn to the second part of Theorem \ref{theo:principal} about the convergence in total variation of the process towards the stationary solution. This result is based on a coupling method first introduced in \cite{hairer}. We thus  begin by recalling the details of the strategy. To this end, we first introduce some notations about the Mandelbrot-Van Ness representation of the fBm.

\subsection{Decomposition of the fBm}\label{sec:decompo-fbm-ter}

As recalled in \eqref{eq:mandel}, the Mandelbrot-Van Ness formula allows us to realize any fBm $(X_t)_{t\ge0}$ (with Hurst parameter $H\in(0,1)$) through a standard two-sided Brownian motion $(W_t)_{t\in\ER}$. The representations immediately gives rise to the decomposition 
\begin{equation}\label{decompo-fbm-ter}
X_t=D_t+Z_t \ ,
\end{equation}
where the process $D$ defined by
\begin{equation*}
D_t:=\al_H \int_{-\infty}^{0} \{ (t-r)^{H-\frac12}-(-r)^{H-\frac12}\} \, dW_r \quad \text{and} \quad Z_t:=\al_H\int_{0}^t (t-r)^{H-\frac12} \, dW_r\ .
\end{equation*}
is seen a the 'past' component encoding the 'memory' of $W$, while
$$Z_t:=\al_H\int_{0}^t (t-r)^{H-\frac12} \, dW_r$$
stands for the 'innovation' process (when looking at $X$ after time $0$).

\smallskip

It turns out that, away from $0$, the process $D$ so defined is smooth (see Lemma \ref{lem:justifd} for details), so that the roughness of $X$ is essentially inherited from that of $Z$. This basic observation will be one the keys of our analysis, at every step of the strategy. {It is worth noting that the smoothness of $D$ was also already used in the past as a central ingredient while studying integration issues with respect to the fBm (see e.g. \cite{alos-mazet-nualart,viens-zhang}), and similar ideas can be found in \cite{hairer-pillai} as well.} All along the procedure, we will thus be led to control the past of the process through the quantity
\begin{equation}\label{eq:normeD}
\vvvert D\vvvert_{1;\ga}:=\sup_{t\in (0,1]} t^{1-\ga} |D'(t)| \ ,
\end{equation}
for some fixed parameter $\ga\in (0,H)$. Let us more generally introduce the following class of functions:
\begin{notation}\label{nota:w-ga-1}
For every $k\geq 1$ and every $\ga\in (0,1)$, we denote by $\mathcal{E}^k_\ga$ the space of paths $f:[0,1]\to \R^d$ which are continuous on $[0,1]$, $k$-times differentiable on $(0,1]$, and such that 
\begin{equation}\label{eq:conditioneinftygamma}
\vvvert f\vvvert_{k;\ga}:=\max_{1\leq \ell \leq k}\ \sup_{t\in (0,1]} t^{\ell-\ga}|f^{(\ell)}(t)| \ < \ \infty  \ .
\end{equation}
\end{notation}

\subsection{The general 3-step scheme}\label{subsec:general3stepschem}

Let $(X_t)_{t\in\ER}$ and $(\widetilde{X}_t)_{t\in\ER}$ denote two fractional Brownian motions with common Hurst parameter $H\in(1/3,1)$. {From now on and for the rest of the paper, we fix a parameter $\gamma \in (1/3,H)$ that will serve us throughout the reasoning.} Then, denote by $(Y_t,\widetilde{Y}_t)$, a couple  of solutions to \eqref{fractionalSDE0}:

\bqn\label{eq:eqcoup}
\begin{cases}
dY_t= b(Y_t)dt+\sigma(Y_t)\, d{\bf X}_t\\
d\widetilde{Y}_t= b(\widetilde{Y}_t)dt+\sigma(\widetilde{Y}_t)\, d{ \widetilde{\bf X}}_t
\end{cases}
\eqn
with initial conditions $(Y_0,(X_t)_{t\le0})$, $(\widetilde{Y}_0,(X_t)_{t\le0})$. We denote by  $({\cal F}_t)_{t\ge0}$ the usual augmentation of the filtration  $(\sigma(X_s,\widetilde{X}_s, (Y_0,\widetilde{Y}_0))_{s\le t})_{t\ge0}$.   
To initiate the coupling procedure without ``weight of the past'', we will certainly  assume that $a.s$, 
\begin{equation}\label{eq:xegalxtilde0}
(X_t)_{t\le0}=(\widetilde{X}_t)_{t\le0}
\end{equation}
and that the initial distribution $\widetilde{\mu}$ of $(Y_0,\widetilde{Y}_0)$ is of the form
\bqn\label{eq:mutilde}
\widetilde{\mu}(dx,d\widetilde{x},dw)=\nu_0(w,dx){\nu}(w,d\widetilde{x})\PE_H(dw)
\eqn
where $\PE_H$ denotes the distribution of a fBm $(X_t)_{t\le0}$ on ${\cal C}(\ER^-,\ER^d)$ and the transitions probabilities $\nu_0(.,dx)$ and $\nu(.,d\tilde{x})$
correspond respectively to the conditional distributions of $Y_0$ and $\widetilde{Y}_0$ given $(X_t)_{t\le0}$. Furthermore, we assume that 
\begin{equation}\label{eq:condmutilde}
{\nu}_0(w,d\widetilde{x})\PE_H(dw)=\mu_0\quad\textnormal{and that }{\nu}(w,d\widetilde{x})\PE_H(dw)=\mu.
\end{equation}
In other words,  $\widetilde{Y}$ is a stationary solution whereas $Y$ starts with a given initial condition $\mu_0$. {At this point, let us remember that thanks to Corollary \ref{coro:lyapou-fbm}, we can choose $\mu$ in such a way that for every $r>0$, 
$$
\int |x|^r\bar{\mu}(dx) \ <+\infty \ ,
$$
where, as usual, $\bar{\mu}$ stands for the first marginal of $\mu$. In fact, for the rest of the paper and along the assumptions of Theorem \ref{theo:principal}, we fix $r>0$ such that one has simultaneously
\begin{equation}
\int |x|^r\bar{\mu_0}(dx) \ <+\infty \quad \text{and} \quad \int |x|^r\bar{\mu}(dx) \ <+\infty \ .
\end{equation}} 

The processes  $(X_t)_{t\in\ER}$ and $(\widetilde{X}_t)_{t\in\ER}$ can be realized through the decomposition introduced in the previous subsection with respect to some  two-sided Brownian motions respectively denoted by $W$ and $\widetilde{W}$ .  In particular, the filtration $({\cal F}_t)_{t\ge0}$  is also generated by  $(\sigma(W_s,\widetilde{W}_s, (Y_0,\widetilde{Y}_0))_{s\le t})_{t\ge0}$.

Furthermore, we will assume in all the proof that on $[0,\infty)$,  $W$ and $\widetilde{W}$ (resp. $X$ and $\widetilde{X}$)
differ by  a (random) drift term denoted by $\gw$ (resp. $g_X$): 
\bqn\label{eq:gw}
d\widetilde{W}_t=dW_t+\gw(t) dt\quad \textnormal{and}\quad d\widetilde{X}_t=dX_t+g_X(t) dt.
\eqn
In the continuity of \eqref{eq:xegalxtilde0}, we assume that for every $t\in \ER_-$,
$$W_t=\widetilde{W}_t\quad a.s.\quad\textnormal{or equivalently that}\quad \gw(t)=g_{X}(t)=0 \quad a.e.$$
Note that the functions $\gw$ and $g_X$ are linked by the following formulas, whenever the latter make sense (see \cite{hairer}, Lemma 4.2 for details):
\begin{align}\label{eq:defgwgb} 
&\gw(t)=\alpha_H \frac{d}{dt}\int_{-\infty}^t(t-s)^{\frac 12 -H} g_X(s) ds\\
&g_X(t)=\gamma_H\alpha_{1-H} \frac{d}{dt}\int_{-\infty}^t(t-s)^{H-\frac 12 } \gw(s) ds.\label{eq:defgb} 
\end{align}


The idea is to build $\gw$ (resp. $g_X$) in order to stick $Y$ and $\widetilde{Y}$. We set 
\begin{equation}\label{defi-tau-infty}
\tau_\infty:=\inf\{t\ge0,\; Y_s=\widetilde{Y}_s\ \ \text{for all}\ s\ge t\} \ .
\end{equation}
Before going further, let us recall a classical relation between $\tau_\infty$ and the total variation distance. Denote by ${\cal B}$ the set of Borel functions $F:{\cal C}(\ER_+,\ER^d)\rightarrow \ER$. Then, owing to the stationarity of $\widetilde{Y}$, we have for any $t\ge0$
\begin{equation}\label{eq:lienTVtauinfty}
\begin{split}
\|{\cal L}(Y_{t+.})-\bar{\cal Q}{\mu}\|_{TV}&=\sup_{F \in{\cal B}, \|F\|_{\infty}\le 1}\ES\left[\left(F(Y_{t+.})-F(\widetilde{Y}_{t+.})\right)\right]\\
&=\sup_{F \in{\cal B}, \|F\|_\infty\le 1}\ES\left[\left(F(Y_{t+.})-F(\widetilde{Y}_{t+.})\right)1_{\{\tau_\infty>t\}}\right]\le 2\PE(\tau_\infty>t)\ .
\end{split}
\end{equation}
As a consequence, in the sequel, we will focus on the quantity $t\mapsto \PE(\tau_\infty>t)$ in order to prove the main theorem. \smallskip

\noindent As usual, the construction  of the coupling  is achieved through  a series of trials. As mentioned in the introduction, each trial is decomposed in three steps:
\begin{itemize}
\item{} Step 1: Try to couple the positions with a \textit{controlled cost} (in a sense made precise below).
\item{} Step 2 (specific to non-Markov processes): Try to keep the paths  fastened together ($i.e.$ to ensure that $g_X(t)=0$).
\item{} Step 3: If Step 2 fails, wait a sufficiently long time in order that in the next trial,  Step 1 be achieved with a \textit{controlled cost} and with  (uniformly lower-bounded away from $0$) probability. During this step, we suppose that $\gw(t)=0$.
\end{itemize}
Let us make  a few precisions:

\smallskip
\noindent $\rhd$ We denote by $\tau_0\ge0$ the beginning of the first trial and by $\tau_k$, $k\ge1$, the end of each trial. {This also means that $\tau_{k-1}$ designates the beginning of the $k$th trial.} We will assume in the sequel that $\tau_0=0$.

 If $\tau_k=+\infty$,
the coupling tentative has been successful. Otherwise, $\tau_k$ is the end of Step 3 of trial $k$.\smallskip

\noindent $\rhd$  Step 1  is  carried out on each interval $[\tau_{k-1},\tau_{k-1}+1]$. The ``cost'' of coupling is represented by the function $\gw$ that one needs to 
build on $[\tau_{k-1},\tau_{k-1}+1]$ in order to get  $Y$ and $\widetilde{Y}$  stuck together at time $\tau_{k-1}+1$. Oppositely to the Markovian case, this cost does not only depend  on the positions of  $Y_{\tau_{k-1}}$ and $\widetilde{Y}_{\tau_{k-1}}$  but also on the past of the Brownian motions,  which have a (strong) influence on the dynamics of $X$ and $\widetilde{X}$.  \smallskip

\noindent {If Step 1 fails, one begins Step 3 (see below) at time $\tau_{k-1}+1$}. Otherwise,
one begins Step 2.\smallskip

\noindent $\rhd$  Step 2 is in fact a series of trials on some intervals {$I_{k,\ell}$ with length 
\bqn\label{eq:defc2}
|I_{k,\ell}|=c_2 2^\ell \ ,
\eqn
independently of $k$, where $c_2$ is a constant greater than one which will be calibrated in the sequel}. More precisely, one successively tries to keep $Y$ and $\widetilde{Y}$ as being equal on intervals $[\tau_{k-1}+1+c_2\sum_{u=1}^{\ell-1} 2^{k},\tau_{k-1}+1+c_2\sum_{u=1}^{\ell} 2^k]$ (with the convention $\sum_{\emptyset}=0$).  Equivalently, this means that on these successive intervals,
$$ g_X(t)=0\quad a.s.$$
Thus, the first natural question is the following: how to build the coupling  $(W,\widetilde{W})$ of the innovations in order to ensure this property, $i.e.$ what is the corresponding function $\gw$ on these successive intervals which ensures that $g_X=0$ ?
The answer is given by Lemma 4.3 of \cite{hairer} that we choose to recall here in a slightly different way: 
\begin{lemma}\label{lemma:formdrift}
Let $\tau$, $t_0$ be fixed positive numbers, and assume that $\gw(t)=0$ on $(-\infty,0]$. Then  $g_X=0$ on $ (\tau,\tau+t_0]$ if and only if for all $t\in (0,t_0]$,
\bqn\label{a11}
\gw^{\tau}(t)=({\cal R}_0 \gw^\tau)(t)
\eqn 
where $\gw^\tau(.)=\gw(\tau+.)$ and where the operator ${\cal R}_0$ is defined as
\begin{equation}\label{eq:def-R-0}
({\cal R}_0 g)(t)=C_H\int_{-\infty}^0 \frac{t^{\frac{1}{2}-H} (-s)^{H-\frac{1}{2}}}{t+T-s} g(s) ds\ ,\quad t\in(0,+\infty) \ ,
\end{equation}
for some appropriate constant $C_H$ (depending only on $H$). 
\end{lemma}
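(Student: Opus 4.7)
The plan is to use the inverse Mandelbrot--Van Ness type relations \eqref{eq:defgwgb}--\eqref{eq:defgb} together with the support assumption $\gw\equiv 0$ on $(-\infty,0]$ in order to rewrite the condition $g_X\equiv 0$ on $(\tau,\tau+t_0]$ as an explicit formula expressing $\gw^\tau$ on $(0,t_0]$ in terms of its past values on $(-\tau,0]$ (equivalently, $\gw$ on $(0,\tau]$). Since the map $\gw\leftrightarrow g_X$ defined by \eqref{eq:defgwgb}--\eqref{eq:defgb} is a bijection, the equivalence stated in the lemma will then follow from a single computation carried out in one direction.

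For the forward implication, I would start from \eqref{eq:defgwgb} evaluated at $\tau+t$ for $t\in(0,t_0]$, and split the integral $\int_{-\infty}^{\tau+t}(\tau+t-s)^{\frac12-H}g_X(s)\,ds$ at $s=\tau$. By assumption the piece on $(\tau,\tau+t]$ vanishes, so after the shift $s=\tau+v$ one is left with
\begin{equation*}
\gw^\tau(t)=\al_H\big(\tfrac12-H\big)\int_{-\infty}^{0}(t-v)^{-\frac12-H}\,g_X(\tau+v)\,dv\, ,\qquad t\in(0,t_0]\, .
\end{equation*}
Next, I would substitute $g_X(\tau+v)$ for $v\in(-\tau,0]$ using \eqref{eq:defgb} and the assumption $\gw\equiv 0$ on $(-\infty,0]$, which forces the inner integral to be supported on $(0,\tau+v]$ and thus, after the shift $r=\tau+u$, on $u\in(-\tau,v]$. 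A Fubini swap between the outer $v$-integral and the inner $u$-integral, followed by the beta-type identity
\begin{equation*}
\int_{u}^{0}(t-v)^{-\frac12-H}(v-u)^{H-\frac12}\,\Big(\cdots\Big)\,dv\ \propto\ \frac{t^{\frac12-H}}{t-u}\, ,
\end{equation*}
should collapse the double integral into a single convolution against the kernel $\tfrac{t^{\frac12-H}(-s)^{H-\frac12}}{t+\tau-s}$ acting on $\gw^\tau(s)$ for $s\in(-\infty,0]$, which is precisely the expression defining $({\cal R}_0\gw^\tau)(t)$ up to the constant $C_H$. The reverse implication is obtained by reading the same chain of computations backwards: plug the candidate $\gw^\tau=\cR_0\gw^\tau$ into \eqref{eq:defgb} and check that the derivative of the resulting fractional integral vanishes on $(\tau,\tau+t_0]$.

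The main obstacle is the rigorous handling of the singular kernels: justifying the interchange of the derivative in $t$ and the integrals (which is delicate at $t=0^+$ because of the $(\tau+t-s)^{\frac12-H}$ singularity at $s=\tau+t$), performing Fubini at the coinciding endpoints $v=u$ and $v=0$, and pinning down the precise value of the constant $C_H$ via a beta function computation. These manipulations require a minimal amount of regularity for $\gw$ on $(0,\tau]$, which is automatically provided by the previous stages of the coupling construction. Since this statement is essentially a restatement of Lemma~4.3 in \cite{hairer} adapted to our normalization, the bulk of the argument can also be borrowed from there, and the slight reformulation here only amounts to rewriting the original fixed-point equation in the shifted variable $\gw^\tau$.
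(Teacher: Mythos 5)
The paper does not actually prove this lemma; it simply defers to Lemma~4.3 of \cite{hairer}, so a computation sketch is what is expected here, and your overall strategy --- evaluate \eqref{eq:defgwgb} at $\tau+t$, discard the tail over $(\tau,\tau+t]$ using $g_X\equiv 0$ there, substitute $g_X$ via \eqref{eq:defgb} with $\gw\equiv 0$ on $(-\infty,0]$, then Fubini plus a special-function identity --- is indeed the right one.

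However, there are two concrete errors. First, your final kernel $t^{\frac12-H}(-s)^{H-\frac12}/(t+\tau-s)$ is wrong: having shifted \emph{both} inner variables to the $\tau$-centered frame ($s\mapsto\tau+v$ and $r\mapsto\tau+u$, as you do), the parameter $\tau$ is fully absorbed and cannot reappear in the resulting kernel. The correct expression is $t^{\frac12-H}(-s)^{H-\frac12}/(t-s)$, as is unambiguous from the identification $C_1{\cal R}_0 g_1(t)=C_1\int_{-\infty}^0 t^{\frac12-H}(-s)^{H-\frac12}(t-s)^{-1}g_1(s)\,ds$ in \eqref{eq:def-cal-H}. (The stray ``$T$'' in the denominator of \eqref{eq:def-R-0} is a typo for $0$; you appear to have read it as $\tau$, which gives a different operator.) Second, the claimed ``beta-type identity'' is false as stated: $\int_u^0(t-v)^{-\frac12-H}(v-u)^{H-\frac12}\,dv$ is a genuine ${}_2F_1$ of $-u/(t-u)$, \emph{not} proportional to $t^{\frac12-H}/(t-u)$. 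The rational kernel emerges only after applying the outer $\partial_t$ coming from \eqref{eq:defgwgb} and keeping the boundary term from the integration by parts that removes the inner $\partial_v$: with $A(t,u):=t^{\frac12-H}(-u)^{H-\frac12}+(\tfrac12-H)\int_u^0(t-v)^{-\frac12-H}(v-u)^{H-\frac12}\,dv$, the Euler transformation of ${}_2F_1$ in the degenerate case $c=a$ yields $\partial_t A(t,u)=(\tfrac12-H)\,t^{\frac12-H}(-u)^{H-\frac12}/(t-u)$, which is the kernel you want and which is $\tau$-free. Your sketch should be reworked along these lines --- it is the outer $t$-derivative combined with the boundary term, not the bare $v$-integral, that produces the ${\cal R}_0$-kernel.
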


\begin{remark}\label{rmk:RT} Since $\gw=0$ on $(-\infty,0]$, observe that $({\cal R}_0 \gw^\tau(t))_{t\ge0}$ only involves the values of $\gw$ on $[0,\tau]$, and therefore relation (\ref{a11}) provides an explicit description of the values of $\gw$ on $(\tau,\tau+t_0]$ in terms of $(\gw(t))_{0\leq t\leq \tau}$, as expected. 
\end{remark}

\begin{remark}
For our further computations, we will also use the following generalization of the operator ${\cal R}_{0}$, defined for a parameter $T\ge 0$:
\begin{equation}\label{eq:def-R-T}
({\cal R}_T g)(t)=C_H\int_{-\infty}^0 \frac{t^{\frac{1}{2}-H} (T-s)^{H-\frac{1}{2}}}{t+T-s} g(s) ds\ ,\quad t\in(0,+\infty) \ .
\end{equation}
\end{remark}

The attempt is successful if Step $1$ and Step $2$ ($i.e.$ all the sub-attempts of this step) are. To ensure a positive probability to the success of the $k$-th attempt, one needs certainly to impose some conditions on the system at time $\tau_{k-1}$.  

\medskip

In the continuity of \cite{hairer} and \cite{fontbona-panloup}, we thus introduce an admissibility condition (we recall that we have fixed a parameter $\gamma\in (1/3,H)$ for the whole study):

\begin{definition}\label{def:admissible-state} 
Let $K$ and $\alpha$ be some positive constants and fix a time $\tau \geq 0$.  Then we will say that a state $\pi:=(a,\tilde{a},w,\widetilde{w})\in (\R^d)^2\times \cac((-\infty,\tau];\R^d)^2$ is $(K,\alpha,\gamma)$-admissible at time $\tau$ and if the following conditions are satisfied:

\smallskip

\noindent
$(i)$ It holds that $\wti=w+\int_{-\infty}^. \gw(s) \, ds$, with $\gw$ satisfying:
\begin{equation}\label{hairerassumpcond}
 \sup_{T\ge0} \int_0^{+\infty} (1+t)^{2\alpha}|({\cal R}_T |\gw^\tau|)(t)|^2dt\le 1  \ ; 
\end{equation}

\smallskip

\noindent
$(ii)$ It holds that
\bqn\label{Kadmiscond}
 |a|+ |\tilde{a}|+  \vvvert D^{(\tau)}(w) \vvvert_{1;\gamma}+ \vvvert D^{(\tau)}(\wti) \vvvert_{1;\gamma}\le K
\eqn
where we have set, for $t\in [0,1]$,
\begin{equation}\label{eq:def-D-tau-w}
D^{(\tau)}(w)_t:=\int_{-\infty}^ {\tau} \left((t+\tau-r)_+^{H-\frac{1}{2}}-(-r)_+^{H-\frac{1}{2}} \right)dw_r \ .
\end{equation}
\end{definition}

\

\begin{remark}
We are aware that, following the subsequent Lemma \ref{lem:justifd}, the above transformation $D^{(\tau)}$ is only defined on a subspace $\Omega^\tau_-\subset \cac((-\infty,\tau];\R^d)$ of full Wiener measure (obtained through a shifting of $\Omega_-$). Nevertheless, using the stability properties reported in the very same Lemma \ref{lem:justifd}, it is readily checked that, on top of being of full Wiener measure, such a space is left invariant by the successive constructions of our strategy. We can therefore assume that at any time $\tau$, the noise component of the system under consideration takes value in $\Omega_-^\tau$, which allows us to justify this slight abuse of notation. 
\end{remark}

If the system is admissible, that is if 
$$\big(Y(\omega)_{\tau_{k-1}(\omega)},\widetilde{Y}(\omega)_{\tau_{k-1}(\omega)},(W(\omega)_{t})_{t\leq \tau_{k-1}(\omega)},(\widetilde{W}(\omega)_{t})_{t\leq \tau_{k-1}(\omega)}\big)$$
defines a $(K,\alpha,\gamma)$-admissible state at time $\tau_{k-1}(\omega)$, one attempts the coupling. Otherwise, one waits for the next one, $i.e.$ one sets $\gw(t)=0$ on $[\tau_{k-1},\tau_k]$ (One will come back below on the duration $\Delta \tau_k:=\tau_k-\tau_{k-1}$).

\smallskip

\noindent Regarding Lemma \ref{lemma:formdrift}, one can remark that Condition \eqref{hairerassumpcond} plays a fundamental role in Step $2$. More precisely, it can be understood as a sufficient condition to ensure the success of the series of attempts involved by Step~2.\smallskip

\noindent 
Item (ii) in Definition \ref{def:admissible-state} is mainly linked to Step 1. Roughly, it ensures that each marginal is in a sufficiently controllable state to couple the positions with a bounded cost.   The fact that the positions live in a compact set 
at the beginning of the attempt is classical. The second condition (on $D^{(\tau)}$) is of course specific to this  non-Markovian setting.

\noindent Finally, note that the first condition will be ensured with probability $1$ at the beginning of the attempt, whereas, of course, the second one will occur only with a (lower-bounded) positive probability. We denote by 
\bqn\label{ensdesadmis}
\AD_k(K,\alpha,\gamma):={\{\tau_{k-1} < \infty\}\cap\{\textnormal{the system is  $(K,\alpha,\gamma)$-admissible at time $\tau_{k-1}$}\}.}
\eqn

\smallskip
\noindent $\rhd$  If the coupling attempt fails, one begins Step $3$, $i.e.$ one waits sufficiently before another attempt. This waiting time will be chosen exponentially proportional to the length of the failing attempt. More precisely,  let $\ell\ge 1$ denote the 
numbers of trials in Step $2$ and adopt the convention, that $\ell=0$ if Step $1$ fails (including the case where the coupling is not attempted since the system is not admissible at time $\tau_{k-1}$). 
Set, {for $k\geq 1$ and $\ell \geq 0$,
\bqn\label{eq:akl}
\FA_{k,\ell}:=\{\tau_{k-1} <\infty\}\cap \{\textnormal{At trial $k$, Step 2 fails exactly at the $\ell$-th attempt}\}
\eqn
and
\bqn\label{eq:bkl-in-general-proof}
{\cal B}_{k,\ell}:=\{\tau_{k-1} <\infty\}\cap \{\textnormal{At trial $k$, Step 2 succeeds at least up to the $\ell$-th attempt (included)}\} \ .
\eqn
Note that with our convention, ${\cal B}_{k,0}$ thus corresponds to the event where Step 1 succeeds.}
Let us finally label the following family of sets, defined inductively, for further use:
\begin{equation}\label{eq:def-cal-Ek}
\ce_{k}: = \lp \tau_{k} < \infty \rp =  \lp \tau_{k-1} < \infty \rp \bigcap \lp \bigcup_{\ell\geq 0} F_{k,\ell}\rp \ .
\end{equation}

{In fact, we will assume in the sequel (see Section \ref{sec:Kadmis}) that for every $\omega\in\FA_{k,\ell}$,
\bqn\label{durationstep3}
\tau_k-\tau_{k-1}^3=\Delta_3(\ell,k)\quad\textnormal{with}\quad \Delta_3(\ell,k):=c_3 \varsigma^k 2^{\beta\ell}
\eqn
where $\tau_{k-1}^3$ denotes the beginning of Step $3$ (in the $k$-$th$ attempt),
and $c_3$, $\varsigma$ and $\beta$ are deterministic parameters that will be calibrated in the course of the reasoning. In particular, conditionally to $\FA_{k,\ell}$, the length of each step will be assumed to be deterministic. During this waiting time, we simply set 
\bqn\label{eq:assumpstep3}
\gw(t)=0 \quad \textnormal{on $[\tau_{k-1}^3,\tau_k]$,}\quad \mbox{ i.e. }
\quad 
\widetilde{W}_t-\widetilde{W}_{\tau_{k-1}^3}
=W_t-W_{\tau_{k-1}^3}.
\eqn}

\subsection{Proof of Theorem \ref{theo:principal}: uniqueness and rate of convergence}\label{sec:prooftheoprinc}

{Denoting by $(Y,\widetilde{Y})$ the coupling of solutions derived from the above-described construction, and with notation (\ref{defi-tau-infty}) in mind, our aim is to prove the following assertion: for every $\alpha\in(0,H)$ and every $p\in (0,\alpha(1-2\alpha))$, there exists a constant $C_{\alpha,p}>0$ such that, for some appropriate calibration of the 3-step scheme, one has
\begin{equation}\label{aim-uniq}
\PE(\tau_\infty>t)\le C_{\alpha,p} t^{-p}\ .
\end{equation}
Optimizing the latter bound with respect to $p$ and then using \eqref{eq:lienTVtauinfty}, the uniqueness assertion of Theorem~\ref{theo:principal} follows immediately, as well as the convergence rate (\ref{main-result:rate-of-conv}), thus completing the proof of our main result.}

\

{The strategy towards (\ref{aim-uniq}) is based on a combination of the successive controls that will be obtained at each stage of our 3-step scheme, namely the results of Proposition \ref{prop:step1}, Proposition \ref{lemme:step2.2} and Proposition~\ref{prop:minokalp}. Although the controls in question will only be shown in the next sections, we have found it important to anticipate these results so as to provide the proof of (\ref{aim-uniq}) right now, which will allow us to both give the reader a general overview of our arguments and also to motivate the forthcoming technical considerations.}

\

{Let us fix $\alpha\in(0,H)$, $p\in (0,\alpha(1-2\alpha))$ and $\beta\in ((1-2\alpha)^{-1},\alpha/p)$. In order to suitably calibrate the scheme, we first consider the parameter $K>0$ given by Proposition \ref{prop:minokalp} for the particular choice $\varepsilon=1/2$. In other words, with the notations of Proposition \ref{prop:minokalp}, we set
\begin{equation}\label{def-K-proof}
K:=\mathbf{K}(1/2,\alpha) \ .
\end{equation}
Then we denote by $\delta=\mathbf{\delta}(K,\alpha)>0$ the constant provided by Proposition \ref{prop:step1}, and we fix a constant $\varsigma=\varsigma(\delta,p,\alpha)>1$ such that $\varsigma^p<(1-\frac{\mathbf{c}_{\alpha,K}\cdot \delta}{2})^{-1}$, where $\mathbf{c}_{\alpha,K} :=(1-\rho^2_{\alpha,K})\prod_{\ell =1}^\infty (1-2^{-\alpha \ell})$, $\rho_{\alpha,K}^2\in (0,1)$ being here the constant given by Proposition \ref{lemme:step2.2}. Finally, with the notations of Proposition \ref{prop:minokalp}, we define $c_3$ by
\begin{equation}\label{def-c-3-proof}
c_3:=\mathbf{c}_{\mathbf{3}}(1/2, \alpha,\beta,\varsigma) \ .
\end{equation}
With these parameters in hand, we will assume in the sequel that Step 2 and Step 3 of the above-described coupling scheme are respectively calibrated along the formulas
\begin{equation}\label{calib-proof}
c_2:=(\mathbf{C}^{\mathbf{2}}_{\alpha,K})^{\frac{1}{2\alpha}} \quad \text{and} \quad \Delta_3(k,\ell):=c_3 \varsigma^k 2^{\beta \ell} \ ,
\end{equation}
where $\mathbf{C}^{\mathbf{2}}_{\alpha,K} \geq 1$ stands for the constant provided by Proposition \ref{lemme:step2.2}. We can now turn to the reasoning towards (\ref{aim-uniq}). In what follows, we denote by $C_p$, resp. $C_{\alpha,p,\beta}$, any generic constant that depends only on $p$, resp. $(\alpha,p,\beta)$.}

\smallskip

\noindent Set $k^*:=\inf\{k\ge1, \Delta \tau_k=+\infty\}${, where $\Delta \tau_k:=\tau_{k}-\tau_{k-1}$}. Using that $\tau_0=0$ {and $\tau_\infty \leq \tau_{k^*-1}+1$, we have 
\bqn\label{b1}
\PE(\tau_\infty>t)\leq \PE\left(\sum_{k=1}^{+\infty}\Delta \tau_k 1_{k^*>k}>{t-1} \right) \ .
\eqn
By Markov's inequality and the fact that $|u+v|^p\le|u|^p+|v|^p$ (because $p\in(0,1]$), we then deduce, provided $t>1$,
\begin{equation*}
\PE\left(\sum_{k=1}^{+\infty}\Delta \tau_k 1_{k^*>k}>{t-1} \right)
\le
\frac{C_p}{t^p}\sum_{k=1}^{+\infty}\ES[ |\Delta \tau_k|^p1_{\{k^*>k\}}] \ ,
\end{equation*}
and as {$(k^*>k)=\bigcup_{\ell \geq 0} F_{k,\ell}$, this yields
\begin{equation}\label{a0}
\PE\left(\sum_{k=1}^{+\infty}\Delta \tau_k 1_{k^*>k}>{t-1}\right)
\le\frac{C_p}{t^p}\sum_{k=1}^{+\infty}\sum_{\ell=0}^{\infty}
\ES\left[|\Delta \tau_k|^p1_{F_{k,\ell}} \right].
\end{equation}
On $F_{k,0}$ (that is the case where Step 1 fails), we have set $\Delta \tau_k=1+c_3\varsigma^k$ according to \eqref{durationstep3}. For $\ell\ge 1$, we have by definition, on the event $F_{k,\ell}$,
\begin{eqnarray*}
\Delta \tau_k&=& 1+c_2 (1+\ldots +2^\ell)+c_3 \varsigma^{k}2^{\beta \ell}\\
&\leq &c_2+c_2 (1+\ldots +2^\ell)+c_3 \varsigma^{k}2^{\beta \ell}\ \leq\  \frac{c_3}{2}+\frac{c_3}{2} (2^{\ell+1}-1)+c_3 \varsigma^{k}2^{\beta \ell} \ \leq \ c_3 \varsigma^{k} 2^{\beta \ell+1} \ ,
\end{eqnarray*}
where we have also used the fact that, according to Proposition \ref{prop:minokalp}, one has $c_3\geq 2c_2$.}
Thus we can recast relation \eqref{a0} as
\begin{equation}\label{a01}
\PE\left(\sum_{k=1}^{+\infty}\Delta \tau_k 1_{k^*>k}>{t-1} \right)
\le
\frac{{C_{\alpha,p,\beta}}}{t^p}\sum_{k=1}^{+\infty} \varsigma^{kp}  \sum_{\ell=0}^{\infty} 2^{\beta\ell p}
\PE(F_{k,\ell}) \ .
\end{equation}
In addition, owing to our definitions \eqref{eq:akl} and \eqref{eq:bkl-in-general-proof}, it is readily checked that $F_{k,\ell}\subset \cb_{k,\ell-1} \cap \cb_{k,\ell}^c$ for every $\ell \geq 1$, and so by Proposition \ref{lemme:step2.2} (remember that we are working under the calibration (\ref{calib-proof}) for $c_2$) we have for every $\ell \geq 1$
\begin{align}\label{applic-step2-1}
\PE({F}_{k,\ell}|{\ce_{k-1}})\le\PE({\cal B}_{k,\ell}^c| {\cal B}_{k,\ell-1})\le { 2^{-\alpha\ell-1}} \ .
\end{align}}
Plugging this inequality into \eqref{a01}, we end up with
\begin{eqnarray}
\PE\left(\sum_{k=1}^{+\infty}\Delta \tau_k 1_{k^*>k}>{t-1} \right)
&\le&
\frac{{C_{\alpha,p,\beta}}}{t^p} \sum_{k=1}^{+\infty}
\varsigma^{k p} \left(\sum_{\ell=0}^{+\infty} 2^{(\beta p-\alpha)\ell}\right) 
\PE\lp \ce_{k-1}  \rp \nonumber \\
&\le&
\frac{{C_{\alpha,p,\beta}}}{t^p} \sum_{k=1}^{+\infty} \varsigma^{k p} \PE\lp \ce_{k-1}  \rp \ ,\label{bou2}
\end{eqnarray}
where we have used the fact that $\beta p-\alpha <0$. Now
\begin{equation}\label{decompo-tau-k-fini}
\PE\lp \ce_{k-1} \rp
=\prod_{m=1}^{k-1} \PE({\cal E}_{m}|{\cal E}_{m-1})=\prod_{m=1}^{k-1} (1-\PE({\cal E}_{m}^c|{\cal E}_{m-1}))\ ,
\end{equation}
and {observe that for every $m\geq 1$,
\begin{equation}\label{lw-bou}
\PE({\cal E}_{m}^c|{\cal E}_{m-1})\ge \PE\big(\Delta \tau_m=+\infty\, |\, \ce_{m-1}\cap \AD_m(K,\alpha,\gamma)\big) \PE(A_m(K,\alpha,\gamma)|{\cal E}_{m-1}) \ .
\end{equation}
At this point, we can first apply Proposition \ref{prop:minokalp} (remember that $K$ and $c_3$ are defined by (\ref{def-K-proof}) and (\ref{def-c-3-proof})) to derive that
\begin{equation}\label{sec-term}
\PE(A_m(K,\alpha,\gamma)|{\cal E}_{m-1}) \geq \frac12 \ .
\end{equation}
On the other hand, using the decomposition
\begin{equation}\label{applic-step2-2}
\PE\big(\Delta \tau_m=+\infty\, |\, \ce_{m-1}\cap \AD_m(K,\alpha,\gamma)\big)=\PE\big(Y_{\tau_{m-1}+1}=\widetilde{Y}_{\tau_{m-1}+1}\, |\, \ce_{m-1}\cap \AD_m(K,\alpha,\gamma)\big) \prod_{\ell=1}^{+\infty}\PE({\cal B}_{m,\ell}|{\cal B}_{m,\ell-1}) \ ,
\end{equation}
we can easily combine the results of Proposition \ref{prop:step1} and Proposition \ref{lemme:step2.2} to assert that
\begin{equation}\label{prem-term}
\PE(\Delta \tau_m=+\infty|\ce_{m-1}\cap A_m(K,\alpha,\gamma)) \geq \mathbf{c}_{\alpha,K}\, \delta \ ,
\end{equation}
where $\mathbf{c}_{\alpha,K} >0$ and  $\delta>0$ have been introduced at the beginning of the proof. }

\smallskip

\noindent
{Injecting (\ref{sec-term})-(\ref{prem-term}) into  (\ref{decompo-tau-k-fini}) and going back to (\ref{lw-bou}), we have thus shown that
\begin{equation}\label{bou3}
\sum_{k=1}^{+\infty}\varsigma^{kp} \PE\lp \tau_{k-1}<+\infty  \rp\le 
\sum_{k=1}^{+\infty} \varsigma^{pk}\lp 1-\frac{\mathbf{c}_{\alpha,K}\,\delta}{2} \rp^{k-1} \ ,
\end{equation}
and the latter quantity is known to be finite due to our choice of $\varsigma$ (that is $\varsigma$ such that such that $\varsigma^p<(1-(\mathbf{c}_{\alpha,K} \delta)/2)^{-1}$). The expected bound (\ref{aim-uniq}) then follows from the combination of (\ref{b1}), (\ref{bou2}) and~(\ref{bou3}), and this achieves the proof of Theorem \ref{theo:principal}.}

\smallskip

\

{
The remainder of the paper is now devoted to the proof of the intermediate results at the core of the above arguments, \textit{i.e.} the results of Proposition \ref{prop:step1}, Proposition \ref{lemme:step2.2} and Proposition \ref{prop:minokalp}, which actually correspond to controls at Step 1, Step 2 and Step 3, respectively. As we mentioned it in the introduction, the main difficulties of this procedure (more precisely, the most innovative part with respect to the analysis in \cite{fontbona-panloup}) essentially lie in the \emph{hitting} Step 1.  We thus propose in the next subsection to give a heuristic description of the coupling construction during this first stage, before we go into the technical details of Sections \ref{sec:rougheqinholder} and \ref{sec:roughcoupling}.}

\subsection{Heuristic description of the coupling system in Step 1}\label{subsec:heuri-coupl-sys}

At time $\tau_{k-1}$, set $a_0:=Y_{\tau_{k-1}}$ and $a_1=\widetilde{Y}_{\tau_{k-1}}$. As reported in Subsection \ref{subsec:general3stepschem}, our aim in Step 1 (the \emph{hitting} step) will be to build $g_X$ on the interval $[\tau_{k-1},\tau_{k-1}+1]$ in such a way that $Y_{\tau_{k-1}+1}=\widetilde{Y}_{\tau_{k-1}+1}$ with strictly positive probability. This construction will actually be the topic of both Sections \ref{sec:rougheqinholder} and \ref{sec:roughcoupling}. However, let us try here to give an idea, at some heuristic level, of the motivations behind this forthcoming strategy.

\

To this end, let us simplify the framework by assuming that $\tau_{k-1}=0$, $d=1$, and consider for the moment the case of a smooth deterministic driver $x$. In brief, our purpose is to exhibit a triplet of paths $(y^0_t,y^1_t,g_t)_{t\in [0,1]}$ satisfying the system
$$
\begin{cases}
dy^0_t=b(y^0_t)\, dt+\si(y^0_t) \, dx_t\\
dy^1_t=b(y^1_t)\, dt+\si(y^1_t) \, \big( dx_t+g_t \, dt \big) 
\end{cases}
,
$$
as well as the constraints $y^0_0=a_0$, $y^1_0=a_1$ and $y^0_1=y^1_1$. Using our invertibility assumptions on $\sigma$ (that is, Hypothesis $\mathbf{(H3)}$), this amounts to finding $(y^0_t,y^1_t,h_t)_{t\in [0,1]}$ such that 
$$
\begin{cases}
dy^0_t=b(y^0_t)\, dt+\si(y^0_t) \, dx_t\\
dy^1_t=b(y^1_t)\, dt+\si(y^1_t) \, dx_t+h_t \, dt  
\end{cases}
,
$$
and $y^0_0=a_0$, $y^1_0=a_1$, $y^0_1=y^1_1$. In fact, let us consider the slightly more general issue of exhibiting a family of paths $(y^\xi_t,h^\xi_t)_{t\in [0,1],\xi\in [0,1]}$ that satisfy the equation
\begin{equation}\label{equat-heur-y-xi}
dy^\xi_t=b(y^\xi_t)\, dt+\si(y^\xi_t) \, dx_t+h^\xi_t \, dt
\end{equation}
as well as the constraints $y^\xi_0=a_0+\xi (a_1-a_0)$, $y^0_1=y^1_1$ and $h^0_.\equiv 0$. Then, using the basic identity $y^0_1-y^1_1=\int_0^1 d\xi \, \partial_\xi y^\xi_1$, we are led to the following sufficient formulation of the problem: finding a family $(y^\xi_t,h^\xi_t)_{t\in [0,1],\xi\in [0,1]}$ that satisfies both (\ref{equat-heur-y-xi}) and the constraints $y^\xi_0=a_0+\xi (a_1-a_0)$, $\partial_\xi y^._1\equiv 0$, $h^0_.\equiv 0$.

\smallskip

A natural way to answer the latter question is to let the so-called tangent path (associated with $y$) come into the picture. Namely, set $h^\xi_t:=-\int_0^\xi d\eta \, \jmath^\eta_t$, where for each $\xi$, $\jmath^\xi$ stands for the solution of the equation
$$d\jmath^\xi_t=b'(y^\xi_t) \jmath^\xi_t \, dt+\sigma'(y^\xi_t) \jmath^\xi_t \, dx_t \quad , \quad \jmath^\xi_0=a_1-a_0 \ .$$
With this specific choice of $h^\xi_t$, it is readily checked that the two paths $t\mapsto \partial_\xi y^\xi_t$ and $t\mapsto \jmath^\xi_t (1-t)$ satisfy the very same equation
$$dz^\xi_t=\big[b'(y^\xi_t) z^\xi_t-\jmath^\xi_t\big] \, dt+\sigma'(y^\xi_t)z^\xi_t \, dx_t \ .$$
Accordingly, $\partial_\xi y^\xi_t=\jmath^\xi_t (1-t)$ and the above constraints $\partial_\xi y^._1\equiv 0$, $h^0_.\equiv 0$ are indeed satisfied.

\smallskip

As a conclusion of these considerations, the problem now reduces to solving the parametric (or functional-valued) system
\begin{equation}\label{sys-rough-heuri}
\begin{cases}
dy^\xi_t=\big[ b(y^\xi_t)-\int_0^\xi d\eta \, \jmath^\eta_t\big]\, dt+\si(y^\xi_t) \, dx_t\\
d\jmath^\xi_t=b'(y^\xi_t) \jmath^\xi_t \, dt+\sigma'(y^\xi_t) \jmath^\xi_t \, dx_t 
\end{cases},
\end{equation}
with initial conditions $y_0^\xi=a_0+\xi(a_1-a_0)$, $\jmath^\xi_0=a_1-a_0$, and $\xi$ varying in $[0,1]$. This new expression of the problem can of course be extended to the case of rough $x$ (and to any dimension $d$), with $(y^\xi_t,\jmath^\xi_t)$ understood as a rough solution of (\ref{sys-rough-heuri}), in the sense of Definition \ref{def:sol-davie}. Just as above, once endowed with such a solution $(y,\jmath)$, defining $g_X$ as $g_X(t):=-\sigma(y^1_t)^{-1}\int_0^1 d\eta\, \jmath^\eta_t$ would then close the procedure.

\

Unfortunately, as the reader can easily check it, the vector fields involved in (\ref{sys-rough-heuri}) do not meet the usual boundedness assumptions that guarantee the existence of a \emph{global} rough solution defined on $[0,1]$ (compare for instance with the conditions in \cite[Theorem 6.1]{Da07} or in \cite[Theorem 10.26]{FV-bk}). In fact, we have not been able to establish such a global existence in the general situation, and we even suspect that an explosion phenomenon might occur in some cases. What we will prove in the next section is a weaker result according to which global existence on $[0,1]$ holds provided the norm $\lVert \mathbf{x}\rVert_{\ga;[0,1]}$ is small enough (depending on $a_0$ and $a_1$).

\smallskip

Going back to the stochastic setting of our study (where $x=X$ is a fBm), the latter existence result is still not satisfying though, because the required smallness condition on $\lVert \mathbf{X}\rVert_{\ga;[0,1]}$ implicitly involves the past trajectory $(W_t)_{t\leq 0}$, which is somehow fixed (and not necessarily small) at this stage of our three-step procedure. In order to overcome this difficulty, we shall appeal (once again) to the 'past-innovation' decomposition (\ref{decompo-fbm-ter}) of $X$, and rewrite the \emph{hitting} system as
\begin{equation}\label{sys-rough-heuri-sto}
\begin{cases}
dY^\xi_t=\big[ b(Y^\xi_t) \, dt-\int_0^\xi d\eta \, J^\eta_t \, dt+\si(Y^\xi_t) \, dD_t\big]+\si(Y^\xi_t) \, dZ_t\\
dJ^\xi_t=\big[ b'(Y^\xi_t) J^\xi_t \, dt+\sigma'(Y^\xi_t) J^\xi_t \, dD_t \big]+\sigma'(Y^\xi_t) J^\xi_t \, dZ_t 
\end{cases},
\end{equation}
with initial conditions $Y_0^\xi=a_0+\xi(a_1-a_0)$, $J^\xi_0=a_1-a_0$, $\xi\in [0,1]$. Now recall that, at the sole price of a singularity at time $0$, the process $D$ can be considered as smooth and therefore the whole terms into brackets in (\ref{sys-rough-heuri-sto}) can somehow be seen as drift terms, to be distinguished from the real rough perturbation driven by $Z$. Based on these properties and still using a pathwise approach (as developed in Section \ref{sec:rougheqinholder} below), we will derive the following refined version of the previous existence statement: there exists a unique global solution to (\ref{sys-rough-heuri-sto}) on $[0,1]$ provided the norm of $\lVert \mathbf{Z}\rVert_{\ga;[0,1]}$ (which no longer depends on $(W_t)_{t\leq 0}$) is small enough. This result, essentially summed up by Proposition \ref{prop:appli}, will turn out to be sufficient for our purpose.

\section{Singular rough equations}\label{sec:rougheqinholder}

\indent
This section is devoted to the presentation of a natural setting to study the hitting system (\ref{sys-rough-heuri-sto}) (properly extended to $\R^d$) and exhibit sharp conditions on $(D,Z)$ for this system to admit a unique global solution on $[0,1]$. To this end, it will turn out to be fundamental that the trajectories of the process $D$ should somehow be considered as differentiable paths that induce some drift term in the equation. However, as we have evoked it in Section \ref{sec:decompo-fbm-ter} (see also Lemma \ref{lem:justifd} below), this differentiability assumption is not exactly satisfied, due to a possible singularity at time $0$ for the derivative of $D$.

\smallskip

With these observations in mind, the purpose of the section is essentially twofold: 

\smallskip

$\bullet$ Introduce appropriate \emph{singular} extensions of the Hölder spaces defined in Section \ref{subsec:classic-topos} and then extend the classical study of rough systems to this setting, for general Banach-valued equations;

\smallskip

$\bullet$ Exhibit sufficient conditions on the driver for these rough singular equations to have a unique global solution, even in situations where the usual boundedness requirements on the vector fields are not met (see Hypothesis (VF2) below), which is the case for (\ref{sys-rough-heuri-sto}).  

\

The effective application of these general (pathwise) considerations to the particular fractional system~(\ref{sys-rough-heuri-sto}) will then be analyzed in Section \ref{subsec:rough-hitting}.

\smallskip

From now on and for the rest of the section, we fix two parameters: $\ga \in (\frac13,\frac12)$ (for the general Hölder roughness) and $\beta\in [\ga,1]$ (encoding the singularity at time $0$).

\subsection{Singular rough solutions and well-posedness results}

Our singular extensions of the usual Hölder spaces are specifically defined through the following seminorms: given a Banach space $V$, an interval $I\subset [0,1]$ and two parameters $\al\in (0,1],\mu\geq \al$, set, for any map $f:I^2 \to V$, resp. $f:I^3 \to V$, 
\begin{equation}\label{topo-singu-1}
\cn[f;\cac_{2;\beta}^{\al,\mu}(I;V)]
:=
\max \bigg( \sup_{s<t\in I} \frac{\| f_{st}\|_V}{\lln t-s \rrn^{\al}},
\sup_{0<s<t\in I} \frac{\|f_{st}\|_V }{\lln t-s \rrn^{\mu}s^{\beta-1}} \bigg) \ , 
\end{equation}
resp.
\begin{equation}\label{topo-singu-2}
\cn[f;\cac_{3;\beta}^{\al,\mu}(I;V)]:=\max \bigg( \sup_{s<u<t\in I} \frac{\|f_{sut}\|_V}{\lln t-s \rrn^{\al}},\sup_{0< s<u<t\in I} \frac{\|f_{sut}\|_V}{\lln t-s \rrn^{\mu}s^{\beta-1}} \bigg) \ ,
\end{equation}
and then 
\begin{equation}\label{topo-singu-3}
 \cac_{1;\beta}^{\al,\mu}(I;V):=\big\{ f\in \cac_1(I;V): \ \delta f \in \cac_{2;\beta}^{\al,\mu}(I;V) \big\} \ .
\end{equation}

\smallskip

Of course, it holds that $\cac_{i;\beta}^{\al,\mu}(I;V)\subset \cac_i^\al(I;V)$ and $\cac_{i;1}^{\al,\mu}(I;V)= \cac_i^\mu(I;V)$. What actually led us to the above definitions is the following readily-checked (and relatively sharp) inclusion:
\begin{lemma}
Let $\mathcal{E}^1_\ga([0,1];\R^d)$ be the space introduced in Notation \ref{nota:w-ga-1}.
It holds that $\mathcal{E}^1_\ga([0,1];\R^d)\subset \cac_{1;\ga}^{\ga,1}([0,1];\R^d)$ and for every $g\in \mathcal{E}^1_\ga([0,1];\R^d)$,
\begin{equation}
\cn[\delta g;\cac_{2;\ga}^{\ga,1}([0,1];\R^d)]\leq c_\ga \! \vvvert g \vvvert_{1;\ga} \ .
\end{equation}
\end{lemma}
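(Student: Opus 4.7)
The plan is to unwind the two seminorms defining $\cac_{2;\ga}^{\ga,1}$ and check each one directly from the pointwise bound $|g'(u)|\le \vvvert g\vvvert_{1;\ga}\,u^{\ga-1}$ that is built into the definition of $\mathcal{E}^1_\ga$. The key preliminary observation is that since $\ga>0$, the function $u\mapsto u^{\ga-1}$ is integrable near $0$, so by continuity of $g$ at $0$ one can represent $\delta g_{0t}=g(t)-g(0)$ as the absolutely convergent improper integral $\int_0^t g'(u)\,du$. This is the only non-routine observation; everything else is elementary.

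Next I would handle the singular seminorm $\sup_{0<s<t}|\delta g_{st}|/(|t-s|\cdot s^{\ga-1})$. For $0<s<t\le 1$, write
\[
|\delta g_{st}|\le\int_s^t |g'(u)|\,du\le \vvvert g\vvvert_{1;\ga}\int_s^t u^{\ga-1}\,du,
\]
and use that $u^{\ga-1}$ is \emph{decreasing} (because $\ga-1<0$) to bound the integrand by $s^{\ga-1}$, giving $|\delta g_{st}|\le \vvvert g\vvvert_{1;\ga}\,s^{\ga-1}(t-s)$, which is exactly the required control with constant $1$.

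For the H\"older seminorm $\sup_{s<t}|\delta g_{st}|/|t-s|^\ga$, I would split into two cases. If $s>0$, integrate directly to get $|\delta g_{st}|\le (\vvvert g\vvvert_{1;\ga}/\ga)(t^\ga-s^\ga)$, and conclude via the standard elementary inequality $t^\ga-s^\ga\le (t-s)^\ga$ for $0\le s\le t$ and $\ga\in(0,1)$ (proved by a one-line concavity / mean-value argument on $[s,t]$, or by checking that $s\mapsto(t-s)^\ga-t^\ga+s^\ga$ vanishes at the endpoints and is concave-then-convex on $[0,t]$). If $s=0$, the improper-integral representation from the preliminary step yields $|g(t)-g(0)|\le (\vvvert g\vvvert_{1;\ga}/\ga)\,t^\ga$ directly.

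Combining the two bounds gives the stated inequality with $c_\ga=1/\ga$ (or a small multiple thereof). The only conceptual point one must be careful about is the passage at $s=0$; this is where the continuity of $g$ on $[0,1]$ (as opposed to mere differentiability on $(0,1]$) is crucially used, so I expect the only mildly delicate step to be the justification of $g(t)-g(0)=\int_0^t g'(u)\,du$. Everything else reduces to integrating a power.
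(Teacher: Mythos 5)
Your proof is correct and is exactly the elementary verification the paper has in mind when it labels the inclusion ``readily-checked'': integrate the pointwise bound $|g'(u)|\le \vvvert g\vvvert_{1;\ga} u^{\ga-1}$, use monotonicity of $u\mapsto u^{\ga-1}$ for the singular weight, and concavity/subadditivity of $u\mapsto u^\ga$ for the H\"older seminorm, taking care to justify $\delta g_{0t}=\int_0^t g'$ via continuity at the origin and integrability of the singularity. Since the paper states the lemma without proof, there is nothing further to compare; your argument with $c_\ga=1/\ga$ is complete.
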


\

Let us now introduce the related notion of a singular rough solution. In the sequel, given two Banach spaces $V,W$ and a smooth map $F:V \to W$, we will denote by $D^{(\ell)}F: V\to \mathcal{L}(V^{\otimes \ell};W)$ the $\ell$-th derivative of $F$, understood in the usual Fréchet sense.

\begin{definition}\label{def:general-sol}
Consider a path $h\in \cac_{1;\beta}^{\ga,1}([0,1];\R^m)$ and a $\ga$-rough path $\mathbf{z}=(z,\mathbf{z}^{\mathbf{2}})$, in the sense of Definition \ref{defi-rough-path}. Then, for any fixed Banach space $V$, any interval $I=[t_0,t_1]\subset [0,1]$, any $v_0\in V$ and all smooth vector fields
$$B:V \to \mathcal{L}(\R^m;V) \quad , \quad \varSigma:V \to \mathcal{L}(\R^n;V) \ , $$
we call $y\in \cac_1^\ga(I;V)$ a solution (on $I$) of the equation
\begin{equation}\label{general-equation}
dy_t =B(y_t) \, dh_t+\varSigma(y_t) \, d\mathbf{z}_t \quad , \quad y_{t_0}=v_0 \ ,
\end{equation}
if the two-parameter path $R^y$ defined as
$$R^y_{st}:=(\der y)_{st}-B_i(y_s) \, (\der h^i)_{st}-\varSigma_j(y_s) \, (\der z^j)_{st}-(D\varSigma_j \cdot \varSigma_k)(y_s) \, \mathbf{z}^{\mathbf{2},jk}_{st} \ $$
belongs to $\cac_{2;\beta}^{\ga,\mu}(I;V)$, for some parameter $\mu>1$. Here, the notation $D\varSigma_j \cdot \varSigma_k$ stands for
$$(D\varSigma_j \cdot \varSigma_k)(v):=(D\varSigma_j)(v)(\varSigma_k(v)) \ , \ \text{for every} \ v \in V \ .$$
\end{definition}

\

\begin{remark}
We are aware that the space $\mathcal{E}_\ga^1$ could also be continuously embedded into the space of paths with finite $1$-variation, so that the whole problem could certainly receive an analog treatment (with $h$ still considered as inducing a drift term) in the $p$-variation setting used in \cite{Da07,deya-hofmanova,FV-bk}, instead of our singular Hölder setting. Nevertheless, switching the equation to a $p$-variation framework could expose us to the risk of a loss of topological sharpness in the results, with solutions possibly leaving the space of Hölder paths (see for instance the general definition of a solution in \cite[Definition 3.1]{Da07}). This is not the case in the above formulation, where the solution is still expected to belong to $\cac_1^\ga$.
\end{remark}

Let us now turn to the presentation of the main results of this section about existence/uniqueness of a solution for the rough singular equation (\ref{general-equation}). We will either be concerned with the classical situation of bounded vector fields (Hypothesis (VF1)) or the more general possibility of linear growth (Hypothesis (VF2)).

\

\noindent
\textbf{Hypothesis (VF1).} The vector field $\varSigma$ and all its derivatives are uniformly bounded on $V$. Besides, the derivative $D^{(1)}B: V\to \mathcal{L}(V;\mathcal{L}(\R^m;V))$ is uniformly bounded on $V$.

\

\noindent
\textbf{Hypothesis (VF2).} The following bounds on $B$ and $\varSigma$ hold true: for all $\ell \geq 0$, 
\begin{equation}\label{regu-b-sigma-1}
\|(D^{(\ell)}B)(v)\| \lesssim 1+\|v\| \quad , \quad \|(D^{(\ell)}\varSigma)(v)\| \lesssim 1+\|v\| \ ,
\end{equation}
and also, for every $v,w\in V$,
\begin{equation}\label{regu-b-sigma-2}
\| (D \varSigma \cdot \varSigma) (v)\| \lesssim 1+\|v\| \quad , \quad \| (D \varSigma \cdot \varSigma) (v)-(D \varSigma \cdot \varSigma) (w)\| \lesssim \|v-w\| \{ 1+\|v\| \} \  .
\end{equation}

\

\begin{theorem}[(VF1)-situation]\label{theo:gene-vf1}
Under Hypothesis (VF1), and for any $v_0\in V$, Equation (\ref{general-equation}) admits a unique solution on $[0,1]$ with initial condition $v_0$, in the sense of Definition \ref{def:general-sol}.
\end{theorem}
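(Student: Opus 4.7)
The proof follows the general Davie--Gubinelli strategy for rough differential equations, adapted to the singular H\"older scale $\cac_{1;\beta}^{\gamma,1}$ defined via \eqref{topo-singu-1}. The overall plan is: (i) establish local existence and uniqueness on some interval $[t_0,t_0+T]\subset [0,1]$, with $T$ depending only on $\lVert \mathbf{z}\rVert_{\gamma;[0,1]}$, $\cn[\delta h;\cac_{2;\beta}^{\gamma,1}]$ and the vector fields, but not on the running state; (ii) iterate the local construction to cover $[0,1]$. Uniqueness will be harvested as a byproduct of the local contraction argument.

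For local existence, we set up a Picard map $\Phi$ on a ball in the space of couples $(y,R^y)$ with $y\in \cac_1^{\gamma}(I;V)$ and $R^y\in \cac_{2;\beta}^{\gamma,\mu}(I;V)$ for some $\mu\in (1,3\gamma)$. Given a candidate $y$, one lets
$$
\Phi(y)_t := v_0+\int_{t_0}^t B(y_s)\, dh_s+\int_{t_0}^t \varSigma(y_s)\, d\mathbf{z}_s,
$$
where the first integral is a Young-type integral (well defined because $h$ is essentially differentiable on $(0,1]$ with derivative integrable against $s^{\beta-1}\, ds$ near zero, while $y$ is $\gamma$-H\"older), and the second is the classical rough integral against $\mathbf{z}$. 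The cornerstone of the analysis is a singular sewing lemma adapted to the two-scale norm \eqref{topo-singu-1}, which controls Riemann-sum oscillations carrying the extra weight $s^{\beta-1}$. Combined with composition estimates for $B(y)$, $\varSigma(y)$ and $(D\varSigma\cdot \varSigma)(y)$ in the singular scale---all three granted by Hypothesis (VF1), since $DB$ is bounded, $\varSigma$ and all its derivatives are bounded, and composition by a Lipschitz function preserves the singular norms---these tools show that $\Phi$ maps some ball to itself and is a strict contraction as soon as $T$ is chosen small enough. The unique fixed point is the sought-after local solution; uniqueness of the solution on $I$ follows directly from the contraction.

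For the global step, observe that under (VF1), $\varSigma$ is uniformly bounded and $B$ is globally Lipschitz with at most affine growth. Consequently, on any subinterval the $\gamma$-H\"older seminorm of a solution is controlled a priori by a constant that depends only on $\lVert \mathbf{z}\rVert_{\gamma;[0,1]}$, $\cn[\delta h;\cac_{2;\beta}^{\gamma,1}]$ and $\lVert v_0\rVert$; a Gr\"onwall argument---using the $L^1$-integrability of $s^{\beta-1}$ at $0$---then rules out any finite-time blow-up of $\lVert y\rVert_\infty$. This makes it possible to choose a single length $T$ uniformly in the running state, to partition $[0,1]$ into finitely many subintervals of length at most $T$, and to concatenate the corresponding local solutions. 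The Davie remainder condition in Definition \ref{def:general-sol} is stable under such concatenation (the control of $R^y$ on each piece transfers to a global control, since the singular factor $s^{\beta-1}$ only matters on the first subinterval), so the resulting path solves \eqref{general-equation} on the full interval $[0,1]$.

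The main technical obstacle is the careful extension of the sewing lemma and the Taylor-expansion machinery to the singular scale $\cac_{2;\beta}^{\gamma,\mu}$. Each contribution entering $R^y$ must inherit the correct singular behaviour near $t=0$ \emph{without} spoiling the sharp $\gamma$-H\"older regularity away from the origin. Expanding $B(y)$ and $\varSigma(y)$ against $\delta y$ produces additional cross terms involving $\delta h$, $\delta z$ and $\mathbf{z}^{\mathbf{2}}$, and one must track the $s^{\beta-1}$ weight through every such expansion in order to verify that the exit exponent $\mu$ can indeed be chosen strictly larger than $1$. Once this weighted bookkeeping is carried out, the remainder of the argument is a mild variant of the classical Davie scheme.
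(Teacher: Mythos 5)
Your proposal is correct in outline but takes a genuinely different route from the paper. You set up a Picard (Gubinelli-type) fixed-point argument on a controlled rough path space adapted to the singular scale $\cac_{2;\beta}^{\gamma,\mu}$, obtaining local existence and uniqueness simultaneously from a Banach fixed point, and then concatenate. The paper instead follows Davie's strategy: it defines the explicit Euler scheme $y^n$ on dyadic partitions, proves uniform bounds for the discrete remainder $R^n$ in the singular norms via the discrete singular sewing lemma (Lemma~\ref{lem:discr-sewing}, which is the quantitative heart of Proposition~\ref{prop:bound-vf1}), extracts a convergent subsequence by an Arzel\`a--Ascoli compactness argument, and verifies that the limit is a solution. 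Uniqueness is then proved separately in Section~5.4 by a comparison argument, still built on the discrete sewing lemma. The paper's scheme-based route has the advantage of being consistent with the rest of the analysis (in particular with Section~\ref{subsec:lyapou}, where the same discrete scheme is used to prove the Lyapunov estimate), and of avoiding any discussion of well-posedness of the rough integral in the singular topology. Your fixed-point route would deliver uniqueness for free but would require, as a prerequisite, a \emph{continuous} singular sewing lemma in the scale $\cac_{2;\beta}^{\gamma,\mu}$ together with the corresponding stability estimates for the rough integral map; the paper only establishes and uses the discrete version. That continuous counterpart is plausible and proveable, but it is a nontrivial technical ingredient that your write-up asserts rather than constructs, so if you follow the Picard path you should flesh out that lemma and the associated composition estimates for $B(y)$, $\varSigma(y)$, $(D\varSigma\cdot\varSigma)(y)$ in the weighted norms.
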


\begin{theorem}[(VF2)-situation]\label{theo:gene}
Under Hypothesis (VF2), the following assertions hold true:

\smallskip

\noindent
$(i)$ For any $v_0\in V$, Equation (\ref{general-equation}) admits at most one solution on $[0,1]$ with initial condition $v_0$, in the sense of Definition \ref{def:general-sol}.

\smallskip

\noindent
$(ii)$ For every $K\geq 1$, there exists $M_K>0$ such that if $\|v_0\|\leq K$, $\cn[\der h;\cac_{2;\beta}^{\ga,1}([0,1])] \leq K$ and $\|\mathbf{z}\|_{\ga;[0,1]} \leq M_K$, then Equation (\ref{general-equation}) admits a unique solution $y$ on $[0,1]$ with initial condition $v_0$, in the sense of Definition \ref{def:general-sol}. Besides,
\begin{equation}\label{boun-solut}
\cn[y;\cac_1^0([0,1];V)]+\cn[y;\cac_1^\ga([0,1];V)] \leq C(K) \ ,
\end{equation}
for some growing function $C:\R^+\to \R^+$.
\end{theorem}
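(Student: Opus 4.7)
My plan is to mimic the standard rough-path recipe, adapted to the singular and linearly-growing setting of \eqref{general-equation}: (a) establish local existence on a subinterval via a Picard-type fixed point in a ball of $\cac_1^\ga$; (b) chain these local solutions across $[0,1]$ by iterating an a priori estimate on the sup-norm of $y$, in the very spirit of the Lyapunov argument of Section 3; (c) derive uniqueness through a Gronwall-type comparison of any two solutions. The two specificities with respect to the classical rough-path theorems of \cite{Da07,FV-bk} are that the drift $B(y)\,dh$ carries a singularity at $0$ (absorbed by the refined class $\cac_{2;\beta}^{\ga,\mu}$) and that (VF2) only provides linear growth of the coefficients, which ties the admissible step size to the size of the data and forces the smallness requirement $\lVert \mathbf{z}\rVert_\ga \leq M_K$.

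For the local existence in part $(ii)$, I would fix a subinterval $I_s = [s, s+\tau_0]\cap[0,1]$ and consider, for each candidate path $\tilde y\in\cac_1^\ga(I_s;V)$ with prescribed value $\tilde y_s$, the Davie map $\Gamma(\tilde y)=y$ defined by
\begin{equation*}
\delta y_{st} := B_i(\tilde y_s)\,\delta h^i_{st} + \varSigma_j(\tilde y_s)\,\delta z^j_{st} + (D\varSigma_j\cdot\varSigma_k)(\tilde y_s)\,\mathbf{z}^{\mathbf{2},jk}_{st} + R^{\tilde y}_{st},
\end{equation*}
where the remainder $R^{\tilde y}$ is produced by a sewing-type argument in $\cac_{2;\beta}^{\ga,\mu}$. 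Under (VF2), together with the bounds on $\delta h$ and $\mathbf{z}$, the standard rough-path estimates (combined with Chen's relation for the second-order term) yield, on a closed ball of $\cac_1^\ga(I_s;V)$ centered at the constant path $\tilde y_s$, a Lipschitz bound
$$\cn[\Gamma(\tilde y_1)-\Gamma(\tilde y_2);\cac_1^\ga(I_s;V)] \ \leq\ C(K,\lVert\mathbf{z}\rVert_\ga)\,\tau_0^{\theta}\,\cn[\tilde y_1-\tilde y_2;\cac_1^\ga(I_s;V)]$$
for some $\theta > 0$, together with the matching self-mapping estimate. Choosing $\tau_0 = \tau_0(K)$ small enough yields a strict contraction and hence a unique local fixed point. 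For the initial subinterval $[0,\tau_0]$, the singular part of the norm of $\delta h$ still controls $\lVert \delta h_{0t}\rVert\lesssim K\,t^\ga$, compatibly with the $\ga$-H\"older regularity of the candidate $y$, and $R^y$ naturally belongs to $\cac_{2;\beta}^{\ga,\mu}$.

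To extend the local solution to all of $[0,1]$, I would iterate the local Davie expansion along the lines of Proposition \ref{prop:bnd-Rny} and Corollary \ref{coro:bound-part-y}, deducing a recursive bound of the form
$$\lVert y_{s+\tau_0}\rVert \ \le\ \bigl(1+c_0\,\tau_0^\theta\,\Phi(K,\lVert\mathbf{z}\rVert_\ga)\bigr)\,\lVert y_s\rVert+c_0\,\tau_0^\theta\,\Phi(K,\lVert\mathbf{z}\rVert_\ga),$$
where $\Phi$ is polynomial in its two arguments. Iterating this inequality over $\lceil 1/\tau_0(K)\rceil$ subintervals produces a discrete Gronwall-type estimate whose compound factor can be made uniformly bounded \emph{provided} $M_K$ has been chosen small enough relative to $\tau_0(K)$. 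This yields $\sup_{t\in[0,1]}\lVert y_t\rVert\le C(K)$, and feeding the uniform sup-norm back into the local H\"older estimates gives \eqref{boun-solut}.

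Uniqueness (part $(i)$) requires no smallness assumption. Given two solutions $y,\bar y$ with common initial data, set $\eta=y-\bar y$. Subtracting the two Davie expansions yields a decomposition of $\delta\eta$ whose coefficients are Lipschitz in the pair $(y_s,\bar y_s)$ under (VF2), with Lipschitz constants of the form $c\{1+\lVert y\rVert_\infty+\lVert\bar y\rVert_\infty\}$---a quantity already finite since both solutions belong to $\cac_1^\ga$. A standard rough-path contraction on a small enough initial subinterval then forces $\eta\equiv 0$ there, after which one iterates across finitely many subintervals of $[0,1]$. The main obstacle in the whole proof is the joint calibration of the local step size $\tau_0(K)$ and the smallness parameter $M_K$: because (VF2) provides only linear growth, the discrete Gronwall iteration of step (b) would in general pick up a geometrically divergent factor, and the role of $M_K$ is precisely to keep this factor under control on $[0,1]$.
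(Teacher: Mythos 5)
Your overall architecture is sound and you correctly identify the central difficulty (calibrating the local step size against $M_K$ under linear growth), but the local-existence step is broken as written. You set up a Picard fixed point on a closed ball of $\cac_1^\ga(I_s;V)$, with $R^{\tilde y}$ obtained from a sewing argument applied to the germ $\Xi(\tilde y)_{st}=B_i(\tilde y_s)\,\delta h^i_{st}+\varSigma_j(\tilde y_s)\,\delta z^j_{st}+(D\varSigma_j\cdot\varSigma_k)(\tilde y_s)\,\mathbf{z}^{\mathbf{2},jk}_{st}$. But in the regime $\ga<1/2$ this germ is not sewable on a ball of $\cac_1^\ga$: after telescoping with Chen's relation, $\delta\Xi(\tilde y)_{sut}$ contains the term $-D\varSigma_j(\tilde y_s)\big[\delta\tilde y_{su}-\varSigma_k(\tilde y_s)\,\delta z^k_{su}\big]\,\delta z^j_{ut}$, which for a generic $\ga$-H\"older path $\tilde y$ is only of order $|t-s|^{2\ga}<1$, so the singular sewing lemma does not apply and $R^{\tilde y}$ is not defined. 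To make the fixed point work you must restrict to the smaller space of paths controlled by $z$, i.e. those $\tilde y$ for which $Q^{\tilde y}_{st}:=\delta\tilde y_{st}-\varSigma_j(\tilde y_s)\,\delta z^j_{st}$ lies in $\cac_{2;\beta}^{\ga,2\ga}$, with the corresponding controlled norm on the contraction ball; without that extra structure the map $\Gamma$ is not even well-defined, let alone a contraction.

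The paper avoids this altogether by following a genuinely different route: instead of a fixed point it studies the discrete Euler scheme $y^n$ on dyadic partitions (Davie's method). Because $R^n_{t_it_{i+1}}=0$ by construction, $y^n$ is automatically ``controlled'' and the singular sewing Lemma~\ref{lem:discr-sewing} applies directly to $L^n$ and $R^n$. Proposition~\ref{prop:l-n} then yields local bounds under the condition $\lVert\mathbf{z}\rVert_\ga\leq(1+\lVert y^n_{kT_0}\rVert)^{-1}$, and Corollary~\ref{coro:unif-est} resolves the calibration you flagged by exhibiting a sequence $(c_k)$ depending only on $(B,\varSigma,\ga,\beta)$ and setting $M_K:=(1+c_{N_K}(1+K))^{-1}$ over the finitely many intervals $N_K=\lceil 1/T_0(K)\rceil$; existence follows by compactness, not contraction. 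Your uniqueness sketch is in line with the paper's (compare the remainders and run a Gronwall contraction on short intervals), although the paper carries it out on the grid $\cp_n$ and lets $n\to\infty$, precisely to accommodate possibly different remainder exponents $\mu$ and $\tilde\mu$ of the two solutions. In short: the Picard route is viable once the function space is corrected; the discrete-scheme-plus-compactness route bypasses the controlled-path bookkeeping, which is one of its quiet advantages in this singular setting.
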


\

Just as in Section \ref{subsec:lyapou}, and in the same spirit as in~\cite{Da07}, our proof for both Theorem~\ref{theo:gene-vf1} and Theorem~\ref{theo:gene} relies on the examination of the discrete scheme associated with the equation. Set $t_i=t_i^n:=\frac{i}{2^n}$, $\mathcal{P}_n:=\{t_i: \ i=0,\ldots,2^n\}$ and define $y^n$ along the iterative formula: $y^n_0=v_0$ and
$$(\der y^n)_{t_i t_{i+1}}=B(y^n_{t_i})\, (\der h)_{t_i t_{i+1}}+\varSigma(y^n_{t_i})\, (\der z)_{t_i t_{i+1}}+(D \varSigma \cdot \varSigma)(y^n_{t_i})\, \mathbf{z}^{\mathbf{2}}_{t_i t_{i+1}} \ .$$
Then, for every $s,t \in \mathcal{P}_n$, set
$$R^n_{st}:=(\der y^n)_{st}-B(y^n_s)\, (\der h)_{st}-\varSigma(y^n_s)\, (\der z)_{st}-(D \varSigma \cdot \varSigma)(y^n_s)\, \mathbf{z}^{\mathbf{2}}_{st}\ ,$$
noting in particular that $R^n_{t_it_{i+1}}=0$. We will also consider the paths
$$L^n_{st}:=(\der y^n)_{st}-\varSigma(y^n_s)\, (\der z)_{st}-\, (D \varSigma \cdot \varSigma)(y^n_s)\, \mathbf{z}^{\mathbf{2}}_{st} \quad \big(=R^n_{st}+B(y^n_s) (\der h)_{st}\ \big)$$
and
$$Q^n_{st}:=(\der y^n)_{st}-\varSigma(y^n_s)\, (\der z)_{st} \ .$$
Finally, for every $s<t\in [0,1]$, we will write $\llbracket s,t\rrbracket=\llbracket s,t\rrbracket_n:=[s,t]\cap \cp_n$, and we extend (or rather restrict) the norms (\ref{topo-singu-1})-(\ref{topo-singu-3}) to discrete paths as
$$\cn[f;\cac_{2;\beta}^{\al,\mu}(\llbracket s,t\rrbracket;V)]:=\max \bigg( \sup_{u<v\in \llbracket s,t\rrbracket} \frac{\| f_{uv}\|_V}{\lln v-u \rrn^{\al}},\sup_{0<u<v\in \llbracket s,t\rrbracket} \frac{||f_{uv}\|_V}{\lln v-u \rrn^{\mu}u^{\beta-1}} \bigg) \ ,$$
with a similar definition for $\cn[f;\cac_{i;\beta}^{\al,\mu}(\llbracket s,t\rrbracket;V)]$, $i\in \{1,3\}$.

\smallskip

The whole key towards the desired estimates lies in the following ``singular sewing lemma'':

\begin{lemma}\label{lem:discr-sewing}
Let $0< \al \leq \la \leq 1$, $\mu_1 \geq 1$ and $\mu_2 >1$. Then there exists a constant $c_{\al,\la,\mu_1,\mu_2} >0$ such that for every path $G:\mathcal{P}_n^2 \to V$ and all $s\leq t \in \mathcal{P}_n$, one has
$$
\cn[G;\cac_{2;\la}^{\al,\mu_1 \wedge \mu_2}(\llbracket s,t\rrbracket;V)] \leq
c_{\al,\la,\mu_1,\mu_2} \big\{ \cm_{\la}^{\al,\mu_1}\big[G;\llbracket s,t\rrbracket\big]+\cn[\der G;\cac_{3;\la}^{\al,\mu_2}(\llbracket s,t \rrbracket;V)] \big\} \ ,
$$
where we have set, if $s=\frac{p}{2^n}$ and $t=\frac{q}{2^n}$,
$$\cm_{\la}^{\al,\mu_1}\big[G;\llbracket s,t\rrbracket\big]:=\max\bigg( \sup_{p\leq i\leq q} \frac{\| G_{t_it_{i+1}}\|}{|t_{i+1}-t_i|^\al} \ , \ \sup_{p+1 \leq i\leq q}\frac{\| G_{t_it_{i+1}}\|}{|t_{i+1}-t_i|^{\mu_1}t_i^{\la-1}} \bigg) \ .$$
\end{lemma}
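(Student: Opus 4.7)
The statement is a discrete ``singular sewing'' lemma in the spirit of Gubinelli, with the extra feature that the weight $u^{\lambda-1}$ near the origin must be tracked carefully. Writing $A := \cm_\lambda^{\alpha,\mu_1}[G;\llbracket s,t\rrbracket]$ and $B := \cn[\delta G;\cac_{3;\lambda}^{\alpha,\mu_2}(\llbracket s,t\rrbracket)]$, the strategy is dyadic telescoping. Fix $0<u<v$ in $\llbracket s,t\rrbracket$ and assume for simplicity $v-u = 2^{k-n}$ (the general case being handled similarly). Iterating the identity $G_{a,b} = G_{a,c} + G_{c,b} - \delta G_{a,c,b}$ at dyadic midpoints of $[u,v]$ produces the telescope
$$
G_{uv} \ =\ \sum_{i=p'}^{q'-1} G_{t_i,t_{i+1}} \ -\ \sum_{j=0}^{k-1}\sum_{\text{nodes at depth }j}\delta G_{a,(a+b)/2,b},
$$
where $u=t_{p'}$, $v=t_{q'}$, and each depth-$j$ node corresponds to a sub-interval of length $\ell_j := (v-u)/2^j$, with $2^j$ nodes per level.

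\noindent For the $\alpha$-H\"older part of the conclusion, the non-singular side of the norm $B$ gives $\|\delta G_{a,c,b}\|\le B\ell_j^{\mu_2}$ at depth $j$, so the level contribution is $\le B(v-u)^{\mu_2}\,2^{-j(\mu_2-1)}$, and summing in $j$ (using $\mu_2>1$) yields a total sewing contribution $\le c\,B(v-u)^{\mu_2}\le c\,B(v-u)^\alpha$. The leaf sum is handled via the mesh bounds: combining the singular mesh bound $\|G_{t_i,t_{i+1}}\|\le A\cdot 2^{-n\mu_1}t_i^{\lambda-1}$ on the interior mesh intervals (and the H\"older mesh bound on the leftmost one when $u=s$), the monotonicity $t_i^{\lambda-1}\le u^{\lambda-1}$, and the elementary identity $N\cdot 2^{-n\mu_1} = N^{1-\mu_1}(v-u)^{\mu_1}\le (v-u)^{\mu_1}\le (v-u)^\alpha$ (valid when $\mu_1\ge 1$ and $N\ge 1$), one gets $\|\sum_i G_{t_i,t_{i+1}}\|\le c\,A(v-u)^\alpha$, hence the announced $\alpha$-H\"older bound.

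\noindent For the singular part of the conclusion (which requires $u>0$), I repeat the same reasoning using the singular sides of the norms: the singular $\mu_2$-bound $\|\delta G_{a,c,b}\|\le B\ell_j^{\mu_2}a^{\lambda-1}$ combined with $a\ge u$ yields a total sewing contribution $\le c\,B(v-u)^{\mu_2}u^{\lambda-1}$, while the singular mesh bound together with $t_i^{\lambda-1}\le u^{\lambda-1}$ and the same identity above gives a leaf-sum contribution $\le c\,A(v-u)^{\mu_1}u^{\lambda-1}$. Since $v-u\le 1$, both $(v-u)^{\mu_1}$ and $(v-u)^{\mu_2}$ are bounded by $(v-u)^{\mu_1\wedge\mu_2}$, whence $\|G_{uv}\|\le c(A+B)(v-u)^{\mu_1\wedge\mu_2}u^{\lambda-1}$, and taking suprema over the admissible pairs $(u,v)$ concludes the proof. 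The main technical obstacle is the uniform tracking of the singular weight $u^{\lambda-1}$ through the telescoping and mesh sums: the strict inequality $\mu_2>1$ (which ensures geometric convergence of the sewing series) and the hypothesis $\mu_1\ge 1$ (which permits the crucial comparison $N\cdot 2^{-n\mu_1}\le (v-u)^{\mu_1}$) are the two ingredients that allow the estimates to close.
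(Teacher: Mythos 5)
Your dyadic-telescoping strategy is genuinely different from the paper's, which relies on a non-trivial ``point-removal'' algorithm from Deya's earlier work together with a specific combinatorial bound (a uniform estimate on a quantity $Q^N_{\alpha,\lambda,\mu_2}$) to control the singular weights; the naive dyadic halving is more elementary and can in principle be made to close, but your write-up contains two concrete errors in the $\alpha$-H\"older estimate.

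You assert that the non-singular side of $B$ gives $\|\delta G_{a,c,b}\|\le B\,\ell_j^{\mu_2}$. It does not: the non-singular component of the $\cac_{3;\lambda}^{\alpha,\mu_2}$ seminorm only gives $\|\delta G_{a,c,b}\|\le B\,\ell_j^{\alpha}$, and with that bound the level-$j$ contribution is $2^jB(v-u)^\alpha 2^{-j\alpha}=B(v-u)^\alpha 2^{j(1-\alpha)}$, which diverges in $j$ whenever $\alpha<1$. To reach a $\mu_2$-type bound you must use the singular side, which carries the factor $a^{\lambda-1}$ and hence blows up on the leftmost node of each level (where $a=u$, possibly zero). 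The correct bookkeeping uses the $\alpha$-bound only on that leftmost node (whose geometric series $\sum_j 2^{-j\alpha}$ converges) and, for the remaining nodes, replaces the crude monotonicity $a^{\lambda-1}\le u^{\lambda-1}$ by the aggregated estimate $\sum_{m\ge 1}(m\ell_j)^{\lambda-1}\lesssim \ell_j^{\lambda-1}2^{j\lambda}$, producing the convergent factor $2^{-j(\mu_2-1)}$ with no residual $u$-dependence. A second, symmetric gap occurs in the leaf sum: the chain $t_i^{\lambda-1}\le u^{\lambda-1}$ together with $N2^{-n\mu_1}\le(v-u)^{\mu_1}$ yields $A(v-u)^{\mu_1}u^{\lambda-1}$, \emph{not} $A(v-u)^\alpha$; the weight $u^{\lambda-1}$ does not vanish, and it is unbounded as $u\to 0$ — e.g.\ on $\llbracket 0,1\rrbracket$ with $u=2^{-n}$ and $v=1$ one gets $\approx A\,2^{n(2-\mu_1-\lambda)}$, which diverges whenever $\mu_1+\lambda<2$. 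Here too the fix passes through the aggregated sum $\sum_i t_i^{\lambda-1}\lesssim 2^{n(1-\lambda)}N^\lambda$ (using $\mu_1\ge 1$ to close), not through monotonicity. Your singular estimate is essentially sound, apart from the leftmost mesh interval when $u=s>0$ (an edge case that both arguments share and that the paper sidesteps by only applying the lemma with $s=0$).
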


\begin{proof}
See Appendix \ref{proofoflem:discr-sewing}.
\end{proof}

\subsection{Existence of a solution in the (VF2)-situation}

\begin{proposition}\label{prop:l-n}
Let Hypothesis (VF2) prevail and assume additionally that 
$$\cn[\der h;\cac_{2;\beta}^{\ga,1}([0,1];V)] \leq K \ , \ \text{for some} \ K\geq 1 \ .$$
 Then there exists a constant $c_0$ (which depends only on $B$, $\varSigma$, $\ga$ and $\beta$) such that if we set $T_0=T_0(K):=\min\big(1,(c_0 K)^{-6/(3\ga-1)} \big)$, the following assertion holds true for every $k\leq 1/T_0$: if $\|\mathbf{z}\|_{\ga;[0,1]} \leq \big( 1+\|y^n_{kT_0}\|\big)^{-1}$, then
\begin{equation}\label{bound-l-n-local}
\cn[L^n;\cac_{2;\beta}^{\ga,1}(\llbracket kT_0,(k+1)T_0\wedge 1\rrbracket;V)] \leq c_0\, K \big\{1+\|y^n_{kT_0}\| \big\} \ .
\end{equation}
\end{proposition}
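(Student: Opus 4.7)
The plan is to apply the discrete sewing lemma (Lemma~\ref{lem:discr-sewing}) to the two-index path $L^n$ on the subinterval $J_k:=\llbracket kT_0,(k+1)T_0\wedge 1\rrbracket$, with the choice of parameters $\al=\ga$, $\la=\beta$, $\mu_1=1$, $\mu_2=3\ga$. Since $\ga>1/3$, one has $\mu_2>1$, so the lemma is applicable and reduces the target estimate to the control of two quantities: the single-step norm $\cm_\beta^{\ga,1}[L^n;J_k]$ and the cocycle norm $\cn[\der L^n;\cac_{3;\beta}^{\ga,3\ga}(J_k)]$.

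The single-step piece is immediate: by construction $L^n_{t_it_{i+1}} = B(y^n_{t_i})(\der h)_{t_it_{i+1}}$, so the linear growth $\|B(v)\|\lesssim 1+\|v\|$ coming from Hypothesis (VF2), combined with the assumption $\cn[\der h;\cac_{2;\beta}^{\ga,1}([0,1])]\le K$, yields both a H\"older bound of order $K(1+\|y^n\|_{\infty;J_k})|t_{i+1}-t_i|^\ga$ and a singular bound of order $K(1+\|y^n\|_{\infty;J_k})|t_{i+1}-t_i|\,t_i^{\beta-1}$. For the cocycle piece, the standard rough-path algebra — $\der(\der y^n)=0$, additivity of $\der z$ and Chen's relation $\der\mathbf{z}^{\mathbf{2}}_{sut}=\der z_{su}\otimes\der z_{ut}$ — gives the expansion
\begin{equation*}
\der L^n_{sut} = -[\varSigma(y^n_s)-\varSigma(y^n_u)]\der z_{ut}-(D\varSigma\cdot\varSigma)(y^n_s)(\der z_{su}\otimes \der z_{ut})+[(D\varSigma\cdot\varSigma)(y^n_u)-(D\varSigma\cdot\varSigma)(y^n_s)]\mathbf{z}^{\mathbf{2}}_{ut}.
\end{equation*}
A first-order Taylor expansion of $\varSigma(y^n_u)-\varSigma(y^n_s)$ together with the identities $(\der y^n)_{su}=\varSigma(y^n_s)\der z_{su}+Q^n_{su}$ and $Q^n_{su}=L^n_{su}+(D\varSigma\cdot\varSigma)(y^n_s)\mathbf{z}^{\mathbf{2}}_{su}$ generates a leading quadratic contribution that cancels the second term of the decomposition. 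The surviving pieces couple $L^n_{su}$, $\mathbf{z}^{\mathbf{2}}_{su}$ or quadratic remainders in $\der y^n$ with $\der z_{ut}$ or $\mathbf{z}^{\mathbf{2}}_{ut}$, all of joint H\"older order $3\ga$ once one systematically uses the singular bound on $L^n_{su}$ (this exchange relies on the inequality $1+\ga\ge 3\ga$, valid for $\ga\le 1/2$, to convert $|u-s|\cdot|t-u|^\ga$ into $|t-s|^{3\ga}$ on $J_k$).

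The last step is to run a bootstrap on $N:=\cn[L^n;\cac_{2;\beta}^{\ga,1}(J_k)]$. The identity $(\der y^n)_{st}=L^n_{st}+\varSigma(y^n_s)\der z_{st}+(D\varSigma\cdot\varSigma)(y^n_s)\mathbf{z}^{\mathbf{2}}_{st}$ allows one to control $\|y^n\|_{\infty;J_k}$ and $\cn[y^n;\cac_1^\ga(J_k)]$ in terms of $\|y^n_{kT_0}\|$, $N$, $K$, $\|\mathbf{z}\|_\ga$ and $T_0^\ga$; invoking the smallness assumption $\|\mathbf{z}\|_\ga(1+\|y^n_{kT_0}\|)\le 1$ one reaches bounds on both $\cm_\beta^{\ga,1}[L^n;J_k]$ and $\cn[\der L^n;\cac_{3;\beta}^{\ga,3\ga}(J_k)]$ of the form $c_1K(1+\|y^n_{kT_0}\|)+c_1T_0^{\varepsilon_0}N$ for some $\varepsilon_0>0$. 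Plugging these into the sewing inequality gives $N\le c_2K(1+\|y^n_{kT_0}\|)+c_2T_0^{\varepsilon_0}N$, and the prescription $T_0\le (c_0K)^{-6/(3\ga-1)}$ for $c_0$ sufficiently large (depending only on $B,\varSigma,\ga,\beta$) absorbs the last term into the left-hand side, yielding \eqref{bound-l-n-local}.

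The main obstacle is the careful book-keeping of the singular weight $s^{\beta-1}$ throughout the cocycle expansion: the singular component of $L^n$ (inherited from the drift $B\,dh$) propagates into $Q^n$ and must remain controllable by the $\cac_{3;\beta}^{\ga,3\ga}$ topology after multiplication against every rough-path quantity ($\der z$, $\mathbf{z}^{\mathbf{2}}$). The standard rough-path cancellation of the quadratic $(\der z)\otimes(\der z)$-term is precisely what reduces the remainder to joint H\"older order $3\ga>1$ and thus makes the singular sewing argument applicable.
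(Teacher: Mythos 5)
Your identification of the overall framework (singular sewing lemma, Chen decomposition of $\der L^n$, Taylor cancellation of the quadratic $\der z\otimes\der z$ term, and exploitation of the smallness condition $\|\mathbf{z}\|_\ga(1+\|y^n_{kT_0}\|)\le 1$) matches the paper's proof, and your algebraic expansion of $\der L^n_{sut}$ is correct. The gap is in the final absorption step. You claim that, after using the smallness of $\|\mathbf{z}\|_\ga$, the estimates for $\cm_\beta^{\ga,1}[L^n;J_k]$ and $\cn[\der L^n;\cac_{3;\beta}^{\ga,3\ga}(J_k)]$ are \emph{linear} in $N:=\cn[L^n;\cac_{2;\beta}^{\ga,1}(J_k)]$, namely of the form $c_1K(1+\|y^n_{kT_0}\|)+c_1T_0^{\varepsilon_0}N$. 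This is not true. Each of the pieces $I^i\der z^i$, $II^i\der z^i$, $III^{ij}\mathbf{z}^{\mathbf{2},ij}$ involves prefactors of the form $(1+\|y^n\|_{\infty;J_k})^\ell$ with $\ell\ge 1$, and the sup-norm $\|y^n\|_{\infty;J_k}$ itself is controlled by $(1+\|y^n_{kT_0}\|)(1+NT_0^\ga)$; combined with the $N$-dependence of $\der y^n$, $Q^n$, $L^n$, this produces a bound that is genuinely \emph{polynomial} in $N$ (quadratic and cubic terms appear), not linear. (The paper's own computation at this point exhibits the factor $c_K^3$.) A linear absorption $N\le A+c_2T_0^{\varepsilon_0}N$ therefore cannot be read off from the estimates as they stand.

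This matters because a polynomial absorption on the whole block $J_k$ does not close by itself: the inequality $N\le A+B(T_0)N+C(T_0)N^2+D(T_0)N^3$ has more than one solution branch, and the discrete quantity $N$ is not a priori known to lie in the small one (nor is it uniformly bounded in $n$). The paper sidesteps this by running a genuine \emph{induction} over the partition points: one assumes $\cn[L^n;\cac_{2;\beta}^{\ga,1}(\llbracket 0,t_q\rrbracket)]\le c_K(1+\|y^n_0\|)$ and derives the \emph{same} bound on $\llbracket 0,t_{q+1}\rrbracket$, the key being that all the auxiliary bounds on $y^n$ and $Q^n$ entering the estimate for $\cn[\der L^n;\cac_{3;\beta}^{\ga,3\ka}(\llbracket 0,t_{q+1}\rrbracket)]$ only involve the partition up to $t_q$, where the inductive hypothesis is in force. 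It is also at this stage that the paper's choice $\mu_2=3\ka$ with $\ka=\tfrac12(\tfrac13+\ga)<\ga$ (rather than your $\mu_2=3\ga$) pays off: it manufactures the explicit factor $T_0^{3(\ga-\ka)}$ that, together with the choices $c_K:=c_0K$ and $T_0\sim(c_0K)^{-1/(\ga-\ka)}=(c_0K)^{-6/(3\ga-1)}$, exactly absorbs the cubic term $c_K^3$ and closes the induction. To repair your argument you would either have to pass to this pointwise induction, or keep track of all the powers of $N$ and then supply a separate continuity/monotonicity argument ruling out the large solution branch.
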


\begin{proof} The strategy consists in an iteration procedure over the points of the partition. So, assume that (\ref{bound-l-n-local}) holds true on an interval $\llbracket 0, t_q\rrbracket$, with $t_q \leq T_0$ (for some time $T_0$ to be determined along the proof). In other words, assume that
\begin{equation}\label{bound-l-n-proof}
\cn[L^n;\cac_{2;\beta}^{\ga,1}(\llbracket 0,t_q\rrbracket;V)] \leq c_K \big\{1+\|y^n_{0}\| \big\} \ ,
\end{equation}
where we denote from now on $c_K:=c_0 K$ (for some constant $c_0$ to be fixed later on). Due to (\ref{regu-b-sigma-1}) and (\ref{regu-b-sigma-2}), it is then easy to check that the following bounds hold true as well:
\begin{equation}\label{boun-infi-proof}
\cn[y^n;\cac_1^0(\llbracket 0,t_q\rrbracket)] \lesssim \{1+\|y^n_0\| \} \{1+c_K T_0^\ga\} \ ,
\end{equation}
and
\begin{equation}\label{boun-k-n-proof}
\max \big( \cn[y^n;\cac_1^\ga(\llbracket 0,t_q\rrbracket)] , \cn[Q^n;\cac_{2;\beta}^{\ga,2\ga}(\llbracket 0,t_q\rrbracket)]  \big) \lesssim \{1+\|y^n_0\| \} \{1+c_K \} \ .
\end{equation}
Now, in order to extend (\ref{bound-l-n-proof}) on $\llbracket 0,t_{q+1}\rrbracket$ (assuming that $t_{q+1}\leq T_0$), let us first apply Lemma \ref{lem:discr-sewing} to $L^n$ and assert that
\begin{equation}\label{sewing-l-n-proof}
\cn[L^n;\cac_{2;\beta}^{\ga,1}(\llbracket 0,t_{q+1}\rrbracket)]\lesssim \cm_{\beta}^{\ga,1}\big[L^n;\llbracket 0,t_{q+1}\rrbracket)]+\cn[\der L^n;\cac_{3;\beta}^{\ga,3\ka}(\llbracket 0,t_{q+1}\rrbracket)] \ ,
\end{equation}
where we set from now on $\ka:=\frac12 \big( \frac13+\ga\big)$, so that $1<3\ka<3\ga$.

\smallskip

As far as the first term is concerned, we can use the fact that $R^n_{t_it_{i+1}}=0$, and then (\ref{regu-b-sigma-1}) and (\ref{boun-infi-proof}), to deduce that
\begin{eqnarray*}
\cm_{\beta}^{\ga,1}\big[L^n;\llbracket 0,t_{q+1}\rrbracket)]&=&\cn[B(y^n) \, \der h;\cac_{2;\beta}^{\ga,1}(\llbracket 0,t_{q+1}\rrbracket)]\\
& \leq& K \cdot \cn[B(y^n);\cac_1^0(\llbracket 0,t_q\rrbracket] \ \lesssim \ K \{1+\|y^n_0\| \} \{1+c_K T_0^\ga\} \ . 
\end{eqnarray*}

\smallskip

In order to estimate $\cn[\der L^n;\cac_{3;\beta}^{\ga,3\ka}(\llbracket 0,t_{q+1}\rrbracket)]$, let us first rely on Chen relation and decompose the increments of $L^n$ as $\der L^n=I^i \, \der z^i+II^i \, \der z^i+III^{ij} \, \mathbf{z}^{\mathbf{2},ij}$, where we have set
\begin{equation}\label{defi-i}
I^i_{st}:=\int_0^1 d\la \, (D\varSigma_i)(y^n_s+\la(\der y^n)_{st}) \, Q^n_{st} \ ,
\end{equation}
\begin{equation}\label{defi-ii}
II^i_{st}:=\int_0^1 d\la \, \big[(D\varSigma_i)(y^n_s+\la (\der y^n)_{st})-(D\varSigma_i)(y^n_s)  \big] \, \varSigma_j(y^n_s) \, (\der z^{j})_{st} \ ,
\end{equation}
\begin{equation}\label{defi-iii}
III^{ij}_{st}:=\der (D\varSigma_i \cdot \varSigma_j)(y^n)_{st} \ .
\end{equation}
For $I^i \, \der z^i$, we can combine (\ref{regu-b-sigma-1}), (\ref{boun-infi-proof}) and (\ref{boun-k-n-proof}) to get that
\begin{eqnarray*}
\cn[I^i \, \der z^i;\cac_{3;\beta}^{\ga,3\ka}(\llbracket 0,t_{q+1}\rrbracket)] &\lesssim & T_0^{3(\ga-\ka)} \big( \{1+\|y^n_0\| \}\{ 1+c_K\}\big)^2\, \|\mathbf{z}\|_{\ga;[0,1]}\\
&\lesssim& \{1+\|y^n_0\| \}\{ 1+T_0^{3(\ga-\ka)} c_K^2\} \cdot \Big( \{1+\|y^n_0\| \} \|\mathbf{z}\|_{\ga;[0,1]} \Big)\\
&\lesssim& \{1+\|y^n_0\| \}\{ 1+T_0^{3(\ga-\ka)} c_K^2\} \ ,
\end{eqnarray*}
where we have used the assumption $\{1+\|y^n_0\| \} \|\mathbf{z}\|_{\ga;[0,1]}\leq 1$ to derive the third inequality.

\smallskip

\noindent
With similar arguments, we can show that
\begin{eqnarray*}
\cn[II^i \, \der z^i;\cac_{3;\beta}^{\ga,3\ka}(\llbracket 0,t_{q+1}\rrbracket)] 
&\lesssim& \{1+\|y^n_0\| \}\{ 1+T_0^{3(\ga-\ka)} c_K^3\} \cdot \Big( \{1+\|y^n_0\| \}^2 \|\mathbf{z}\|_{\ga;[0,1]}^2 \Big)\\
&\lesssim& \{1+\|y^n_0\| \}\{ 1+T_0^{3(\ga-\ka)} c_K^3\} \ .
\end{eqnarray*}
Finally, thanks to the second estimate in (\ref{regu-b-sigma-2}), we obtain that
$$\cn[III^{ij} \, \mathbf{z}^{\mathbf{2},ij};\cac_{3;\beta}^{\ga,3\ka}(\llbracket 0,t_{q+1}\rrbracket)] 
\lesssim \{1+\|y^n_0\| \}\{ 1+T_0^{3(\ga-\ka)} c_K^2\} \ .$$

\smallskip

Going back to (\ref{sewing-l-n-proof}), we have shown that, for some constant $c_1$ depending only on $B$, $\varSigma$ and $(\ga,\ka,\beta)$, 
$$\cn[L^n;\cac_{2;\beta}^{\ga,1}(\llbracket 0,t_{q+1}\rrbracket)] \leq \{1+\|y^n_0\|\} \cdot \Big( c_1K \{1+T_0^{3(\ga-\ka)}c_K^3\} \big) \ .$$
Let us now set $c_0:=2c_1$, $c_K:=c_0K$ and $T_0:=\min(1,(2c_1 K)^{-1/(\ga-\ka)})$, in such a way that
$$c_1K \{1+T_0^{3(\ga-\ka)}c_K^3\} \leq c_K \ ,$$
and accordingly $\cn[L^n;\cac_{2;\beta}^{\ga,1}(\llbracket 0,t_{q+1}\rrbracket)] \leq c_K\{1+\|y^n_0\|\}$ as desired. 

\

This iteration procedure allows us to extend the bound (\ref{bound-l-n-proof}) over the interval $\llbracket 0,T_0\rrbracket$. Then it is easy to see that the very same arguments can be used for any interval $\llbracket kT_0,(k+1)T_0\rrbracket$, which completes the proof.

\end{proof}

\begin{corollary}\label{coro:unif-est}
Let Hypothesis (VF2) prevail and assume additionally that 
$$\cn[\der h;\cac_{2;\beta}^{\ga,1}([0,1];V)] \leq K \quad  \text{and} \quad \|v_0\| \leq K \ , \ \text{for some}\ K\geq 1 \ .$$
Then there exists $M_K>0$ such that if $\|\mathbf{z}\|_{\ga;[0,1]} \leq M_K$, one has
\begin{equation}\label{boun-quanti-inter}
\sup_{n\geq 0}  \, \max \bigg( \cn[y^n;\cac_1^0(\llbracket 0,1\rrbracket)],\cn[y^n;\cac_1^\ga(\llbracket 0,1\rrbracket)],\cn[Q^n;\cac_{2;\beta}^{\ga,2\ga}(\llbracket 0,1\rrbracket)],\cn[L^n;\cac_{2;\beta}^{\ga,1}(\llbracket 0,1\rrbracket)] \bigg)\leq C(K) \ , 
\end{equation}
for some growing function $C:\R^+ \to \R^+$. As a result, under the same assumptions and if $\|\mathbf{z}\|_{\ga;[0,1]} \leq M_K$, it holds that
\begin{equation}\label{boun-r-n}
\sup_{n\geq 0} \, \cn[R^n;\cac_{2;\beta}^{\ga,3\ga}(\llbracket 0,1\rrbracket)] \leq D(K) \ ,
\end{equation}
for some growing function $D:\R^+ \to \R^+$.
\end{corollary}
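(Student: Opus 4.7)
The proof proceeds by iterating the local estimate of Proposition~\ref{prop:l-n} across the $\lceil 1/T_0(K)\rceil$ sub-intervals of the form $\llbracket kT_0,(k+1)T_0\wedge 1\rrbracket$, where $T_0=T_0(K)$ depends only on $K$. The central task is to produce a $K$-dependent bound $N(K)$ such that $\sup_{n}\max_{k}\|y^n_{kT_0}\|\leq N(K)$, provided $M_K$ is chosen appropriately.

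To this end, I would argue by induction on $k$. The base case $\|y^n_0\|=\|v_0\|\leq K$ is immediate. For the inductive step, assume $\|y^n_{kT_0}\|\leq N(K)$ and select $M_K$ such that $M_K(1+N(K))\leq 1$; then the smallness condition of Proposition~\ref{prop:l-n} is met at the $k$-th step, yielding the local bound (\ref{bound-l-n-local}) on $L^n$. Combining this with the defining identity
\begin{equation*}
(\der y^n)_{st}=L^n_{st}+\varSigma(y^n_s)(\der z)_{st}+(D\varSigma\cdot\varSigma)(y^n_s)\,\mathbf{z}^{\mathbf{2}}_{st}
\end{equation*}
and the linear growth bounds of (VF2), one obtains a recursion of the form $\|y^n_{(k+1)T_0}\|\leq (1+c(K)T_0^\ga)\|y^n_{kT_0}\|+c(K)T_0^\ga$. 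Iterating over the fixed number $\lceil 1/T_0(K)\rceil$ of sub-intervals produces an explicit growing function $N(K)$, which in turn pins down the final choice of $M_K$.

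With $N(K)$ in hand, the uniform bounds on $\cn[y^n;\cac_1^0(\llbracket 0,1\rrbracket)]$, $\cn[y^n;\cac_1^\ga(\llbracket 0,1\rrbracket)]$, $\cn[Q^n;\cac_{2;\beta}^{\ga,2\ga}(\llbracket 0,1\rrbracket)]$ and $\cn[L^n;\cac_{2;\beta}^{\ga,1}(\llbracket 0,1\rrbracket)]$ are obtained by patching the sub-interval estimates (\ref{bound-l-n-local}), (\ref{boun-infi-proof}) and (\ref{boun-k-n-proof}). For pairs $s<t$ lying inside a single sub-interval, the local bounds apply directly. For pairs straddling at least one sub-interval endpoint, one uses either a telescoping argument or the crude bound $|t-s|^{-\ga}\leq T_0(K)^{-\ga}$ together with the now-uniform $\cac_1^0$ control to pass from local to global estimates. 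This delivers (\ref{boun-quanti-inter}). The bound (\ref{boun-r-n}) then follows from the discrete sewing Lemma~\ref{lem:discr-sewing} applied to $R^n$ with parameters $(\al,\mu_1,\mu_2)=(\ga,3\ga,3\ga)$: since $R^n_{t_it_{i+1}}=0$ by construction of the scheme, the diagonal term $\cm_\beta^{\ga,3\ga}[R^n;\llbracket 0,1\rrbracket]$ vanishes, so only $\cn[\der R^n;\cac_{3;\beta}^{\ga,3\ga}]$ must be controlled. Writing $\der R^n=\der L^n-\der(B(y^n_\cdot)(\der h)_{\cdot\cdot})$ and noting $\der(B(y^n)(\der h))_{sut}=-\der(B(y^n))_{su}(\der h)_{ut}$, one injects the uniform bounds on $y^n$, the hypothesis $\cn[\der h;\cac_{2;\beta}^{\ga,1}]\leq K$, and the decomposition of $\der L^n$ into the terms $I^i,II^i,III^{ij}$ already analyzed in the proof of Proposition~\ref{prop:l-n}.

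The main obstacle of this plan lies in the closing of the induction loop: the smallness hypothesis $\|\mathbf{z}\|_{\ga;[0,1]}(1+\|y^n_{kT_0}\|)\leq 1$ of Proposition~\ref{prop:l-n} involves $y^n_{kT_0}$, whose a priori bound itself requires $\|\mathbf{z}\|_{\ga;[0,1]}$ to be sufficiently small. The crucial point is that the number of iteration steps is controlled solely by $1/T_0(K)$, a quantity depending on $K$ but \emph{not} on $\mathbf{z}$, which makes the fixed-point selection $M_K=M_K(N(K))$ feasible; without this feature of Proposition~\ref{prop:l-n}, one would genuinely face the explosion phenomenon alluded to in Section~\ref{subsec:heuri-coupl-sys}.
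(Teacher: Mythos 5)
Your proof is correct and follows essentially the same route as the paper's: iterate the local estimate of Proposition~\ref{prop:l-n} across the $N_K=\lceil 1/T_0(K)\rceil$ subintervals, observe that $N_K$ depends only on $K$ (not on $\mathbf{z}$) so that one can close the loop by choosing $M_K$ after the affine recursion has produced the global bound $N(K)$, patch the local H\"older/singular-H\"older estimates into global ones, and then derive \eqref{boun-r-n} from Lemma~\ref{lem:discr-sewing} together with the decomposition $(\der R^n)_{sut}=\der B(y^n)_{su}(\der h)_{ut}+(\der L^n)_{sut}$ and the triple $(I,II,III)$ of Proposition~\ref{prop:l-n}. The paper phrases the iteration via an auxiliary sequence $(c_k)$ depending only on $(B,\varSigma,\ga,\beta)$ and sets $M_K:=(1+c_{N_K}(1+K))^{-1}$, but this is the same mechanism as your $N(K)$-based fixed-point selection.
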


\begin{proof}
Using (\ref{bound-l-n-local}) as well as its spin-offs (\ref{boun-infi-proof}) and (\ref{boun-k-n-proof}), it is not hard to exhibit a growing sequence $(c_k)$ that depends only on $(B,\varSigma,\ga,\beta)$ (and not on $K$) such that the following property holds true: for every $k\geq 0$, if $\|\mathbf{z}\|_{\ga;[0,1]} \leq (1+c_k \{1+K\})^{-1}$, then one has both
\begin{equation}\label{boun-seq-1}
\cn[y^n;\cac_1^0(I_k) ] \leq c_{k+1}  \{1+K\} 
\end{equation}
and
\begin{equation}\label{boun-seq-2}
\max \big( \cn[y^n;\cac_1^\ga(I_k)] , \cn[Q^n;\cac_{2;\beta}^{\ga,2\ga}(I_k)],\cn[L^n;\cac_{2;\beta}^{\ga,1}(I_k)]  \big) \leq c_{k+1}  \{1+K^2\} \ ,
\end{equation}
where we have set $I_k:=\llbracket kT_0,(k+1)T_0\rrbracket$. As a result, if we denote by $N_K$ the smallest integer such that $T_0 N_K \geq 1$ and assume that $\|\mathbf{z}\|_{\ga;[0,1]} \leq M_K:=(1+c_{N_K}(1+K))^{-1}$, then both bounds (\ref{boun-seq-1}) and (\ref{boun-seq-2}) hold true for $k=0,\ldots,N_K-1$. The extension of these local bounds into global ones (that is, on the interval $\llbracket 0,1\rrbracket$) is then a matter of standard arguments, which achieves the proof of (\ref{boun-quanti-inter}).

\smallskip

As far as (\ref{boun-r-n}) is concerned, apply first Lemma \ref{lem:discr-sewing} to the path $R^n$, which, since $R^n_{t_it_{i+1}}=0$, entails that
$$
\cn[R^n;\cac_{2;\beta}^{\ga,3\ga}(\llbracket 0,1\rrbracket;V)] \lesssim \cn[\der R^n;\cac_{3;\beta}^{\ga,3\ga}(\llbracket 0,1 \rrbracket;V)] \big\} \ .
$$
Then, just as in the proof of Proposition \ref{prop:l-n}, observe that we can decompose the increments of $R^n$ as
\begin{equation}\label{decompo-incr-r-n}
(\der R^n)_{sut}=\der B(y^n)_{su} (\der h)_{ut}+(\der L^n)_{sut}=\der B(y^n)_{su} (\der h)_{ut}+I^i_{su} \der z^i_{ut}+II^i_{su}  \der z^i_{ut}+III^{ij}_{su} \mathbf{z}^{\mathbf{2},ij}_{ut} \ ,
\end{equation}
where the paths $(I,II,III)$ have been defined through (\ref{defi-i})-(\ref{defi-iii}). The conclusion is now easy to derive from the bound (\ref{boun-quanti-inter}).

\end{proof}

\begin{proof}[Proof of Theorem \ref{theo:gene}, point $(ii)$] Consider the sequence (still denoted by $y^n$) of continuous paths on $[0,1]$ defined through the linear interpolation of the points of the previous (discrete) sequence $y^n$. Define $M_K$ as in Corollary \ref{coro:unif-est} and assume that $\|\mathbf{z}\|_{\ga;[0,1]} \leq M_K$. Then it is readily checked that (\ref{boun-quanti-inter}) gives rise to a uniform bound for $\cn[y^n;\cac_1^\ga([0,1];V)]$, and we can therefore conclude about the existence of a path $y\in \cac_1^\ga([0,1];V)$, as well as a subsequence of $y^n$ (that we still denote by $y^n$), such that $y^n \to y$ in $\cac_1^\ka([0,1];V)$ for every $0<\ka<\ga$.

\smallskip

The fact that $y$ actually defines a solution of (\ref{general-equation}) is essentially obtained by passing to the limit in the uniform estimate (\ref{boun-r-n}). The details of this (easy) procedure can for instance be found at the end of \cite[Section 3.3]{Deya}. As for the bound (\ref{boun-solut}), it is a straightforward consequence of (\ref{boun-quanti-inter}).
\end{proof}

\subsection{Existence of a solution in the (VF1)-situation}

Under Hypothesis (VF1), the exhibition of a uniform bound for $\cn[R^n;\cac_{2;\beta}^{\ga,\mu}(\llbracket 0,1\rrbracket)]$ (with $\mu>1$) essentially follows the same general procedure as in the classical ('non-singular') situation treated in \cite{Da07} or \cite{FV-bk}. As we here consider slightly more specific topologies, let us briefly review the result at the core of this procedure.

\begin{proposition}\label{prop:bound-vf1}
Let Hypothesis (VF1) prevail and assume additionally that 
$$\cn[\der h;\cac_{2;\beta}^{\ga,1}([0,1];\R^m)] \leq K \ , \ \text{for some}\ K\geq 1 \ .$$
Also, fix a parameter $\ka$ such that $1<3\ka<3\ga$. Then there exists a a constant $c_0$ (which depends only on $B$, $\varSigma$, $\ga$, $\beta$ and $\ka$) such that if we set 
$$T_0=T_0(\lVert \mathbf{z}\rVert_{\ga},K):=\min\Big(1,\Big( c_0\big\{1+\lVert \mathbf{z}\rVert_{\ga}\big\} K\Big)^{-1/(\ga-\ka)} \Big) \ ,$$
the following property holds true: for every $0<T_1<T_0$ and every $k\leq 1/T_1$,
\begin{equation}\label{firs-est-vf1}
\cn[R^n;\cac_{2;\beta}^{\ga,3\ka}(\llbracket kT_1,(k+1)T_1\wedge 1 \rrbracket)] \leq c_0\, K \big\{1+\|y^n_{kT_1}\| \big\} \ .
\end{equation}
\end{proposition}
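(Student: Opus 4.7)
The strategy closely parallels that of Proposition \ref{prop:l-n}, except that we now benefit from the boundedness of $\varSigma$ (and its derivatives) to avoid any iteration on a power of $\|y^n\|$: the only linear-growth dependence we must track is the one coming from $B$, which will be carried by the factor $\{1+\|y^n_{kT_1}\|\}$ in the final bound.

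The plan is as follows. First, I apply Lemma \ref{lem:discr-sewing} to $R^n$ on the interval $\llbracket kT_1,(k+1)T_1\wedge 1\rrbracket$ with $(\al,\mu_1,\mu_2)=(\ga,3\ka,3\ka)$. Since $R^n_{t_i t_{i+1}}=0$ by construction, the discrete ``M'' term vanishes, and we are left with
$$\cn[R^n;\cac_{2;\beta}^{\ga,3\ka}(\llbracket kT_1,(k+1)T_1\wedge 1\rrbracket)] \ \lesssim \ \cn[\der R^n;\cac_{3;\beta}^{\ga,3\ka}(\llbracket kT_1,(k+1)T_1\wedge 1\rrbracket)] \ .$$
Then, using Chen's relation for $\mathbf{z}$ together with straightforward Taylor expansions, I decompose the increments of $R^n$ exactly as in (\ref{decompo-incr-r-n}), namely
$$(\der R^n)_{sut}=\der B(y^n)_{su}(\der h)_{ut}+I^i_{su}(\der z^i)_{ut}+II^i_{su}(\der z^i)_{ut}+III^{ij}_{su}\mathbf{z}^{\mathbf{2},ij}_{ut} \ , $$
with $(I,II,III)$ defined through \eqref{defi-i}--\eqref{defi-iii}.

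Next, I run a bootstrap iteration inside the interval $I_k:=\llbracket kT_1,(k+1)T_1\wedge 1\rrbracket$, in the spirit of the proof of Proposition \ref{prop:l-n}. Assume by induction that \eqref{firs-est-vf1} holds on $\llbracket kT_1,t_q\rrbracket$ with $t_q\leq (k+1)T_1\wedge 1$, namely
$$\cn[R^n;\cac_{2;\beta}^{\ga,3\ka}(\llbracket kT_1,t_q\rrbracket)] \ \leq \ c_0 K\{1+\|y^n_{kT_1}\|\} \ . $$
Inserting this into the defining relation of $R^n$ and using the uniform boundedness of $\varSigma, D\varSigma, D\varSigma\cdot\varSigma$ together with the linear growth $\|B(v)\|\lesssim 1+\|v\|$, I deduce the auxiliary estimates
$$\cn[y^n;\cac_1^0(\llbracket kT_1,t_q\rrbracket)] \ \lesssim \ \{1+\|y^n_{kT_1}\|\}\{1+c_0 K T_1^{\ga}\} \ ,$$
$$\max\big(\cn[y^n;\cac_1^\ga(\llbracket kT_1,t_q\rrbracket)],\cn[Q^n;\cac_{2;\beta}^{\ga,2\ga}(\llbracket kT_1,t_q\rrbracket)]\big) \ \lesssim \ \{1+\|y^n_{kT_1}\|\}\{1+\|\mathbf{z}\|_\ga\}K \ ,$$
which in turn control each of the four terms in the Chen decomposition: $\der B(y^n)(\der h)$ is handled thanks to the boundedness of $D^{(1)}B$ and the fact that $\der h\in \cac_{2;\beta}^{\ga,1}$; the three rough-integral terms $I,II,III$ are controlled as in the proof of Corollary \ref{coro:unif-est}, using only the uniform boundedness of $\varSigma$ and its derivatives. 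Summing up and comparing regularities, one obtains an inequality of the form
$$\cn[\der R^n;\cac_{3;\beta}^{\ga,3\ka}(\llbracket kT_1,t_{q+1}\rrbracket)] \ \leq \ c_1\{1+\|y^n_{kT_1}\|\}\Big(K+K\{1+\|\mathbf{z}\|_\ga\}^3 T_1^{3(\ga-\ka)}\Big) \ ,$$
for some constant $c_1=c_1(B,\varSigma,\ga,\ka,\beta)$.

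Finally, choosing $c_0:=2c_1$ and setting $T_0=T_0(\|\mathbf{z}\|_\ga,K):=\min\!\big(1,(c_0\{1+\|\mathbf{z}\|_\ga\}K)^{-1/(\ga-\ka)}\big)$ ensures that the second term on the right-hand side above is bounded by $c_0 K\{1+\|y^n_{kT_1}\|\}$ as soon as $T_1\leq T_0$, which propagates the induction hypothesis from $\llbracket kT_1,t_q\rrbracket$ to $\llbracket kT_1,t_{q+1}\rrbracket$. Iterating over $q$ proves \eqref{firs-est-vf1} on each dyadic sub-interval $\llbracket kT_1,(k+1)T_1\wedge 1\rrbracket$, as desired. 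The main technical subtlety lies in Step 4: one has to check that, under the singular scaling encoded by $\beta$, the term $\der B(y^n)_{su}(\der h)_{ut}$ indeed contributes to $\cn[\der R^n;\cac_{3;\beta}^{\ga,3\ka}]$ with the sharp exponent $3\ka$ in the singular part, which is where the assumption $\beta\geq \ga$ comes in (it guarantees that $|u-s|^\ga s^{\beta-1}\cdot|t-u|\lesssim |t-s|^{\ga+1}s^{\beta-1}\lesssim |t-s|^{3\ka}s^{\beta-1}$ on the relevant time-scale).
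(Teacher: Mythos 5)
Your overall strategy — apply the singular sewing Lemma \ref{lem:discr-sewing} to $R^n$ (using $R^n_{t_it_{i+1}}=0$ to kill the $\cm$-term), run the Chen-type decomposition \eqref{decompo-incr-r-n}, and bootstrap along the points of $\mathcal{P}_n$ — is precisely what the paper does, and your final threshold $T_0 = \min\big(1,(c_0\{1+\|\mathbf{z}\|_\ga\}K)^{-1/(\ga-\ka)}\big)$ matches the paper's. So the route is correct.

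There is, however, a genuine imprecision in your intermediate estimate
$$\max\big(\cn[y^n;\cac_1^\ga],\cn[Q^n;\cac_{2;\beta}^{\ga,2\ga}]\big) \lesssim \{1+\|y^n_{kT_1}\|\}\{1+\|\mathbf{z}\|_\ga\}K\ ,$$
which as stated hides the dependence on $c_0$. The bootstrap hypothesis $\cn[R^n;\cac_{2;\beta}^{\ga,3\ka}]\leq c_0 K\{1+\|y^n_{kT_1}\|\}$ enters into $\delta y^n = B(y^n_s)\delta h + \varSigma(y^n_s)\delta z + (D\varSigma\cdot\varSigma)(y^n_s)\mathbf{z}^{\bf 2}+R^n$, and the $R^n$ contribution to $\cn[y^n;\cac_1^\ga]$ carries no extra smallness — it produces exactly $c_0 K\{1+\|y^n_{kT_1}\|\}$. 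The correct shape is therefore the paper's assertion $(i)$, namely
$$\max\big(\cn[y^n;\cac_1^\ga],\cn[Q^n;\cac_{2;\beta}^{\ga,2\ga}]\big)\leq c_1 K\big[\|\mathbf{z}\|_\ga+\{1+\|y^n_{kT_1}\|\}\{1+c_0\}\{1+K|t-s|^\ga\}\big]\ ,$$
with $c_1$ depending only on $(B,\varSigma)$ — note the explicit multiplicative factor $\{1+c_0\}$. Without tracking this factor, your constant $c_1$ in the final inequality implicitly depends on $c_0$, which appears to make the assignment $c_0:=2c_1$ circular. The reason the loop still closes is that the $c_0$-dependence is only linear, and the definition of $T_0$ itself contains $c_0$, so that $T_1^{3(\ga-\ka)}\leq (c_0\{1+\|\mathbf{z}\|_\ga\}K)^{-3}$ produces a decay of order $c_0^{-3}$ that overkills the linear factor $\{1+c_0\}$. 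You should make this explicit rather than sweep it under the $\lesssim$; it is the one piece of bookkeeping the proof hinges on.

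Finally, your closing remark about the ``main technical subtlety'' is off the mark. The weight in $|\delta h_{ut}|\leq K|t-u|u^{\beta-1}$ is $u^{\beta-1}$, not $s^{\beta-1}$; the relevant observation is simply that $u^{\beta-1}\leq s^{\beta-1}$ since $u>s>0$ and $\beta\leq 1$, and then $|u-s|^\ga|t-u|\leq |t-s|^{1+\ga}\leq |t-s|^{3\ka}$ because $\ga<\tfrac12$ gives $3\ka<3\ga<1+\ga$. The condition $\beta\geq\ga$ is a standing assumption of the section (ensuring $\mathcal{E}^1_\ga\subset\cac_{1;\beta}^{\ga,1}$), not what is invoked here.
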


\begin{proof}
Just as in the proof of Proposition \ref{prop:l-n}, the strategy consists in an iteration procedure over the points of $\mathcal{P}_n$. The argument actually relies on the following two readily-checked assertions: $(i)$ If $\cn[R^n;\cac_{2;\beta}^{\ga,3\ka}(\llbracket s,t\rrbracket)] \leq c_0\, K \big\{1+\|y^n_{s}\| \big\}$, then one has
$$\max\big(\cn[y^n;\cac_1^\ga(\llbracket s,t\rrbracket)],\cn[Q^n;\cac_{2;\beta}^{\ga,2\ga}(\llbracket s,t\rrbracket)]\big)\leq c_1 \, K \big[ \lVert \mathbf{z}\rVert_\ga+\big\{ 1+\lVert y^n_s\rVert\big\}\big\{ 1+c_0\big\}\big\{ 1+K|t-s|^\ga\big\}\big]$$
for some constant $c_1$ that depends only on $(B,\varSigma)$; $(ii)$ With decomposition (\ref{decompo-incr-r-n}) in mind, one has 
$$\cn[\der R^n;\cac_{3;\beta}^{\ga,3\ka}(\llbracket s,t\rrbracket)] \leq c_2 \lln t-s \rrn^{3(\ga-\ka)}\big[ \cn[y^n;\cac_1^\ga(\llbracket s,t\rrbracket)] \{1+K+\lVert \mathbf{z}\rVert_\ga^2 \}+\cn[Q^n;\cac_{2;\beta}^{\ga,2\ga}(\llbracket s,t\rrbracket)]\lVert \mathbf{z}\rVert_\ga \big] $$
for some constant $c_2$ that depends only on $(B,\varSigma)$.

\smallskip

It is now easy to inject $(i)$ and $(ii)$ into the iteration scheme exhibited in the previous section for $L^n$ (note that we can additionally use the fact that $R^n_{t_it_{i+1}}=0$ here). The details of the procedure are therefore left to the reader.
\end{proof}

\begin{proof}[Proof of the existence statement in Theorem \ref{theo:gene-vf1}] 
Starting from (\ref{firs-est-vf1}) and using the same steps as in the proof of Corollary \ref{coro:unif-est}, one easily gets uniform estimates for both $\cn[y^n;\cac_1^\ga(\llbracket 0,1\rrbracket)]$ and $\cn[R^n;\cac_{2;\beta}^{\ga,3\ka}(\llbracket 0, 1 \rrbracket)]$. The derivation of a solution then follows from the same convergence argument as in the above proof of Theorem \ref{theo:gene}, point $(ii)$.
\end{proof}

\subsection{Uniqueness of the solution}
It is a well-known fact that uniqueness statements are usually less demanding than existence statements as far as global boundedness of the vector fields is concerned. Accordingly, in opposition with the previous existence proof (where specific sharp estimates had to be displayed), the strategy towards uniqueness essentially follows the same lines as in the standard situation. We briefly review the transposition of the main arguments in this singular setting.

\smallskip

Assume here that either Hypothesis (VF1) or Hypothesis (VF2) prevails and consider two solutions $U,\Uti$ of (\ref{general-equation}) with identical initial conditions. Then set 
$$R_{st}=R(y)_{st}:=(\der y)_{st}-B_i(y_s) \, (\der h^i)_{st}-\varSigma_j(y_s) \, (\der z^j)_{st}-(D\varSigma_j \cdot \varSigma_k)(y_s) \, \mathbf{z}^{\mathbf{2},jk}_{st} \ ,$$
$$Q_{st}=Q(y)_{st}:=(\der y)_{st}-\varSigma_j(y_s) \, (\der z^j)_{st} \ ,$$
and similarly $\Rti:=R(\yti)$, $\Qti:=Q(\yti)$. Also, fix $\mu$, resp. $\tilde{\mu} >1$ such that $\cn[R;\cac_{2;\beta}^{\ga,\mu}([0,1])]<\infty$, resp. $\cn[\widetilde{R};\cac_{2;\beta}^{\ga,\mu}([0,1])]<\infty$, as well as a parameter $\ka$ satisfying both $\frac13< \ka < \ga $ and $3\ka < \mu \wedge \tilde{\mu}$.

\begin{lemma}
There exists a finite constant $c_{R,\Rti}>0$ such that for every $s<t\in \mathcal{P}_n$, one has
\begin{equation}\label{boun-uni-1}
\cn[R-\Rti;\cac_{2;\beta}^{\ka,3\ka}(\llbracket s,t\rrbracket)] \leq c_{R,\Rti}\cdot \big\{ 2^{-n\ep}+\cn[\der(R-\Rti);\cac_{3;\beta}^{\ka,3\ka}(\llbracket s,t\rrbracket)]\big\} \ ,
\end{equation}
where $\ep:=\inf(\ga-\ka,(\mu \wedge \tilde{\mu})-3\ka)>0$.
\end{lemma}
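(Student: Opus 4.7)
The plan is to apply the singular discrete sewing Lemma~\ref{lem:discr-sewing} to the two-parameter path $G := R - \Rti$, using the parameters $\alpha = \kappa$, $\lambda = \beta$, $\mu_1 = \mu_2 = 3\kappa$ (note that $3\kappa > 1$ by the choice of $\kappa$). This immediately yields
$$
\cn[G;\cac_{2;\beta}^{\ka,3\ka}(\llbracket s,t\rrbracket)] \ \lesssim \ \cm_{\beta}^{\ka,3\ka}\big[G;\llbracket s,t\rrbracket\big] \ + \ \cn[\delta G;\cac_{3;\beta}^{\ka,3\ka}(\llbracket s,t\rrbracket)] \ .
$$
Since the last term is precisely the one that appears on the right-hand side of \eqref{boun-uni-1}, the remaining work consists in controlling the local quantity $\cm_{\beta}^{\ka,3\ka}[G;\llbracket s,t\rrbracket]$ by $c_{R,\Rti}\cdot 2^{-n\ep}$.

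This control comes from the fact that $R$ and $\Rti$, being the remainders associated with the (global) solutions $U$ and $\Uti$, belong by assumption to $\cac_{2;\beta}^{\ga,\mu}([0,1])$ and $\cac_{2;\beta}^{\ga,\tilde\mu}([0,1])$, while the one-step norm $\cm_{\beta}^{\ka,3\ka}$ involves only increments over a dyadic interval of length $2^{-n}$. Set $c_{R,\Rti}:=\cn[R;\cac_{2;\beta}^{\ga,\mu}([0,1])]+\cn[\Rti;\cac_{2;\beta}^{\ga,\tilde\mu}([0,1])]$. On the one hand, the global $\ga$-regularity yields, for every $i$,
$$
\frac{\lVert G_{t_i t_{i+1}}\rVert}{|t_{i+1}-t_i|^{\ka}} \ \leq \ c_{R,\Rti}\, |t_{i+1}-t_i|^{\ga-\ka} \ = \ c_{R,\Rti}\cdot 2^{-n(\ga-\ka)} \ .
$$
On the other hand, the second supremum in $\cm_{\beta}^{\ka,3\ka}$ only runs over $p+1\leq i\leq q$ (with $s=p/2^n$), so $t_i>0$, and the singular part of the norms of $R,\Rti$ applies to give
$$
\frac{\lVert G_{t_i t_{i+1}}\rVert}{|t_{i+1}-t_i|^{3\ka}\, t_i^{\beta-1}} \ \leq \ c_{R,\Rti}\, |t_{i+1}-t_i|^{(\mu\wedge\tilde\mu)-3\ka} \ = \ c_{R,\Rti}\cdot 2^{-n((\mu\wedge\tilde\mu)-3\ka)} \ .
$$
Taking the maximum of the two and recalling that $\ep=\inf(\ga-\ka,(\mu\wedge\tilde\mu)-3\ka)>0$ provides the desired bound $\cm_{\beta}^{\ka,3\ka}[G;\llbracket s,t\rrbracket]\leq c_{R,\Rti}\cdot 2^{-n\ep}$, and injecting this into the sewing estimate above concludes the proof.

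There is no genuine obstacle here: the statement is a direct by-product of the singular sewing Lemma~\ref{lem:discr-sewing} combined with the observation that the assumed regularities $\ga$ and $\mu\wedge\tilde\mu$ of the \emph{continuous} remainders strictly exceed the working exponents $\ka$ and $3\ka$, which is exactly what produces the $2^{-n\ep}$ gain once we restrict the comparison to the dyadic grid.
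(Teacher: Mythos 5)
Your proof is correct and follows essentially the same route as the paper: apply the singular discrete sewing Lemma~\ref{lem:discr-sewing} to $G=R-\Rti$ with exponents $(\alpha,\lambda,\mu_1,\mu_2)=(\kappa,\beta,3\kappa,3\kappa)$, then bound the one-step term $\cm_{\beta}^{\ka,3\ka}[R-\Rti;\llbracket s,t\rrbracket]$ by the triangle inequality and the a priori regularities $\cac_{2;\beta}^{\ga,\mu}$, $\cac_{2;\beta}^{\ga,\tilde\mu}$ of $R$ and $\Rti$, extracting the gain $2^{-n\ep}$ from the strict gaps $\ga-\ka>0$ and $(\mu\wedge\tilde\mu)-3\ka>0$. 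You merely spell out the two sub-cases of $\cm_{\beta}^{\ka,3\ka}$ more explicitly than the paper does, but the argument is identical.
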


\begin{proof}
It is a mere application of Lemma \ref{lem:discr-sewing}. Observe indeed that
\begin{eqnarray*}
\cm_{\beta}^{\ka,3\ka}\big[R-\Rti;\llbracket s,t\rrbracket\big]&\leq & \cm_{\beta}^{\ka,3\ka}\big[R;\llbracket s,t\rrbracket\big]+\cm_{\beta}^{\ka,3\ka}\big[\Rti;\llbracket s,t\rrbracket\big]\\
&\leq& c_{R,\Rti} \, \{2^{-n(\ga-\ka)}+2^{-n(\mu_1-3\ka)}+2^{-n(\mu_2-3\ka)} \} \ .
\end{eqnarray*}
\end{proof}

\begin{lemma}
There exists a finite constant $C_{y,\yti}>0$ such that for every $s<t \in \mathcal{P}_n$, one has
\begin{equation}\label{boun-uni-2}
\cn[\der(R-\Rti);\cac_{3;\beta}^{\ka,3\ka}(\llbracket s,t\rrbracket)] \leq C_{y,\yti} \lln t-s \rrn^{\ga-\ka} \, \cn_{\beta}^{\ka,2\ga}\big[(y,\yti);\llbracket s,t \rrbracket\big] \ ,
\end{equation}
where we have set
\begin{equation}\label{norm-proc-contr}
\cn_{\beta}^{\ka,2\ga}\big[(y,\yti);\llbracket s,t \rrbracket\big]:=\cn[y-\yti;\cac_1^0(\llbracket s,t\rrbracket)]+\cn[y-\yti;\cac_1^\ka(\llbracket s,t\rrbracket)]+\cn[Q-\Qti;\cac_{2;\beta}^{\ka,2\ga}(\llbracket s,t\rrbracket)] \ .
\end{equation}
\end{lemma}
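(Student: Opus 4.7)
The plan is to use the same Chen-type decomposition of $\der R$ already exhibited in the proof of Corollary~\ref{coro:unif-est}: for $s\leq u<v<w\leq t$ in $\mathcal{P}_n$, write
\begin{equation*}
(\der R)_{uvw} = \der B(y)_{uv}\,(\der h)_{vw} + I^i_{uv}(y)\,(\der z^i)_{vw} + II^i_{uv}(y)\,(\der z^i)_{vw} + III^{ij}_{uv}(y)\,\mathbf{z}^{\mathbf{2},ij}_{vw} \ ,
\end{equation*}
where $I,II,III$ are the continuous analogues of the paths in (\ref{defi-i})-(\ref{defi-iii}) (with $y^n$ replaced by $y$). Writing the same identity with $\yti$ instead of $y$ and subtracting reduces the problem to controlling, for each of the four resulting pieces, a "two-solution" difference of the shape $[\Phi(y)-\Phi(\yti)]_{uv}$ multiplied by the relevant increment of $h$, $z$, or $\mathbf{z}^{\mathbf{2}}$. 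A key preliminary observation is that only the $h$-increment carries a genuine singularity at the origin, through $|(\der h)_{vw}|\leq \cn[\der h;\cac_{2;\beta}^{\ga,1}]\min(|w-v|^\ga,\, |w-v|\,v^{\beta-1})$, so the $h$-piece is the only one that will need to be tested against the singular part of the $\cac_{3;\beta}^{\ka,3\ka}$-norm; the three $z$-pieces can be treated using their plain H\"older behaviour.

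The second step is to establish a Lipschitz-type estimate for each of the four pieces $[\Phi(y)-\Phi(\yti)]_{uv}$. The general recipe is to write $\Phi(y_u+\la\der y_{uv})-\Phi(\yti_u+\la\der\yti_{uv})$ via the fundamental theorem of calculus, split the resulting expression into a contribution in $\der(y-\yti)_{uv}$ and one in $(y-\yti)_u$, and combine with a Taylor expansion of $\Phi$. Carrying this out for $\der B(y)_{uv}-\der B(\yti)_{uv}$, $I^i_{uv}(y)-I^i_{uv}(\yti)$, $II^i_{uv}(y)-II^i_{uv}(\yti)$, and $III^{ij}_{uv}(y)-III^{ij}_{uv}(\yti)$ produces bounds proportional to $\cn[y-\yti;\cac_1^\ka]$, $\cn[y-\yti;\cac_1^0]$ and — only for the $I^i$-piece — $\cn[Q-\Qti;\cac_{2;\beta}^{\ka,2\ga}]$ (this last contribution arising from a factor $(Q-\Qti)_{uv}$ left over after differentiating in $\la$). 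These three quantities are precisely what the seminorm $\cn_\beta^{\ka,2\ga}$ in (\ref{norm-proc-contr}) collects, which explains its specific definition. Under (VF1) or (VF2), the derivatives of $B$, $\varSigma$, $D\varSigma$ and $D\varSigma\cdot\varSigma$ are bounded on the (relatively compact) image of the pair $(y,\yti)$, and all such bounds, together with the finite H\"older and singular norms of $y,\yti,Q,\Qti$, are absorbed into the constant $C_{y,\yti}$.

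Combining the preceding Lipschitz estimates with $|(\der z)_{vw}|\lesssim|w-v|^\ga$ and $|\mathbf{z}^{\mathbf{2}}_{vw}|\lesssim|w-v|^{2\ga}$, each of the four pieces gets estimated by $|w-u|^{\theta}\,\cn_\beta^{\ka,2\ga}$ with $\theta\geq\ka+\ga$ (when one uses the non-singular forms). This already suffices for the non-singular part of the target norm, since $|w-u|^{\ka+\ga}\leq|w-u|^\ka|t-s|^\ga\leq|w-u|^\ka|t-s|^{\ga-\ka}$ thanks to $|w-u|\leq|t-s|\leq 1$. For the singular part, the three $z$-pieces are absorbed via $u^{\beta-1}\geq 1$ (since $\beta\leq 1$, $u\leq 1$), provided that, for the $I^i$-piece, one uses the singular forms of $|(Q-\Qti)_{uv}|$ and $|\Qti_{uv}|$ rather than their non-singular counterparts; this yields a contribution of order $|w-u|^{3\ga}u^{\beta-1}$, which fits comfortably inside $|w-u|^{3\ka}u^{\beta-1}|t-s|^{\ga-\ka}$ (using $3(\ga-\ka)\geq\ga-\ka$ and $|w-u|\leq|t-s|\leq 1$).

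The main technical obstacle is then the singular bound for the $h$-piece: combining $v^{\beta-1}\leq u^{\beta-1}$ with $|v-u|^\ka|w-v|\leq|w-u|^{\ka+1}$, we need to verify that $|w-u|^{\ka+1}\leq|w-u|^{3\ka}|t-s|^{\ga-\ka}$, that is, $|w-u|^{1-2\ka}\leq|t-s|^{\ga-\ka}$. Since $1-2\ka>0$ (as $\ka<\ga<\frac12$), and since the algebraic condition $1-2\ka\geq\ga-\ka$ is equivalent to $\ka\leq 1-\ga$ — which follows directly from $\ka<\ga<\frac12$ — we conclude that $|w-u|^{1-2\ka}\leq|t-s|^{1-2\ka}\leq|t-s|^{\ga-\ka}$. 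This completes the verification of~(\ref{boun-uni-2}).
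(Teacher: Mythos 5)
Your proof is correct and follows the same route as the paper: the paper's own argument simply recalls the four-term decomposition $\der(R-\Rti)_{sut}=\der(B(y)-B(\yti))_{su}\,\der h_{ut}+[I^i-\widetilde{I}^i]_{su}\,\der z^i_{ut}+[II^i-\widetilde{II}^i]_{su}\,\der z^i_{ut}+[III^{ij}-\widetilde{III}^{ij}]_{su}\,\mathbf{z}^{\mathbf{2},ij}_{ut}$ and then asserts that \eqref{boun-uni-2} follows from ``standard differential-calculus arguments'' based on \eqref{regu-b-sigma-1}--\eqref{regu-b-sigma-2}. You have filled in precisely those arguments, including the exponent bookkeeping ($\ka+\ga$ versus $3\ka$, use of the singular bound on $Q-\Qti$ for the $I^i$-piece, and the inequality $1-2\ka\geq\ga-\ka$ for the $h$-piece), all of which check out.
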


\begin{proof}
First, note that the increments of $R$ (or $\Rti$) can be decomposed just as the increments of $R^n$ in the proof of Corollary \ref{coro:unif-est} (see (\ref{decompo-incr-r-n})), which allows us to write
$$
\der(R-\Rti)_{sut}=\der(B(y)-B(\yti))_{su}\, \der h_{ut}
+\big[ I^i_{su}-\widetilde{I}^i_{su}\big] \, \der z^i_{ut}+\big[ II^i_{su}-\widetilde{II}^i_{su}\big] \, \der z^i_{ut}+\big[ III^{ij}_{su}-\widetilde{III}^{ij}_{su}\big] \, \mathbf{z}^{\mathbf{2},ij}_{ut} \ ,
$$
where the paths $I,II,III$, resp. $\widetilde{I},\widetilde{II},\widetilde{III}$, are defined along (\ref{defi-i})-(\ref{defi-iii}) (replace $(y^n,Q^n)$ with $(y,Q)$, resp. $(\yti,\Qti)$). The bound (\ref{boun-uni-2}) is then obtained through standard differential-calculus arguments based on relations~(\ref{regu-b-sigma-1}) and~(\ref{regu-b-sigma-2}).
\end{proof}

\begin{proof}[Proof of Theorem \ref{theo:gene}, point $(i)$, and uniqueness property of Theorem \ref{theo:gene-vf1}] 
Consider the above setting and notations. First, going back to the very definitions of $(K,R)$ and $(\Kti,\Rti)$, it is not hard to check that for every $s<t\in \mathcal{P}_n$, one has, with the notation (\ref{norm-proc-contr}),
$$\cn_{\beta}^{\ka,2\ga}\big[ (y,\yti);\llbracket s,t \rrbracket \big] \leq c_{y,\yti} \Big\{ \|y_s-\yti_s \|+\lln t-s \rrn^{\ka} \cn_{\beta}^{\ka,2\ga}\big[ (y;\yti);\llbracket s,t\rrbracket\big]+\cn\big[R-\Rti;\cac_{2;\beta}^{\ka,3\ka}(\llbracket s,t\rrbracket\big]  \Big\} \ ,$$
where the constant $c_{y,\yti}$ does not depend on $n$. We can then combine (\ref{boun-uni-1})-(\ref{boun-uni-2}) and assert that for every $s<t\in \mathcal{P}_n$, 
$$\cn_{\beta}^{\ka,2\ga}\big[ (y,\yti);\llbracket s,t \rrbracket \big] \leq c_{y,\yti} \Big\{ \|y_s-\yti_s \|+\lln t-s \rrn^{\ga-\ka} \cn_{\beta}^{\ka,2\ga}\big[ (y;\yti);\llbracket s,t\rrbracket\big]+2^{-n\ep}  \Big\} \ .$$
The uniqueness result is now immediate. Indeed, for $T_0>0$ such that $c_{y,\yti} \, T_0^{\ga-\ka} \leq \frac12$, and since $y_0=\yti_0$, we first get that
$$\cn\big[y-\yti;\cac_1^0(\llbracket 0,T_0 \rrbracket)\big] \leq \cn_{\beta}^{\ka,2\ga}\big[(y,\yti);\llbracket 0,T_0\rrbracket \big] \leq C_{y,\yti} \cdot 2^{-n\ep} \ ,$$
and accordingly $y_t=\yti_t$ for every $t\in [0,T_0]$. The argument can then be repeated on $[T_0,2T_0]$, $[2T_0,3T_0]$, and so on.
\end{proof}

\section{Hitting step}\label{sec:roughcoupling}

Keeping in mind the strategy sketched out in Section \ref{subsec:heuri-coupl-sys}, the route to Step 1, that is the hitting step, is now quite clear: we need to check that the vector fields involved in the hitting system (\ref{sys-rough-heuri-sto}) do satisfy the assumptions of the previous section, and then see how the conditions therein exhibited (for the driver) can be injected into the general coupling machinery. 

\smallskip

We recall that we have fixed $H\in (1/3,1/2)$, $\ga\in (1/3,H)$, as well as vector fields $b:\R^d \to \R^d$ and $\si:\R^d \to \mathcal{L}(\R^d,\R^d)$ satisfying Hypotheses (\textbf{H1}) and (\textbf{H3}) (note that Hypothesis (\textbf{H2}) is not required at this stage of the procedure).

\subsection{Rough hitting}\label{subsec:rough-hitting}
This first section focuses on the hitting issue at the level of the general (deterministic) rough system, and therefore it settles the bases for our forthcoming stochastic analysis. Let us recall that the space $\mathcal{E}^2_\ga([0,1];\R^d)$ has been introduced through Notation \ref{nota:w-ga-1}, and let us fix two paths $h\in \mathcal{E}^2_\ga([0,1];\R^d)$, $z\in \mathcal{C}^\ga([0,1];\R^d)$, assuming in addition that $z$ can be canonically lifted into a $\ga$-rough path $\mathbf{z}:=\mathfrak{L}(z)$, in the sense of Definition~\ref{defi-canonic-rp}.

\begin{lemma}\label{lem:vf-appli}
Consider the Banach space $V_2:=\mathcal{W}^{1,\infty}([0,1];\R^d) \times L^\infty([0,1];\R^d)$, and define the vector fields $(B,\varSigma)$ on $V$ along the following formulas:
$$B^i_0\begin{pmatrix}
y\\
\jmath
\end{pmatrix}(\xi):=
\begin{pmatrix}
b^i(y(\xi))-\int_0^\xi d\eta \, \jmath^i(\eta)\\
(\partial_k b^i)(y(\xi))  \jmath^k(\xi)
\end{pmatrix}  \quad , \quad B^i_j\begin{pmatrix}
y\\
\jmath
\end{pmatrix}(\xi)=\varSigma^i_j\begin{pmatrix}
y\\
\jmath
\end{pmatrix}(\xi):= 
\begin{pmatrix}
\si_j^i(y(\xi))\\
(\partial_k \si_j^i)(y(\xi))  \jmath^k(\xi)
\end{pmatrix}$$
for $i,j=1,\dots,d$. Then, under Hypothesis ${\bf (H1)}$, the pair $(B,\varSigma)$ satisfies Hypothesis (VF2).
\end{lemma}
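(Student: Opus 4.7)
The plan is to verify Hypothesis (VF2) by a direct Fr\'echet differentiation of $B_0$, $B_j$ and $\varSigma_j$ viewed as smooth maps on $V_2=\mathcal{W}^{1,\infty}([0,1];\R^d)\times L^\infty([0,1];\R^d)$. The key structural observation is that every one of these vector fields splits into (i) a first component that depends only on $y$ through composition with $b$ or $\sigma$ (with an additional integral-in-$\xi$ contribution for $B_0$), and (ii) a second component that is linear in $\jmath$, with coefficient also given by a composition with $Db$ or $D\sigma$. This structure is preserved by differentiation, and $\mathbf{(H1)}$ is tailored exactly to control all the composition terms pointwise.

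First I would check the pointwise zeroth-order bounds. For $v=(y,\jmath)\in V_2$, the first component of $B_0(v)$ is $\xi\mapsto b(y(\xi))-\int_0^\xi \jmath(\eta)\,d\eta$; its $L^\infty$-norm is at most $|b(0)|+\|Db\|_\infty\|y\|_\infty+\|\jmath\|_\infty$ by $\mathbf{(H1)}$, and its weak $\xi$-derivative equals $Db(y(\xi))y'(\xi)-\jmath(\xi)$, whose $L^\infty$-norm is bounded by $\|Db\|_\infty\|y'\|_\infty+\|\jmath\|_\infty$. Hence this component lies in $\mathcal{W}^{1,\infty}$ with norm $\lesssim 1+\|v\|_{V_2}$. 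The second component $\xi\mapsto Db(y(\xi))\jmath(\xi)$ is in $L^\infty$ with norm $\lesssim\|\jmath\|_\infty$. The bounds for $\varSigma_j(v)$ are analogous and in fact cleaner because $\sigma$ itself is bounded, giving $\|\varSigma_j(v)\|_{V_2}\lesssim 1+\|v\|_{V_2}$.

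Next I would treat the higher Fr\'echet derivatives. A standard induction shows that for smooth $\varphi:\R^d\to\R^d$ the $\ell$-th derivative of the composition operator $y\mapsto \varphi\circ y$ on $\mathcal{W}^{1,\infty}$, evaluated at $(y'_1,\ldots,y'_\ell)$, is pointwise equal to $D^{(\ell)}\varphi(y(\xi))(y'_1(\xi),\ldots,y'_\ell(\xi))$, and its weak $\xi$-derivative is a polynomial combination involving $D^{(\ell)}\varphi$ and $D^{(\ell+1)}\varphi$ multiplied by $y'(\xi)$ or $(y'_i)'(\xi)$. Applying this with $\varphi=\sigma$ yields, thanks to the uniform boundedness of every $D^{(\ell)}\sigma$ in $\mathbf{(H1)}$, uniformly bounded $\ell$-th derivatives for the first component of $\varSigma_j$; the second component of $\varSigma_j$ introduces one extra factor of $\jmath$ through the product rule, which produces exactly the linear bound $\|D^{(\ell)}\varSigma_j(v)\|\lesssim 1+\|v\|_{V_2}$. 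The very same mechanism applies to $B_0$ and $B_j$, the integral term $\jmath\mapsto-\int_0^\cdot\jmath(\eta)d\eta$ being linear and thus transparent to iterated differentiation.

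Finally I would verify the two estimates on $D\varSigma\cdot\varSigma$ in~\eqref{regu-b-sigma-2}, which is where the subtler point of the proof lies. A direct computation from the explicit form of $\varSigma_j$ shows that $(D\varSigma_j\cdot\varSigma_k)(v)$ has first component $\xi\mapsto (D\sigma_j\cdot\sigma_k)(y(\xi))$ and second component a linear combination of $(D^{(2)}\sigma_j(y)\,\sigma_k(y))\jmath$ and $(D\sigma_j(y)\,D\sigma_k(y))\jmath$. Both are controlled under $\mathbf{(H1)}$, so that the first inequality $\|(D\varSigma\cdot\varSigma)(v)\|_{V_2}\lesssim 1+\|v\|_{V_2}$ is immediate. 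For the local Lipschitz inequality, I would write the difference at $v=(y,\jmath)$ and $w=(\tilde y,\tilde\jmath)$ through a mean-value argument; the main obstacle is the second-component expression $(D^{(2)}\sigma_j\cdot\sigma_k)(y)\jmath-(D^{(2)}\sigma_j\cdot\sigma_k)(\tilde y)\tilde\jmath$, which I would split as $(D^{(2)}\sigma_j\cdot\sigma_k)(y)(\jmath-\tilde\jmath)$, controlled by $\|v-w\|_{V_2}$, plus $[(D^{(2)}\sigma_j\cdot\sigma_k)(y)-(D^{(2)}\sigma_j\cdot\sigma_k)(\tilde y)]\tilde\jmath$, controlled by $\|y-\tilde y\|_\infty\cdot\|\tilde\jmath\|_\infty\leq\|v-w\|_{V_2}\,\{1+\|v\|_{V_2}\}$ (thanks to $\mathbf{(H1)}$ applied to $D^{(3)}\sigma$). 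This precisely matches the shape of the right-hand side of~\eqref{regu-b-sigma-2}, and completes the verification of (VF2).
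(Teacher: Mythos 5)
Your proof is correct and follows essentially the same route as the paper's: both compute the Fr\'echet derivatives of $B$ and $\varSigma$ explicitly (the paper writes them out in coordinates, you argue more narratively), observe that the first component sees $y$ only through composition with $b$ or $\sigma$ while the second is linear in $\jmath$, and invoke $\mathbf{(H1)}$ to close the resulting bounds; in particular, your splitting of the difference $(D^{(2)}\sigma_j\cdot\sigma_k)(y)\jmath-(D^{(2)}\sigma_j\cdot\sigma_k)(\tilde y)\tilde\jmath$ into a $(\jmath-\tilde\jmath)$-term and a $(y-\tilde y)$-term is exactly what the paper dismisses as ``immediate'' for \eqref{regu-b-sigma-2}.

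One subtlety you share with the paper: $\mathbf{(H1)}$ as stated only bounds $D^{(1)}b$ uniformly (whereas every derivative of $\sigma$ is bounded), so the $\xi$-derivative of the first component of $D^{(\ell)}B_0(v)$ --- which contains a factor $D^{(\ell+1)}b(y(\xi))\,y'(\xi)$ for $\ell\geq 1$ --- does not literally satisfy $\lesssim 1+\|v\|_{V_2}$ on all of $V_2$ unless one also assumes $D^{(\ell)}b$ bounded for every $\ell$. This does not propagate into an error, since in the proofs of Theorems~\ref{theo:gene-vf1} and~\ref{theo:gene} only the zeroth-order linear-growth bound on $B$ is used globally, and higher derivatives of $B$ are only ever evaluated along the (a priori bounded) range of the solution, where smoothness of $b$ suffices; but a literal verification of $(\mathrm{VF2})$ for $B_0$ for all $\ell$ would require strengthening $\mathbf{(H1)}$ accordingly.
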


\begin{proof}
We have the following explicit expressions: 
$$(D\varSigma_j^i)\begin{pmatrix}
y\\
\jmath
\end{pmatrix}\begin{pmatrix}
y_1\\
\jmath_1
\end{pmatrix}= 
\begin{pmatrix}
(\partial_k \si_i^j)(y) \, y_1^k\\
(\partial_k \partial_{\ell} \si_j^i)(y) \, y_1^\ell\, \jmath^k+(\partial_k \si_i^j)(y) \, \jmath_1^k
\end{pmatrix} \ ,$$
and more generally, for every $q\geq 1$,
\begin{multline*}
(D^{(q)}\varSigma_j^i)\begin{pmatrix}
y\\
\jmath
\end{pmatrix}
\left(\begin{pmatrix}
y_1\\
\jmath_1
\end{pmatrix},\ldots,\begin{pmatrix}
y_q\\
\jmath_q
\end{pmatrix}\right)\\
= 
\begin{pmatrix}
(\partial_{k_1} \cdots \partial_{k_q} \si_j^i)(y) \, y_1^{k_1}\cdots y_q^{k_q} \\
(\partial_k \partial_{k_1}\cdots \partial_{k_q}  \si_j^i)(y) \, y_1^{k_1}\cdots y_q^{k_q}\, \jmath^k+(\partial_{k_1}\cdots \partial_{k_q}  \si_j^i)(y)  \sum_{r=1,\ldots,q} y_1^{k_1}\cdots y_{r-1}^{k_{r-1}} \jmath_r^{k_r}y_{r+1}^{k_{r+1}}\cdots y_q^{k_q}
\end{pmatrix} \ .
\end{multline*}
In particular,
$$(D\varSigma_j^i \cdot \varSigma_k)\begin{pmatrix}
y\\
\jmath
\end{pmatrix}= 
\begin{pmatrix}
(\partial_\ell \si_j^i)(y) \si_k^\ell(y)\\
\big\{(\partial_m \partial_{\ell} \si_j^i)(y)  \si_k^\ell(y) +(\partial_\ell \si_i^j)(y) (\partial_m\si_k^\ell)(y)\big\}\, \jmath^m
\end{pmatrix} \ .$$
Based on these formulas, the two conditions (\ref{regu-b-sigma-1}) and (\ref{regu-b-sigma-2}) for $\varSigma$ are immediate. We can then exhibit a similar expression for $D^{(q)}B_0$.

\end{proof}

Combining Lemma \ref{lem:vf-appli} with the well-posedness results of Theorems \ref{theo:gene-vf1} and \ref{theo:gene} yields the following statement:

\begin{proposition}\label{prop:appli}
Under Hypothesis ${\bf (H1)}$, the following assertions hold true:

\smallskip

\noindent
$(a)$ Let $V_1:=L^\infty([0,1];\R^d)^2$. Then for every $A\in V_1$ and every smooth function $\vp:\R^d \to \R^d$ bounded with bounded derivatives, the rough system
\begin{eqnarray}
dy_t(\xi)&=&\Big[\vp(b(y_t(\xi)))-\int_0^\xi \varphi(\jmath_t(\eta)) \, d\eta \Big] \, dt+\si(y_t(\xi)) \, dh_t+\si(y_t(\xi)) \, d\mathbf{z}_t \ ,\label{sys-1-bis}\\
d\jmath_t(\xi)&=&(\partial_k b)(y_t(\xi)) \vp(\jmath_t(\xi))_k \, dt+(\partial_k \si)(y_t(\xi))\vp(\jmath_t(\xi))_k \, dh_t +(\partial_k \si)(y_t(\xi))\vp(\jmath_t(\xi))_k \, d\mathbf{z}_t\ ,\label{sys-2-bis}
\end{eqnarray}
with initial condition $(y_0,\jmath_0)=A$, admits a unique solution 
$$(y,\jmath)=: \Psi_{V_1}(A,\vp,(h,\mathbf{z}))\in \cac_1^\ga([0,1];V_1) \ ,$$
in the sense of Definition \ref{def:general-sol}.
\smallskip

\noindent
$(b)$ Let $V_2:=\mathcal{W}^{1,\infty}([0,1];\R^d) \times L^\infty([0,1];\R^d)$ and pick $A\in V_2$. Assume that $\|A\|_{V_2} \leq K$ and $\vvvert h\vvvert_{1;\ga} \leq K$, for some fixed $K\geq 1$. Then there exists a constant $M_K >0$ such that if $\|\mathbf{z}\|_{\ga;[0,1]} \leq M_K$, the conclusion of point $(a)$ is still true for $\vp\equiv \id$ and $V_1$ replaced with $V_2$, and one has
\begin{equation}\label{bou-psi-v2-k}
\cn\big[  \Psi_{V_2}(A,\id,(h,\mathbf{z}));\cac_1^0([0,1];V_2)] \leq C(K) \ ,
\end{equation}
for some growing function $C:\R^+ \to \R^+$.
\end{proposition}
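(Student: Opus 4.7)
\medskip

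\noindent\textbf{Proof plan.} The statement is essentially a corollary that packages Theorems~\ref{theo:gene-vf1} and~\ref{theo:gene} together with Lemma~\ref{lem:vf-appli}, once we recast the parametric system~(\ref{sys-1-bis})-(\ref{sys-2-bis}) as a single Banach-valued rough equation of the form~(\ref{general-equation}). The overall strategy is thus to rewrite the system in terms of vector fields $(B,\varSigma)$ acting on the appropriate infinite-dimensional state space ($V_1$ for part $(a)$, $V_2$ for part~$(b)$), to verify the relevant hypothesis (VF1) or (VF2) case by case, and finally to invoke the corresponding well-posedness theorem. A preliminary remark is that the driver $h$ fits into the framework of Section~\ref{sec:rougheqinholder} with $\beta=\ga$, since the embedding lemma stated right after (\ref{topo-singu-3}) yields $\cn[\delta h;\cac_{2;\ga}^{\ga,1}([0,1];\R^d)]\lesssim \vvvert h\vvvert_{1;\ga}$, so that an upper bound of order $K$ on $\vvvert h\vvvert_{1;\ga}$ immediately translates into an upper bound of the same order on the relevant H\"older-singular norm of~$\delta h$.

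\medskip

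For part $(a)$, one defines on $V_1=L^\infty([0,1];\R^d)^2$ the vector fields
$$
B_0\!\begin{pmatrix} y\\ \jmath\end{pmatrix}\!(\xi):=\begin{pmatrix}\vp(b(y(\xi)))-\int_0^\xi \vp(\jmath(\eta))\,d\eta \\ (\partial_k b)(y(\xi))\vp(\jmath(\xi))^k\end{pmatrix},\quad
\varSigma_j\!\begin{pmatrix} y\\ \jmath\end{pmatrix}\!(\xi):=\begin{pmatrix}\si_j(y(\xi))\\ (\partial_k\si_j)(y(\xi))\vp(\jmath(\xi))^k\end{pmatrix},
$$
and the key point is that, because $\vp$ and all its derivatives are bounded, the combination with Hypothesis~$\mathbf{(H1)}$ makes every derivative of $\varSigma$ uniformly bounded on $V_1$ (the $\jmath$-variable now enters only through the bounded factor $\vp(\jmath)$), while $D^{(1)}B_0$ is bounded as well: the Fr\'echet derivative of the integral term $\jmath\mapsto \int_0^\xi \vp(\jmath(\eta))\,d\eta$ has operator norm bounded by $\|\vp'\|_\infty$, and the remaining contributions to $D^{(1)}B_0$ involve only bounded combinations of $b',b'',\vp,\vp'$. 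Hypothesis (VF1) is thus satisfied, and the global existence and uniqueness assertion of Theorem~\ref{theo:gene-vf1} delivers a unique solution $(y,\jmath)\in\cac_1^\ga([0,1];V_1)$ in the sense of Definition~\ref{def:general-sol}.

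\medskip

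For part~$(b)$, the choice $\vp\equiv\id$ destroys the boundedness of $\varSigma$ on $L^\infty$, since the second component $(\partial_k\si_j)(y)\jmath^k$ now grows linearly in $\jmath$. This is precisely the reason why the enlarged space $V_2=\mathcal{W}^{1,\infty}([0,1];\R^d)\times L^\infty([0,1];\R^d)$ becomes relevant: Lemma~\ref{lem:vf-appli} asserts that the vector fields $(B,\varSigma)$, viewed on $V_2$, satisfy Hypothesis (VF2) (both components of $\varSigma$ grow at most linearly, and so do $D\varSigma\cdot\varSigma$ and its increments, thanks to the $W^{1,\infty}$-control on $y$). We are therefore exactly in the setting of Theorem~\ref{theo:gene}(ii): given $K\ge 1$ with $\|A\|_{V_2}\le K$ and $\cn[\delta h;\cac_{2;\ga}^{\ga,1}]\lesssim K$, this theorem produces a threshold $M_K>0$ such that whenever $\|\mathbf{z}\|_{\ga;[0,1]}\le M_K$, the system admits a unique solution in $\cac_1^\ga([0,1];V_2)$, together with the quantitative bound~(\ref{boun-solut}), which is exactly the desired~(\ref{bou-psi-v2-k}).

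\medskip

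The main (and essentially only) obstacle of this proof lies in the careful verification of the vector-field assumptions, and especially in making sure that the $\xi$-parametric structure of the system is compatible with the infinite-dimensional formalism of Section~\ref{sec:rougheqinholder}. The explicit formulas for $D^{(q)}\varSigma$ and $D\varSigma\cdot\varSigma$ computed in the proof of Lemma~\ref{lem:vf-appli}, together with the pointwise-in-$\xi$ nature of all the nonlinearities, make this verification routine; the reason why $V_2$ (rather than $V_1$) is the appropriate space in~$(b)$ is traced back to the fact that the $\xi$-derivative of $\sigma(y_t(\cdot))$ must remain controlled in $L^\infty$, which forces $y_t(\cdot)\in\mathcal{W}^{1,\infty}$. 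Once this Banach-valued reformulation is in place, existence and uniqueness follow mechanically from the general theorems.
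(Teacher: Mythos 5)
Your proof proposal matches the paper's argument exactly: both parts are handled by recasting (\ref{sys-1-bis})--(\ref{sys-2-bis}) as a Banach-valued equation of type (\ref{general-equation}), then verifying Hypothesis (VF1) directly for bounded $\vp$ (so Theorem \ref{theo:gene-vf1} applies) and invoking Lemma \ref{lem:vf-appli} for $\vp\equiv\id$ on $V_2$ (so Theorem \ref{theo:gene}$(ii)$ applies). The paper's proof is in fact a two-line version of your argument, and your elaboration of why boundedness of $\vp$ restores (VF1) and why the $\mathcal{W}^{1,\infty}$ component is needed in $V_2$ is the right intuition.
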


\begin{proof}
Point $(b)$ is obtained through the combination of Lemma \ref{lem:vf-appli} and Theorem \ref{theo:gene}. As for point $(a)$, it suffices to observe that for every fixed $\vp$, the vector fields involved in (\ref{sys-1-bis})-(\ref{sys-2-bis}) satisfy Hypothesis (VF1), and we can therefore appeal to Theorem \ref{theo:gene-vf1} to conclude in this case.

\end{proof}

Let us now rigourously check that when $\vp\equiv \id$, the hitting system (\ref{sys-1-bis})-(\ref{sys-2-bis}) indeed satisfies the desired property, namely offering a way to see two rough solutions (with different initial conditions and drivers differing from a sole drift term) hit a time $1$.
\begin{proposition}\label{prop:consistency}
Let $V_2:=\mathcal{W}^{1,\infty}([0,1];\R^d) \times L^\infty([0,1];\R^d)$ and consider a $V_2$-valued solution $(y,\jmath)$ on $[0,1]$ (in the sense of Definition \ref{def:general-sol}) of the rough system (\ref{sys-1-bis})-(\ref{sys-2-bis}) with $\vp\equiv \id$ and inital condition
$$y_0(\xi)=(1-\xi) a_0+\xi a_1\quad , \quad \jmath_0(\xi)=a_1-a_0 \ ,$$
for fixed $a_0,a_1 \in \R^d$. Then the following assertions hold true:

\smallskip

\noindent
$(a)$ The $\R^d$-valued path $y^{(0)}:=y_.(0)$, is the solution on $[0,1]$ (in the sense of Definition \ref{def:sol-davie}) of the rough equation
\begin{equation}\label{prop:identif-2}
dy_t=b(y_t) \, dt+\si(y_t) \, d\mathbf{x}_t \quad , \quad y_0=a_0 \ ,
\end{equation}
where $\mathbf{x}$ is the canonical rough path above $x:=z+h$.

\smallskip

\noindent
$(b)$ The $\R^d$-valued path $y^{(1)}:=y_.(1)$ is the solution on $[0,1]$ (in the sense of Definition \ref{def:sol-davie}) of the rough equation
\begin{equation}\label{rgh-sys-btilde}
dy_t=b(y_t) \, dt+\si(y_t) \, d\widetilde{\mathbf{x}}_t \quad , \quad y_0=a_1 \ ,
\end{equation}
where $\widetilde{\mathbf{x}}$ is the canonical rough path above $\widetilde{x}:=z+(h+e)$, with
$$e_t:=-\int_0^t ds \, \si(y_s(1))^{-1}\int_0^1 d\eta \, \jmath_s(\eta) \ .$$
\noindent
$(c)$ It holds that $y^{(0)}_1=y^{(1)}_1$.
\end{proposition}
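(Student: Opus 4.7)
The three parts of the statement can be treated separately, with essentially all the work concentrated in part (c).

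For part (a), the key observation is that setting $\xi = 0$ in the rough system (\ref{sys-1-bis})-(\ref{sys-2-bis}) causes the integral term $\int_0^0 \jmath_t(\eta)\, d\eta$ to vanish identically, so that the first equation reduces to $dy_t(0) = b(y_t(0))\, dt + \sigma(y_t(0))\, dh_t + \sigma(y_t(0))\, d\mathbf{z}_t$. Since $h \in \mathcal{E}^2_\ga$ induces an admissible drift in the sense of Section \ref{sec:rougheqinholder}, the standard perturbation rules for canonical rough paths allow us to absorb $h$ into the driver and identify the resulting equation as driven by the canonical lift of $x = z + h$, with initial condition $y_0(0) = a_0$. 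For part (b), setting $\xi = 1$ yields the extra drift term $-\int_0^1 \jmath_t(\eta)\, d\eta\, dt$. Using Hypothesis $\mathbf{(H3)}$ and the uniform boundedness of $\sigma^{-1}$ together with the regularity of $y(1)$ and $\jmath$, the function $e$ defined in the statement is continuously differentiable on $[0,1]$, so that $-\int_0^1 \jmath_t(\eta)\, d\eta\, dt = \sigma(y_t(1))\, de_t$ and $e$ may itself be treated as a smooth drift. Absorbing both $h$ and $e$ into the driver gives the canonical lift of $\widetilde{x} = z + h + e$.

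For part (c), the strategy is the tangent-flow argument sketched in Section \ref{subsec:heuri-coupl-sys}: prove the identity
\begin{equation*}
\partial_\xi y_t(\xi) \ = \ (1-t)\, \jmath_t(\xi) \qquad \text{for all } (t,\xi) \in [0,1]^2,
\end{equation*}
which, evaluated at $t = 1$, gives $\partial_\xi y_1(\xi) \equiv 0$ and hence $y^{(1)}_1 - y^{(0)}_1 = \int_0^1 \partial_\xi y_1(\xi)\, d\xi = 0$. To establish the identity, I would set $v_t(\xi) := \partial_\xi y_t(\xi)$ (which makes sense as an $L^\infty$ function of $\xi$ since $y \in \cac_1^\ga([0,1]; V_2)$ with $V_2$ enforcing $\mathcal{W}^{1,\infty}$-regularity in $\xi$) and $u_t(\xi) := (1-t)\, \jmath_t(\xi)$. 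A direct calculation from the $\jmath$-equation in (\ref{sys-2-bis}) yields
\begin{equation*}
du_t(\xi) \ = \ \bigl[(\partial_k b)(y_t(\xi))\, u_t(\xi)^k - \jmath_t(\xi)\bigr] dt + (\partial_k \sigma)(y_t(\xi))\, u_t(\xi)^k\, dh_t + (\partial_k \sigma)(y_t(\xi))\, u_t(\xi)^k\, d\mathbf{z}_t,
\end{equation*}
while formally differentiating the $y$-equation in $\xi$ shows that $v$ satisfies the very same linear rough equation. Since $v_0(\xi) = \partial_\xi\bigl[(1-\xi) a_0 + \xi a_1\bigr] = a_1 - a_0 = \jmath_0(\xi) = u_0(\xi)$, uniqueness for linear rough equations (a special case of Theorem \ref{theo:gene-vf1}) gives $v \equiv u$.

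The only genuinely delicate point is the rigorous justification of the $\xi$-differentiation of the rough equation for $y$. The cleanest route is a regularization argument: replace $z$ by its linear interpolations $z^{(n)}$ along the dyadic partitions and take $\mathbf{z}^{(n)}$ as the associated canonical rough paths, so that $\mathbf{z}^{(n)} \to \mathbf{z}$ in rough-path topology. For each $n$ the system becomes a classical parametrized ODE problem, for which the identity $\partial_\xi y^{(n)} = (1-\cdot)\, \jmath^{(n)}$ follows from standard smooth-ODE tangent-flow theory; continuity of the rough solution map provided by Theorem \ref{theo:gene-vf1} then transports this identity to the limit. Alternatively, one can work directly in the rough setting by analyzing the difference quotients $\varepsilon^{-1}[y_t(\xi+\varepsilon) - y_t(\xi)]$ and $\varepsilon^{-1}[\jmath_t(\xi+\varepsilon) - \jmath_t(\xi)]$, verifying that they satisfy the linearized rough system up to an error vanishing in the $V_2$-topology as $\varepsilon \to 0$, and again invoking uniqueness.
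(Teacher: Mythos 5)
Your proposal tracks the paper's proof of all three parts, and the core idea for (c) — establish $\partial_\xi y_t(\xi) = (1-t)\jmath_t(\xi)$ by comparing two solutions of the same linear rough equation — is exactly the paper's argument. Parts (a) and (b) are handled precisely as in the paper, via Corollary~\ref{coro:identif-sol} and uniqueness.

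The one point in (c) that needs tightening is your appeal to "uniqueness for linear rough equations (a special case of Theorem~\ref{theo:gene-vf1})." As you have written it, the equation satisfied by $v = \partial_\xi y$ and $u = (1-\cdot)\jmath$ has coefficients $(\partial_k b)(y_t(\xi))$, $(\partial_k\sigma)(y_t(\xi))$, $\jmath_t(\xi)$ which are time-dependent because $(y,\jmath)$ is a fixed path, so this does not fall directly under the autonomous framework of Definition~\ref{def:general-sol} or the uniqueness statements of Theorems~\ref{theo:gene-vf1}/\ref{theo:gene}. The paper's device is to augment the original system by a third component: consider, in $V := L^\infty([0,1];\R^d)^3$, the autonomous system obtained by adjoining to (\ref{sys-1-bis})--(\ref{sys-2-bis}) the equation
\begin{equation*}
dg_t = \bigl[(\partial_k b)(y_t)g_t^k - \jmath_t\bigr]\,dt + (\partial_k\sigma)(y_t)g_t^k\,dh_t + (\partial_k\sigma)(y_t)g_t^k\,d\mathbf{z}_t\ ,\qquad g_0(\xi)=a_1-a_0 \ ,
\end{equation*}
and then check that both triplets $(y,\jmath,\partial_\xi y)$ and $(y,\jmath,(1-\cdot)\jmath)$ solve it with the same initial condition. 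The uniqueness statement for this autonomous augmented system (Theorem~\ref{theo:gene}$(i)$, since the vector fields satisfy (VF2)) is what gives $\partial_\xi y \equiv (1-\cdot)\jmath$. This system-augmentation is precisely the trick that makes your "uniqueness for linear rough equations" rigorous within the paper's stated results. Your alternative regularization route via piecewise-linear approximants $\mathbf{z}^{(n)}$ would also work, using continuity of the solution map in the $V_2$-valued setting, but the augmentation is shorter and avoids a separate limiting argument.
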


\begin{proof}

$(a)$ Let $y$ be a solution of (\ref{prop:identif-2}) in the sense of Definition \ref{def:sol-davie}. Then clearly it is also a solution in the sense of Definition \ref{def:general-sol}, and by Corollary \ref{coro:identif-sol}, we can conclude that $y$ is a solution of the equation
\begin{equation}\label{back-equa-std-2}
dy_t=\big[b(y_t) \, dt+\si(y_t) \, dh_t\big]+\si(y_t) \, d\mathbf{z}_t\quad , \quad y_0=a_0 \ ,
\end{equation} 
(that is, in Definition \ref{def:general-sol}, we take $V:=\R^d$, $m-1=n=d$, $B_0(y):=b(y)$, $B_i(y):=\si_i(y)$, $\varSigma_i(y):=\si_i(y)$, $h_t \leftrightarrow (t,h_t)$, $\mathbf{z} \leftrightarrow\mathbf{z}$). The conclusion then comes from the uniqueness statement contained in Theorem \ref{theo:gene}, since $y_.(0)$ trivially satisfies Equation (\ref{back-equa-std-2}) as well.

\smallskip

\noindent
$(b)$ As above, observe that due to the regularity of the path $e$ and thanks to Corollary \ref{coro:identif-sol}, the solution of~(\ref{rgh-sys-btilde}) (in the sense of Definition \ref{def:sol-davie}) is also the solution of
\begin{equation}\label{back-equa-std-3}
dy_t=b(y_t) \, dt+\si(y_t) \, d(h+e)_t+\si(y_t) \, d\mathbf{z}_t\quad , \quad y_0=a_1 \ ,
\end{equation} 
in the sense of Definition \ref{def:general-sol} (that is, with $V:=\R^d$, $m-1=n=d$, $B_0(y):=b(y)$, $B_i(y):=\si_i(y)$, $\varSigma_i(y):=\si_i(y)$, $h_t \leftrightarrow (t,h_t+e_t)$, $\mathbf{z} \leftrightarrow\mathbf{z}$). It turns out that the path $w:=y_.(1)$ satisfies Equation (\ref{back-equa-std-3}) as well. This can be easily derived from the fact that
\begin{eqnarray*}
\lefteqn{\bigg| \si(w_s) \, (\der e)_{st}-\bigg( -\int_0^1 d\eta \, \jmath_s(\eta)\bigg) \, (t-s)\bigg|}\\ & =& |\si(w_s)| \, \bigg| \int_s^t dr \int_0^1 d\eta \, \bigg[ \si(w_r)^{-1} \jmath_r(\eta)-\si(w_s)^{-1}\jmath_s(\eta)\bigg]\bigg|\\
&\lesssim &\lln t-s\rrn^{1+\ga} \big\{ \cn[\jmath;\cac_1^0([0,1])]\, \cn[w;\cac_1^\ga([0,1])]+\cn[\jmath;\cac_1^\ga([0,1])] \big\} \ .
\end{eqnarray*}
Therefore, just as for point $(a)$, we can conclude with the help of the uniqueness property stated in Theorem~\ref{theo:gene}.

\smallskip

\noindent
$(c)$ The assertion relies on the following identity: for every $t,\xi\in [0,1]$, one has
\begin{equation}\label{identif-for-coupl}
(\partial_\xi y_t)(\xi)=\jmath_t(\xi) (1-t) \ .
\end{equation}
It can indeed be checked that, when seen as paths with values in $V:=L^\infty([0,1];\R^d)^3$, the triplets $(y,\jmath,\partial_\xi y)$ and $(y,\jmath,g)$ (where we have set $g_t(\xi):=\jmath_t(\xi) (1-t)$) are both solution of the system obtained by adding to (\ref{sys-1-bis})-(\ref{sys-2-bis}) the third equation
$$dg_t=[(\partial_k b)(y_t) g^k_t-\jmath_t] \, dt+(\partial_k \si)(y_t)g^k_t \, dh_t+(\partial_k \si)(y_t)g^k_t \, d\mathbf{z}_t \quad , \quad g_0(\xi)=a_1-a_0 \ .$$
The conclusion is now immediate:
$$y^{(1)}_1-y^{(0)}_1=y_1(1)-y_1(0)=\int_0^1  g_1(\xi) \, d\xi=0 \ .$$

\end{proof}

Before we summarize the previous results into a single statement (Theorem \ref{theo:pathwise-rough-coupl} below), let us introduce an auxiliary system which will later serve us as an ingredient to 'invert' the hitting system. This system (or rather this family of systems) takes values in $V_1:=L^\infty([0,1])^2$, and is defined for every smooth compactly-supported $\vp:\R^d \to \R^d$ as follows:
\begin{eqnarray}
d\bar{y}_t(\xi)&=&\bigg[\vp(b(\bar{y}_t(\xi)))-\int_0^\xi \varphi(\bar{\jmath}_t(\eta)) \, d\eta+\si(\bar{y}_t(\xi))\si(\bar{y}_t(1))^{-1}\int_0^1 \vp(\bar{\jmath}_t(\eta)) \, d\eta  \bigg] \, dt\nonumber\\
& &\hspace{3cm}+\si(\bar{y}_t(\xi)) \, dh_t+\si(\bar{y}_t(\xi)) \, d\mathbf{z}_t \ ,\label{syst-tilde-1}\\
d\bar{\jmath}_t(\xi)&=&\bigg[(\partial_k b)(\bar{y}_t(\xi)) \vp(\bar{\jmath}_t(\xi))_k+ (\partial_k \si)(\bar{y}_t(\xi)) \vp(\bar{\jmath}_t(\xi))_k\si(\bar{y}_t(1))^{-1}\int_0^1 \vp(\bar{\jmath}_t(\eta)) \, d\eta\bigg] dt\nonumber\\
& & \hspace{3cm} +(\partial_k \si)(\bar{y}_t(\xi))\vp(\bar{\jmath}_t(\xi))_k \, dh_t +(\partial_k \si)(\bar{y}_t(\xi))\vp(\bar{\jmath}_t(\xi))_k \, d\mathbf{z}_t\ .\label{syst-tilde-2}
\end{eqnarray}
with initial condition $(y_0(\xi),\jmath_0(\xi))=((1-\xi)a_0+\xi a_1,a_1-a_0)$. It is not hard to see that for every such fixed $\vp$, the vector fields involved in (\ref{syst-tilde-1})-(\ref{syst-tilde-2}) satisfy the conditions of Theorem \ref{theo:gene-vf1} (that is, Hypothesis (VF1)), and therefore the system admits a unique solution 
\begin{equation}\label{defi-flow-tilde}
(\bar{y},\bar{\jmath})=:\bar{\Psi}_{V_1}((a_0,a_1),\vp,(h,\mathbf{z})) \in \cac_1^\ga([0,1];V_1)\ .
\end{equation}

\smallskip

\noindent
\textbf{Notation.} For all $h:[0,1] \to \R^d$ and $g\in L^1([0,1];\R^d)$, we set, for all $t\in [0,1]$, 
\begin{equation}\label{eq:def-T}
T(h,g)_t:=h_t+\int_0^t g_s \, ds \ .
\end{equation}

\begin{theorem}\label{theo:pathwise-rough-coupl}
Fix $K\geq 1$ and $V_1:=L^\infty([0,1];\R^d)^2$. Then there exists a smooth compactly-supported function $\vp_K:\R^d \to \R^d$ such that, for all $a_0,a_1\in \R^d$, the following assertions hold true:

\smallskip

\noindent
$(i)$ The system
\begin{eqnarray}
dy_t(\xi)&=&\Big[\vp_K(b(y_t(\xi)))-\int_0^\xi \varphi_K(\jmath_t(\eta)) \, d\eta \Big] \, dt+\si(y_t(\xi)) \, dh_t+\si(y_t(\xi)) \, d\mathbf{z}_t \ ,\nonumber\\
d\jmath_t(\xi)&=&(\partial_k b)(y_t(\xi)) \vp_K(\jmath_t(\xi))_k \, dt+(\partial_k \si)(y_t(\xi))\vp_K(\jmath_t(\xi))_k \, dh_t +(\partial_k \si)(y_t(\xi))\vp_K(\jmath_t(\xi))_k \, d\mathbf{z}_t\ ,\nonumber
\end{eqnarray}
with initial condition $(y_0(\xi),\jmath_0(\xi))=((1-\xi)a_0+\xi a_1,a_1-a_0)$, admits a unique solution 
\begin{equation}\label{defi-flow-k}
(y,\jmath)=: \Psi_{V_1}((a_0,a_1),\vp_K,(h,\mathbf{z}))\in \cac_1^\ga([0,1];V_1) \ ,
\end{equation}
in the sense of Definition \ref{def:general-sol}.

\smallskip

\noindent
$(ii)$ There exists a constant $M_K>0$ such that if $\max(|a_0|,|a_1|,|a_1-a_0|) \leq K$, $\vvvert h\vvvert_{1;\ga}\leq K$ and $\|\mathbf{z}\|_{\ga;[0,1]} \leq M_K$, then, defining $(y,\jmath)$ through (\ref{defi-flow-k}), one has: $(ii$-$a)$ the $\R^d$-valued path $y^{(0)}=y_.(0)$ is the solution of
$$dy_t=b(y_t) \, dt+\si(y_t) \, d\mathbf{x}_t \ , \ y_0=a_0 \ ,$$
where $\mathbf{x}$ is the canonical rough path above $x:=z+h$; $(ii$-$b)$ the $\R^d$-valued path $y^{(1)}=y_.(1)$ is the solution of
\begin{equation}\label{eq:y(1)}
dy_t=b(y_t) \, dt+\si(y_t) \, d\widetilde{\mathbf{x}}_t \ , \ y_0=a_1 \ ,
\end{equation}
where $\widetilde{\mathbf{x}}$ is the canonical rough path above $\widetilde{x}:=z+T(h,g)$, with
\begin{equation}\label{drif-g}
g_{t}:=-\si(y_t(1))^{-1}\int_0^1 d\eta \, \vp_K(j_t(\eta)) \quad , \quad (y,j):=\Psi_{V_1}((a_0,a_1),\vp_K,(h,\mathbf{z})) \  ;
\end{equation}
$(ii$-$c)$ $y^{(0)}_1=y^{(1)}_1$.

\smallskip

\noindent
$(iii)$ With notations (\ref{defi-flow-tilde}) and (\ref{defi-flow-k}) in mind, we have the following identities:
\begin{equation}\label{ident-psi-psiti-0}
(y,j)
=\Psi_{V_1}((a_0,a_1),\vp_K,(h,\mathbf{z}))=\bar{\Psi}_{V_1}( (a_0,a_1),\vp_K,( T(h,g),\mathbf{z})) \ ,
\end{equation}
with $g$ defined just as in (\ref{drif-g}), and
\begin{equation}\label{ident-psi-psiti}
(\bar{y},\bar{\jmath}):=\bar{\Psi}_{V_1}((a_0,a_1),\vp_K,(h,\mathbf{z}))=\Psi_{V_1}( (a_0,a_1),\vp_K,( T(h,\bar{g}),\mathbf{z})) \ ,
\end{equation}
with
\begin{equation}\label{drif-gti}
\bar{g}(t):=\si(\bar{y}_t(1))^{-1} \int_0^1 d\eta\, \vp_K(\bar{\jmath}_t(\eta)) \ .
\end{equation}
\end{theorem}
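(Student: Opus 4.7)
The plan is to design a single cutoff $\vp_K$ such that, on the one hand, the $V_1$-valued system of (i) falls within the scope of Proposition~\ref{prop:appli}(a) for arbitrary data (yielding well-posedness without any size restriction on the driver or the initial data) and, on the other hand, under the restrictions of (ii), the $V_2$-solution with $\vp \equiv \id$ produced by Proposition~\ref{prop:appli}(b) stays in the region where $\vp_K$ acts as the identity --- so that the hitting conclusions of Proposition~\ref{prop:consistency} transfer immediately.

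The first step is to invoke Proposition~\ref{prop:appli}(b) with $\vp \equiv \id$, together with $\mathbf{(H1)}$, to exhibit a radius $R_K > 0$, depending only on $K$, such that whenever $\max(|a_0|,|a_1|) \leq K$, $\vvvert h \vvvert_{1;\ga} \leq K$ and $\|\mathbf{z}\|_{\ga;[0,1]} \leq M_K$, the unique $V_2$-solution $(\hat y, \hat \jmath)$ satisfies
$$\sup_{t\in[0,1],\, \xi\in[0,1]} \bigl( |\hat y_t(\xi)| + |b(\hat y_t(\xi))| + |\hat \jmath_t(\xi)| \bigr) \leq R_K ,$$
using the $V_2$-bound \eqref{bou-psi-v2-k} together with the Lipschitz-type bound on $b$ coming from $\mathbf{(H1)}$. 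I then fix once and for all $\vp_K : \R^d \to \R^d$ smooth, compactly supported, with $\vp_K(v) = v$ on $\{|v| \leq R_K + 1\}$. Since $\vp_K$ has bounded derivatives of every order, the vector fields involved in the system of (i) satisfy the assumptions of Proposition~\ref{prop:appli}(a), which directly yields point~(i) for every $(a_0,a_1)$ and every driver.

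For (ii), the $V_2$-solution $(\hat y, \hat \jmath)$ a priori only solves the $\vp \equiv \id$ system; but by the choice of $R_K$ one has $\vp_K(\hat \jmath_t(\xi)) = \hat \jmath_t(\xi)$ and $\vp_K(b(\hat y_t(\xi))) = b(\hat y_t(\xi))$ all along the trajectory, so that $(\hat y, \hat \jmath)$ is also a solution of the $\vp_K$-system. The $V_1$-uniqueness from~(i), applied to the $V_1$-valued version of $(\hat y, \hat \jmath)$, then forces $\Psi_{V_1}((a_0,a_1),\vp_K,(h,\mathbf{z})) = (\hat y, \hat \jmath)$. The equations for $y^{(0)}$ and $y^{(1)}$ stated in (ii-a) and (ii-b) now follow by applying Proposition~\ref{prop:consistency} to the $V_2$-solution, the drift $g_t$ of \eqref{drif-g} coinciding with the $-\si(y_t(1))^{-1}\int_0^1 \jmath_t(\eta) \, d\eta$ of Proposition~\ref{prop:consistency}(b) via the identity $\vp_K \circ \jmath = \jmath$; the hitting identity $y^{(0)}_1 = y^{(1)}_1$ is then Proposition~\ref{prop:consistency}(c).

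Point~(iii) reduces to a direct verification combined with a uniqueness argument. Starting from $(y,j) := \Psi_{V_1}((a_0,a_1),\vp_K,(h,\mathbf{z}))$ and defining $g$ by \eqref{drif-g}, the extra drift term $\si(\bar y(\xi)) \si(\bar y(1))^{-1} \int_0^1 \vp_K(\bar \jmath) \, d\eta$ inside the bracket of \eqref{syst-tilde-1}--\eqref{syst-tilde-2}, when the driver is replaced by $T(h,g)$, is exactly cancelled by the contribution $\si(\bar y(\xi))\, g_t$ arising from $dT(h,g)_t = dh_t + g_t\, dt$; a parallel cancellation occurs in the $\bar \jmath$-equation thanks to $\si(\bar y(1))^{-1}\int_0^1 \vp_K(\bar\jmath) + g_t = 0$ along the diagonal. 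Thus $(y,j)$ also solves the $\bar\Psi$-system with driver $T(h,g)$, and \eqref{ident-psi-psiti-0} follows from the uniqueness for that system (itself a consequence of Theorem~\ref{theo:gene-vf1}, whose hypothesis (VF1) is again met because $\vp_K$ has compact support and $\sigma^{-1}$ is bounded by $\mathbf{(H3)}$). A symmetric argument, starting from $(\bar y, \bar \jmath) := \bar\Psi_{V_1}((a_0,a_1),\vp_K,(h,\mathbf{z}))$ with $\bar g$ given by \eqref{drif-gti}, yields \eqref{ident-psi-psiti}. The main technical obstacle in this program is the very first step: the radius $R_K$ must be selected before $\vp_K$ enters the construction, and one has to verify that \emph{every} argument to which $\vp_K$ is applied along the trajectory --- in particular $b(y)$, not merely $y$ itself --- remains in the region where $\vp_K$ is the identity.
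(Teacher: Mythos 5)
Your proposal is correct and follows essentially the same route as the paper's proof: apply Proposition~\ref{prop:appli}(b) with $\vp\equiv\id$ to produce a $V_2$-solution with a quantitative bound, choose the cutoff $\vp_K$ to equal the identity on a ball large enough to contain the full trajectory, deduce point (i) from Proposition~\ref{prop:appli}(a), match the $\vp_K$- and $\id$-solutions via uniqueness, then transfer the hitting conclusion from Proposition~\ref{prop:consistency} and verify (iii) through the same drift-cancellation plus uniqueness argument.

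The one point where you improve on the paper is the definition of the radius. The paper simply takes $\vp_K\equiv\id$ on $[-C(K),C(K)]^d$, where $C(K)$ is the $V_2$-bound from \eqref{bou-psi-v2-k}. This controls $\sup_{t,\xi}|\hat{y}_t(\xi)|$ and $\sup_{t,\xi}|\hat{\jmath}_t(\xi)|$, but the system also evaluates $\vp_K$ at $b(\hat{y}_t(\xi))$, and since $b$ is unbounded with $|b(v)|\leq |b(0)|+L|v|$, the argument $b(\hat{y}_t(\xi))$ need not lie in $[-C(K),C(K)]^d$. Your choice of $R_K\geq \sup_{t,\xi}\bigl(|\hat{y}_t(\xi)|+|b(\hat{y}_t(\xi))|+|\hat{\jmath}_t(\xi)|\bigr)$, using the global Lipschitz bound on $b$ from $\mathbf{(H1)}$, closes that small gap cleanly; it is exactly the fix the paper would need to make the identity $\Psi_{V_1}((a_0,a_1),\vp_K,(h,\mathbf{z}))=\Psi_{V_2}(A,\id,(h,\mathbf{z}))$ hold as claimed.
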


\begin{proof}
With the notation $C(K)$ used in Proposition \ref{prop:appli} point $(b)$, consider any smooth function $\vp_K:\R^d \to \R^d$ such that $\vp_K \equiv \id$ on $[-C(K),C(K)]^d$ and $\vp_K(x)=0$ for every $| x| \geq 2\, C(K)$. Then $(i)$ follows immediately from Proposition \ref{prop:appli} point $(a)$. Besides, owing to (\ref{bou-psi-v2-k}), it is clear that by defining $M_K$ just as in Proposition \ref{prop:appli} point $(b)$, and under the assumptions of the above point $(ii)$, one has the identity
$$\Psi_{V_1}((a_0,a_1),\vp_K,(h,\mathbf{z}))=\Psi_{V_2}(A,\id,(h,\mathbf{z})) \quad \text{with} \ A(\xi):=((1-\xi)a_0+\xi a_1,a_1-a_0) \ .$$
Therefore, the three points $(ii$-$a)$-$(ii$-$b)$-$(ii$-$c)$ can be readily deduced from Proposition \ref{prop:consistency}.

\smallskip

In order to prove $(iii)$, observe first that with the notations in (\ref{drif-g}), one has, at least at a formal level,
\begin{eqnarray*}
dy_t(\xi)&=&\bigg[\vp_K(b(y_t(\xi)))-\int_0^\xi \varphi_K(\jmath_t(\eta)) \, d\eta
+\si(y_t(\xi))\si(y_t(1))^{-1}\int_0^1 \vp_K(\jmath_t(\eta)) \, d\eta  \bigg] dt \\
& &\hspace{3cm}+\si(y_t(\xi)) \, d\big( T(h,g)\big)_t+\si(y_t(\xi)) \, d\mathbf{z}_t \ ,
\end{eqnarray*}
with a similar transformation for the equation involving $\jmath$. Given the regularity of $g$, the latter transformations can actually be justified in a rigourous way, that is in the framework settled through Definition~\ref{def:general-sol}: one can for instance mimick the arguments of the proof of Proposition \ref{prop:consistency} point $(b)$. Identity (\ref{ident-psi-psiti-0}) now follows from the uniqueness of the solution to the system (\ref{syst-tilde-1})-(\ref{syst-tilde-2}) (with fixed $\vp:=\vp_K$). Identity (\ref{ident-psi-psiti}) can then be derived from a similar transformation of (\ref{syst-tilde-1})-(\ref{syst-tilde-2}), which completes the proof of our statement.
\end{proof}


\subsection{Toward a Girsanov transformation}
Let us go back to our stochastic setting, where $x=X$ stands for a $H$-fBm. The aim now is to translate the previous results at the level of the underlying Wiener paths, so as to construct the expected coupling $(W,\widetilde{W})$ on $[\tau_k,\tau_k+1]$ via a Girsanov-type argument. To this end, we will deduce from Theorem \ref{theo:pathwise-rough-coupl} how to build an appropriate drift function $g_{_W}$ for the hitting objective to be achieved. This property is the topic of Theorem~\ref{theo:construc:appli} below, that we write (without loss of generality) with $\tau_k=0$.  Just before we state this result, we need to specify, through the following technical lemma, how the Wiener space can be somehow 'decomposed' in accordance with the past-innovation splitting (\ref{decompo-fbm-ter}). 

\smallskip

We recall that we have fixed $H\in (1/3,1/2)$ and $\ga\in (1/3,H)$ for the whole study. Besides, in the sequel, we will indifferently denote by $\mathbb{P}_W$, and call the Wiener measure, the Wiener measure on $\cac((-\infty,0];\R^d)$ (reversed Brownian motion), the Wiener measure on $\cac([0,1];\R^d)$, as well as the law of a two-sided Brownian motion on $\cac((-\infty,1];\R^d)$. We also define the following two sets of functions, which will be used in order to define our perturbations on Wiener's space:
\begin{eqnarray}
\cb_{-}^{c}&=&\lcl g: (-\infty,0]\to\R^{d} ; \, g \text{ is bounded measurable with compact support}  \rcl
\label{eq:def-Bc-minus}  \\
\cb_{+}&=&\lcl g: [0,1]\to\R^{d} ; \, g \text{ is bounded and measurable}  \rcl \ .
\label{eq:def-B-plus}
\end{eqnarray}

\begin{lemma}\label{lem:justifd}
There exist two spaces $\Omega_-\subset \cac((-\infty,0];\R^d)$ and $\Omega_+\subset \cac([0,1];\R^d)$ of full Wiener measure such that the following properties are satisfied:
\begin{itemize}
\item[(i)] Let ${\cal D}_{X}^-$ be defined for every smooth compactly-supported $\vp:(-\infty,0] \to \R^d$ vanishing at $0$ by
\begin{equation}\label{eq:def-DX-minus}
{\cal D}_X^- \vp(t):=\alpha_H\int_{-\infty}^0 \left((t-r)^{H-\frac 12}-(-r)^{H-\frac 12}\right) d\vp(r)\quad \textnormal{if $t\in (0,1]$},
\end{equation}
and ${\cal D}_X^- \vp(0)=0$.  Then ${\cal D}_X^-$ extends to $\Omega_-$ as an application with values in ${\cal E}^2_\ga$. Besides, for every $w_-\in \Omega_-$ and every function $\gw^-\in \cb_{-}^{c}$, the path $\wti_-:=w_-+\int_{-\infty}^. \gw^-(s)\, ds$ still belongs to $\Omega_-$.  
\item[(ii)] Let ${\cal D}_{X}^+$ be defined for every smooth compactly-supported $\vp:[0,1] \to \R^d$ vanishing at $0$ by
\begin{equation}\label{eq:def-DX-plus}
{\cal D}_X^+ \vp(t)=\alpha_H\int_{0}^t (t-r)^{H-\frac 12} d \vp(r) \ , \ t\in [0,1] \ .
\end{equation}
Then ${\cal D}_X^+$ extends to $\Omega_+$ as an application with values in $\cac^\ga([0,1];\R^d)$, and for every $w_+ \in\Omega_+$, ${\cal D}_X^+ w_+$ can be canonically lifted into a rough path $\mathfrak{L}({\cal D}_X^+ w_+)$, in the sense of Definition \ref{defi-canonic-rp}. Besides, for every $w_+\in \Omega_+$ and every continuous $\gw^+:[0,1]\to \R^d$, the path $\wti_+:=w_++\int_0^. \gw^+(s)\, ds$ still belongs to $\Omega_+$.
\item[(iii)] For every $\varepsilon>0$, it holds that
$$\PE_W(w_+\in \Omega_+: \,  \|\mathfrak{L}({\cal D}_X^+ w_+)\|_{\gamma;[0,1]}\le \varepsilon)>0 \ .$$
\item[(iv)] Set $\Omega:=\{w_-\sqcup w_+:\,  w_-\in \Omega_- , w_+\in \Omega_+\}\subset \cac((-\infty,1];\R^d)$ and for every $w=w_-\sqcup w_+\in \Omega$,
$${\cal D}_X w:={\cal D}_X^- w_-+{\cal D}_X^+  w_+ \ .$$
Then ${\cal D}_X w$ belongs to $\cac^\ga([0,1];\R^d)$ and can be canonically lifted as a rough path, in the sense of Definition \ref{defi-canonic-rp}. Furthermore, as a random variable on $(\Omega,\mathbb{P}_W)$, ${\cal D}_X$ has the law of a fBm of Hurst index $H$. 
\item[(v)] It holds that $\Theta_{-1}(\Omega)\subset \Omega_-$, where $\Theta$ stands for the shift operator, that is $\Theta_\tau(w)_t=w_{t+\tau}$.
\end{itemize}
\end{lemma}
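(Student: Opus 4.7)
The plan is to identify $\Omega_-\subset\cac((-\infty,0];\R^d)$ and $\Omega_+\subset \cac([0,1];\R^d)$ as suitable full Wiener-measure events on which the kernels $\mathcal{D}_X^-$ and $\mathcal{D}_X^+$ can be given a pathwise meaning together with all the additional structure (derivatives, rough-path lift, translation stability) required in the statement. The Mandelbrot-Van Ness formula \eqref{eq:mandel}, read as $X_t=\mathcal{D}_X^- w_-(t)+\mathcal{D}_X^+ w_+(t)$, is the guiding principle: points (i)-(ii) set up each half, points (iii)-(v) then collect the remaining consequences.

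\textbf{Step 1: construction of $\mathcal{D}_X^-$ and proof of (i).} For a test $\varphi \in \cb_{-}^{c}$, differentiating under the integral in \eqref{eq:def-DX-minus} yields, for $t\in(0,1]$ and $\ell=1,2$,
\[
\partial_t^\ell \mathcal{D}_X^- \varphi(t)=c_{H,\ell}\int_{-\infty}^0 (t-r)^{H-\frac{1}{2}-\ell}\,d\varphi(r),
\]
and the same formula, reinterpreted as a Wiener integral, makes sense for Brownian paths; its Gaussian $L^2$-norm is of order $t^{H-\ell}$. Since $\gamma<H$, this is strictly subcritical with respect to the weight $t^{\ell-\gamma}$. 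A Kolmogorov-type argument over a dyadic exhaustion of $(0,1]$, combined with the classical polynomial control $|w_-(r)|=O(|r|^{\frac12+\varepsilon})$ as $r\to-\infty$, shows that on a full-measure set $\Omega_-$ the derivatives are classical, the primitive extends continuously to $0$, and $\vvvert\mathcal{D}_X^- w_-\vvvert_{2;\gamma}<\infty$. Stability under a drift perturbation $w_-+\int_{-\infty}^{\cdot} g^-(s)\,ds$ with $g^-\in\cb_{-}^{c}$ is then a deterministic check: by linearity it suffices to show that $\mathcal{D}_X^-[\int_{-\infty}^{\cdot}g^-]\in\mathcal{E}^2_\gamma$, which reduces to the bound $|\partial_t^\ell \mathcal{D}_X^-[\int_{-\infty}^{\cdot}g^-](t)|\lesssim t^{H-\frac12}$ --- well within the singularity budget since $H+\frac12-\gamma>0$.

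\textbf{Step 2: construction of $\mathcal{D}_X^+$ and proof of (ii)-(iii).} The operator $\mathcal{D}_X^+$ is the Liouville fBm, a Gaussian Volterra transformation whose covariance has finite mixed $\rho$-variation in the appropriate sense. A classical Kolmogorov estimate yields $\gamma$-H\"older paths on a full-measure event, while Gaussian rough-path theory (see e.g.\ \cite[Chapter 15]{FV-bk}) supplies the canonical lift $\mathfrak{L}(\mathcal{D}_X^+ w_+)$ together with the almost sure convergence of its piecewise-linear approximations. Intersecting these two full-measure sets defines $\Omega_+$. Stability under the drift $w_++\int_0^{\cdot}g^+(s)\,ds$ is immediate: such a perturbation contributes a $C^1$-smooth path to $\mathcal{D}_X^+ w_+$, which does not affect the second level of the rough path. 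For (iii), the support theorem for Gaussian rough paths (the closure of the lifted Cameron-Martin space is the topological support of the law) places $0$ in the support of $\mathfrak{L}(\mathcal{D}_X^+ w_+)$, so every $\|\cdot\|_{\gamma;[0,1]}$-ball around $0$ has strictly positive Wiener mass.

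\textbf{Step 3: items (iv)-(v) and the main obstacle.} Point (iv) is the rigorous form of \eqref{eq:mandel}: the sum $\mathcal{D}_X^- w_-+\mathcal{D}_X^+ w_+$ is centred Gaussian with the covariance of an $H$-fBm, and the uniqueness of the canonical lift identifies its rough extension with the standard fBm rough path. Point (v) reduces to the observation that the growth and derivative conditions defining $\Omega_-$ are invariant under time shifts of the underlying Brownian motion, so that $\Theta_{-1}$ maps $\Omega$ into $\Omega_-$. The principal obstacle I anticipate is not conceptual but bookkeeping: one must choose $\Omega_\pm$ simultaneously so that every later operation --- Wiener integration, dyadic rough approximations, drift perturbations in $\cb_{-}^{c}$ or $\cb_{+}$, and the shift $\Theta_{-1}$ --- remains within these sets; and, more quantitatively, the sharp weighted derivative bounds of Step 1 are the only genuinely nontrivial Gaussian estimates driving the whole argument.
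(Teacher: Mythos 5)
Your overall plan (past/innovation split, explicit full-measure sets, Gaussian rough-path machinery, support theorem) matches the paper's, but there are several places where you are taking a different (and in one case, insufficient) route.

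For item $(i)$, the paper does not run a Kolmogorov argument at all. It first integrates by parts to rewrite ${\cal D}_X^-$ as an operator acting on the path $\varphi$ itself (see \eqref{ipp-d-x-minus}), and then makes $\Omega_-$ \emph{explicit}: the set of paths that are locally $(1/2-\varepsilon)$-H\"older, vanish at zero and grow at most like $|t|^{1/2+\varepsilon'}$ at $-\infty$. On this explicit set, the bound $\vvvert {\cal D}_X^- w_-\vvvert_{2;\gamma}\lesssim \sup_{r\le -1}\tfrac{|w_-(r)|}{|r|^{1/2+\varepsilon'}}+\|w_-\|_{\gamma;[-1,0]}$ is a two-line deterministic computation; there is no need for any Gaussian estimate, and crucially the drift stability $\tilde w_-\in\Omega_-$ becomes trivial because the defining properties of $\Omega_-$ (local regularity and decay) are manifestly preserved by adding a compactly supported smooth drift. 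Your linearity argument (``show ${\cal D}_X^-[\int g^-]\in{\cal E}^2_\gamma$'') only closes if $\Omega_-$ is taken to be exactly the set $\{w_-:\,{\cal D}_X^-w_-\in{\cal E}^2_\gamma\}$ — you should say that explicitly, since otherwise you have no handle on whether $\tilde w_-$ lies in whatever set Kolmogorov's lemma delivered. The paper's explicit choice sidesteps this entirely.

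For item $(iii)$, invoking the support theorem is the right idea, but the version you quote (``closure of the lifted Cameron-Martin space equals the support'') is a theorem that comes with a hypothesis: the Gaussian process must satisfy the \emph{complementary Young regularity} condition, and this is not automatic for the Liouville (Volterra) kernel. The paper cites \cite[Theorem 15.60]{FV-bk} together with a specific verification of this hypothesis for the Liouville fBm from \cite[Example 2.11]{CYR-friz}; your proposal skips this check, which is the only nontrivial content of $(iii)$.

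For item $(iv)$, identifying the law with fBm via covariance is fine, but the statement requires that ${\cal D}_X w$ be canonically liftable \emph{for every} $w\in\Omega=\Omega_-\times\Omega_+$, not merely $\PE_W$-a.s. A Gaussian rough-path argument only delivers almost-sure liftability on a possibly different exceptional set, so it cannot be used here directly. The paper's mechanism is Proposition \ref{prop:lift-sum}: adding a path of class ${\cal E}^2_\gamma$ (not merely $C^1$, because ${\cal D}_X^- w_-$ has a singularity at $0$) to a canonically liftable path preserves liftability, pathwise and deterministically. This is exactly what turns the two separate definitions of $\Omega_-$ and $\Omega_+$ into the pathwise conclusion of $(iv)$; you will need this (or an equivalent deterministic stability lemma) and your remark that the perturbation ``contributes a $C^1$-smooth path'' does not cover it as written.
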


\begin{proof}
Let us fix $\varepsilon \in (0,H-\ga)$.

\smallskip

\noindent
$(i)$ Note first that, using an elementary integration-by-parts formula, ${\cal D}_X^-$ can be equivalently defined as
\begin{equation}\label{ipp-d-x-minus}
{\cal D}_X^- \vp(t):=\alpha_H\left(H-\frac 12\right)\int_{-\infty}^0 \left((t-r)^{H-\frac 32}-(-r)^{H-\frac 32}\right) \vp(r)\, dr\quad \textnormal{if $t\in (0,1]$}
\end{equation}
and ${\cal D}_X^- \vp(0)=0$, for every test-function $\vp$ vanishing at $0$. Fixing a parameter $\varepsilon'>0$, we can now specify the space $\oom_{-}$ alluded to in our claim above:
\begin{multline}\label{eq:def-omega-minus}
\Omega_-:={\cal  C}_{\varepsilon,\varepsilon'}
=
\Big\{  w_-:(-\infty,0]\to \R^d; \, w_{-} \text{ is $(\frac{1}{2}-\varepsilon)$-Hölder continuous on compacts intervals, } \\
w_{-}(0)=0, \text{ and }   \lim_{t\rightarrow-\infty} \frac{w_-(t)}{|t|^{\frac{1}{2}+\varepsilon'}}=0 \Big\}.
\end{multline}
Owing to some classical properties on the Wiener process, this subspace is of Wiener measure $1$ for any fixed $\ep,\varepsilon'>0$, and from (\ref{ipp-d-x-minus}), it is easy to check that ${\cal D}_X^-$ continuously extends to $\Omega_-$, as an application with values in $\mathcal{E}^2_\ga$.  Indeed, for every test-function $\vp$ vanishing at $0$, every $k\geq 1$ and every $t\in (0,1]$, it holds that, for some constant $c_k\geq 0$,
\begin{eqnarray*}
t^{k-\ga}\big| ({\cal D}_X^- \vp)^{(k)}(t)\big|&=&c_k\, t^{k-\ga}\bigg| \int_{-\infty}^0 (t-r)^{H-\frac 32-k} \vp(r)\, dr\bigg|\\
&\leq & c_k\, \bigg\{ \int_{-\infty}^{-1} (t-r)^{H-\frac 32-k} |\vp(r)|\, dr+t^{k-\ga}\int_{-1}^{0} (t-r)^{H-\frac 32-k} |\vp(r)|\, dr\bigg\}\\
&\lesssim& \bigg( \sup_{r\leq -1} \frac{|\vp(r)|}{|r|^{\frac12+\varepsilon'}}\bigg) \, \int_{-\infty}^{-1} \frac{dr}{|r|^{1+k-H-\varepsilon'}}+ \lVert \vp\rVert_{\ga;[-1,0]} t^{k-\ga} \int_{-1}^0 (t-r)^{H-1-k-\varepsilon} \, dr \\
&\lesssim& \sup_{r\leq -1} \frac{|\vp(r)|}{|r|^{\frac12+\varepsilon'}}+ \lVert \vp\rVert_{\ga;[-1,0]}  \ .
\end{eqnarray*}

\

\noindent
$(ii)$ Just as above, note that ${\cal D}_X^+$ can be equivalently defined as
\begin{equation}\label{ipp-d-x-plus}
{\cal D}_X^+ \vp(t):=\alpha_H \vp(t)\, t^{H-\frac12}+\alpha_H\left(H-\frac 12\right)\int_0^t (t-r)^{H-\frac 32}\big(\vp(t)- \vp(r)\big)\, dr \quad \textnormal{if $t\in (0,1]$}
\end{equation}
and ${\cal D}_X^- \vp(0)=0$, for every test-function $\vp$ on $[0,1]$ vanishing at $0$. From this expression, it is easy to check that, as a map with values in $\cac^\ga([0,1])$, ${\cal D}_X^+$ continuously extends to the (full-Wiener-measure) space $\widetilde{\Omega}_+$ of $(\frac12-\ep)$-Hölder paths on $[0,1]$ which vanish at $0$. We can also check that the covariance function of the Gaussian process ${\cal D}_X^+:(\widetilde{\Omega}_+,\mathbb{P}_W)\to \cac^\ga([0,1])$ so defined satisfies the conditions of \cite[Theorem 15.33]{FV-bk}, which allows us to assert that the subspace
\begin{equation}\label{eq:def-omega-plus}
\Omega_+:=\big\{w_+\in \widetilde{\Omega}_+: \, {\cal D}_X^+w_+ \ \text{can be canonically lifted as a $\ga$-rough path}\big\}.
\end{equation}
is of full Wiener measure. Finally, the stability of $\Omega_+$ through the transformation $w_+\mapsto w_++\int_0^.\gw^+(s) \, ds$ follows from the definition of $\widetilde{\Omega}_+$ and the result of Proposition \ref{prop:lift-sum}, since $\int_0^.\gw^+(s) \, ds$ obviously belongs to $\cac^1([0,1])$.

\

\noindent
$(iii)$ By \cite[Theorem 15.60]{FV-bk} (and using the terminology therein introduced), the assertion reduces to showing that the Gaussian process ${\cal D}_X^+:(\Omega_+,\mathbb{P}_W) \to \cac^\ga([0,1])$ satisfies the complementary Young regularity condition. It turns out that this specific result has been proved in \cite[Example 2.11]{CYR-friz}, which immediately yields the conclusion.

\

\noindent
$(iv)$ It holds that $\mathcal{E}^1_\ga\subset \cac^\ga([0,1])$, and so ${\cal D}_Xw$ does belong to $\cac^\ga([0,1])$, for every $w\in \Omega$. The fact that it can be canonically lifted as a $\ga$-rough path follows from point $(ii)$ (that is, we can lift ${\cal D}_X^+w_+$) and Proposition~\ref{prop:lift-sum} (due to ${\cal D}_X^- w_- \in \mathcal{E}^2_\ga$). Finally, when dealing with a two-sided Brownian motion $W$ on $(-\infty,1]$, and starting from the explicit formulas (\ref{ipp-d-x-minus})-(\ref{ipp-d-x-plus}) for ${\cal D}_X^-$ and ${\cal D}_X^+$, we can apply Itô formula to identify ${\cal D}_X W$ with the Mandelbrot-Van Ness transformation of $W$.

\

\noindent
$(v)$ It can be immediately checked from the previous constructions.
\end{proof}

We will also rely on the following inversion formula, borrowed from \cite[p. 741]{hairer}:
\begin{lemma}\label{lem:inversi}
Consider $w_-,\wti_- \in \Omega_-$, $w_+ \in \Omega_+$ and $\gw^{-}\in\cb_{-}^{c}$ (recall that those spaces are respectively defined by \eqref{eq:def-omega-minus}, \eqref{eq:def-omega-plus} and \eqref{eq:def-Bc-minus}). We assume that:
\begin{equation}\label{eq:relation-w-wtilde-minus}
\wti_-=w_-+\int_{-\infty}^{.} \gw^-(s) \, ds,
\end{equation}
Also consider a generic function $g_{X}^{+}\in\cb_{+}$.  Then there exists a map $\ck:\cb_{-}^{c}\times\oom_{+}\times\cb_{+}\to\oom_{+}$ such that
\begin{equation*}
{\cal D}_X(w_- \sqcup w_+)_{|[0,1]}+\int_0^. g_X^+(s) \, ds
=
{\cal D}_X\lp \wti_- \sqcup \ck(g_{_W}^-,\om_{+},g_{X}^{+})\rp_{|[0,1]}.
\end{equation*}
Specifically, the map $\ck$ is given by the following formula:
\begin{equation*}
\ck(g_{_W}^-,\om_{+},g_{X}^{+}) 
=
w_{+} + \int_{0}^{.}  \ch(g_{W}^{-},g_{X}^{+}) \, ds
=
T\lp w_{+}, \ch(g_{W}^{-},g_{X}^{+})\rp,
\end{equation*}
where we recall that $T$ has been introduced in \eqref{eq:def-T}, and where the integral transformation $\ch$ is defined by:
\begin{align}\label{eq:def-cal-H}
{\cal H}(g_1,g_2)_t:=C_1\int_{-\infty}^0 \frac{t^{\frac{1}{2}-H}(-s)^{H-\frac{1}{2}}}{t-s} g_1(s)\,  ds+\alpha_H\frac{d}{dt}\left(\int_0^t (t-s)^{\frac{1}{2}-H} g_2(s)\, ds\right) \notag\\
=C_1{\cal R}_0 g_1(t)+C_2\int_0^t (t-s)^{-\frac 12-H} g_2(s) \, ds \ .
\end{align}
In \eqref{eq:def-cal-H}, observe that the notation ${\cal R}_{T}$ has been introduced in \eqref{eq:def-R-T}.
\end{lemma}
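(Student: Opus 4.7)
Given the explicit candidate $\ck(g_W^-, w_+, g_X^+) = T(w_+, \ch(g_W^-, g_X^+))$, the lemma will be established in two steps: first check that this candidate actually lies in $\Omega_+$, then verify the announced identity between $\cac^\gamma$-paths on $[0,1]$.

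For the first step, it suffices by Lemma \ref{lem:justifd}(ii) to show that $\ch(g_W^-, g_X^+)$ is continuous (and in fact bounded) on $[0,1]$. The first piece $C_1 {\cal R}_0 g_W^-$ is continuous on $(0,1]$ by dominated convergence, using that $g_W^- \in \cb_-^c$ has compact support; the prefactor $t^{1/2-H}$ controls the behaviour at $t=0^+$. The second piece is a classical Riemann--Liouville singular integral of a bounded function $g_X^+$, whose continuity on $[0,1]$ is standard.

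For the second step, the linearity of ${\cal D}_X^\pm$ decomposes the target identity into
\begin{equation*}
\big[{\cal D}_X^-(\wti_-) - {\cal D}_X^-(w_-)\big] + \big[{\cal D}_X^+(\ck) - {\cal D}_X^+(w_+)\big] = \int_0^\cdot g_X^+(s)\, ds.
\end{equation*}
Using \eqref{eq:relation-w-wtilde-minus} and the explicit form of $\ck$, each bracket can be rewritten, via Fubini and the integration-by-parts reformulations \eqref{ipp-d-x-minus}-\eqref{ipp-d-x-plus}, as an explicit integral against a power kernel, the compact support of $g_W^-$ ensuring that all boundary contributions vanish. Differentiating in $t$ and using the definition \eqref{eq:def-cal-H} of $\ch$, the identity splits into two independent pieces, corresponding respectively to the $g_W^-$- and the $g_X^+$-contributions.

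The $g_X^+$-piece reduces to the classical Riemann--Liouville composition identity
\begin{equation*}
\frac{d}{dt} \int_0^t (t-r)^{H-1/2} \int_0^r (r-s)^{-1/2-H} g_X^+(s)\, ds\, dr = c_H\, g_X^+(t),
\end{equation*}
which uniquely fixes the constant $C_2$. The $g_W^-$-piece is a reformulation of the Mandelbrot--Van Ness inversion formula \eqref{eq:defgb} applied to drifts supported on $(-\infty,0]$: one checks that $\alpha_H \int_0^t (t-r)^{H-1/2} ({\cal R}_0 g_W^-)(r)\, dr$ exactly cancels, after $t$-differentiation, the contribution of the past drift through the Mandelbrot kernel, which fixes the constant $C_1$. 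The main (and essentially only) obstacle is purely computational: rigorously justifying the Fubini manipulations and the interchange between the $t$-derivative and the singular fractional kernels. Once this is done, the identity follows by matching the universal constants $C_1, C_2$ with the correct explicit functions of $\alpha_H$ and $H$.
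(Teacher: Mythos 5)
The paper does not supply its own proof of this lemma---it is quoted from Hairer's work \cite{hairer} (p.\ 741), and the text only records the informal interpretation that follows the statement. Your sketch reconstructs exactly the argument that interpretation encodes: linearity of ${\cal D}_X^{\pm}$ splits the target identity into a $g_W^-$-part and a $g_X^+$-part; the $g_X^+$-part is the composition of ${\cal D}_X^+$ with the Riemann--Liouville derivative kernel, and Fubini reduces it to the Beta integral $B(\tfrac12-H,\tfrac12+H)$, which pins down $C_2$; the $g_W^-$-part is precisely the assertion, via Lemma~\ref{lemma:formdrift}, that $C_1{\cal R}_0 g_W^-$ is the continuation of $g_W$ onto $(0,1]$ that annihilates the induced fractional drift, so that the change of past trajectory is invisible to ${\cal D}_X$ on $[0,1]$. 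The approach thus matches the paper's own reading; your version simply carries out the computation the paper delegates to \cite{hairer}. Two small caveats are worth flagging for a full write-up. First, invoking Lemma~\ref{lem:justifd}(ii) needs $\ch(g_W^-,g_X^+)$ continuous on $[0,1]$; boundedness of ${\cal R}_0 g_W^-$ near $t=0$ does follow from the scaling $\int_{-\epsilon}^0 (-s)^{H-1/2}(t-s)^{-1}\,ds \sim t^{H-1/2}$, but continuity \emph{at} $t=0$ is not automatic for a merely measurable $g_W^-\in\cb_-^c$, so one should either establish a short H\"older estimate on ${\cal R}_0$ or observe that the $g_W^-$ actually produced by the coupling scheme are more regular. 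Second, both pieces are cleanest to verify as level-zero integral identities (both sides vanish at $t=0$, so the $t$-derivative form you write is equivalent to what is actually claimed), after which Fubini and Euler-type integrals are the only work left.
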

The above formula can be interpreted as follows: in the spirit of \eqref{eq:defgwgb}, the second term of ${\cal H}(g_{_W}^-,g_X^+)$ corresponds to the the drift on the Wiener component induced by the ``fractional drift'' $g_X^+$ whereas
by Lemma \ref{lemma:formdrift}, the first term is the drift on the Wiener component on $[0,1]$ which ensures that, given a past $\gw^-$, the corresponding fractional drift is equal to $0$.

\smallskip

\noindent In the next statement, we denote by $\Phi(a;\mathbf{x})$ the unique solution on $[0,1]$ of the rough equation
$$dy_t=b(y_t) \, dt+\si(y_t) \, d\mathbf{x}_t \quad , \quad y_0=a \ ,$$
understood in the sense of Definition \ref{def:sol-davie}. Let us recall that $b$ and $\sigma$ are assumed to satisfy Hypothesis (\textbf{H1}), so that the above equation indeed admits a unique solution on $[0,1]$.




\begin{theorem}\label{theo:construc:appli}
Fix two parameters $K,\alpha >0$, as well as a $(K,\alpha,\gamma)$-admissible state $\pi=(a,\widetilde{a},w_-,\widetilde{w}_-)\in (\R^d)^2 \times \Omega_-^2$. Also, consider $\gw^-\in \cb_{-}^{c}$, where $\cb_{-}^{c}$ is defined by \eqref{eq:def-omega-minus}, such that $\wti_-=w_-+\int_{-\infty}^. \gw^-(s)\, ds$. 
Then there exists an application $\Lambda=\Lambda_\pi:\Omega_+ \to \Omega_+$ such that: 
\begin{enumerate}
\item[(i)] For every $w_+\in \Omega_+$ and every $t\in[0,1]$,  $\Lambda(w_+)_t=w_+(t)+\int_0^t \gw^+(w_+,s) ds$, for some $({\cal G}_t)_{t\in[0,1]}$-adapted function $\gw^+$.
\item[(ii)] There exists a positive constant $\delta_K$ depending only on $K$ such that 
$$\PE_{W}\big(w_+\in \Omega_+: \, \Phi(a;\mathfrak{L}({\cal D}_X(w_- \sqcup w_+)))_1=\Phi(\tilde{a};\mathfrak{L}({\cal D}_X(\wti_- \sqcup \Lambda(w_+)))_1\big)\ge \delta_K \ .$$
\item[(iii)]  $\Lambda$ is bijective with inverse $\Lambda^{-1}$ satisfying $\Lambda^{-1}(w_+)_t=w_{+}(t)+\int_0^t \bar{g}_{_{W}}^+(w_+,s) ds$, for some $({\cal G}_t)_{t\in[0,1]}$-adapted function $\bar{g}_{_W}^+$.
\item[(iv)] \label{statement4} There exists  $C_K>0$ depending only on $K$ such that for $\mathbb{P}_W$-every $w_+ \in \Omega_+$, 
$$\int_0^1 (| \gw^+|^2 +|\bar{g}_{_W}^+|^2)(w_+,s)\, ds\le C_K \ .$$
\item[(v)]  \label{statement5}For every fixed $w_+\in \Omega_+$, consider the function $\gw(w_+,.):(-\infty,1]\to \R^d$ defined as $\gw(w_+,t)=\gw^-(t)$ if $t\leq 0$ and $\gw(w_+,t)=\gw^+(w_+,t)$ if $t\in (0,1]$, and denote by $g_X(w_+,.)$ the image of $\gw(w_+,.)$ through the transformation (\ref{eq:defgb}).  Then there exists $C_K>0$ depending only on $K$ such that for $\mathbb{P}_W$-every $w_+\in \Omega_+$,
$$\sup_{t\in[0,1]} |g_X(w_+,t)|\le C_K \ .$$
\end{enumerate}
\end{theorem}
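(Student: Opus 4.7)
The plan is to apply the pathwise hitting result of Theorem~\ref{theo:pathwise-rough-coupl} after decomposing the fBm along the past-innovation splitting of Lemma~\ref{lem:justifd}, and then to lift the resulting fBm-level drift to a Wiener-level shift via the inversion formula of Lemma~\ref{lem:inversi}. The inverse $\Lambda^{-1}$ will be built in the dual manner, using the auxiliary system $\bar{\Psi}_{V_1}$ of Theorem~\ref{theo:pathwise-rough-coupl}(iii) together with identity (\ref{ident-psi-psiti}).

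More concretely, given $w_+\in \Omega_+$, set $D:={\cal D}_X^- w_-$, $Z:={\cal D}_X^+w_+$ and $\mathbf{Z}:=\mathfrak{L}(Z)$. By Lemma~\ref{lem:justifd}(i), $D\in \mathcal{E}^2_\gamma$, while the $(K,\alpha,\gamma)$-admissibility of $\pi$ ensures, up to a multiplicative constant, that $\vvvert D\vvvert_{1;\gamma}\lesssim K$ and $\max(|a|,|\tilde{a}|,|\tilde{a}-a|)\leq K$. Define $g_X^+(w_+,\cdot)$ by formula (\ref{drif-g}) applied to the $V_1$-valued solution of system (\ref{defi-flow-k}) driven by $(D,\mathbf{Z})$ with initial data $(a,\tilde{a})$, and set
\[\Lambda(w_+) := \mathcal{K}(g_W^-,w_+,g_X^+(w_+,\cdot)) = w_+ + \int_0^\cdot \mathcal{H}(g_W^-,g_X^+)_s\,ds,\]
where $\mathcal{K}$ and $\mathcal{H}$ come from Lemma~\ref{lem:inversi}. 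Assertion (i) is then immediate: the solution of (\ref{defi-flow-k}) at time $s$ depends on $w_+$ only through $(Z_r)_{r\leq s}$, which is $\mathcal{G}_s$-measurable, and the explicit formula (\ref{eq:def-cal-H}) propagates this adaptedness to $g_W^+=\mathcal{H}(g_W^-,g_X^+)$. For (ii), Lemma~\ref{lem:inversi} yields the crucial identity ${\cal D}_X(w_-\sqcup w_+)_{|[0,1]}+\int_0^\cdot g_X^+\,ds = {\cal D}_X(\tilde{w}_-\sqcup \Lambda(w_+))_{|[0,1]}$, hence Theorem~\ref{theo:pathwise-rough-coupl}(ii-c) ensures the hitting $\Phi(a;\mathfrak{L}({\cal D}_X(w_-\sqcup w_+)))_1 = \Phi(\tilde{a};\mathfrak{L}({\cal D}_X(\tilde{w}_-\sqcup \Lambda(w_+))))_1$ on the event $E_K := \{\|\mathbf{Z}\|_{\gamma;[0,1]}\leq M_K\}$; Lemma~\ref{lem:justifd}(iii) then provides $\mathbb{P}_W(E_K)\geq \delta_K>0$ with $\delta_K$ depending only on $K$ (since so does $M_K$).

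The inverse $\Lambda^{-1}$ for (iii) is constructed via the dual system: given $\tilde{w}_+\in\Omega_+$, one applies the auxiliary $\bar{\Psi}_{V_1}$ construction of Theorem~\ref{theo:pathwise-rough-coupl}(iii) with drivers $({\cal D}_X^-\tilde{w}_-, \mathfrak{L}({\cal D}_X^+\tilde{w}_+))$ and initial data $(a,\tilde{a})$ to produce a drift $\bar{g}_X^+(\tilde{w}_+,\cdot)$ through formula (\ref{drif-gti}), and defines $\Lambda^{-1}(\tilde{w}_+)$ as the corresponding $\mathcal{K}$-transform; identity (\ref{ident-psi-psiti}), combined with the injectivity of the Mandelbrot--Van Ness transformation and the uniqueness of the hitting solution, then guarantees $\Lambda\circ\Lambda^{-1}=\mathrm{Id}$, and $\mathcal{G}$-adaptedness of $\bar{g}_W^+$ is inherited exactly as for $g_W^+$. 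Assertion (v) follows from the explicit form $g_X^+(t)=-\sigma(y_t(1))^{-1}\int_0^1\varphi_K(j_t(\eta))\,d\eta$: $\sigma^{-1}$ is bounded by~$\mathbf{(H3)}$ and $\varphi_K$ is bounded by its compact support, yielding $|g_X^+|\leq C_K$ uniformly; the analogous bound for $\bar{g}_X^+$ follows identically. For (iv), we split $g_W^+ = C_1\mathcal{R}_0 g_W^- + C_2\int_0^\cdot (\cdot-s)^{-1/2-H}g_X^+(s)\,ds$: the first summand has $L^2([0,\infty))$-norm bounded by $1$ via the admissibility condition (\ref{hairerassumpcond}) taken at $T=0$, while the second is uniformly bounded in $L^\infty([0,1])$ since $\int_0^t(t-s)^{-1/2-H}\,ds = t^{1/2-H}/(1/2-H)$ remains finite on $[0,1]$ for $H<1/2$; the same decomposition applies to $\bar{g}_W^+$.

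The main obstacle lies in the global pathwise existence for the hitting system: its vector fields in the $V_2$-formulation only satisfy Hypothesis (VF2), so Theorem~\ref{theo:gene}(ii) demands the smallness condition $\|\mathbf{Z}\|_{\gamma;[0,1]}\leq M_K$ for the deterministic hitting to succeed. It is precisely the interplay between this deterministic smallness threshold (depending only on $K$) and the positive-probability lower bound of Lemma~\ref{lem:justifd}(iii) that yields the uniform constant $\delta_K$ in~(ii). A secondary technical point is the verification that both $\Lambda$ and $\Lambda^{-1}$ actually map $\Omega_+$ into itself, which follows from the stability of $\Omega_+$ under continuously drifted perturbations guaranteed by Lemma~\ref{lem:justifd}(ii) (the drifts $g_W^+$ and $\bar{g}_W^+$ being continuous on $[0,1]$ by elementary dominated convergence on the convolution term).
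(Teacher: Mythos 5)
Your proposal follows essentially the same route as the paper's proof: define $\Lambda$ by feeding the fBm-level hitting drift of Theorem~\ref{theo:pathwise-rough-coupl} into the Wiener inversion $\mathcal{K}$ of Lemma~\ref{lem:inversi}; use Lemma~\ref{lem:justifd}$(iii)$ for the positive-probability bound in $(ii)$; and use Hypothesis~\textbf{(H3)} together with the compactly supported cutoff $\vp_K$ for $(iv)$ and $(v)$. Parts $(i)$, $(ii)$, $(iv)$, $(v)$ are accurate.

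The gap is in part $(iii)$. Appealing to identity \eqref{ident-psi-psiti}, the ``injectivity of the Mandelbrot--Van Ness transformation'', and ``uniqueness of the hitting solution'' does not by itself produce the composition identities $\bar{\Lambda}\circ\Lambda=\mathrm{Id}$ and $\Lambda\circ\bar{\Lambda}=\mathrm{Id}$: what is needed is the concrete cancellation $\bar{g}_X(\bar{w}_+,\cdot)=-g_X(w_+,\cdot)$ (for $\bar{w}_+:=\Lambda(w_+)$), which implies $\bar{\Lambda}(\bar{w}_+)=T(w_+,\mathcal{H}(g_W^-,g_X)+\mathcal{H}(-g_W^-,-g_X))=w_+$. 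Establishing this cancellation requires two computational steps you have not made explicit. First, one applies the inversion formula \eqref{eq:defgb} to show that $\frac{d}{dt}\big({\cal D}_X^-G_W^-+{\cal D}_X^+(G_W^+(w_+,\cdot))\big)(t)=g_X(w_+,t)$, so that the drivers of $\bar{\Psi}_{V_1}$ evaluated at $(h_{\widetilde{w}_-},\mathbf{z}_{\bar{w}_+})$ coincide with $(T(h_{w_-},g_X(w_+,\cdot)),\mathbf{z}_{w_+})$. Second, one applies identity \eqref{ident-psi-psiti-0} (not only \eqref{ident-psi-psiti}) to identify this with $\Psi_{V_1}((a,\widetilde{a}),\vp_K,(h_{w_-},\mathbf{z}_{w_+}))$, from which the cancellation follows. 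The symmetric composition $\Lambda\circ\bar{\Lambda}=\mathrm{Id}$ then uses \eqref{ident-psi-psiti} in an analogous way. You should also make explicit that the candidate inverse is $\bar{\Lambda}(w_+)=\mathcal{K}(-g_W^-,w_+,\bar{g}_X(w_+))$, with the sign of $g_W^-$ flipped, since this is what the past shift from $\widetilde{w}_-$ back to $w_-$ requires.
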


\begin{proof} Set $h_{w_-}:={\cal D}_X^- w_-$ (where we recall that  ${\cal D}_X^-$ is defined by \eqref{eq:def-DX-minus}) and write, for every $w_+\in \Omega_+$, ${\bf z}_{w_{+}}:=\mathfrak{L}({\cal D}_X^+ w_+)$ (see Lemma \ref{lem:justifd}$(ii)$). 

\smallskip

\noindent
$(i)$ With the notations of Theorem \ref{theo:pathwise-rough-coupl}, consider the function $g_X:\Omega_+\times [0,1]\to \R^d$ given by
\begin{equation}\label{defi-g-x-w-plus}
[g_X(w_+)](t):=-\si(y_t(1))^{-1}\int_0^1 d\eta \, \vp_K(j_t(\eta)), 
\quad \textnormal{with} \quad 
(y,j):=\Psi_{V_1}((a,\widetilde{a}),\vp_K,(h_{w_-},{\bf z}_{w_+})) \ .
\end{equation}
Then, with the notations of Lemma \ref{lem:inversi}, we define $\Lambda$  on $[0,1]$ as
\begin{equation}\label{eq:def-Lambda}
\Lambda(w_+)_t
:=
\ck\lp g_{W}^{-}, w_{+}, g_{X}(w_{+}) \rp
=
w_+(t)+\int_0^t{\cal H}(g_{_W}^-,g_X(w_+) )_s \, ds
=
T\big(w_+,{\cal H}(g_{_W}^-,g_X(w_+))\big)_t .
\end{equation}

\smallskip

\noindent
$(ii)$ By the very definition of $\ck,\mathcal{H}$ and invoking Lemma \ref{lem:inversi}, we have, for every $w_+\in \Omega_+$,
$${\cal D}_X(\wti_- \sqcup \Lambda(w_+))_{|0,1]}={\cal D}_X(w_- \sqcup w_+)_{|[0,1]}+\int_0^. g_X(w_+,s) \, ds \ . 
$$
Besides, it is readily checked that
\begin{equation*}
\Phi\lp \widetilde{a}, \mathfrak{L}\lp{\cal D}_X(\wti_- \sqcup \Lambda(w_+))\rp\rp
=
\Phi\lp \widetilde{a}, \mathfrak{L}\lp{\cal D}_X(w_- \sqcup w_{+}) + \int_{0}^{.} [g_X(w_+)](s) ds\rp\rp
=
y^{(1)},
\end{equation*}
where $y^{(1)}$ is defined by \eqref{eq:y(1)}. In addition, by the admissibility condition \eqref{Kadmiscond}, we know that $\vvvert h_{w_-}\vvvert_{1;\ga}\le K$ and $\max(|a|,|\widetilde{a}|,|\widetilde{a}-a|)\le 2 K$. Therefore, we are exactly in a position to apply Theorem  \ref{theo:pathwise-rough-coupl} and deduce the existence of a positive constant $M_K>0$ such that
$$
\PE_{W}\big(w_+\in \Omega_+: \, \Phi(a;\mathfrak{L}({\cal D}_X(w_- \sqcup w_+)))_1=\Phi(\tilde{a};\mathfrak{L}({\cal D}_X(\wti_- \sqcup \Lambda(w_+)))_1\big)
$$
$$
\ge\ \PE_W(w_+\in \Omega_+: \,  \|{\bf z}_{w_{+}}\|_{\gamma;[0,1]}\le M_K) \ .
$$
The conclusion now comes from Lemma \ref{lem:justifd}, point $(iii)$.

\smallskip

\noindent 
$(iii)$ Set $h_{\wti_-}:={\cal D}_X^-\wti_-$ and with the notations of Theorem \ref{theo:pathwise-rough-coupl}-$(iii)$, define, for every $w_+\in \Omega_+$,
$$\bar{g}_X(w_+,t):=\si(\bar{y}_t(1))^{-1}\int_0^1 d\eta \, \vp_K(\bar{\jmath}_t(\eta)), \quad \textnormal{with} \quad (\bar{y},\bar{\jmath}):=\bar{\Psi}_{V_1}((a,\widetilde{a}),\vp_K,(h_{\widetilde{w}_-},{\bf z}_{w_+}))\ ,
$$
where the flow $\bar{\Psi}_{V_1}$ has been introduced in relation \eqref{defi-flow-tilde}.
Then consider the application $\bar{\Lambda}:\Omega_+\to \Omega_+$ given by
\begin{equation}\label{eq:def-Lambda-bar}
\bar{\Lambda}(w_+)_t
:=\ck\lp -g_{W}^{-}, w_{+}, \bar{g}_{X}(w_{+}) \rp
=T_t(w_+,{\cal H}(-g_{{_W}}^-,\bar{g}_X(w_+,.))) .
\end{equation}
Let us check that $\bar{\Lambda}$ is actually the inverse of $\Lambda$, by showing first that $\bar{\Lambda}\circ\Lambda={\rm Id}$. To this end, fix $w_+\in \Omega_+$ and set $\bar{w}_+:= \Lambda(w_+)= T(w_+, g_{_W}^+(w_+,.))$, where, according to $(i)$, $g_{_W}^+(w_+,.):={\cal H}(g_{_W}^-,g_X(w_+,.) )$, with $g_X(w_+,.)$ given by (\ref{defi-g-x-w-plus}). Then consider the functions $G_{_W}^-=\int_{-\infty}^.g_{_W}^-(s)\, ds$ and $G_{_W}^+(w_+,.):=\int_0^.g_{_W}^+(w_+,s)\, ds$ (defined respectively on  $(-\infty,0]$ and $[0,1]$), so that, by construction, 
$$z_{\bar{w}_+}=
\cd_{X}^{+}\bar{w}_{+}
=
z_{w_+}+{\cal D}_X^+( G_{_W}^+(w_+,.)) \ .$$
\noindent Thus, using Corollary \ref{coro:identif-sol}, we get
\begin{align}
\bar{\Psi}_{V_1}((a,\widetilde{a}),\vp_K,(h_{\widetilde{w}_-},{\bf z}_{\bar{w}_+}))\nonumber&=\bar{\Psi}_{V_1}((a,\widetilde{a}),\vp_K,\big(h_{\widetilde{w}_-}+{\cal D}_X^+( G_{_W}^+(w_+,.)),{\bf z}_{{w}_+}))\nonumber\\
&=\bar{\Psi}_{V_1}((a,\widetilde{a}),\vp_K,(h_{{w}_-}+ {\cal D}_X^-G_{_W}^-+{\cal D}_X^+( G_{_W}^+(w_+,.)),{\bf z}_{{w}_+})\ .\label{identif-tow}
\end{align}
At this point, observe that by the inversion formula \eqref{eq:defgb}, one has for any $t\in(0,1]$,
\begin{eqnarray*}
\lefteqn{\frac{d}{dt}\left({\cal D}_X^-G_{_W}^-+{\cal D}_X^+( G_{_W}^+(w_+,.))\right)(t)}\\
&=&\al_H \frac{d}{dt}\left(\int_{-\infty}^{0}\left((t-s)^{H-\frac{1}{2}}-(-s)^{H-\frac{1}{2}} \right)g_{_W}^-(s)\, ds +\int_0^t (t-s)^{H-\frac{1}{2}} {\cal H}(g_{_W}^-,g_X(w_+,.) )_s\,  ds\right)\\
&=&g_X(w_+,t)\ .
\end{eqnarray*}
Therefore,  $h_{{w}_-}+{\cal D}_1 G_{W}^-+{\cal D}_2 G_{_W}^+=T(h_{{w}_-}, g_X(w_+,.))$, which, going back to (\ref{identif-tow}), gives us
$$\bar{\Psi}_{V_1}((a,\widetilde{a}),\vp_K,(h_{\widetilde{w}_-},{\bf z}_{\bar{w}_+}))=\bar{\Psi}_{V_1}((a,\widetilde{a}),\vp_K,T(h_{{w}_-}, g_X(w_+,.)),{\bf z}_{w_+})) \ .$$
We can now apply identity \eqref{ident-psi-psiti-0} to assert that 
$$\bar{\Psi}_{V_1}((a,\widetilde{a}),\vp_K,(h_{\widetilde{w}_-},{\bf z}_{\bar{w}_+}))={\Psi}_{V_1}((a,\widetilde{a}),\vp_K,(h_{{w}_-},{\bf z}_{{w}_+}))\ ,$$
which readily entails that $\bar{g}_X(\bar{w}_+,.)=-g_X(w_+,.)$. The conclusion is now immediate: according to \eqref{eq:def-Lambda-bar} we have
\begin{align*}
\bar{\Lambda}(\bar{w}_+)&=T(\bar{w}_+,{\cal H}(-g_{{_W}}^-,\bar{g}_X(\bar{w}_+,.)))\\
&=T(w_+,{\cal H}(g_{_W}^-,g_X(w_+,.))+{\cal H}(-g_{{_W}}^-,\bar{g}_X(\bar{w}_+,.)))=T(w_+,0)=w_+ \ .
\end{align*}
The fact that $ \Lambda\circ \bar{\Lambda}={\rm Id}$ follows from symmetric arguments (by using (\ref{ident-psi-psiti}) instead of (\ref{ident-psi-psiti-0})).

\smallskip

\noindent
$(iv)$ Let us recall that according to \eqref{eq:def-cal-H} we have
$$g_{W}^+(w_+,t)={\cal H}(g_{_W}^-,g_X(w_+,.))_t=C_1({\cal R}_0 g_W^-)(t)+C_2\int_0^t (t-s)^{-\frac 12-H} [g_X(w_+)](s) \, ds \ .$$
The desired uniform bound on the $L^2$-norm of $g_W^+(w_+,.)$ then follows from two elementary facts:
(a)
It holds that $\int_0^1 |({\cal R}_0 g_W^-)(s)|^2 \, ds \leq 1$ thanks to the admissibility condition \eqref{hairerassumpcond}. 
(b)
The function $g_X(w_+)$ defined by \eqref{defi-g-x-w-plus} is bounded. This trivially stands from our assumption~\eqref{eq:hyp-bnd-inverse-sigma} on $\si^{-1}$ and from the definition of the cutoff function $\vp_{K}$. 
The same arguments can be used for the bound on the $L^2$-norm of $\bar{g}_W^+(w_+,.)$.

\smallskip

\noindent
$(v)$ Just as above, it is an immediate consequence of Hypothesis (\textbf{H3}) and the definition of $\vp_K$.

\end{proof}

\begin{remark}
We have written our results for a time origin $\tau=0$ for notational sake. However, notice that the generalizations of Lemma \ref{lem:inversi} and Theorem \ref{theo:construc:appli} to a shifted time origin $\tau$ are straightforward. Let us just describe the transformation $\ch$ in this context: consider $w_-,\wti_- \in \Theta_{\tau}\Omega_-$, $w_+ \in \Theta_{\tau}\Omega_+$ and $\gw^{-}\in\Theta_{\tau}\cb_{-}^{c}$, where $\Theta_{\tau}$ denotes the shift of a path by $\tau$. We still assume that relation \eqref{eq:relation-w-wtilde-minus} between $\wti_-$ and $w_{-}$ holds true. Then for $g_{X}^{+}\in\Theta_{\tau}\cb_{+}$ we have
\begin{equation*}
{\cal D}_X(w_- \sqcup w_+)_{|[\tau,\tau+1]}+\int_{\tau}^. g_X^+(s) \, ds
=
{\cal D}_X\lp \wti_- \sqcup \ck_{\tau}(g_{_W}^-,\om_{+},g_{X}^{+})\rp_{|[\tau,\tau+1]},
\end{equation*}
where $\ck_{\tau}$ is defined as follows:
\begin{equation*}
\ck_{\tau}(g_{_W}^-,\om_{+},g_{X}^{+}) 
:=
w_{+} + \int_{\tau}^{.}  \ch_{\tau}(g_{W}^{-},g_{X}^{+})_{s} \, ds\ ,
\end{equation*}
with
\begin{equation*}
{\cal H}_{\tau}(g_1,g_2)_t
:=C_1{\cal R}_0 g_1^\tau(t)+C_2\int_\tau^t (t-s)^{-\frac 12-H} g_2(s) \, ds \ ,
\end{equation*}
and where we recall that the notation ${\cal R}_{T}$ has been introduced in \eqref{eq:def-R-T}. These transforms are then used in the successive binding trials alluded to in Section \ref{subsec:general3stepschem}.
\end{remark}

\subsection{Achievement of Step 1}
As a conclusion of this section, we obtain the following result.

{\begin{prop}\label{prop:step1}Assume $\mathbf{(H1)}$ and $\mathbf{(H3)}$.  Then, for all $\alpha >0$ and $K>0$, there exist constants {$\delta=:\mathbf{\delta}(\alpha,K)>0$} and $C_K >0$ such that for each $k\geq 1$, $(W,\widetilde{W})$ can be built on $[\tau_{k-1},\tau_{k-1}+1]$ in such a way that the following properties hold:
\begin{itemize}
\item[(a)] One has $  \PE(Y_{\tau_{k-1}+1}=\widetilde{Y}_{\tau_{k-1}+1}| \ce_{k-1}\cap \AD_k(K,\alpha,\gamma))\ge \delta$.
\item[(b)] If $\omega\in \AD_k(K,\alpha,\gamma)$ and Step $1$ is successful, then $\sup_{t\in[0,1]} |g_X(t+\tau_{k-1})|\le C_K.$
\item[(c)]  $\int_{\tau_{k-1}}^{\tau_{k-1}+1}|\gw(s)|^2ds\le C_K\quad a.s.$
\end{itemize}
\end{prop}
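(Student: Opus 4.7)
The plan is to transport Theorem \ref{theo:construc:appli} from the deterministic origin $\tau=0$ to the random time $\tau_{k-1}$, and then to read off each of (a), (b), (c) directly from the conclusions therein.

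First, I would fix $\omega \in \mathcal{E}_{k-1}\cap \AD_k(K,\alpha,\gamma)$, so that the quadruple
$$\pi_k:=\big(Y_{\tau_{k-1}},\widetilde{Y}_{\tau_{k-1}},(W_t)_{t\le\tau_{k-1}},(\widetilde{W}_t)_{t\le\tau_{k-1}}\big)$$
defines a $(K,\alpha,\gamma)$-admissible state at time $\tau_{k-1}$ and the past drift $\gw^-$ satisfies the integrability condition \eqref{hairerassumpcond}. Using the shifted version of Theorem \ref{theo:construc:appli} (as indicated in the remark closing Section \ref{subsec:rough-hitting}), I would introduce the associated coupling map $\Lambda=\Lambda_{\pi_k}$ and build the innovation $\widetilde{W}$ on $[\tau_{k-1},\tau_{k-1}+1]$ by setting
$$\widetilde{W}_{\tau_{k-1}+t}-\widetilde{W}_{\tau_{k-1}}\,:=\,\Lambda(w_+)_t,\qquad w_+:=\big(W_{\tau_{k-1}+s}-W_{\tau_{k-1}}\big)_{s\in[0,1]}.$$
By construction, the drift $\gw$ on $[\tau_{k-1},\tau_{k-1}+1]$ coincides with the function $\gw^+(w_+,\cdot)$ of Theorem \ref{theo:construc:appli}, while the corresponding fractional drift is $g_X(w_+,\cdot)$.

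The three items then follow as immediate translations of Theorem \ref{theo:construc:appli}. For (a), assertion (ii) yields
$$\PE\big(Y_{\tau_{k-1}+1}=\widetilde{Y}_{\tau_{k-1}+1}\,\big|\,\mathcal{F}_{\tau_{k-1}}\big)\ \ge\ \delta_K\qquad\text{on }\mathcal{E}_{k-1}\cap \AD_k(K,\alpha,\gamma),$$
so that integration yields the statement with $\delta:=\delta_K$. For (b), assertion (v) provides the uniform bound $\sup_{t\in[0,1]}|g_X(w_+,t)|\le C_K$ for $\PE_W$-a.e. $w_+\in\Omega_+$, which is exactly the required control on the fractional drift (and in particular holds whenever Step~1 succeeds). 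For (c), assertion (iv) produces $\int_0^1|\gw^+(w_+,s)|^2\,ds\le C_K$ a.s., i.e. the desired $L^2$-bound on $\gw$ over $[\tau_{k-1},\tau_{k-1}+1]$.

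The delicate point, which I expect to be the main technical obstacle, is the \emph{measurability and Markovian conditioning} at the stopping time $\tau_{k-1}$: one must verify that $\omega\mapsto \Lambda_{\pi_k(\omega)}(w_+)$ is jointly measurable in the admissible state and in $w_+$, and that the regular conditional law of $w_+$ given $\mathcal{F}_{\tau_{k-1}}$ is indeed the Wiener measure on $\Omega_+$. Both facts follow from the Feller transition kernel framework described in Section \ref{markov-structure}, combined with the continuity of ${\cal D}_X^\pm$ and of the solution map $\Psi_{V_1}$ established in Lemma \ref{lem:justifd} and Proposition \ref{prop:appli} respectively. Once these measurability matters are settled, no further analytic work is required: the whole analytic content of Step~1 is already packaged into Theorem \ref{theo:construc:appli}.
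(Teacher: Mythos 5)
Your approach founders on the coupling construction itself, not on measurability. You propose to set $\widetilde{W}_{\tau_{k-1}+t}-\widetilde{W}_{\tau_{k-1}} := \Lambda(w_+)_t$ deterministically. But then the law of the innovation $\widetilde{W}$ on $[\tau_{k-1},\tau_{k-1}+1]$ is $\Lambda^*\PE_W$, a Girsanov transform of Wiener measure rather than Wiener measure itself. This violates the implicit but essential requirement that \emph{both} $W$ and $\widetilde{W}$ be Brownian motions (so that $X$ and $\widetilde X$ remain fBms and the second equation in \eqref{eq:eqcoup} is a genuine rough SDE driven by fBm, with $\mathcal{L}(\widetilde Y_{t+\cdot})=\bar{\mathcal{Q}}\mu$ as used in \eqref{eq:lienTVtauinfty}). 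The proposition is about constructing a bona fide coupling of the two Brownian innovations, not a change of measure on one of them.

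The paper accordingly spends the bulk of its proof on a maximal-coupling-type construction of the joint law: it defines $\mathbf{P}_1 := \tfrac12\bigl(\Lambda_1^*\PE_W \wedge \Lambda_2^*\PE_W\bigr)$ with $\Lambda_1(w)=(w,\Lambda(w))$, $\Lambda_2(w)=(\Lambda^{-1}(w),w)$, symmetrizes to $\widetilde{\mathbf{P}}_1$, and sets $\mathcal{L}(W^{\tau_{k-1}},\widetilde W^{\tau_{k-1}}) := \mathbf{P}_1+\mathbf{P}_2$ with $\mathbf{P}_2 := S^*\mathbf{P}_1+\Delta^*(\PE_W-\Pi_1^*\widetilde{\mathbf{P}}_1)$. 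It then checks that both marginals are Wiener, and that only three outcomes occur: $w_2=\Lambda(w_1)$, $w_2=\Lambda^{-1}(w_1)$, or $w_1=w_2$. This structure matters twice over. First, the drift $\gw$ must be controlled \emph{also when the attempt fails}, else the memory carried into Steps 2--3 is unmanageable; under the paper's construction $\gw$ is always one of $\gw^+$, $\bar{g}_{_W}^+$ or $0$, each bounded via Theorem~\ref{theo:construc:appli}$(iv)$ (note that $(c)$ therefore needs the $L^2$-bound on $\bar{g}_{_W}^+$ as well as on $\gw^+$). Second, and more importantly, item $(a)$ does \emph{not} reduce to $\PE_W(\mathcal{R})\ge\delta_K$: one has to lower-bound $\lVert\mathbf{1}_{\mathcal{R}\times\Lambda(\mathcal{R})}\mathbf{P}_1\rVert_{TV}$, which requires the Girsanov density $D_\Lambda$ and an inequality of the type in Lemma C.1 of \cite{mattingly02}, fed by the uniform $L^2$-bound on $\gw^+$. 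None of this is an ``immediate translation'' of Theorem~\ref{theo:construc:appli}$(ii)$. The measurability issues you flag as the main obstacle are, by contrast, not where the work lies.
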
}

\begin{proof} 
At the price of a change of variable, we can assume that $\tau_{k-1}=0$. The construction of the coupling follows the lines of \cite{hairer} and \cite{fontbona-panloup}. For the sake of completeness, one however recalls the principle below.

\noindent
\textit{Step 1: Definition of $\cl(W,\widetilde{W})$:}
With the notations of Theorem \ref{theo:construc:appli}, let $\pi$ denote the current state and let $\Lambda$ denote the related coupling function.  Let $\PE_W$ denote the Wiener measure on $[0,1]$ and  $\Lambda^*\PE_W$ be the image measure of $\PE_W$ by the mapping $\Lambda$. By  Girsanov's Theorem,
 ${\Lambda}^*\PE_W(dw)= D_{{\Lambda}} (w) \PE_W(dw)$ where, with the notations of Theorem \ref{theo:construc:appli},
\bqn\label{eq:dphi}
D_{\Lambda}(w)=\exp\left(\int_0^1 \gw^+(w,s)  dw(s)-\frac{1}{2}\int_0^1 |\gw^+(w,s)|^2ds\right).
\eqn

\noindent First, if $\omega\notin \AD_k(K,\alpha,\gamma)$, one does not attempt Step $1$. In other words, in this case, $\gw^{+}=0$ on $[\tau_{k-1},\tau_{k-1}+1]$.\smallskip

\noindent Second, assume that $\omega\in \AD_k(K,\alpha,\gamma)$. For  positive measures $\mu_1$ and $\mu_2$ with densities $D_1$ and $D_2$ with respect to another measure $\mu$, denote by $\mu_1\wedge\mu_2$ the measure defined by $(\mu_1\wedge \mu_2) (dw)=D_1(w)\wedge D_2(w) \mu(dw)$.
According to Theorem  \ref{theo:construc:appli}$(ii)$, the way of gluing  $Y$ and $\widetilde{Y}$ at time $\tau_{k-1}+1$ implies  the construction of  a coupling $(W,\widetilde{W})$ such that $\widetilde{W}=\Lambda(W)$ on $[\tau_{k-1},\tau_{k-1}+1]$ with lower-bounded probability. However, let us recall that in this non-Markovian setting, we also need to control the distance between $W$ and $\widetilde{W}$ on the event where the coalescent coupling fails. This particular feature leads to a construction of the coupling which slightly differs from the classical maximal coupling, where the components are independent conditionally to the failure (see $e.g.$ \cite{lindvall}).
Namely,  with the help of the invertibility of $\Lambda$ proved in Theorem~\ref{theo:construc:appli}, one defines a non-negative measure ${\bf P}_1$ on $\oom_{+}^2$ by
$${\bf {P}}_1=\frac{1}{2}\left(\Lambda_{1}^* \PE_W\wedge \Lambda^*_{2}\PE_W\right)$$
where $\Lambda_{1}$ and $\Lambda_{2}$ are the functions  $a.s.$ defined on ${\cal C}([0,1],\ER^d)$ by
$$\Lambda_1(w)=(w,\Lambda(w))\quad \textnormal{and}\quad \Lambda_2(w)=(\Lambda^{-1}(w),w).$$
Let us recall here that, even though this is not suggested by the notation,  $ {\bf P}_1$  strongly depends on the current state $\pi$ (via $\Lambda$).  
Indeed, if one goes back to the definition \eqref{eq:def-Lambda} of $\Lambda$, it is readily checked that the function $g_{X}(w_{+})$ therein depends also on $w_{-}$.
In addition, for any bounded measurable function $F$ defined on $\oom_{+}^2$, we have:
\begin{eqnarray*}
\E_{\Lambda_{1}^{*}\PE}[F]
&=&
\int_{\oom_{+}} F(w,\laa(w)) \, \PE(dw)
=
\int_{\oom_{+}} F(w_{1},w_{2}) \, \1_{(w_{2}=\laa(w_{1}))} 
D_{{\Lambda}}(w_{2}) \, \PE(dw_{2}) \\
&=&
\int_{\oom_{+}} F(w_{1},w_{2}) \, \1_{(w_{1}=\laa^{-1}(w_{2}))} 
D_{{\Lambda}}(w_{2}) \, \PE(dw_{2}).
\end{eqnarray*}
Therefore we get:
$$\Lambda_{1}^* \PE_W(dw_1,dw_2)
=\mathbf{1}_{\{(\Lambda^{-1}(w),w)\}}(w_1,w_2)D_{{\Lambda}}(w_{2}) \PE_W (dw_{2}),$$
where $D_{{\Lambda}}$ is defined by \eqref{eq:dphi}. This implies that ${\bf P}_1$ satisfies
\bqn\label{eq:repp1}
{\bf P}_1(dw_1,dw_2)=\frac{1}{2}\mathbf{1}_{\{(\Lambda^{-1}(w),w)\}}(w_1,w_2)
(D_{\Lambda}(w_{2})\wedge 1) \PE_W (dw_{2}).
\eqn
Write  $S(w_1,w_2)=(w_2,w_1)$ and denote by $\widetilde{\bf P}_1$ the ``symmetrized'' non-negative measure induced by ${\bf P}_1$,  $\widetilde{\bf P}_1:={\bf P}_1+S^*{\bf P}_1$.  We then define the coupling   $({W}_t^{\tau_{k-1}},\widetilde{W}_t^{\tau_{k-1}})=(W_{t+\tau_{k-1}}-W_{\tau_{k-1}},\widetilde{W}_{t+\tau_{k-1}}-\widetilde{W}_{\tau_{k-1}})$ as follows:
\begin{equation}\label{eq:def-law-W-tilde-W}
{\cal L}(({W}_t^{\tau_{k-1}},\widetilde{W}_t^{\tau_{k-1}})_{t\in[0,1]})
=\widetilde{\bf P}_1+ \Delta^*(\PE_W-\Pi^*_1 \widetilde{\bf P}_1)
={\bf P}_1+{\bf P}_2,
\end{equation}
with $\Delta(w)=(w,w)$, $\Pi_1(w_1,w_2)=w_1$ and ${\bf P}_2=S^*{\bf P}_1+\Delta^*(\PE_W-\Pi^*_1 \widetilde{\bf P}_1)$. 
Using \eqref{eq:repp1}, we check that for nonnegative functions $f$, 
\begin{align*}
\Pi^*_1\widetilde{\bf P}_1(f)&\le\frac{1}{2}\int \left(f(\Lambda^{-1}(w))D_{\Lambda}(w)+ f(w)\right)\PE_{W}(dw)\leq \PE_{W}(f),
\end{align*}
hence  ${\bf P}_2$ is the sum of two non-negative measures.
Thanks to the symmetry property of $\widetilde{\bf P}_1$ and to the fact that $ \Pi_1 \circ\Delta$ is the identity, one can also check that the marginals of $ {\bf P}_1+ {\bf P}_2$ are both equal to $\PE_W$. 
In conclusion, the coupling \eqref{eq:def-law-W-tilde-W} between $W$ and $\widetilde{W}$ has been achieved in such a way that:\smallskip

\noindent\emph{(i)} Each marginal is the distribution of a Brownian motion.\smallskip

\noindent\emph{(ii)} Only three possibilities occur: $w_2=\Lambda(w_1)$ (under ${\bf P}_1$) or $w_2=\Lambda^{-1}(w_1)$
(under $S^*{\bf P}_1$) or $w_1=w_2$ (under $\Delta^*(\PE_W-\Pi^*_1 \widetilde{\bf P}_1)$) 
so that $\gw=0$ in that last case. In particular, whenever the coupling fails, the distance between the two Brownian motions is still controlled.

\noindent
\textit{Step 2: Proof of statements $(b)$ and $(c)$.} Statement $(b)$ is a direct consequence of the last statement of Theorem \ref{theo:construc:appli}. For $(c)$, the result is obvious  if $\omega\notin \AD_k(K,\alpha,\gamma)$. Otherwise, this is a consequence of the statement $(ii)$ above and from Theorem \ref{theo:construc:appli}$(iv)$.

\noindent
\textit{Step 3: Proof of statement $(a)$}: 
Recall that  $\tilde{P}_1$ denotes the distribution of  $({W}^{\tau_{k-1}},\widetilde{W}^{\tau_{k-1}})$ on $[\tau_{k-1},\tau_{k-1}+1]$ and that $\widetilde{W}_t^{\tau_{k-1}}=\Lambda({W}_t^{\tau_{k-1}})$ under the subprobability ${\bf P}_1$. Set  ${\cal R}:=\{(w, (y^{\pi,w}(1)=\tilde{y}^{\pi,\Lambda(w)}(1)\}$ where $(y^{\pi,w},y^{\pi,\Lambda(w)})$ stands for a coupled solution to the SDE on $[0,1]$ with initial condition $\pi$ and couple of Wiener innovations $(w,\Lambda({w}))$.
With the notations of Theorem \ref{theo:construc:appli}, ${\cal R}$ can be more precisely written as follows:
$${\cal R}:=\big\{ w_+\in \Omega_+: \, \Phi(a;\mathfrak{L}({\cal D}_X(w_- \sqcup w_+)))_1=\Phi(\tilde{a};\mathfrak{L}({\cal D}_X(\wti_- \sqcup \Lambda(w_+)))_1\big\} \ ,$$
where $\pi:=(a,\tilde{a},w_-,\wti_-)\in (\R^d)^2 \times \Omega_-^2$ stands for the past of the system up to time $\tau_{k-1}=0$.
As a consequence,
$$\PE(Y_{\tau_{k-1}+1}=\widetilde{Y}_{\tau_{k-1}+1}|\ce_{k-1}\cap \AD_k(K,\alpha,\gamma))\ge  {\bf P}_1({{\cal R}\times\Lambda({\cal R})})=\|\mathbf{1}_{{\cal R}\times\Lambda({\cal R})}{\bf P}_1\|_{TV}.$$
Now, by Theorem  \ref{theo:construc:appli}, we know that there exists a positive $\delta_K$ (depending only on $K$) such that $\PE_W({\cal R})\ge\delta_K$. Our claim (a) is thus reduced to lower bound $\|\mathbf{1}_{{\cal R}\times\Lambda({\cal R})}{\bf P}_1\|_{TV}$ in terms of $\PE_W({\cal R})$.

The aforementioned lower bound is obtained as follows: by \eqref{eq:repp1} and Lemma C.1. of \cite{mattingly02} (applied to $p=2$, $\mu_1=\Lambda^*\PE_W$, $\mu_2=\PE_W$ and $X={\cal R}$) we have
$$\|\mathbf{1}_{{\cal R}\times\Lambda({\cal R})}{\bf P}_1\|_{TV}\ge \frac{\left[\int_{\Lambda({\cal R})} D_{\Lambda}(w)\PE_W(dw)\right]^2}{ 4 \int_{\Lambda({\cal R})} D_{\Lambda}(w)^{3} \PE_W(dw)}.$$
Following the lines of the proof of \cite[Lemma 3.1]{fontbona-panloup} and using that $w\mapsto \int_0^1|g_{_W}^+(w,s)|^2ds$ is  bounded by a constant depending only on $K$ (see Theorem \ref{theo:construc:appli}$(iv)$), one deduces that  
$$\|\mathbf{1}_{{\cal R}\times\Lambda({\cal R})}{\bf P}_1\|_{TV}\ge C\left[  \PE_W({\cal R})  \right]^2\ge C\delta_K>0,$$
which is the desired lower bound.
\end{proof}

\section{About Step 2}\label{sec:step2}

{As explained in Section \ref{subsec:general3stepschem}, and following the ideas of \cite{fontbona-panloup,hairer}, Step 2 consists in a series of trials to keep $Y$ and $\widetilde{Y}$ as equal on successive intervals $I_{k,\ell}:=[s_{k,\ell},s_{k,\ell+1}]$ of length $c_2 2^\ell$. To be more specific, for every fixed $k\ge1$, we define $(s_{k,\ell})_{\ell\ge0}$ by
\bqn\label{eq:skuv}
 s_{k,0}=s_{k,1}=\tau_{k-1}+1\quad \textnormal{and for every $\ell\ge 1$} \quad s_{k,\ell+1}=s_{k,\ell}+c_2 2^{\ell}.
 \eqn
From a pathwise point of view, an obvious way to achieve our goal here, that is to keep the paths $Y$ and $\widetilde{Y}$ glued together is to set $g_X(t)=0$ after time $\tau_{k-1}+1$, which by the one-to-one connection of Lemma \ref{lemma:formdrift}, amounts to setting
\begin{equation}\label{def-g-s}
g_{_W}(t)=g_{_S}(t):=({\cal R}_0 g^{\tau_{k-1}+1}_{_W})(t)  \ .
\end{equation}
The aim then is to extend the previous Brownian coupling $(W,\widetilde{W})$ in such a way that, with some controlled probability, Condition (\ref{def-g-s}) is indeed satisfied on the successive intervals $I_{k,\ell}$. Using the notation ${\cal B}_{k,\ell}$ introduced in (\ref{eq:bkl-in-general-proof}) and with the proof of Theorem \ref{theo:principal} in mind (see (\ref{applic-step2-1}) and (\ref{applic-step2-2})), we are more precisely interested in the control of the related quantity $\PE({\cal B}_{k,\ell}|{\cal B}_{k,\ell-1})$, that is the probability of respecting (\ref{def-g-s}) on the interval $I_{k,\ell}=[s_{k,\ell},s_{k,\ell+1}]$ provided it holds up to time $s_{k,\ell}$. This specific issue has been studied in \cite[Section 3.2]{fontbona-panloup} for $H>\frac12$. It turns out that the result therein obtained, as well as its proof, can be transposed into our setting without any change, which leads us directly to the following assertion:}

 {\begin{prop}\label{lemme:step2.2} Let $g_{_S}$ be defined by (\ref{def-g-s}) for $t\geq \tau_{k-1}+1$. Then for every $\alpha \in (0,H)$ and every $K>0$, there exist constants $\mathbf{C}^{\mathbf{2}}_{\alpha,K}\ge 1$, $\mathbf{C}^{\mathbf{2,1}}_{\alpha,K}\ge 1$ and $\rho^1_{\alpha,K},\rho^2_{\alpha,K}\in (0,1)$, which do not depend on $k$ and such that the following properties hold:
\begin{itemize}
\item[(a)] On the event $\AD_k(K,\alpha,\gamma)$, one has
\bqne
\int_0^{+\infty} (1+t)^{2\alpha}|g_{_S}(\tau_{k-1}+1+t)|^2dt\le \mathbf{C}^{\mathbf{2}}_{\alpha,K} \ .\eqne
\item[(b)] One can extend the coupling $(W,\widetilde{W})$ along Condition (\ref{def-g-s}) in such a way that, calibrating Step 2 by the formula $c_2:=(\mathbf{C}^{\mathbf{2}}_{\alpha,K})^{\frac{1}{2\alpha}}$, one has
$$\rho^1_{\alpha,K}\le\PE({\cal B}_{k,1}|{\cal B}_{k,0})\le \rho^2_{\alpha,K} \ ,$$
and for all $k\geq 0$, $\ell\geq 2$,
\bqn\label{eq:lbbk1}
1-2^{-\alpha \ell}\le\PE({\cal B}_{k,\ell}|{\cal B}_{k,\ell-1})\le 1-2^{-\alpha \ell-1} \ .
\eqn
\item[(c)] On the event $F_{k,\ell}$ ($\ell \geq 1$) defined by \eqref{eq:akl} and under the same calibration $c_2:=(\mathbf{C}^{\mathbf{2}}_{\alpha,K})^{\frac{1}{2\alpha}}$, one has
$$\int_{s_{k,1}}^{s_{k,2}} |\gw(t)|^2dt\le \mathbf{C}^{\mathbf{2,1}}_{\alpha,K} \ ,$$
and if $\ell \geq 2$,
\bqne
\int_{s_{k,\ell}}^{s_{k,\ell+1}} |\gw(t)|^2dt\le (2(\ell+3))^2\quad ,\quad\int_{s_{k,p-1}}^{s_{k,p}}|\gw(t)|^2dt\le 2^{-2\alpha p} \ , \ p \in\{2,\ldots,\ell\}\ .
\eqne
\end{itemize}
\end{prop}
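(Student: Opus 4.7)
The plan is to closely mimic the strategy of \cite[Section 3.2]{fontbona-panloup}, since the only role played by the Hurst parameter in that reasoning is through the kernel structure of ${\cal R}_{0}$ and the admissibility bound \eqref{hairerassumpcond}, both of which still make sense for $H\in(1/3,1/2)$. Essentially, on each interval $I_{k,\ell}$ one freezes the ``mandatory'' drift $g_{_S}$ dictated by Lemma~\ref{lemma:formdrift}, estimates its $L^2$-cost using \eqref{hairerassumpcond}, and then realises the constraint $\gw=g_{_S}$ with high probability via a Girsanov-flavoured coupling between $W$ and $\widetilde W$.

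For item (a), first note that by definition \eqref{eq:def-R-0} one has $|g_{_S}(\tau_{k-1}+1+t)|\le ({\cal R}_0|g_{_W}^{\tau_{k-1}+1}|)(t)$, so the bound follows directly from the $T=0$ case of the admissibility condition \eqref{hairerassumpcond} applied at time $\tau_{k-1}+1$. The constant $\mathbf{C}^{\mathbf{2}}_{\alpha,K}$ therefore depends only on $K$ and $\alpha$, not on $k$; since ${\cal R}_0$ is linear, one can in fact take $\mathbf{C}^{\mathbf{2}}_{\alpha,K}$ as a universal multiple of $K^{2}$.

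For item (b), on the $\ell$-th sub-attempt one is attempting to enforce $\widetilde W_{t}-\widetilde W_{s_{k,\ell}}=W_{t}-W_{s_{k,\ell}}+\int_{s_{k,\ell}}^{t}g_{_S}(u)\,du$ on the interval $I_{k,\ell}$ of length $c_2 2^{\ell}$. Using a (hitting-time) maximal coupling of the two Brownian increments as in \cite[Lemma~3.5]{fontbona-panloup}, the conditional success probability is controlled from above and below by explicit functions of $\int_{I_{k,\ell}}|g_{_S}(t)|^2\,dt$; thanks to (a) this integral is bounded by $\mathbf{C}^{\mathbf{2}}_{\alpha,K}(s_{k,\ell})^{-2\alpha}(c_2 2^{\ell})\lesssim \mathbf{C}^{\mathbf{2}}_{\alpha,K}c_2^{-2\alpha}2^{-2\alpha\ell}$ for $\ell \ge 2$ (and separately estimated for $\ell=1$). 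Calibrating $c_{2}=(\mathbf{C}^{\mathbf{2}}_{\alpha,K})^{1/(2\alpha)}$ exactly compensates the $\mathbf{C}^{\mathbf{2}}_{\alpha,K}$ factor and turns this into a bound of order $2^{-\alpha\ell}$, yielding both the upper and lower bounds in~\eqref{eq:lbbk1}. The borderline case $\ell=1$ has to be treated separately because the upper bound $\rho^{2}_{\alpha,K}<1$ must be kept strictly away from $1$; this is done by a direct comparison argument on $I_{k,1}$.

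For item (c), on the failure event $F_{k,\ell}$ we must exhibit an explicit redefinition of $\gw$ on $I_{k,\ell}$ (and inherit the successful values on the previous sub-intervals). The cost on the successful sub-intervals $I_{k,p}$, $2\le p\le \ell$, is precisely bounded by the $L^{2}$ norm of $g_{_S}$ there, which by the same calibration gives $2^{-2\alpha p}$. On the failing sub-interval $I_{k,\ell}$, one uses the explicit form of the failed coupling (a Brownian bridge-type correction) to obtain the polynomial-in-$\ell$ bound $(2(\ell+3))^{2}$, see \cite[Lemma~3.6]{fontbona-panloup}. Finally the first interval $I_{k,1}$ is controlled by an absolute constant $\mathbf{C}^{\mathbf{2,1}}_{\alpha,K}$. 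The main technical obstacle is really just the bookkeeping in item (c) — ensuring that the constants obtained are uniform in $k$ under the admissibility framework — but since the analysis in \cite{fontbona-panloup} never uses $H>1/2$ other than through the admissibility bound, the transposition is immediate.
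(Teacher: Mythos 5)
Your overall strategy is the same as the paper's: item-by-item transposition of \cite[Section 3.2]{fontbona-panloup}, noting that the argument there uses $H$ only through the kernel of $\mathcal{R}_0$ and the admissibility bound, both of which remain available for $H\in(1/3,1/2)$. The paper gives no further proof than this citation, so in spirit you are aligned.

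However, your sketch of item (a) has a real gap. You claim the bound follows ``directly from the $T=0$ case of the admissibility condition \eqref{hairerassumpcond} applied at time $\tau_{k-1}+1$'', but the event $\AD_k(K,\alpha,\gamma)$ encodes admissibility at time $\tau_{k-1}$, not $\tau_{k-1}+1$. Between these two times, Step~1 has injected a new drift $g_{_W}^+$ on $[\tau_{k-1},\tau_{k-1}+1]$, which is \emph{not} accounted for by \eqref{hairerassumpcond} at $\tau_{k-1}$. To estimate $\int_0^\infty(1+t)^{2\alpha}|({\cal R}_0 g_{_W}^{\tau_{k-1}+1})(t)|^2\,dt$, one must split $g_{_W}^{\tau_{k-1}+1}$ into the old part on $(-\infty,-1]$ --- controlled by the $T=1$ (not $T=0$) slice of the $\sup_T$ in \eqref{hairerassumpcond}, giving a bound of $1$ --- and the Step~1 contribution on $[-1,0]$, whose $L^2$ norm is $\le C_K$ by Proposition~\ref{prop:step1}$(c)$. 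It is precisely this second piece that generates the $K$-dependence of $\mathbf{C}^{\mathbf{2}}_{\alpha,K}$; your attribution of the $K^2$ scaling to ``linearity of $\mathcal{R}_0$'' together with condition~(ii) of Definition~\ref{def:admissible-state} is not the right mechanism (condition~(i) carries a $K$-free bound of $1$).

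A smaller issue: in item (b), the intermediate estimate $\int_{I_{k,\ell}}|g_{_S}|^2 \le \mathbf{C}^{\mathbf{2}}_{\alpha,K}(s_{k,\ell})^{-2\alpha}(c_2 2^\ell)$ carries a spurious factor $c_2 2^{\ell}$; the correct chain is simply $\int_{I_{k,\ell}}|g_{_S}|^2 \le \mathbf{C}^{\mathbf{2}}_{\alpha,K}\,(1+ s_{k,\ell}-\tau_{k-1}-1)^{-2\alpha}\lesssim \mathbf{C}^{\mathbf{2}}_{\alpha,K}\,c_2^{-2\alpha}2^{-2\alpha\ell}$ using $s_{k,\ell}-(\tau_{k-1}+1)=c_2(2^\ell-2)$. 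Your final conclusion happens to match, but the displayed inequality as written is not dimensionally correct (it would require $\alpha>1/2$ to be consistent). You should also make explicit that the lower bound $\rho^1_{\alpha,K}>0$ and the two-sided control in \eqref{eq:lbbk1} hinge on having \emph{both} an upper and a lower bound on the single-interval $L^2$ cost, so that the coupling success probability is squeezed between two explicit functions of that cost; this two-sided structure is what lets the same calibration $c_2=(\mathbf{C}^{\mathbf{2}}_{\alpha,K})^{1/(2\alpha)}$ give both directions of the inequality.
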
}

\section{$(K,\alpha,\gamma)$-admissibility condition}\label{sec:Kadmis}
In this section, we assume that Steps 1 and 2 are carried out  as described previously, and the aim is to ensure that  the system is $(K,\alpha,\gamma)$-admissible with positive probability at every time $\tau_k$.  This is the purpose of the next proposition. {In the subsequent statements, we recall that for all $\alpha \in (0,H)$ and $K>0$, the notation $\mathbf{C}^{\mathbf{2}}_{\alpha,K}$ refers to the constant in $(1,+\infty)$ provided by Proposition \ref{lemme:step2.2}.}

\begin{proposition}\label{prop:minokalp} 
Let $H\in(1/3,1/2)$ and  assume  ${\bf (H1)}$, ${\bf (H2)}$, ${\bf (H3)}$ hold true.
For all $\varepsilon \in (0,1)$, $\alpha\in (0,H)$, $\beta>(1-2\alpha)^{-1}$ and $\varsigma >1$, there exist strictly positive constants 
\begin{equation}\label{csts-K-c-3}
K=:\mathbf{K}(\varepsilon, \alpha) \quad , \quad c_3=:\mathbf{c}_{\mathbf{3}}(\varepsilon, \alpha,\beta,\varsigma)
\end{equation}
 such that calibrating Step 2 and Step 3 along the formulas
\begin{equation}\label{best-calib}
c_2:=(\mathbf{C}^{\mathbf{2}}_{\alpha,K})^{\frac{1}{2\alpha}} \quad , \quad \Delta_3(k,\ell):=c_3 \varsigma^k 2^{\beta \ell}
\end{equation}
yields that $c_3\geq 2c_2$ and for every $k\geq 1$
\begin{equation}\label{lower-boun-step-3}
\PE\big( A_{k+1}(K,\alpha,\gamma)\, \big|\, \ce_k\big)\ge 1-\varepsilon \ .
\end{equation}
\end{proposition}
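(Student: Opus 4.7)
\emph{Proof plan.} The strategy is to first fix $K=\mathbf{K}(\varepsilon,\alpha)$ so that condition~(ii) of Definition~\ref{def:admissible-state} holds with conditional probability at least $1-\varepsilon/2$ (this simultaneously determines $c_{2}=(\mathbf{C}^{\mathbf{2}}_{\alpha,K})^{1/(2\alpha)}$), and then to choose $c_3=\mathbf{c}_{\mathbf{3}}(\varepsilon,\alpha,\beta,\varsigma)$ large enough --- in particular $c_3\geq 2c_2$ --- so that condition~(i) holds with conditional probability at least $1-\varepsilon/2$. A union bound then yields~\eqref{lower-boun-step-3}.

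\emph{Handling condition (ii).} Condition \eqref{Kadmiscond} involves the positions $|Y_{\tau_k}|+|\widetilde Y_{\tau_k}|$ and the past-noise norms $\vvvert D^{(\tau_k)}(W)\vvvert_{1;\gamma}+\vvvert D^{(\tau_k)}(\widetilde W)\vvvert_{1;\gamma}$. For the position part, \eqref{eq:assumpstep3} enforces $\gw\equiv 0$ during Step~3, so the increments of $W$ and $\widetilde W$ over $[\tau_k^3,\tau_k]$ are genuine Brownian increments, independent of $\cf_{\tau_k^3}$. Iterating the Lyapunov bound \eqref{ineq-lyapoun} of Theorem~\ref{theo:lyapou} on unit subintervals of $[\tau_k^3,\tau_k]$ (whose total length is at least $c_3\varsigma$), the exponential contraction $e^{-C_2/2}$ combined with finiteness of all moments of $\|\mathbf{X}\|_\gamma$ yield, uniformly in $k$,
\[
\PE\!\big[|Y_{\tau_k}|^r+|\widetilde Y_{\tau_k}|^r\,\big|\,\ce_k\big] \ \leq\ C_r \quad (r\geq 1).
\]
For the past-noise part, since each marginal $W$, $\widetilde W$ is a genuine Brownian motion whose law is invariant under time translation, standard Gaussian estimates applied to the explicit formula~\eqref{eq:def-D-tau-w} (together with a controlled drift correction bounded by $\int_0^{\tau_k}|\gw|^2\,ds$, itself controlled by Propositions~\ref{prop:step1}(c) and~\ref{lemme:step2.2}(c)) provide analogous uniform-in-$k$ moment bounds on $\vvvert D^{(\tau_k)}(W)\vvvert_{1;\gamma}$ and $\vvvert D^{(\tau_k)}(\widetilde W)\vvvert_{1;\gamma}$. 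Markov's inequality then allows us to choose $K$ large enough to get the claim.

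\emph{Handling condition (i).} Decompose $\gw^{\tau_k}$ into its restrictions $\gw^{(m)}$ to the slices $I_m\subset(-\infty,0]$ corresponding to Steps 1--2 of each past trial $m\in\{1,\dots,k\}$ (Steps 3 contribute nothing by \eqref{eq:assumpstep3}). By definition of $\Delta_3$ in \eqref{durationstep3}, the distance $\tilde v_m$ from $\tau_k$ to the nearest point of $I_m$ satisfies $\tilde v_m\geq c_3\varsigma^m 2^{\beta\ell_m}$, while $|I_m|\leq c\cdot 2^{\ell_m}$ and $\int_{I_m}|\gw|^2 \leq C_K(1+\ell_m^2)$ by Propositions~\ref{prop:step1}(c) and~\ref{lemme:step2.2}(c). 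Applying Cauchy--Schwarz in the $s$-integration of~\eqref{eq:def-R-T} inside each slice, using the pointwise estimate $\frac{(T-s)^{H-1/2}}{t+T-s}\leq \frac{(T+\tilde v_m)^{H-1/2}}{t+T+\tilde v_m}$ for $-s\geq \tilde v_m$, and integrating the outcome against $(1+t)^{2\alpha}\,dt$ (which converges precisely because $\alpha<H$), one checks that the contribution of trial $m$ is $\leq C\cdot 2^{\ell_m}(1+\ell_m^2)(T+\tilde v_m)^{2\alpha-1}$ uniformly in $T\geq 0$. Summing via $|\mathcal{R}_T\gw^{\tau_k}|\leq \sum_m|\mathcal{R}_T\gw^{(m)}|$ and using $\tilde v_m\geq c_3\varsigma^m 2^{\beta\ell_m}$, we end up with
\[
\sup_{T\geq 0}\int_0^\infty (1+t)^{2\alpha}\big|\mathcal{R}_T\gw^{\tau_k}(t)\big|^2 dt \ \leq\ C\bigg(\sum_{m=1}^k (c_3\varsigma^m)^{\alpha-\frac12}(1+\ell_m)\,2^{\ell_m(1-\beta(1-2\alpha))/2}\bigg)^{\!\!2}.
\]
The hypothesis $\beta>(1-2\alpha)^{-1}$ makes the exponent $1-\beta(1-2\alpha)$ strictly negative, so the $\ell_m$-factor has a uniformly bounded exponential moment thanks to the geometric tails of $\ell_m$ provided by Proposition~\ref{lemme:step2.2}(b) (via $F_{m,\ell}\subset \cb_{m,\ell-1}\cap \cb_{m,\ell}^c$ and \eqref{eq:lbbk1}); simultaneously, since $\varsigma>1$ and $\alpha<1/2$, the $m$-factor is geometrically summable. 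Hence the right-hand side above can be made arbitrarily small in conditional $L^1$-norm (given $\ce_k$) by choosing $c_3$ large enough, and Markov's inequality yields condition~\eqref{hairerassumpcond} with conditional probability $\geq 1-\varepsilon/2$.

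\emph{Main obstacle.} The delicate step is the sharp, uniform-in-$T\geq 0$ analysis of $\mathcal{R}_T$, whose kernel couples a near-zero singularity $t^{1/2-H}$ with only marginal algebraic decay as $|s|\to\infty$. The geometric Step-3 calibration $\Delta_3(k,\ell)=c_3\varsigma^k 2^{\beta\ell}$, together with the threshold $\beta>(1-2\alpha)^{-1}$, is designed so that the separation $\tilde v_m$ between active past drifts and $\tau_k$ precisely overwhelms the polynomial growth $1+\ell_m^2$ of the trial-wise drift energies accumulated during Step-2 failures, which is what makes the above resummation tractable.
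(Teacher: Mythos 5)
Your high-level architecture matches the paper's: isolate the two pieces of the admissibility condition, control condition (ii) via a Chebyshev-type inequality applied to conditional moments of $|Y_{\tau_k}|$ and $\vvvert D^{(\tau_k)}\vvvert_{1;\gamma}$, and control condition (i) by exploiting the geometric Step-3 durations together with $\beta>(1-2\alpha)^{-1}$ (for which the paper simply refers back to \cite[Prop.\ 4.6]{fontbona-panloup}; the pathwise bound you derive is in fact deterministic since $1-\beta(1-2\alpha)<0$ makes the $\ell_m$-factor uniformly bounded, so your final appeal to Markov for condition (i) is unnecessary). However, there is a genuine gap in the treatment of condition (ii): you invoke ``standard Gaussian estimates'' and ``iterating the Lyapunov bound'' as if the conditioning by $\ce_k$ were innocuous, but the event $\ce_k=(\tau_k<\infty)$ is strongly correlated with the past noise increments and with the positions $Y_{\tau_m}$ (each trial's failure is determined by them). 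The paper handles precisely this via the Cauchy--Schwarz machinery of Lemma~\ref{lemma:contDuv}, built on the \emph{upper} bound $\PE(\ce_k|\ce_{k-1})\ge\eta>0$ (Remark~\ref{rem:lem888}), which converts conditional expectations given $\ce_k$ into a geometric series of unconditional ones. Without this, the claimed uniform-in-$k$ moment bounds are unjustified.

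Two secondary issues. First, your assertion $\ES[|Y_{\tau_k}|^r\,|\,\ce_k]\le C_r$ for all $r\ge1$ cannot be right: the initial condition $\mu_0$ is only assumed to have a finite $r$-th moment for \emph{some} $r>0$ (possibly $<1$), and the paper in fact works with a small exponent $p\in(0,1)$ constrained both by $r$ and by the requirement $\lambda p/2\le 1/4$ in Proposition~\ref{prop:H2gamma}. Second, your ordering ``fix $K$ first, then $c_3$'' hides a dependency issue: the conditional moment bounds driving the choice of $K$ depend on the calibration (hence on $c_3$, $\varsigma$). The paper resolves this by proving (Lemmas~\ref{lem:step3-2}, \ref{lem:step3-1}) that for $c_3$ above a $\varsigma$-dependent threshold the moments are bounded by constants $\mathbf{C}^{\mathbf{3,2}}_\alpha,\mathbf{C}^{\mathbf{3,3}}_\alpha$ depending on $\alpha$ \emph{alone}; only then is $K=\mathbf K(\varepsilon,\alpha)$ chosen, and $c_3$ finalized afterwards. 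That decoupling is essential for the claimed dependence structure in \eqref{csts-K-c-3} and should be made explicit in your argument.

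Also, a small correction: both $W$ and $\widetilde W$ have Brownian marginals under the coupling, so no ``controlled drift correction'' via $\int_0^{\tau_k}|\gw|^2$ is needed for $\vvvert D^{(\tau_k)}(\widetilde W)\vvvert_{1;\gamma}$; the paper treats the two marginals identically.
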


{
\begin{remark}
Let us insist on the fact that, in accordance with our notations in (\ref{csts-K-c-3}), the function $\mathbf{K}$ so defined depends on the two parameters $\varepsilon$ and $\alpha$ only, whereas $\mathbf{c}_{\mathbf{3}}$ depends both on $(\varepsilon,\alpha)$ and on $(\beta,\varsigma)$. This dependence issue is of paramount importance in the proof of Theorem \ref{theo:principal}, as we have seen it in Section \ref{sec:prooftheoprinc}.
\end{remark}}

{The proof of Proposition \ref{prop:minokalp} will actually be obtained as a consequence of the three following lemmas. We assume here that $H\in (1/3,1/2)$ is fixed, and that Hypotheses ${\bf (H1)}$, ${\bf (H2)}$, ${\bf (H3)}$ hold true.}
{
\begin{lemma}\label{lem:step3-0}
For all $\alpha\in (0,H)$, $\beta>(1-2\alpha)^{-1}$, $K>0$ and $\varsigma >1$, there exists a constant $\mathbf{c}_{\mathbf{3,1}}(\alpha,\beta,K,\varsigma)>0$ such that for every $c_3\geq \mathbf{c}_{\mathbf{3,1}}(\alpha,\beta,K,\varsigma)$, calibrating Step 2 and Step 3 along the formulas in (\ref{best-calib}) yields that for every $k\geq 1$
$$
\PE\left( \sup_{T\ge0} \int_0^{+\infty} (1+t)^{2\alpha}(({\cal R}_T |\gw^{\tau_k}|)(t))^2dt\le 1\, \Big|\, \ce_k\right)=1 \ .
$$
\end{lemma}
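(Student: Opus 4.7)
The strategy is to decompose $\gw^{\tau_k}$ along the successive trials of the coupling scheme and exploit the separation in time enforced by Step~3. Conditionally on $\ce_k$, each trial $k'\in\{1,\ldots,k\}$ has concluded through a full Step~3 (on which $\gw\equiv 0$), and recall that $\gw$ also vanishes on $(-\infty,0]$. Writing $\gw^{\tau_k}=\sum_{k'=1}^{k}g_{k'}$, with $g_{k'}$ the restriction of $\gw^{\tau_k}$ to the (shifted) trial-$k'$ window, Minkowski's inequality in $L^{2}((1+t)^{2\alpha}dt)$ reduces the problem to individually controlling $\|{\cal R}_T|g_{k'}|\|_{L^2((1+t)^{2\alpha}dt)}$.

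For each $k'$, I would then apply Cauchy--Schwarz in $s$ to obtain
\begin{equation*}
({\cal R}_T|g_{k'}|)(t)^{2}\;\le\;\|g_{k'}\|_{L^{2}}^{2}\int_{\mathrm{supp}\,g_{k'}}K_T(t,s)^{2}\,ds,\qquad K_T(t,s):=C_H\,\frac{t^{1/2-H}(T-s)^{H-1/2}}{t+T-s}.
\end{equation*}
The substitution $u=t/(T-s)$ then yields
$\int_0^{+\infty}(1+t)^{2\alpha}K_T(t,s)^{2}dt\lesssim_{\alpha}(T-s)^{-1}+(T-s)^{2\alpha-1}$, the integrability near $0$ and $\infty$ being secured by $\alpha<H<1/2$ (together with $(1+u\tau)^{2\alpha}\le 1+(u\tau)^{2\alpha}$ since $2\alpha\le 1$). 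Both powers being decreasing in $T-s\ge 0$, the worst case is $T=0$, which takes care of the supremum over $T$ at no extra cost.

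The key geometric input is the separation $-s\ge\Delta_{3}(k',\ell_{k'})\ge c_{3}\varsigma^{k'}2^{\beta\ell_{k'}}$ for every $s\in\mathrm{supp}\,g_{k'}$, coming from the Step-3 gap that precedes $\tau_{k'}$ within trial $k'$. Combining this with the length bound $|\mathrm{supp}\,g_{k'}|\le 1+c_{2}2^{\ell_{k'}+1}$ and the $L^{2}$-estimates $\|g_{k'}\|_{L^{2}}^{2}\le C(\alpha,K)(1+\ell_{k'})^{2}$ provided by Proposition~\ref{prop:step1}(c) (for $\ell_{k'}=0$) and Proposition~\ref{lemme:step2.2}(c) (for $\ell_{k'}\ge 1$), a direct computation would yield
\begin{equation*}
\|{\cal R}_T|g_{k'}|\|_{L^{2}((1+t)^{2\alpha}dt)}^{2}\;\le\; C(\alpha,K)\,c_{2}\,c_{3}^{2\alpha-1}\,(1+\ell_{k'})^{2}\,\varsigma^{k'(2\alpha-1)}\,2^{\ell_{k'}[1+\beta(2\alpha-1)]}.
\end{equation*}

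The exponent $1+\beta(2\alpha-1)$ is strictly negative precisely because $\beta>(1-2\alpha)^{-1}$, so the right-hand side stays bounded uniformly in $\ell_{k'}\ge 0$; likewise, $\varsigma>1$ together with $\alpha<H<1/2$ ensures that $\sum_{k'\ge 1}\varsigma^{k'(\alpha-1/2)}<\infty$. Taking square roots and summing via Minkowski leads to an overall bound of the form $C(\alpha,\beta,\varsigma,K)\,c_{2}^{1/2}\,c_{3}^{\alpha-1/2}$, which can be forced to be $\le 1$ by choosing $c_{3}$ large enough, thus defining the threshold $\mathbf{c}_{\mathbf{3,1}}(\alpha,\beta,K,\varsigma)$. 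The main delicate point I anticipate is the careful tracking of the support of each $g_{k'}$ (with the split between $\ell_{k'}=0$ and $\ell_{k'}\ge 1$), together with the verification that the assumption $\beta>(1-2\alpha)^{-1}$ is exactly what balances the Step-2 window growth $2^{\ell}$ against the kernel-induced decay $2^{-\beta\ell(1-2\alpha)}$; any slacker choice of $\beta$ would destroy summability in $\ell_{k'}$ and the whole argument would break down.
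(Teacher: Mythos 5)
Your proposal is correct, and the reasoning is sound. Note that the paper itself does not spell out a proof of Lemma~\ref{lem:step3-0} but simply states that it ``can be shown along the very same arguments as in [Fontbona--Panloup, Proposition 4.6]''; your route (decompose $\gw^{\tau_k}=\sum_{k'}g_{k'}$ over trials, Minkowski in the weighted $L^2$, Cauchy--Schwarz in $s$ against the kernel of ${\cal R}_T$, and then exploit the Step-3 gap $-s\ge c_3\varsigma^{k'}2^{\beta\ell_{k'}}$ together with the $L^2$ controls of Proposition~\ref{prop:step1}$(c)$ and Proposition~\ref{lemme:step2.2}$(c)$) is exactly the expected argument. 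A few points worth verifying explicitly when writing this up cleanly: the supports of the $g_{k'}$ are genuinely disjoint (so that $|\gw^{\tau_k}|=\sum_{k'}|g_{k'}|$ and ${\cal R}_T|\gw^{\tau_k}|=\sum_{k'}{\cal R}_T|g_{k'}|$ pointwise, which is what allows Minkowski); the inequality $T-s\ge -s\ge\Delta_3(\ell_{k'},k')\ge c_3\ge 1$ lets you drop the $(T-s)^{-1}$ term in favour of $(T-s)^{2\alpha-1}$ since $2\alpha-1>-1$; and the uniformity in $T\ge 0$ is automatic once you observe that your $t$-integral bound $(T-s)^{-1}+(T-s)^{2\alpha-1}$ is monotone decreasing in $T-s$. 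Your identification of the role of $\beta>(1-2\alpha)^{-1}$ (making the exponent $1+\beta(2\alpha-1)$ strictly negative, hence geometric decay dominating the $(1+\ell)^2$ growth and the $2^{\ell}$ support length) and of $\varsigma>1$ together with $\alpha<1/2$ (making $\varsigma^{k'(\alpha-1/2)}$ summable) is precisely right, and the resulting overall bound $C(\alpha,\beta,\varsigma,K)\,c_2^{1/2}c_3^{\alpha-1/2}$, with $c_2$ pinned down by the calibration and $\alpha-1/2<0$, does indeed yield a finite threshold $\mathbf{c}_{\mathbf{3,1}}(\alpha,\beta,K,\varsigma)$ beyond which the quantity drops below $1$.
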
}

{
\begin{lemma}\label{lem:step3-2}
For all $\alpha \in (0,H)$ and $\varsigma >1$, there exist constants $\mathbf{C}^{\mathbf{3,2}}_\alpha >0$ and $\mathbf{c}_{\mathbf{3,2}}(\alpha,\varsigma) >0$  such that for all $K>0$ and $c_3\geq \mathbf{c}_{\mathbf{3,2}}(\alpha,\varsigma)$, calibrating the scheme along the formulas in (\ref{best-calib}) yields that 
$$
\sup_{k\geq 0} \max \Big( \ES[\vvvert D^{{\tau_k}}(W) \vvvert_{1;\ga}  |\ce_k]\ ,\ \ES[\vvvert D^{{\tau_k}}(\widetilde{W}) \vvvert_{1;\ga} |\ce_k]\Big) \le\mathbf{C}^{\mathbf{3,2}}_\alpha\ .
$$
\end{lemma}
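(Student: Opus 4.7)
Plan for Lemma \ref{lem:step3-2}:

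We focus on bounding $\ES[\vvvert D^{(\tau_k)}(W)\vvvert_{1;\gamma}\mid\ce_k]$; the argument for $\widetilde W$ is symmetric, since both processes are marginally Wiener and the roles of $W$ and $\widetilde W$ can be interchanged throughout the coupling construction. For $k=0$ the bound reduces to a standard moment estimate on the Mandelbrot--Van Ness past component of a two-sided Brownian motion, which is part of Lemma \ref{lem:justifd}. For $k\geq 1$, I would differentiate under the integral sign to get
\[
D^{(\tau_k)}(W)'(t) = \bigl(H-\tfrac{1}{2}\bigr)\int_{-\infty}^{\tau_k} (t+\tau_k-r)^{H-3/2}\,dW_r,
\]
and split this integral at the starting time $\tau_{k-1}^3$ of the final Step 3 of the $k$-th trial, writing $D^{(\tau_k)}(W)'(t)=(H-\tfrac{1}{2})[A_1(t)+A_2(t)]$ with $A_1$ the memory piece over $(-\infty,\tau_{k-1}^3]$ and $A_2$ the innovation piece over $[\tau_{k-1}^3,\tau_k]$.

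The central structural observation is that $\ce_k\in\mathcal{F}_{\tau_{k-1}^3}$, since the outcome of trial $k$ is decided by Steps 1 and 2, while Step 3 always completes in the deterministic duration $\Delta_3(k,\ell_k)=c_3\varsigma^k 2^{\beta\ell_k}$. Combined with \eqref{eq:assumpstep3}, this tells us that conditionally on $\mathcal{F}_{\tau_{k-1}^3}\cap\ce_k$, the process $(W_t-W_{\tau_{k-1}^3})_{t\in[\tau_{k-1}^3,\tau_k]}$ is a standard Brownian motion. Consequently $A_2(t)$ is centered Gaussian, and via the change of variable $u=\tau_k-r$ one gets
\[
\text{Var}\bigl(A_2(t)\mid\mathcal{F}_{\tau_{k-1}^3}\bigr) = \int_0^{\Delta_3(k,\ell_k)} (t+u)^{2H-3}\,du \leq C_H\,t^{2H-2}.
\]
An analogous bound on the increments $A_2(t)-A_2(s)$ followed by a Garcia--Rodemich--Rumsey-type argument applied to the weighted process $t\mapsto t^{1-\gamma}A_2(t)$ yields $\ES[\sup_{t\in(0,1]} t^{1-\gamma}|A_2(t)|\mid\mathcal{F}_{\tau_{k-1}^3}]\leq C_{\gamma,H}$ uniformly in the value of $\Delta_3$, and averaging over $\ce_k$ preserves this bound.

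For the memory piece $A_1$, the kernel $(t+\tau_k-r)^{H-3/2}$ is bounded on $\{r\leq\tau_{k-1}^3,\,t\in(0,1]\}$ by $\Delta_3(k,\ell_k)^{H-3/2}$, which decays as $c_3$ grows since $H-3/2<0$. Disintegrating $\ce_k=\bigsqcup_{\ell\geq 0} F_{k,\ell}$, where on $F_{k,\ell}$ the value $\Delta_3=c_3\varsigma^k 2^{\beta\ell}$ is deterministic, and using the Wiener-marginal law of $W$ together with an $L^2$-isometry computation conditioned on $\mathcal{F}_{\tau_{k-1}^3}\cap F_{k,\ell}$ (complemented by Kolmogorov's lemma applied to $A_1'(t)$ to pass from pointwise moment estimates to the supremum), one aims at
\[
\ES\Bigl[\sup_{t\in(0,1]}|A_1(t)|^2\,\Big|\,F_{k,\ell}\Bigr]^{1/2} \leq C_H\,\bigl(c_3\varsigma^k 2^{\beta\ell}\bigr)^{H-1},
\]
after which the assembly $\ES[\sup_t|A_1|\mid\ce_k]=\sum_\ell \ES[\sup_t|A_1|\mid F_{k,\ell}]\,\PE(F_{k,\ell}\mid\ce_k)$ produces the geometric series $C_H(c_3\varsigma^k)^{H-1}\sum_{\ell\geq 0} 2^{\beta\ell(H-1)}$, which converges since $\beta(1-H)>0$, and the factor $\varsigma^{k(H-1)}\leq 1$ provides uniformity in $k$.

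The main obstacle is precisely the justification of the $L^2$-bound on $|A_1(t)|^2$ conditionally on $F_{k,\ell}$: although $W$ has the Wiener law marginally, conditioning on $F_{k,\ell}$ -- an event determined by the intricate coupling construction of Sections \ref{sec:roughcoupling}--\ref{sec:step2} -- could in principle distort the distribution of the past increments of $W$. The argument requires showing that this distortion does not blow up the second moment of the Wiener integral $A_1$ beyond the unconditional value, which would be achieved through a careful tracking of the $L^2$-norms of the drift terms $\gw$ accumulated at each previous trial (bounded by the earlier results, namely Proposition \ref{prop:step1}(c) and Proposition \ref{lemme:step2.2}(c)) together with a Girsanov-type change-of-measure estimate. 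Once these ingredients are in place, taking $c_3\geq\mathbf{c}_{\mathbf{3,2}}(\alpha,\varsigma)$ large enough so that the above geometric sum is controlled by a universal constant absorbs the remaining dependence into the uniform bound $\mathbf{C}^{\mathbf{3,2}}_\alpha$, depending only on $\alpha$.
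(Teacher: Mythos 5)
The overall structure you identify is sound: splitting the derivative of $D^{(\tau_k)}(W)$ at the start of the final Step 3 does isolate a ``clean'' innovation piece $A_2$ (where $W$ is an unperturbed Brownian motion, $\gw\equiv 0$, and $\ce_k\in\cf_{\tau^3_{k-1}}$ so that conditioning costs nothing), and your treatment of $A_2$ would go through. However, your handling of the memory piece $A_1$ has a genuine gap, and it is exactly the one you flag at the end: the asserted conditional $L^2$-bound $\ES[\sup_t|A_1(t)|^2\mid F_{k,\ell}]^{1/2}\lesssim \Delta_3(k,\ell)^{H-1}$ cannot be obtained by the route you sketch. The problem is that conditioning on $F_{k,\ell}$ (or on $\ce_k$) is \emph{not} a change of measure, so Girsanov is not the right tool: what you are facing is a bound of the form $\ES[X\mathbf{1}_{\ce_k}]/\PE(\ce_k)$, and $\PE(\ce_k)$ decays at least geometrically in $k$ (each trial succeeds with probability bounded below), so the naive division produces an exponentially growing factor $\eta^{-ck}$. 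The kernel decay gives $\ES[|A_1|^2]^{1/2}\sim(c_3\varsigma^k)^{H-1}\sim\varsigma^{-k(1-H)}$, but since $\varsigma>1$ must be allowed to be arbitrarily close to $1$ (this is forced by the choice of $\varsigma$ in the proof of the main theorem in Section \ref{sec:prooftheoprinc}), this factor does not dominate $\eta^{-ck}$. Bounding the accumulated $L^2$-norm of $\gw$ over the first $k$ trials, as you suggest, is also of no help: that norm grows linearly in $k$, so the associated Radon--Nikodym densities blow up.

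The paper resolves this with a \emph{finer} decomposition of the Wiener integral into the pieces $\cd^{\tau_k}_{\tau_{m-1},\tau_m}$ (over all $m\le k$, plus the tail over $(-\infty,0]$), as in \eqref{decomp:dtauk}. This buys two things your coarse split misses. First, on each piece the random endpoints can be ``made deterministic'' by disintegrating over $F_{m,\ell}$ alone, and the increments of $W$ on $[\tau_{m-1},\tau_m]$ have controlled $p$-moments conditional on $\ce_{m-1}$ by stationarity and independence. Second, and crucially, the conditional-expectation gap between conditioning on $\ce_{m-1}$ and on $\ce_k$ is handled by a tunable $L^p$--Cauchy--Schwarz argument (estimate \eqref{eq:controlXEK}), which costs only a factor $\eta^{-(k-m)/p}$ per piece, for $p$ as large as one wishes. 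Since the kernel decay for each piece $m<k$ is $a_k^{H-1/2}=(c_3\varsigma^k)^{H-1/2}$, one may choose $p=p(\eta,\varsigma)$ so large that $\varsigma^{H-1/2}\,\eta^{-1/p}\le 1$ and the sum over $m$ converges, uniformly in $k$ and for every $\varsigma>1$. No single Cauchy--Schwarz applied to your lump $A_1$ can reproduce this: you would be stuck with a fixed Hölder exponent on a single piece, and the geometric loss from dividing by $\PE(\ce_k)$ would not cancel. In short, the fine decomposition and the $p$-Cauchy--Schwarz trick of Lemma \ref{lemma:contDuv} are not cosmetic but are the essential mechanism that makes $A_1$ tractable, and your proposal would need to rediscover something equivalent rather than invoke a change of measure.
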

}

{\begin{lemma}\label{lem:step3-1}
There exists a constant $p\in (0,1)$ and for all $\alpha \in (0,H)$ and $\varsigma >1$, there exist constants $\mathbf{C}^{\mathbf{3,3}}_\alpha >0$ and $\mathbf{c}_{\mathbf{3,3}}(\alpha,\varsigma) >0$  such that for all $K>0$ and $c_3\geq \mathbf{c}_{\mathbf{3,3}}(\alpha,\varsigma)$, calibrating the scheme along the formulas in (\ref{best-calib}) yields that 
$$
\sup_{k\geq 0} \max \Big( \ES[|Y_{\tau_k}|^{p}|\ce_k] \ , \ \ES[|\widetilde{Y}_{\tau_k}|^{p}|\ce_k]\Big) \le\mathbf{C}^{\mathbf{3,3}}_\alpha\ .
$$
\end{lemma}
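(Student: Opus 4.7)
The plan is to proceed by induction on $k$, exploiting two key facts: (i) the Lyapunov contraction of Theorem \ref{theo:lyapou}, and (ii) by construction of the coupling law \eqref{eq:def-law-W-tilde-W}, $W$ remains a standard two-sided Brownian motion under $\PE$, so that $X$ is a genuine fractional Brownian motion and $Y$ satisfies the original equation $dY_t=b(Y_t)dt+\sigma(Y_t)d\mathbf{X}_t$ throughout every trial, with no drift modification whatsoever. The same applies to $\widetilde{Y}$ on Step 3 (since $\gw\equiv 0$ there); on Steps 1--2 the process $\widetilde{X}$ differs from a fractional Brownian motion only by the deterministic drift $\int_0^{\cdot}g_X(s)ds$, which is uniformly bounded by Proposition \ref{prop:step1}(b).

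Fix $p\in(0,1)$ with $p\leq r$ so that $\bar{\mu}_0$ and $\bar{\mu}$ admit finite $p$-th moments, and set $q:=\mu p/2$ with $\mu=8/(3\gamma-1)$ the exponent of Theorem \ref{theo:lyapou}. Iterating \eqref{ineq-lyapoun} over successive unit subintervals of $[\tau_{k-1},\tau_k]$ and raising to the power $p/2\in(0,1/2)$ (subadditively, $(a+b)^{p/2}\leq a^{p/2}+b^{p/2}$) yields
\begin{equation*}
|Y_{\tau_k}|^{p}\leq e^{-c'\Delta\tau_k}|Y_{\tau_{k-1}}|^{p}+C\sum_{j=0}^{\lfloor\Delta\tau_k\rfloor-1}e^{-c'(\Delta\tau_k-j-1)}\bigl(1+\|\mathbf{X}\|_{\gamma;[\tau_{k-1}+j,\tau_{k-1}+j+1]}^{q}\bigr),
\end{equation*}
with universal $c'=pC_2/4>0$. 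Decomposing $\ce_k=\bigsqcup_{\ell\geq 0}F_{k,\ell}$, on each $F_{k,\ell}$ the duration $\Delta\tau_k=T_{k,\ell}\geq c_3\varsigma^k 2^{\beta\ell}$ is deterministic. Multiplying by $\mathbf{1}_{F_{k,\ell}}$, summing in $\ell$ and taking expectation shows that the contraction contribution is at most $e^{-c'c_3\varsigma^k}\PE(\ce_{k-1})\,\ES[|Y_{\tau_{k-1}}|^p\mid\ce_{k-1}]$; after dividing by $\PE(\ce_k)=\PE(\ce_{k-1})\PE(\ce_k\mid\ce_{k-1})$ and using a uniform lower bound on $\PE(\ce_k\mid\ce_{k-1})$ (which holds because the coupling measure \eqref{eq:def-law-W-tilde-W} always carries a strictly positive ``independent'' mass $\Delta^{*}(\PE_W-\Pi_1^{*}\widetilde{\mathbf{P}}_1)$ on which Step 1 automatically fails), this quantity can be made smaller than $\tfrac{1}{2}\ES[|Y_{\tau_{k-1}}|^{p}\mid\ce_{k-1}]$ upon enlarging $\mathbf{c}_{\mathbf{3,3}}(\alpha,\varsigma)$.

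For the noise term I would invoke the past/innovation decomposition $X=D+Z$ of Section \ref{sec:decompo-fbm-ter}: since $\gw\equiv 0$ on Step 3, the innovation on $[\tau_{k-1}^{3},\tau_k]$ is, conditionally on $\mathcal{F}_{\tau_{k-1}^{3}}$, a genuine Liouville fractional Brownian motion, whose rough path norm admits Gaussian moments of every order, yielding
\begin{equation*}
\ES\bigl[\|\mathbf{X}\|_{\gamma;I_j}^{q}\mid \mathcal{F}_{\tau_{k-1}^{3}}\bigr]\leq C\bigl(1+\vvvert D^{(\tau_{k-1}^{3})}(W)\vvvert_{1;\gamma}^{q}\bigr)
\end{equation*}
for unit subintervals $I_j$ of Step 3, the right-hand side being controlled in expectation by an intermediate-time analogue of Lemma \ref{lem:step3-2}. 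For intervals $I_j$ lying inside Steps 1 and 2, the $L^{2}$-bounds on $\gw$ from Propositions \ref{prop:step1}(c) and \ref{lemme:step2.2}(c) allow a local Girsanov change of measure under which the driving Brownian motion becomes standard, with a Radon--Nikodym density uniformly bounded in every $L^{s}$; the same Gaussian moment bound transfers back to $\PE$. Summing against the exponential weights $e^{-c'(T_{k,\ell}-j)}$ and then over $\ell$ produces a uniform constant which fixes $\mathbf{C}^{\mathbf{3,3}}_{\alpha}$ and closes the induction; the argument for $\widetilde{Y}$ runs in parallel.

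The main obstacle is the simultaneous control of the conditioning on $\ce_k$ and the long-range memory of $X$ on the past of $W$ accumulated across all previous trials. The identity $\gw\equiv 0$ on Step 3 is precisely what rescues the argument: it makes the Brownian innovation on Step 3 genuinely independent of $\mathcal{F}_{\tau_{k-1}^{3}}$, so that the super-exponential contraction $e^{-c'c_3\varsigma^k 2^{\beta\ell}}$ produced by the long waiting time absorbs the ratio $\PE(\ce_{k-1})/\PE(\ce_k)$ and closes the induction as soon as $c_3\geq \mathbf{c}_{\mathbf{3,3}}(\alpha,\varsigma)$.
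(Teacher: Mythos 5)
Your proposal captures the paper's backbone correctly: iterate the Lyapunov inequality from Theorem~\ref{theo:lyapou} raised to a small power $p$, use the deterministic lower bound $\Delta\tau_k\geq c_3\varsigma^k$ (available on $F_{k,\ell}$) to kill the $|Y_{\tau_{k-1}}|^p$ contribution for $c_3$ large, and absorb the ratio $\PE(\ce_{k-1})/\PE(\ce_k)$ via a uniform lower bound on $\PE(\ce_k\mid\ce_{k-1})$. The treatment of the $|Y_{\tau_{k-1}}|^p$ term is essentially what the paper does.

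The handling of the noise term, however, contains a genuine gap. First, the local Girsanov argument you invoke for intervals inside Steps~1--2 is a red herring: under $\PE$, $W$ (resp.\ $\widetilde W$) is \emph{already} a standard Brownian motion by construction of the coupling~\eqref{eq:def-law-W-tilde-W}, so there is no drift to remove. The true obstruction is that $\ce_k$ and $F_{k,\ell}$ are \emph{not} $\mathcal{F}_{\tau_{k-1}}$-measurable --- they encode information about the success or failure of the coupling during Steps~1--2 of trial~$k$ --- so the conditional law of the increments of $W$ on $[\tau_{k-1},\tau_k]$ given $\ce_k$ is not Brownian, and no Girsanov change of measure repairs that. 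What the paper does instead (in Lemma~\ref{lemma:contDuv}, Step~1, the estimate~\eqref{eq:controlYtaum1}, reused in the proof of Proposition~\ref{prop:H2gamma}) is a Cauchy--Schwarz argument that trades the conditioning on $\ce_k$ for a conditioning on $\ce_{k-1}$, up to an $\eta^{-1}$ factor and a higher moment; since $\ce_{k-1}$ is $\mathcal{F}_{\tau_{k-1}}$-measurable, the Brownian character of $W$ on $[\tau_{k-1},\tau_k]$ is then preserved and stationarity of Brownian increments can be used directly, no Girsanov needed.

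Second, your proposed bound $\ES[\|\mathbf{X}\|_{\gamma;I_j}^q\mid\mathcal{F}_{\tau_{k-1}^3}]\leq C(1+\vvvert D^{(\tau_{k-1}^3)}(W)\vvvert_{1;\gamma}^q)$ is not correct as stated: for a unit interval $I_j$ starting at $\tau_{k-1}^3+j$ with $j>0$, the $D$-part of the decomposition~\eqref{decompo-fbm-ter} shifted to the origin of $I_j$ is $D^{(\tau_{k-1}^3+j)}$, not $D^{(\tau_{k-1}^3)}$, and the former also involves the Wiener increments on $[\tau_{k-1}^3,\tau_{k-1}^3+j]$. Controlling $\vvvert D^{(\tau_{k-1}+\ell)}(W)\vvvert_{1;\gamma}$ uniformly in $\ell$ requires the further split $(D^{(\tau_{k-1}+\ell)})'=c_H(\mathcal{D}_{-\infty,\tau_{k-1}}^{\tau_{k-1}+\ell}+\mathcal{D}_{\tau_{k-1},\tau_{k-1}+\ell}^{\tau_{k-1}+\ell})$ and term-by-term estimation, which is precisely what Proposition~\ref{prop:H2gamma} does; this proposition (which the paper's proof of the lemma simply invokes) is the technical workhorse here, and your ``intermediate-time analogue of Lemma~\ref{lem:step3-2}'' is a placeholder for exactly that content, not a derivation of it. Two smaller remarks: the lower bound $\PE(\ce_k\mid\ce_{k-1})\geq\eta$ is obtained in the paper from the \emph{upper} bound on the Step-2 success probability (Remark~\ref{rem:lem888} / Proposition~\ref{lemme:step2.2}(b)), not from positivity of the $\Delta^*$-mass in the coupling (which could a priori vanish); and the constraint on $p$ is not merely $p\leq r$ but also $\lambda p/2\leq 1/4$ with $\lambda=8/(3\gamma-1)$, required to apply Proposition~\ref{prop:H2gamma}.
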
}

{
\begin{remark}
Just as above, let us stress the fact that, as indicated by our notations, the constants $\mathbf{C}^{\mathbf{3,2}}_\alpha$ and $\mathbf{C}^{\mathbf{3,3}}_\alpha$ in Lemmas \ref{lem:step3-2} and \ref{lem:step3-1} only depend on $\alpha$, and not on $\varsigma$. This will be an essential point in the subsequent proof of Proposition \ref{prop:minokalp}.
\end{remark}}

{Before we turn to the proof of these three lemmas, let us see how their combination can lead to the desired proposition.}
{
\begin{proof}[Proof of Proposition \ref{prop:minokalp}]
Fix $\varepsilon \in (0,1)$, $\alpha\in (0,H)$, $\beta>(1-2\alpha)^{-1}$, $\varsigma >1$, and let $p \in (0,1)$, $\mathbf{C}^{\mathbf{3,2}}_\alpha$, $\mathbf{C}^{\mathbf{3,3}}_\alpha$, $\mathbf{c}_{\mathbf{3,2}}(\alpha,\varsigma)$ and $\mathbf{c}_{\mathbf{3,3}}(\alpha,\varsigma)$ be defined as in Lemmas \ref{lem:step3-2} and \ref{lem:step3-1}. Then for all $K >0$ and $c_3 \geq \max(\mathbf{c}_{\mathbf{3,2}}(\alpha,\varsigma),\mathbf{c}_{\mathbf{3,3}}(\alpha,\varsigma))$, calibrating the scheme as in (\ref{best-calib}) yields that
\begin{eqnarray*}
\lefteqn{\PE\big( |Y_{\tau_k}|+ |\widetilde{Y}_{\tau_k}|+  \vvvert D^{(\tau_k)}(W) \vvvert_{1;\ga}+ \vvvert D^{(\tau_k)}( \widetilde{W}) \vvvert_{1;\ga}\le K\, \big|\, \ce_k\big)}\\
&\geq& 1-\PE\big( |Y_{\tau_k}|^{p}+ |\widetilde{Y}_{\tau_k}|^{p}+  \vvvert D^{(\tau_k)}(W) \vvvert_{1;\ga}^{p}+ \vvvert D^{(\tau_k)}( \widetilde{W}) \vvvert_{1;\ga}^{p}> K^{p_\alpha}\, \big|\, \ce_k\big)\\
&\geq & 1-\frac{1}{K^{p}} \big\{ \ES[|Y_{\tau_k}|^{p}|\ce_k] +\ES[|\widetilde{Y}_{\tau_k}|^{p}|\ce_k]+\ES[\vvvert D^{(\tau_k)}(W) \vvvert_{1;\ga}^{p}|\ce_k]+\ES[\vvvert D^{(\tau_k)}( \widetilde{W}) \vvvert_{1;\ga}^{p}|\ce_k]\big\}\\
&\geq & 1-\frac{2}{K^{p}}\big\{\mathbf{C}^{\mathbf{3,3}}_\alpha+(\mathbf{C}^{\mathbf{3,2}}_\alpha)^{p} \big\}\ .
\end{eqnarray*}
Therefore, setting from now on
$$K=\mathbf{K}(\varepsilon, \alpha) :=\big( 2\varepsilon^{-1}\big\{\mathbf{C}^{\mathbf{3,3}}_\alpha+(\mathbf{C}^{\mathbf{3,2}}_\alpha)^{p} \big\}\big)^{1/p}  \ ,$$
we get that for every $c_3 \geq \max(\mathbf{c}_{\mathbf{3,2}}(\alpha,\varsigma),\mathbf{c}_{\mathbf{3,3}}(\alpha,\varsigma))$ and for the calibration in (\ref{best-calib}),
\begin{equation}\label{proof-step3-1}
\PE\big( |Y_{\tau_k}|+ |\widetilde{Y}_{\tau_k}|+  \vvvert D^{(\tau_k)}(W) \vvvert_{1;\ga}+ \vvvert D^{(\tau_k)}( \widetilde{W}) \vvvert_{1;\ga}\le K\, \big|\, \ce_k\big) \geq 1-\varepsilon \ .
\end{equation}
Then, appealing also to the notations of Lemma \ref{lem:step3-0}, we define
$$\mathbf{c}_{\mathbf{3}}(\varepsilon, \alpha,\beta,\varsigma):=\max\big(\mathbf{c}_{\mathbf{3,1}}(\alpha,\beta,K,\varsigma) ,\mathbf{c}_{\mathbf{3,2}}(\alpha,\varsigma),\mathbf{c}_{\mathbf{3,3}}(\alpha,\varsigma), 2(\mathbf{C}^{\mathbf{2}}_{\alpha,K})^{\frac{1}{2\alpha}}\big) \ .$$
In this way, setting $c_3:=\mathbf{c}_{\mathbf{3}}(\varepsilon, \alpha,\beta,\varsigma)$ and still calibrating the scheme as in (\ref{best-calib}), we deduce, on top of (\ref{proof-step3-1}), that $c_3\geq 2c_2$ and by Lemma \ref{lem:step3-0}
\begin{equation}\label{proof-step3-2}
\PE\left( \sup_{T\ge0} \int_0^{+\infty} (1+t)^{2\alpha}(({\cal R}_T |\gw^{\tau_k}|)(t))^2dt\le 1\, \Big|\, \ce_k\right)=1 \ .
\end{equation}
The bound (\ref{lower-boun-step-3}) immediately follows from (\ref{proof-step3-1}) and (\ref{proof-step3-2}).
\end{proof}}

\

{It remains us to prove Lemma \ref{lem:step3-0}, Lemma \ref{lem:step3-2} and Lemma \ref{lem:step3-1}. It turns out that Lemma \ref{lem:step3-0} can be shown along the very same arguments as in \cite[Proposition 4.6]{fontbona-panloup}, and therefore we will not return to this proof for the sake of conciseness. As for the strategy toward Lemma \ref{lem:step3-2}, resp. Lemma \ref{lem:step3-1}, it is the topic of the subsequent Section \ref{subsec:hpdeux}, resp. Section \ref{subsec:lem:step3-1}.}  

\


\subsection{Proof of Lemma \ref{lem:step3-2}}\label{subsec:hpdeux}
Let us first remark that $[D^{(\tau)}(W)]'$ is well-defined (see Lemma \ref{lem:justifd}) and satisfies:
$$[D^{(\tau)}(W)]'_t=\alpha_H \left(H-\frac 12\right)\int_{-\infty}^\tau (t+\tau-r)^{H-\frac{3}{2}} dW_r.$$
In this section, our computations will hinge on a related incremental process, defined as follows: for $u\leq v \leq \tau$, 
$${\cal D}_{u,v}^\tau(t):=\int_u^v(t+\tau-r)^{H-\frac{3}{2}} dW_r.$$
For $k\ge1$, we thus decompose $D^{(\tau_k)}$ in a series of terms depending on the sequence $(\tau_m)_{m=0}^k$:
\bqn\label{decomp:dtauk}
[D^{(\tau_{k})}(W)]'_t=\alpha_H\left(H-\frac 12\right)\left( {\cal D}_{-\infty,0}^{\tau_k}(t)+\sum_{m=1}^{{k}} {\cal D}_{\tau_{m-1},\tau_m}^{\tau_k}(t)\right).
\eqn
The idea of the sequel is to control each term of the right-hand side separately. We begin by a simple lemma:
\begin{lemma}\label{lemma:IPP1}
For every $0\leq u<v<\tau$ and every $t>0$, one has almost surely
$$|{\cal D}_{u,v}^\tau(t)|\le c_H\Big\{ (t+\tau-u)^{H-\frac 32}\left|W_v-W_u\right|+\int_u^v(t+\tau-r)^{H-\frac 52}  \left|W_v-W_r\right|\, dr\Big\}$$
{and
$$|{\cal D}_{-\infty,0}^\tau(t)|\le c_H\int_{-\infty}^0(t+\tau-r)^{H-\frac 52}  \left|W_r\right|\, dr \ ,$$
for some deterministic constant $c_H>0$.}
\end{lemma}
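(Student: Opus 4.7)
The key observation is that the integrand $r \mapsto (t+\tau-r)^{H-3/2}$ is $C^\infty$ on $(-\infty,\tau)$ (since $r<v<\tau$), so stochastic integration by parts against the deterministic smooth kernel reduces the problem to a pathwise computation involving Brownian increments. My plan is therefore to apply the standard integration-by-parts formula
\begin{equation*}
\int_u^v f(r)\, dW_r = f(v)W_v - f(u)W_u - \int_u^v f'(r) W_r\, dr
\end{equation*}
with $f(r):=(t+\tau-r)^{H-3/2}$, then rearrange so that the Brownian position $W_r$ is replaced by the increment $W_v - W_r$ (which vanishes at the right endpoint). A direct computation gives
\begin{equation*}
\int_u^v f(r)\, dW_r = f(u)(W_v-W_u) + \int_u^v f'(r)(W_v-W_r)\, dr,
\end{equation*}
and since $f'(r) = (3/2-H)(t+\tau-r)^{H-5/2}$, taking absolute values inside the integral yields the first inequality of the lemma with $c_H := \max(1, 3/2-H)$.

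For the second inequality I would treat the improper integral ${\cal D}_{-\infty,0}^{\tau}(t)$ as the almost sure limit of $\int_u^0 f(r)\, dW_r$ as $u\to -\infty$, and apply the same integration-by-parts identity on $[u,0]$. Using the convention $W_0=0$ valid for the two-sided Brownian motion, this reads
\begin{equation*}
\int_u^0 f(r)\, dW_r = -f(u) W_u - (3/2-H)\int_u^0 (t+\tau-r)^{H-5/2} W_r\, dr.
\end{equation*}
The boundary term $f(u)W_u$ vanishes a.s. as $u\to-\infty$: indeed $|f(u)|=(t+\tau-u)^{H-3/2}$ decays like $|u|^{H-3/2}$ while the law of the iterated logarithm forces $|W_u|$ to grow no faster than $|u|^{1/2+\epsilon}$, and $H-3/2+1/2 < 0$. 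The remaining integral converges (absolutely, a.s.) to $\int_{-\infty}^0 (t+\tau-r)^{H-5/2} W_r\, dr$ by dominated convergence, using once again the LIL bound together with $H-5/2+1/2 = H-2 < -1$ to ensure integrability at $-\infty$.

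The main (minor) obstacle is the rigorous justification of the integration-by-parts identity for the stochastic integral against the smooth deterministic kernel, and the passage to the limit $u\to -\infty$; both are standard once one invokes the LIL to control $|W_u|$. No use of rough paths or of the coupling construction is needed here — the lemma is a purely pathwise estimate on Wiener increments, which is precisely what makes it a convenient tool for the forthcoming control of $\vvvert D^{(\tau_k)}(W)\vvvert_{1;\ga}$ in the proof of Lemma~\ref{lem:step3-2}.
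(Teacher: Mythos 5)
Your proposal is correct and follows essentially the same route as the paper: a deterministic integration by parts of the Wiener integral against the smooth kernel $r\mapsto (t+\tau-r)^{H-3/2}$, rearranged so that only the increments $W_v-W_r$ (resp. $W_r$ with $W_0=0$) appear, followed, for the improper case, by the observation that $|u|^{H-3/2}|W_u|\to 0$ a.s.\ as $u\to-\infty$. The only cosmetic difference is that you invoke the law of the iterated logarithm, whereas the weaker (and more elementary) a.s.\ estimate $|W_u|=o(|u|^{1/2+\epsilon})$ suffices — which is exactly what you use in the end.
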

\begin{proof} {Both bounds follow from an integration-by-parts argument similar to the one we performed in \eqref{ipp-d-x-minus}. The second bound also involves the fact that $\lim_{u\rightarrow-\infty} u^{H-\frac{3}{2}} |W_u|=0$ almost surely.}
\end{proof}

We now state some controls related to the decomposition \eqref{decomp:dtauk}.

{\begin{lemma} \label{lemma:contDuv} 
Let $\alpha>0$and assume that for some (fixed) calibration of the scheme, there exists ${\eta}\in (0,1)$ such that for all $k\geq 1$, $\ell \geq 0$ and $K>0$ ,
\begin{equation}\label{eq:cdt-Ek-Fkl}
\PE({\cal E}_{k}|{\cal E}_{k-1})\ge {\eta}\ ,
\qquad \PE(F_{k,\ell}|{\cal E}_{k-1})\le  2^{-\alpha \ell}\quad\textnormal{ and }\quad\Delta \tau_k\ge a_k \ \ \textnormal{a.s.},
\end{equation}
where $F_{k,\ell}$ and $\ce_{k}$ are respectively defined in \eqref{eq:akl} and \eqref{eq:def-cal-Ek}, and $(a_k)_{k\ge1}$ is a deterministic sequence such that $a_k\ge 1$ for every $k\ge1$.  Then there exists a constant $C^2_{\eta,\alpha}>0$ and for every $p>0$ there exists a constant $C^1_{\eta,\alpha,p}>0$ such that for every $k\ge1$,
\begin{equation}\label{eq:cdtional-Dtau-k-1-gamma}
\ES\Big[\Big(\sup_{t\in (0,1]} t^{1-\gamma}|{\cal D}_{\tau_{m-1},\tau_m}^{\tau_k}(t)|\Big)\Big|\, {\cal E}_k\Big]\le \frac{C^1_{\eta,\alpha,p}}{a_k^{1/2-H}\eta^{(k-m)/p}} \quad , \quad   m\in\{1,\ldots,k-1\}\ ,
\end{equation}
\begin{equation}\label{eq:cdtional-Dtau-k-1-gamma-0}
\ES\Big[\Big(\sup_{t\in (0,1]} t^{1-\gamma}|{\cal D}_{-\infty,0}^{\tau_k}(t)|\Big)\Big|\, {\cal E}_k\Big]\le \frac{C^1_{\eta,\alpha,p}}{a_k^{1/2-H}\eta^{k/p}} \ ,
\end{equation}
and
\begin{equation}\label{eq:cdtional-Dtau-k-1-gamma-1}
\max \Big( \ES\Big[\Big(\sup_{t\in (0,1]} t^{1-\gamma}|{\cal D}_{\tau_{k-1},\tau_k}^{\tau_k}(t)|\Big)\Big|\, {\cal E}_k\Big],\ES\Big[\Big(\sup_{t\in (0,1]} t^{1-\gamma}|{\cal D}_{-\infty,0}^{0}(t)|\Big)\Big]\Big)\le C^2_{\eta,\alpha} \ .
\end{equation}
\end{lemma}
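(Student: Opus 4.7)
The plan is to treat the three estimates \eqref{eq:cdtional-Dtau-k-1-gamma}, \eqref{eq:cdtional-Dtau-k-1-gamma-0} and \eqref{eq:cdtional-Dtau-k-1-gamma-1} in turn. The first two share a common scheme: first, obtain an $\mathscr{F}_{\tau_{m}}$-measurable (resp.\ $\mathscr{F}_{0}$-measurable) upper bound for the ${\cal D}$ term by freezing the singular kernel via the deterministic lower bound $\tau_{k}-\tau_{m}\ge a_{k}$ (valid on ${\cal E}_{k}$ since $\Delta\tau_{k}\ge a_{k}$); second, transfer the conditioning on ${\cal E}_{k}$ to a conditioning on ${\cal E}_{m}$; third, close the estimate by a moment bound on the Brownian increment, with the tail of $\Delta\tau_{m}$ being controlled through the assumption $\PE(F_{m,\ell}|{\cal E}_{m-1})\le 2^{-\alpha\ell}$ and the explicit calibration $\Delta\tau_{m}\lesssim \varsigma^{m}2^{\beta\ell}$ on $F_{m,\ell}$. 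The last estimate \eqref{eq:cdtional-Dtau-k-1-gamma-1} is different in nature and is handled by a direct Gaussian computation.

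For \eqref{eq:cdtional-Dtau-k-1-gamma}, I would combine Lemma \ref{lemma:IPP1} with the bounds $(t+\tau_{k}-r)^{H-3/2}\le a_{k}^{H-3/2}$ and $(t+\tau_{k}-r)^{H-5/2}\le a_{k}^{H-5/2}$ (exponents being negative), together with $t^{1-\gamma}\le 1$ on $(0,1]$, to deduce that on ${\cal E}_{k}$
\begin{equation*}
\sup_{t\in(0,1]} t^{1-\gamma}|{\cal D}_{\tau_{m-1},\tau_{m}}^{\tau_{k}}(t)|
\le
c_{H}\Big\{a_{k}^{H-3/2}|W_{\tau_{m}}-W_{\tau_{m-1}}|+a_{k}^{H-5/2}\!\int_{\tau_{m-1}}^{\tau_{m}}|W_{\tau_{m}}-W_{r}|\,dr\Big\}=:X_{m},
\end{equation*}
and $X_{m}$ is $\mathscr{F}_{\tau_{m}}$-measurable. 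Then, using $\mathbf{1}_{{\cal E}_{k}}=\mathbf{1}_{{\cal E}_{k}}\mathbf{1}_{{\cal E}_{m}}$ and H\"older's inequality with $1/p+1/p'=1$ (for $p\ge 1$),
\begin{equation*}
\ES[X_{m}\mathbf{1}_{{\cal E}_{k}}]=\ES\bigl[X_{m}\mathbf{1}_{{\cal E}_{m}}\PE({\cal E}_{k}\,|\,\mathscr{F}_{\tau_{m}})\bigr]\le \ES[X_{m}^{p}\mathbf{1}_{{\cal E}_{m}}]^{1/p}\PE({\cal E}_{k})^{1/p'},
\end{equation*}
where the last inequality exploits $\PE({\cal E}_{k}\,|\,\mathscr{F}_{\tau_{m}})\le 1$. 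Dividing by $\PE({\cal E}_{k})$ and using $\PE({\cal E}_{k})\ge\eta^{k-m}\PE({\cal E}_{m})$ yields
$
\ES[X_{m}\,|\,{\cal E}_{k}]\le \eta^{-(k-m)/p}\ES[X_{m}^{p}\,|\,{\cal E}_{m}]^{1/p}.
$
For $p\in(0,1)$ the weaker direct bound $\ES[X_{m}\,|\,{\cal E}_{k}]\le \eta^{-(k-m)}\ES[X_{m}\,|\,{\cal E}_{m}]$ (which follows from $\PE({\cal E}_{k}\,|\,\mathscr{F}_{\tau_{m}})\le 1$) is even smaller than $\eta^{-(k-m)/p}$, so the desired form holds for any $p>0$ up to an adjustment of the constant.

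The $p$-th moment of $X_{m}$ is estimated by Brownian scaling after conditioning on $(\tau_{m-1},\tau_{m})$, giving $\ES[|W_{\tau_{m}}-W_{\tau_{m-1}}|^{p}\,|\,{\cal E}_{m}]\lesssim \ES[(\Delta\tau_{m})^{p/2}\,|\,{\cal E}_{m}]$ and similarly $\ES[(\int_{\tau_{m-1}}^{\tau_{m}}|W_{\tau_{m}}-W_{r}|\,dr)^{p}\,|\,{\cal E}_{m}]\lesssim \ES[(\Delta\tau_{m})^{3p/2}\,|\,{\cal E}_{m}]$. A decomposition over $\{F_{m,\ell}\}_{\ell\ge 0}$ using $\Delta\tau_{m}\lesssim \varsigma^{m}2^{\beta\ell}$ on $F_{m,\ell}$ and $\PE(F_{m,\ell}|{\cal E}_{m-1})\le 2^{-\alpha\ell}$ then gives $\ES[(\Delta\tau_{m})^{q}\,|\,{\cal E}_{m-1}]\lesssim \varsigma^{mq}\sum_{\ell}2^{(\beta q-\alpha)\ell}$, which converges for $q<\alpha/\beta$; choosing $p$ small enough (so that $3p/2<\alpha/\beta$) yields $\ES[X_{m}^{p}\,|\,{\cal E}_{m}]^{1/p}\lesssim a_{k}^{H-3/2}\varsigma^{m/2}$. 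Since $a_{k}\gtrsim \varsigma^{k}$ by the calibration, $\varsigma^{m/2}/a_{k}\le 1$ for $m\le k-1$, giving the claimed $a_{k}^{H-1/2}\,\eta^{-(k-m)/p}$ bound. The estimate \eqref{eq:cdtional-Dtau-k-1-gamma-0} is obtained along the same lines, starting from the second part of Lemma \ref{lemma:IPP1} and using $\tau_{k}-r\ge\tau_{k}\ge a_{k}$ for $r\le 0$; the integral $\int_{-\infty}^{0}(t+\tau_{k}-r)^{H-5/2}|W_{r}|\,dr$ is further controlled by a dyadic decomposition over $[-2^{j+1},-2^{j}]$ combined with the subgaussian tails of $W$ at $-\infty$ (which are encoded in the definition of $\Omega_{-}$).

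For \eqref{eq:cdtional-Dtau-k-1-gamma-1} the $a_{k}$-trick fails since $r$ may approach $\tau_{k}$ in the defining integral. Instead, I would exploit the (conditionally) Gaussian nature of the process $t\mapsto t^{1-\gamma}{\cal D}_{\tau_{k-1},\tau_{k}}^{\tau_{k}}(t)$: the change of variable $s=\tau_{k}-r$ gives
\begin{equation*}
\ES\!\left[\bigl({\cal D}_{\tau_{k-1},\tau_{k}}^{\tau_{k}}(t)\bigr)^{2}\,\bigl|\,\mathscr{F}_{\tau_{k-1}}\right]=\int_{0}^{\Delta\tau_{k}}(t+s)^{2H-3}\,ds\lesssim t^{2H-2},
\end{equation*}
so $t^{1-\gamma}|{\cal D}_{\tau_{k-1},\tau_{k}}^{\tau_{k}}(t)|$ has $L^{2}$-norm uniformly bounded on $(0,1]$ since $\gamma<H$. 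A Kolmogorov--Chentsov-type argument upgrades this to a uniform bound on the supremum, and the conditioning trick of the previous paragraph (here with $m=k$, so no $\eta^{-(k-m)}$ factor appears) translates this into a conditional expectation bound given ${\cal E}_{k}$. The unconditioned bound on $\sup_{t}t^{1-\gamma}|{\cal D}_{-\infty,0}^{0}(t)|$ follows from an entirely analogous Gaussian argument applied to the past-Wiener integral, and is independent of the coupling construction. The main technical obstacle is the tension in the first bound between the decay rate $a_{k}^{H-1/2}$ afforded by the kernel and the polynomial growth $\sim(\Delta\tau_{m})^{1/2}$ of the Brownian increment: reconciling the two requires the parameter $p$ to be chosen small enough so that the geometric tail $2^{-\alpha\ell}$ of $F_{m,\ell}$ dominates the $2^{\beta\ell\cdot 3p/2}$ growth of the relevant moments, which is precisely why the constant $C^{1}_{\eta,\alpha,p}$ blows up as $p\downarrow 0$.
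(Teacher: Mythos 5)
Your proposal takes a genuinely different route from the paper's, and the difference is not harmless: it creates a gap in the dependence of the constant $C^1_{\eta,\alpha,p}$.

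The crux is the choice of dominating random variable for $\sup_t t^{1-\gamma}|\mathcal{D}^{\tau_k}_{\tau_{m-1},\tau_m}(t)|$ when $m<k$. You bound the two kernels of Lemma \ref{lemma:IPP1} by the crude $a_k^{H-3/2}$ and $a_k^{H-5/2}$, giving a bound $X_m$ whose $p$-moments grow like $(\Delta\tau_m)^{p/2}$ and $(\Delta\tau_m)^{3p/2}$. Controlling these then forces you to sum $\PE(F_{m,\ell}|\mathcal{E}_{m-1})\,\Delta(m,\ell)^{q}\lesssim 2^{-\alpha\ell}\varsigma^{mq}2^{\beta q\ell}$, which (a) quietly uses the specific calibration $\Delta(m,\ell)\sim\varsigma^m 2^{\beta\ell}$ — but the lemma's hypotheses \eqref{eq:cdt-Ek-Fkl} do not grant this; they only give $a_k\geq 1$ and a tail bound on $F_{m,\ell}$ — and (b) produces constants that depend on $\varsigma$, $\beta$ and $c_3$, contradicting the asserted form $C^1_{\eta,\alpha,p}$. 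This matters downstream: in the proof of Lemma \ref{lem:step3-2}, the parameter $p$ is \emph{chosen as a function of $\varsigma$} so that $\varsigma^{H-1/2}\eta^{-1/p}\leq 1$, and the argument requires $C^1_{\eta,\alpha,p}$ to be free of $\varsigma$ for this choice to close.

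The paper avoids this entirely by splitting the exponents differently: writing $(t+\tau_k-r)^{H-3/2}\leq a_k^{H-1/2}(t+\tau_k-r)^{-1}$ and $(t+\tau_k-r)^{H-5/2}\leq a_k^{H-1/2}(t+\tau_k-r)^{-2}$, then using $\tau_k-\tau_{m-1}\geq\Delta\tau_m$ and $\tau_k-r\geq\tau_m+1-r$ (from $a_k\geq 1$), it obtains $\sup_t t^{1-\gamma}|\mathcal{D}^{\tau_k}_{\tau_{m-1},\tau_m}(t)|\leq c_H\,a_k^{H-1/2}R_{\tau_m}$ with
$R_t:=(t-\tau_{m-1})^{-1}|W_t-W_{\tau_{m-1}}|+\int_{\tau_{m-1}}^t (t+1-r)^{-2}|W_t-W_r|\,dr$. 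The point of this precise normalisation is that, by Brownian scaling and Jensen against the finite measure $(t+1-r)^{-3/2}dr$, all moments of $R_{\tau_{m-1}+\Delta(m,\ell)}$ are bounded \emph{uniformly in $\ell$, $m$ and the calibration}, so the conditioning step only ever needs $\sup_\ell$ of a uniformly bounded quantity — no tail control of $\Delta\tau_m$ is required, and the constant really is $C^1_{\eta,\alpha,p}$. Your Hölder-based conditioning trick (passing from $\mathcal{E}_k$ to $\mathcal{E}_m$) is correct and parallel to the paper's Cauchy--Schwarz step, so the gap is isolated to the choice of $X_m$ versus $R_{\tau_m}$; I would recommend replacing $X_m$ accordingly.

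For \eqref{eq:cdtional-Dtau-k-1-gamma-1}, the Gaussian-variance-plus-Kolmogorov route is more roundabout than what is needed and has a subtlety you should flag: $\tau_k$ is not $\mathscr{F}_{\tau_{k-1}}$-measurable (the duration of attempt $k$ depends on the Brownian path on $[\tau_{k-1},\tau_k]$ through the coupling's success), so the conditional variance computation you write requires first conditioning on $F_{k,\ell}$ to freeze $\Delta\tau_k$, as in the paper's Step 1. The paper instead splits $\mathcal{D}^{\tau_k}_{\tau_{k-1},\tau_k}=\mathcal{D}^{\tau_k}_{\tau_{k-1},\tau_k-1}+\mathcal{D}^{\tau_k}_{\tau_k-1,\tau_k}$, treats the first piece with the same $m<k$ machinery (with $a_k$ replaced by $1$), and bounds the second pathwise via Lemma \ref{lemma:IPP1} with kernel exponent $(H-\gamma)-3/2$ (integrable since $\gamma<H$), avoiding Kolmogorov--Chentsov altogether. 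Your final paragraph, identifying the tension between the $a_k^{H-1/2}$ gain and the $\sim(\Delta\tau_m)^{1/2}$ growth of the Brownian increment, is exactly the right diagnosis — the paper's $R_t$ is designed precisely to neutralise that tension at the pathwise level rather than at the level of moments.
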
}
{\begin{remark}\label{rem:lem888} Observe that the second assumption of \eqref{eq:cdt-Ek-Fkl} holds true by Proposition \ref{lemme:step2.2}$(b)$  as soon as $c_2=:=(\mathbf{C}^{\mathbf{2}}_{\alpha,K})^{\frac{1}{2\alpha}}$. As well, with the help of the upper-bound in \eqref{eq:lbbk1}, we have for instance:
$$\PE({\cal E}_k|{\cal E}_{k-1})\ge 1-\PE({\cal B}_{k,2}|{\cal B}_{k,1})\ge \eta=2^{-2\alpha-1}.$$
Remark that the previous inequality corresponds to an upper bound for the probability of success of the attempt. This (possibly surprising) technical condition will in fact provide us with a way to (roughly) control the effect of conditioning by the event ${\cal E}_{k}$ throughout the computations (see in particular \eqref{eq:controlYtaum}).
\end{remark}

\begin{proof}
{The proof is divided in three steps.}

\smallskip

\noindent
\textbf{Step 1.} The aim of this step is to ``make deterministic'' the duration of Attempt $m$ and to go back to a conditioning by ${\cal E}_{m-1}$.
  First, remark that owing to \eqref{eq:skuv}, $\Delta \tau_m$ is deterministic on the event ${F}_{m,\ell}$. We denote it by $\Delta(m,\ell)\geq 1$.
{Now fix $m\geq 1$ and on the event $\ce_{m}$, consider a generic process $(R_t)_{t> \tau_{m-1}}$.} One can readily check that:
$$\ES[R_{\tau_m}|{\cal E}_m]=\sum_{\ell\ge0}\ES[R_{\tau_{m-1}+\Delta(m,\ell)}|F_{m,\ell}]\PE(F_{m,\ell}|{\cal E}_m).$$
By the Cauchy-Schwarz inequality and the fact that $F_{m,\ell}\subset {\cal E}_{m-1}$, one can check that
\begin{align*}
\ES[R_{\tau_{m-1}+\Delta(m,\ell)}|F_{m,\ell}]&=\frac{\ES[R_{\tau_{m-1}+\Delta(m,\ell)}1_{F_{m,\ell}}|{\cal E}_{m-1}]}{\PE(F_{m,\ell}|{\cal E}_{m-1})}\\
& \le \ES[R_{\tau_{m-1}+\Delta(m,\ell)}^2|{\cal E}_{m-1}]^{\frac{1}{2}}\PE(F_{m,\ell}|{\cal E}_{m-1})^{-\frac 12}\ .
\end{align*}
{As} $F_{m,\ell}\subset {\cal E}_m\subset {\cal E}_{m-1}$, {we can obviously write}
$$
\PE(F_{m,\ell}|{\cal E}_{m})=\frac{\PE({ F}_{m,\ell}|{\cal E}_{m-1})}{\PE({\cal E}_{m}|{\cal E}_{m-1})}\ ,
$$
so that, {thanks to our assumption \eqref{eq:cdt-Ek-Fkl}, the following holds true: } 
$$\ES[R_{\tau_m}|{\cal E}_m]\le \eta^{-\frac{1}{2}}\sum_{\ell\ge0} \ES[R_{\tau_{m-1}+\Delta(m,\ell)}^2|{\cal E}_{m-1}]^{\frac{1}{2}}{\PE(F_{m,\ell}|{\cal E}_{m-1})}^{\frac{1}{2}}\ .$$
Invoking our assumption \eqref{eq:cdt-Ek-Fkl} again, we deduce that
$$\ES[R_{\tau_m}|{\cal E}_m]\le  \eta^{-\frac{1}{2}}\sum_{\ell\ge0} 2^{-\frac{\alpha \ell}{2}}\ES[R_{\tau_{m-1}+\Delta(m,\ell)}^2|{\cal E}_{m-1}]^{\frac{1}{2}}$$
which yields:
\begin{equation}\label{eq:controlYtaum1}
\ES[R_{\tau_m}|{\cal E}_m]\le c_{\eta,\alpha}\cdot \sup_{\ell\ge0} \ES[R_{\tau_{m-1}+\Delta(m,\ell)}^2|{\cal E}_{m-1}]^{\frac{1}{2}}\ .
\end{equation}
Finally, since ${\cal E}_k\subset{\cal E}_m$ for $k\geq m$, a similar Cauchy-Schwarz argument as above implies that for a given random variable $S$ and for every $p>0$,
\bqn\label{eq:controlXEK}
|\ES[S |{\cal E}_k]|\le \frac{\ES[|S|^p|{\cal E}_{m}]^{\frac1p} }{\PE({\cal E}_k|{\cal E}_{m})^{\frac{1}{p}}}\ .
\eqn
Using that $\PE({\cal E}_k|{\cal E}_{m})\ge \eta^{k-m}$, we can conclude this step with the following control:
\begin{equation}\label{eq:controlYtaum}
\ES[R_{\tau_m}|{\cal E}_k]\le c_{\eta,\alpha} {\eta^{(m-k)/p}}\sup_{\ell\ge0} \ES[|R_{\tau_{m-1}+\Delta(m,\ell)}|^{2p}|{\cal E}_{m-1}]^{\frac{1}{2p}}\ .
\end{equation}
where $c_{\eta,\alpha}$ depends on $\eta$ and $\alpha$ only.

\smallskip

\noindent 
{\textbf{Step 2. Case $1\leq m<k$.} Since $\Delta \tau_k\ge a_k$, it is readily checked that for all $m<k$, $t\in[0,1]$ and $r\in[\tau_{m-1},\tau_m]$, one has $t+\tau_k-r\ge a_k$. We then deduce from Lemma \ref{lemma:IPP1}  that 
\bqn\label{eq:12334}
\sup_{t\in[0,1]}t^{1-\gamma} | {\cal D}_{\tau_{m-1},\tau_m}^{\tau_k}(t)| \leq \sup_{t\in[0,1]} | {\cal D}_{\tau_{m-1},\tau_m}^{\tau_k}(t)| 
\le 
c_H a_k^{H-1/2} R_{\tau_m}
\eqn
where we have set, for every $t> \tau_{m-1}$,
$$R_{t}:=\left(t-\tau_{m-1}\right)^{-1}\left|W_{t}-W_{\tau_{m-1}}\right|+\int_{\tau_{m-1}}^{t}(t+1-r)^{-2}  \left|W_{t}-W_r\right|dr\ .$$
Since $\int_{\tau_{m-1}}^{t}(t+1-r)^{- 3/2}dr\le 2$ for every $t>\tau_{m-1}$, one can first check by Jensen's inequality that 
$$\left(\int_{\tau_{m-1}}^{t}(t+1-r)^{-2}  \left|W_{t}-W_r\right|dr\right)^p\le c_{p}\int_{\tau_{m-1}}^{t}(t+1-r)^{- 3/2} \left((t-r)^{-\frac 12}\left|W_{t}-W_r\right|\right)^p dr\ .$$
Then it follows from the scaling property of the Brownian motion that 
$$\ES[|R_{\tau_{m-1}+\Delta(m,\ell)}|^{2p}|{\cal E}_{m-1}]\le c_p \ ,$$
where $c_p$ depends on $p$ only. Going back to \eqref{eq:controlYtaum}, we get the desired bound (\ref{eq:cdtional-Dtau-k-1-gamma}).}

\smallskip

\noindent 
{\textbf{Step 3. Case $m=k \geq 1$.} Let us write here
\begin{equation}\label{decompo-case-k=m}
\sup_{t\in (0,1]} t^{1-\gamma}|{\cal D}_{\tau_{k-1},\tau_k}^{\tau_k}(t)| \leq \sup_{t\in (0,1]} |{\cal D}_{\tau_{k-1},\tau_k-1}^{\tau_k}(t)|+ \sup_{t\in (0,1]} t^{1-\gamma}|{\cal D}_{\tau_{k}-1,\tau_k}^{\tau_k}(t)| \ .
\end{equation} 
The first term in the right-hand side can then be treated along the very same arguments as above (using $p=1/2$ in (\ref{eq:controlYtaum})), which gives us directly
\begin{equation}\label{case-k=m-1}
\ES\Big[\Big(\sup_{t\in (0,1]}|{\cal D}_{\tau_{k-1},\tau_k-1}^{\tau_k}(t)|\Big)\Big|\, {\cal E}_k\Big]\le c^1_{\eta,\alpha} \ .
\end{equation}
On the other hand, using the bound of Lemma \ref{lemma:IPP1} again, we get that
$$\sup_{t\in (0,1]} t^{1-\ga} |{\cal D}_{\tau_k-1,\tau_k}^{\tau_k}(t)| \leq c_H R_{\tau_k} \ ,$$
with, for every $t>\tau_{k-1}$,
$$R_t:=|W_t-W_{t-1}|+\int_{t-1}^t |t-r|^{(H-\ga)-3/2} |W_t-W_r| \, dr \ .$$
It is then readily checked that for every $\ell \geq 0$, $\ES[|R_{\tau_{k-1}+\Delta(k,\ell)}||{\cal E}_{k-1}]\leq c_{H,\ga}$, and so we can apply (\ref{eq:controlYtaum}) again (with $p=1/2$) to assert that
\begin{equation}\label{case-k=m-2}
\ES\Big[\Big(\sup_{t\in (0,1]}t^{1-\ga}|{\cal D}_{\tau_{k}-1,\tau_k}^{\tau_k}(t)|\Big)\Big|\, {\cal E}_k\Big]\le c^2_{\eta,\alpha} \ .
\end{equation}
The combination of (\ref{decompo-case-k=m}), (\ref{case-k=m-1}) and (\ref{case-k=m-2}) provide the first part of (\ref{eq:cdtional-Dtau-k-1-gamma-1}).}

\smallskip

\noindent 
{\textbf{Step 4. Asymptotic cases.} On the one hand, we can use Lemma \ref{lemma:IPP1} to obtain that for every $k\geq 1$,
$$\sup_{t\in (0,1]} t^{1-\gamma}|{\cal D}_{-\infty,0}^{\tau_k}(t)| \leq \sup_{t\in (0,1]} |{\cal D}_{-\infty,0}^{\tau_k}(t)|\leq c_H a_k^{H-1/2} \int_{-\infty}^{0} |1-r|^{-2} |W_r| \, dr\, $$
and (\ref{eq:cdtional-Dtau-k-1-gamma-0}) then follows from the general bound (\ref{eq:controlXEK}) (with $m=0$).}

\smallskip

\noindent
{On the other hand, it is not hard to see that the situation where $k=0$ can be handled with the same strategy as in Step 3, namely writing
$$
\sup_{t\in (0,1]} t^{1-\gamma}|{\cal D}_{-\infty,0}^{0}(t)| \leq \sup_{t\in (0,1]} |{\cal D}_{-\infty,-1}^{0}(t)|+ \sup_{t\in (0,1]} t^{1-\gamma}|{\cal D}_{-1,0}^{0}(t)|  
$$
and then bounding the first, resp. second, term along the arguments of Step 2, resp. Step 3, with $p=1/2$. This easily leads us to the second part of (\ref{eq:cdtional-Dtau-k-1-gamma-1}), and accordingly the proof of the lemma is achieved.}
\end{proof}

{With Lemma \ref{lemma:contDuv} in hand, we can now turn to the proof of our technical result.}

\begin{proof}[Proof of Lemma \ref{lem:step3-2}] We only prove the result for $W$, the proof for $\widetilde{W}$ being completely similar. Fix $\alpha >0$ and  $\varsigma >1$. By Remark \ref{rem:lem888}, the conditions of Lemma \ref{lemma:contDuv} are satisfied by the sequence $a_k:=c_3 \varsigma^k$, provided $c_3\geq 1$. 
Now, by decomposition \eqref{decomp:dtauk}, it holds that for every $k\geq 0$,
\begin{equation}\label{decom-proof-lemme-8.4}
\ES[\vvvert D^{{\tau_k}}(W) \vvvert_{1;\ga}  |{\cal E}_k]\le c_H \Big\{ \ES\Big[\Big(\sup_{t\in (0,1]} t^{1-\gamma}|{\cal D}_{-\infty,0}^{\tau_k}(t)|\Big)\Big|\, {\cal E}_k\Big]+\sum_{m=1}^{k} \ES\Big[\Big(\sup_{t\in (0,1]} t^{1-\gamma}|{\cal D}_{\tau_{m-1},\tau_m}^{\tau_k}(t)|\Big)\Big|\, {\cal E}_k\Big]\Big\}\ .
\end{equation}
For $k=0$, we can use (\ref{eq:cdtional-Dtau-k-1-gamma-1}) to assert that
\begin{equation}\label{k=0}
\ES\Big[\Big(\sup_{t\in (0,1]} t^{1-\gamma}|{\cal D}_{-\infty,0}^{0}(t)|\Big)\Big] \leq C^2_{\eta,\alpha} \ .
\end{equation}
For $k\geq 1$, combining the three bounds (\ref{eq:cdtional-Dtau-k-1-gamma})-(\ref{eq:cdtional-Dtau-k-1-gamma-0})-(\ref{eq:cdtional-Dtau-k-1-gamma-1}) gives that for every $p>0$,
\begin{multline*}
\ES\Big[\Big(\sup_{t\in (0,1]} t^{1-\gamma}|{\cal D}_{-\infty,0}^{\tau_k}(t)|\Big)\Big|\, {\cal E}_k\Big]+\sum_{m=1}^{k} \ES\Big[\Big(\sup_{t\in (0,1]} t^{1-\gamma}|{\cal D}_{\tau_{m-1},\tau_m}^{\tau_k}(t)|\Big)\Big|\, {\cal E}_k\Big]\\
\le \ \frac{C^1_{\eta,\alpha,p}}{(c_3\varsigma^k)^{1/2-H}} \sum_{m=0}^{k-1} \frac{1}{\eta^{(k-m)/p}}+C^2_{\eta,\alpha}\ \le \ \frac{C^{1,1}_{\eta,\alpha,p}}{c_3^{1/2-H}} \bigg(\frac{\varsigma^{H-1/2}}{\eta^{1/p}}\bigg)^k+C^2_{\eta,\alpha}\ .
\end{multline*}
At this point, and since $\varsigma >1$, we can pick $p=p(\varsigma,\eta)>0$ such that
$$\frac{\varsigma^{H-1/2}}{\eta^{1/p}}\le 1 \ ,$$
which entails that
\begin{equation}\label{k-larger-1}
\ES\Big[\Big(\sup_{t\in (0,1]} t^{1-\gamma}|{\cal D}_{-\infty,0}^{\tau_k}(t)|\Big)\Big|\, {\cal E}_k\Big]+\sum_{m=1}^{k} \ES\Big[\Big(\sup_{t\in (0,1]} t^{1-\gamma}|{\cal D}_{\tau_{m-1},\tau_m}^{\tau_k}(t)|\Big)\Big|\, {\cal E}_k\Big]\le \frac{C^{1,2}_{\eta,\alpha,\varsigma}}{c_3^{1/2-H}}+C^2_{\eta,\alpha}\ .
\end{equation}
By injecting (\ref{k=0})-(\ref{k-larger-1}) into (\ref{decom-proof-lemme-8.4}) and setting $\mathbf{c}_{\mathbf{3,2}}(\alpha,\varsigma):=\max\big( 1,(C^{1,2}_{\eta,\alpha,\varsigma})^{1/(1/2-H)}\big)$, we can conclude that for every $c_3\geq \mathbf{c}_{\mathbf{3,2}}(\alpha,\varsigma)$ and every $k\geq 0$, 
$$\ES[\vvvert D^{{\tau_k}}(W) \vvvert_{1;\ga}  |{\cal E}_k]\le c_H  \{ 1+C^2_{\eta,\alpha} \} \ ,$$
which corresponds to the desired estimate.
 \end{proof}

\subsection{Proof of Lemma \ref{lem:step3-1}}\label{subsec:lem:step3-1}
The argument is based on a combination of the Lyapunov control established in Theorem \ref{theo:lyapou} and the properties of the noise shown in the previous section. Since the arguments are identical for $Y$ and $\wt{Y}$, we only prove the statement for $Y$.

\smallskip

\noindent
First, set $\rho:=e^{-C_2/2}$ where $C_2$ is defined in Hypothesis ${\bf (H2)}$. By Theorem \ref{theo:lyapou}, we know that there exists a constant $c$ (depending only on $\gamma$) such that for every $t\in\mathbb{R}_+$,
\begin{equation}
|Y_{t+1}|^2 \leq  \rho | Y_t|^2+c \big\{1+\lVert \mathbf{X}^{(t)}\rVert_\ga^\la\big\} 
 \ , \quad 
\end{equation}
where we have set $\la:=\frac{8}{3\ga-1}$, $X^{(t)}_s:=X_{t+s}-X_t$ ($s\ge0$) and $\lVert \mathbf{X}^{(t)}\rVert_\ga:=\lVert \mathbf{X}^{(t)}\rVert_{\ga,[0,1]}$. Accordingly, for every $p\in (0,2]$, we get that
\begin{equation}\label{eq:defrho}
| Y_{t+1}|^p \leq \rho^{p/2} | Y_t|^p+c^{p/2} \big\{1+\lVert\mathbf{X}^{(t)}\rVert_\ga^{\la p/2}\big\} \ .
\end{equation}
A straightforward induction procedure then yields 
$$| Y_{\tau_k}|^p\le (\rho^{p/2})^{\Delta \tau_k} |Y_{\tau_{k-1}}|^p+c^{p/2}\sum_{\ell=0}^{\Delta \tau_k-1}(\rho^{p/2})^{\Delta \tau_k  -\ell}(1+{\lVert\mathbf{X}^{(\tau_{k-1}+\ell)}\rVert_\ga^{\la p/2}}) \ .$$
At this point, note that by Remark \ref{rem:lem888}, we can rely on the existence of a parameter $\eta>0$ (depending only on $\alpha$) such that for every $k\ge0$  and $K>0$, $\PE({\cal E}_k|{\cal E}_{k-1})\ge\eta$, and in particular $\ES[ |Y_{\tau_{k-1}}|^p|{\cal E}_k]\le \eta^{-1}\ES[ |Y_{\tau_{k-1}}|^p|{{\cal E}_{k-1}}]$. Therefore, for all $p\in (0,2]$ and $c_3\ge \frac{\log ({\eta}/2)}{\log \rho^{p/2}}$, we have, due to $\Delta \tau_k \geq c_3$, 
$$(\rho^{p/2})^{\Delta \tau_k}\leq (\rho^{p/2})^{c_3}\le \frac{{\eta}}{2}\ ,$$
and so
\begin{eqnarray*}
\ES[| Y_{\tau_k}|^p|{\cal E}_k]&\le& \frac{{\eta}}{2} \ES[|Y_{\tau_{k-1}}|^{p}|{\cal E}_k]+c^{p/2}\sum_{\ell=0}^{+\infty}\rho^{\ell}+c^{p/2}\ES\left[\sum_{\ell=0}^{\Delta \tau_k}(\rho^{p/2})^{\Delta \tau_k  -\ell}\lVert\mathbf{X}^{(\tau_{k-1}+\ell)}\rVert_\ga^{\la p/2}\Big|{\cal E}_k\right]\\
&\le& \frac{1}{2} \ES[ |Y_{\tau_{k-1}}|^p|{\cal E}_{k-1}]+\frac{c^{p/2}}{1-\rho}+c^{p/2}\ES\left[\sum_{\ell=0}^{\Delta \tau_k}(\rho^{p/2})^{\Delta \tau_k  -\ell}\lVert\mathbf{X}^{(\tau_{k-1}+\ell)}\rVert_\ga^{\la p/2}\Big|{\cal E}_k\right] \ ,
\end{eqnarray*}
which, by induction, entails that
$$\sup_{k\ge0}\ES[| Y_{\tau_k}|^p|{\cal E}_k]\le \ES[| Y_{\tau_0}|^p|{\cal E}_0]+2{C}_{\rho,p}=\ES[| Y_{0}|^p]+2{C}_{\rho,p}\ ,$$
where 
\begin{equation}\label{eq:crhorho}
C_{\rho,p}:=\frac{c^{p/2}}{1-\rho}+c^{p/2}\ES\left[\sum_{\ell=0}^{\Delta \tau_k}(\rho^{p/2})^{\Delta \tau_k  -\ell}\lVert\mathbf{X}^{(\tau_{k-1}+\ell)}\rVert_\ga^{\la p/2}\Big|{\cal E}_k\right] \ .
\end{equation}
Let us recall here that we have assumed the existence of a parameter $r>0$ such that $\ES[| Y_{0}|^r]<+\infty$ (for $\wt{Y}_0$, one even knows that $\ES[| \wt{Y}_{0}|^p]<+\infty$ for every $p>0$, since the invariant measure has moments of any order). The conclusion now comes from the result of Proposition \ref{prop:H2gamma}  below.

\

 \begin{prop}\label{prop:H2gamma} For all $\alpha \in (0,H)$ and $\varsigma >1$, there exists a constant $\bar{\mathbf{c}}(\alpha,\varsigma) >0$ such that for all $K>0$ and $c_3\geq \bar{\mathbf{c}}(\alpha,\varsigma)$, calibrating the scheme along the formulas in (\ref{best-calib}) entails that for all $q\in (0,\frac14]$ and $\rho\in (0,1)$, 
$$ \sup_{k\ge1} \ES\left[\sum_{\ell=0}^{\Delta \tau_k}\rho^{\Delta \tau_k  -\ell}\lVert\mathbf{X}^{(\tau_{k-1}+\ell)}\rVert_\ga^{q}\Big|{\cal E}_k\right]<+\infty \ .$$
\end{prop}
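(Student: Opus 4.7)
The plan is to decouple the argument into two tasks: a conditioning manipulation transferring $\ce_k$ back to $\ce_{k-1}$, and a uniform moment estimate on $\lVert\mathbf{X}^{(t)}\rVert_\ga$ that exploits a past/innovation decomposition of the fBm at time $\tau_{k-1}$.

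\textbf{Step 1 (Reduction via conditioning).} I will partition $\ce_k = \bigsqcup_{\ell\ge0}F_{k,\ell}$, on which $\Delta\tau_k=\Delta(k,\ell)$ is deterministic, and apply a Cauchy--Schwarz transfer exactly as in Step~1 of the proof of Lemma~\ref{lemma:contDuv}, using the bounds $\PE(\ce_k\mid\ce_{k-1})\ge\eta$ (Remark~\ref{rem:lem888}) and $\PE(F_{k,\ell}\mid\ce_{k-1})\le C\cdot 2^{-\alpha\ell}$ (Proposition~\ref{lemme:step2.2}). The geometric bounds $\sum_j\rho^{\Delta(k,\ell)-j}\le(1-\rho)^{-1}$ and $\sum_\ell 2^{-\alpha\ell/2}<\infty$ will then reduce the whole proposition to the uniform estimate
\begin{equation*}
\sup_{k\ge1,\,0\le j\le\Delta\tau_k}\ \ES\!\left[\,\lVert\mathbf{X}^{(\tau_{k-1}+j)}\rVert_\ga^{2q}\,\Big|\,\ce_{k-1}\,\right] <\infty. \qquad (\ast)
\end{equation*}

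\textbf{Step 2 (Past/innovation splitting).} Since $t_j:=\tau_{k-1}+j$ is a stopping time, the Mandelbrot--Van Ness representation of $X^{(t_j)}$ splits at $\tau_{k-1}$ as $X^{(t_j)} = P_j+N_j$, where $P_j$ is $\mathcal{F}_{\tau_{k-1}}$-measurable and smooth on $(0,1]$, while $N_j$ depends only on $(W_r-W_{\tau_{k-1}})_{r\ge\tau_{k-1}}$. The same splitting will be performed on the L\'evy area $\mathbf{X}^{\mathbf{2},(t_j)}$, with cross terms handled as singular Young-type integrals through the framework of Section~\ref{sec:rougheqinholder} (since $P_j\in\mathcal{E}^2_\ga$). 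By the strong Markov property, the law of $(N_j,\mathbf{N}^{\mathbf{2}}_j)$ conditional on $\ce_{k-1}$ coincides with its unconditional law; a direct variance computation yields an fBm-type covariance bounded uniformly in $j$, so the usual Gaussian rough-path estimates (cf.\ Proposition~\ref{prop:levy-fbm}) will give uniform moment bounds on $\lVert N_j\rVert_\ga$ and $\lVert\mathbf{N}^{\mathbf{2}}_j\rVert_{2\ga}$. A Cauchy--Schwarz estimate on the cross terms then reduces $(\ast)$ to a uniform bound on $\ES[\vvvert P_j\vvvert_{1;\ga}^{4q}\mid\ce_{k-1}]$.

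\textbf{Step 3 (Past piece).} For $j=0$, $P_0=D^{(\tau_{k-1})}(W)$ and the desired bound follows immediately from Lemma~\ref{lem:step3-2}, combined with Jensen to pass from the first moment to the $4q$-th moment (since $4q\le 1$). For $j\ge1$, $P_j$ is smooth on $[0,1]$ with
\begin{equation*}
P_j'(s)= \alpha_H\bigl(H-\tfrac12\bigr)\int_{-\infty}^{\tau_{k-1}}(t_j+s-r)^{H-3/2}\,dW_r,
\end{equation*}
and I will decompose this integral as in \eqref{decomp:dtauk} into a piece on $(-\infty,0]$ and pieces on $(\tau_{m-1},\tau_m]$, $m\le k-1$. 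Each contribution is controlled by the same Cauchy--Schwarz / Lemma~\ref{lemma:IPP1} scheme as in Lemma~\ref{lemma:contDuv}, using $t_j+s-r\ge\tau_{k-1}-\tau_m\ge a_{k-1}$, and transferring the conditioning from $\ce_{k-1}$ back to $\ce_{m-1}$ at the cost of $\eta^{-(k-1-m)/p}$. Choosing $p=p(\eta,\varsigma)$ large enough so that $\varsigma^{H-1/2}\eta^{-1/p}\le 1$ produces a geometric series in $m$, summable uniformly in $k$ as soon as $c_3\ge\bar{\mathbf{c}}(\alpha,\varsigma)$ is large enough.

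The hardest part will be Step~3: though each ingredient is already at our disposal, one must carefully verify that shifting the reference time from $\tau_k$ to $t_j=\tau_{k-1}+j$ preserves the crucial geometric summability, the key observation being that the minimum inter-stop gap $\Delta\tau_{k-1}\ge a_{k-1}$ continues to dominate the kernel decay regardless of $j$, and that the singular Young/rough treatment of the L\'evy-area cross terms remains amenable to the same moment estimates.
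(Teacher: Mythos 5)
Your proposal follows the same high-level scheme as the paper (Cauchy--Schwarz conditioning transfer from $\ce_k$ to $\ce_{k-1}$, then a ``past/innovation'' decomposition of the noise at $\tau_{k-1}$, then the $D$-type estimates from Lemmas \ref{lemma:IPP1}/\ref{lemma:contDuv}), and the logic is sound. However, your decomposition differs from the paper's in one structurally significant way, and this creates a piece of work you have not quite acknowledged.

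The paper's proof applies Corollary~\ref{prop:controlnormXDH} to split $\mathbf{X}^{(\tau_{k-1}+\ell)}$ at the \emph{current} time $\tau_{k-1}+\ell$, so that the rough-path component is $\mathbf{Z}^{(\tau_{k-1}+\ell)}$, a \emph{genuine Liouville fBm} whose law is that of $\mathbf{Z}$; stationarity then gives $\ES[\lVert\mathbf{Z}^{(\tau_{k-1}+\ell)}\rVert_\ga\mid\ce_{k-1}]=\ES[\lVert\mathbf{Z}\rVert_\ga]<\infty$ with no further work. Only the smooth piece $D^{(\tau_{k-1}+\ell)}$ is then split once more at $\tau_{k-1}$. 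You, by contrast, split $X^{(\tau_{k-1}+j)}$ directly at $\tau_{k-1}$, so your innovation piece $N_j$ is the \emph{increment} of the Liouville process $Z^{(\tau_{k-1})}$ over $[j,j+\cdot]$, not the Liouville process itself. Its second-order lift $\mathbf{N}^{\mathbf 2}_j$ is therefore a new Gaussian rough path whose moments you need to bound uniformly in $j$, and this is not covered by Proposition~\ref{prop:levy-fbm} (stated for fBm) nor by the lift of $\mathbf{Z}$ provided in Lemma~\ref{lem:justifd}. You would have to check that the 2D-covariance of the Liouville increment process satisfies the hypotheses of a Friz--Victoir-type theorem (e.g.\ \cite[Theorem 15.33]{FV-bk}) with constants uniform in $j$. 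This is plausibly true but requires a separate verification; the paper's two-stage split is precisely designed to avoid it by never changing the reference time of the rough component.

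Two minor additional points. First, in Step~3 your lower bound $t_j+s-r\ge\tau_{k-1}-\tau_m\ge a_{k-1}$ for $r\in(\tau_{m-1},\tau_m]$ only holds for $m\le k-2$; the block $m=k-1$ has $\tau_{k-1}-\tau_{k-1}=0$ and must be treated separately, exactly as the analogous boundary block is treated in Step~3 of the proof of Lemma~\ref{lemma:contDuv} (via the decomposition of the time interval near the singularity and a stationarity argument, without any factor $a_{k-1}^{H-1/2}$). Second, the paper's reduction in fact bounds the geometric sum $\sum_\ell \rho^{\Delta(k,m)-\ell}$ directly (via \eqref{eq:controlYtaum1}) and keeps the supremum over $\ell$ and $k$, reducing to $\sup_{\ell,k}\ES[\lVert\mathbf{X}^{(\tau_{k-1}+\ell)}\rVert_\ga^{1/2}\mid\ce_{k-1}]^{2q}$; your $(\ast)$ is the same statement. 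These are small items, and once you supply the uniform rough-path estimate for $N_j$ your argument closes.
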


\begin{proof} Let us again recall the existence of a parameter $\eta>0$ (depending only on $\alpha$) such that for every $k\ge0$ and $K>0$, $\PE({\cal E}_k|{\cal E}_{k-1})\ge\eta$. Then, by using the general bound \eqref{eq:controlYtaum1}, one obtains that 
\begin{align*}
\ES\left[\sum_{\ell=0}^{\Delta \tau_k}\rho^{\Delta \tau_k -\ell}\lVert\mathbf{X}^{(\tau_{k-1}+\ell)}\rVert_\ga^q\Big|{\cal E}_k\right]
&\le c_{\eta,\al}\cdot \sup_{m\ge1}\sum_{\ell=0}^{\Delta(k,m)}\rho^{\Delta(k,m)  -\ell}\ES\left[\lVert\mathbf{X}^{(\tau_{k-1}+\ell)}\rVert_\ga^{2q}\big|{\cal E}_{k-1}\right]^{\frac 12}\\
& \le \frac{c_{\eta,\al}}{1-\rho}\cdot\sup_{\ell,k}\ES\left[\lVert\mathbf{X}^{(\tau_{k-1}+\ell)}\rVert_\ga^{1/2}\big|{\cal E}_{k-1}\right]^{2q}\ .
\end{align*}
Secondly, by Corollary \ref{prop:controlnormXDH}, we know that
\begin{equation}
\|\mathbf{X}^{(\tau_{k-1}+\ell)}\|_{\ga} \le c(1+\|\mathbf{Z}^{(\tau_{k-1}+\ell)}\|_{\ga}^2+\vvvert D^{(\tau_{k-1}+\ell)}\vvvert_{1;\ga}^2) \ ,
\end{equation}
for some constant $c$ that depends only on $\ga$, and so 
$$\sup_{\ell,k}\ES\left[\lVert\mathbf{X}^{(\tau_{k-1}+\ell)}\rVert_\ga^{1/2}|{\cal E}_{k-1}\right]\le 
c^{1/2} \big( 1+\sup_{\ell,k}\ES[\|\mathbf{Z}^{(\tau_{k-1}+\ell)}\|_{\ga}|{\cal E}_{k-1}]+\sup_{\ell,k}\ES[\vvvert D^{(\tau_{k-1}+\ell)}\vvvert_{1;\ga}|{\cal E}_{k-1}]\big) \ .$$
Owing to the stationarity and the independence of the Brownian increments, it is clear that 
$$\ES[\|\mathbf{Z}^{(\tau_{k-1}+\ell)}\|_{\ga}|{\cal E}_{k-1}]=\ES[\|\mathbf{Z}\|_{\ga}] \ ,$$
where the latter expectation is known to be finite (see \cite[Theorem 15.33]{FV-bk}).

\smallskip 

Then, similarly to \eqref{decomp:dtauk}, one has the decomposition 
$$(D^{(\tau_{k-1}+\ell)})'(t)=\alpha_H\left(H-\frac 12\right)\left({\cal D}_{-\infty,\tau_{k-1}}^{\tau_{k-1}+\ell}(t)+{\cal D}_{\tau_{k-1},\tau_{k-1}+\ell}^{\tau_{k-1}+\ell}(t)\right) \ ,$$
On the one hand, the fact that, for any $c_3$ large enough (depending on $\alpha$ and $\varsigma$), the quantity
$$\ES\left[\sup_{t\in[0,1]}t^{1-\gamma}\big|{\cal D}_{-\infty,\tau_{k-1}}^{\tau_{k-1}+\ell}(t)\big| \Big|{\cal E}_{k-1}\right]$$
is uniformly bounded in $\ell$ and $k$ can be shown with similar arguments as those in the proof of Lemma \ref{lem:step3-2}. To be more specific, the idea is to start from a similar decomposition as the one in \eqref{decomp:dtauk} and to control each term with the help of Lemma \ref{lemma:IPP1}. Since the right-hand term in the latter lemma decreases with $\tau$, the dependency in $\ell$ can be managed as follows: for all $\ell \geq 0$ and $u<v$, 
$$\left|{\cal D}_{u,v}^{\tau_{k-1}+\ell}(t)\right|\le c_H\left((t+\tau_{k-1}-u)^{H-\frac 32}\left|W_v-W_u\right|+\int_u^v(t+\tau_{k-1}-r)^{H-\frac 52}  \left|W_v-W_r\right|dr\right)\ ,$$
and from here we can mimick the arguments of the proof of Lemma \ref{lemma:contDuv}.\smallskip

\noindent As far as the process ${\cal D}_{\tau_{k-1},\tau_{k-1}+\ell}^{\tau_{k-1}+\ell}$ is concerned, we can use the independency and stationary properties of the Brownian motion, together with the bound of Lemma \ref{lemma:IPP1}, to assert that for every $\ell\ge 1$,
$$\ES\left[\sup_{t\in[0,1]}t^{1-\gamma}\big|{\cal D}_{-\infty,\tau_{k-1}}^{\tau_{k-1}+\ell}(t)\big|\Big|{\cal E}_{k-1}\right]\le 
c_H\ES\left[\sup_{t\in[0,1]}t^{1-\gamma}\left((t+\ell)^{H-\frac 32}|W_\ell|+ \int_0^{\ell} (t+\ell-r)^{H-\frac 52} |W_r|dr\right)\right]\ .$$
For the first term, it is enough to observe that $t^{1-\gamma}(t+\ell)^{H-\frac 32}|W_\ell|\le \ell^{-\frac 12}|W_\ell|$. For the second term, we can write
\begin{align*}
t^{1-\gamma}\int_0^{\ell} (t+\ell-r)^{H-\frac 52} |W_r-W_\ell|dr&\le \int_{0}^{\ell-1} (\ell-r)^{H-2} (\ell-r)^{-\frac 12}|W_r-W_\ell| dr\\
&+\int_{\ell-1}^{\ell}(\ell-r)^{H-1-\gamma}(\ell-r)^{-\frac 12}|W_r-W_\ell| dr\ ,
\end{align*}
where we have used the fact that for every $r\in[\ell-1,\ell]$ and every $t\in(0,1]$, 
$$t^{1-\gamma} (t+\ell-r)^{H-2}\le (t+\ell-r)^{H-1-\gamma}\le (\ell-r)^{H-1-\gamma}\ .$$
The uniform boundedness of $\ES\left[\sup_{t\in[0,1]}t^{1-\gamma}\big|{\cal D}_{-\infty,\tau_{k-1}}^{\tau_{k-1}+\ell}(t)\big|\Big|{\cal E}_{k-1}\right]$ follows immediately, and this achieves the proof of our assertion.
\end{proof}

\bigskip

{ {\bf Acknowledgements: }We are grateful to Martin Hairer for fruitful advices, and to Peter K. Friz for bringing our attention to the references behind Lemma \ref{lem:justifd}$(iii)$.  }

\appendix

\section{Singular paths and canonical lift}

Let us recall that the space $\mathcal{E}_\ga^2([0,1];\R^d)$, as well as the notation $\vvvert f\vvvert_{1;\ga}$, have been introduced in Section \ref{sec:decompo-fbm-ter}. Besides, let us denote by $\cac^1([0,1];\R^d)$ the space of differentiable $\R^d$-valued paths on $[0,1]$ with continuous derivative.

\begin{proposition}\label{prop:lift-sum}
Let $z\in \cac_1^\ga([0,1];\R^d)$ be a path that can be canonically lifted into a rough path $\mathfrak{L}(z)$, in the sense of Definition \ref{defi-canonic-rp}, and let $g\in \mathcal{E}^2_\ga([0,1];\R^d)$, resp. $g\in \cac^1([0,1];\R^d)$. Then $z+g$ can be canonically lifted into a rough path $\mathfrak{L}(z+g)$ and it holds that
\begin{equation}\label{boun-diff-levy-areas}
\cn[\mathfrak{L}(z+g)^{\mathbf{2}}-\mathfrak{L}(z)^{\mathbf{2}};\cac_{2,\ga}^{2\ga,1+\ga}([0,1];\R^{d,d})]\leq c_\ga \big\{ 1+\vvvert g\vvvert_{1;\ga}^2+\cn[z;\cac_1^\ga([0,1];\R^d)]^2\big\} \ ,
\end{equation}
resp. 
\begin{equation}\label{boun-diff-levy-areas-2}
\cn[\mathfrak{L}(z+g)^{\mathbf{2}}-\mathfrak{L}(z)^{\mathbf{2}};\cac_{2}^{1+\ga}([0,1];\R^{d,d})]\leq c_\ga \big\{  1+\cn[g;\cac^1([0,1];\R^d)]^2+\cn[z;\cac_1^\ga([0,1];\R^d)]^2\big\} \ ,
\end{equation}
for some constant $c_\ga$ that depends only on $\ga$.
\end{proposition}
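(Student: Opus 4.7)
The strategy is to exploit the fact that $g$ is (almost) differentiable, with an integrable derivative $g'$ (satisfying $|g'(u)| \leq \vvvert g\vvvert_{1;\ga}\, u^{\ga-1}$ in the $\mathcal{E}^2_\ga$ case, and $|g'(u)| \leq \cn[g;\cac^1]$ in the $\cac^1$ case), so that all cross integrals involving $g$ can be realized as ordinary Lebesgue integrals and the whole problem reduces to careful estimates of those integrals. I would first define the candidate $\mathfrak{L}(z+g)^{\2}$ by an explicit formula, and only afterwards verify convergence of the dyadic piecewise linear approximations required by Definition~\ref{defi-canonic-rp}.

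\smallskip

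\textbf{Explicit construction.} Set, for every $0\leq s<t\leq 1$ and all components $i,j$,
\begin{equation*}
A_{st}^{ij} := \int_s^t (\delta z^i)_{su}\, g'^j(u)\, du, \quad C_{st}^{ij} := \int_s^t (\delta g^i)_{su}\, g'^j(u)\, du,
\end{equation*}
noting that in the $\mathcal{E}^2_\ga$ case these are well-defined even when $s=0$, since $|(\delta z^i)_{0u}||g'^j(u)| \lesssim u^{2\ga-1}$ is integrable. A formal integration by parts then suggests defining
\begin{equation*}
\mathfrak{L}(z+g)^{\2,ij}_{st} := \mathfrak{L}(z)^{\2,ij}_{st} + A_{st}^{ij} + \bigl( (\delta g^i)_{st}(\delta z^j)_{st} - A_{st}^{ji}\bigr) + C_{st}^{ij}.
\end{equation*}
I would then check that the three added terms satisfy the singular H\"older estimate \eqref{boun-diff-levy-areas}. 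The key bounds, in the $\mathcal{E}^2_\ga$ case, come from the two elementary inequalities, valid for $0<s<t\leq 1$:
\begin{equation*}
\int_s^t (u-s)^\ga u^{\ga-1}\, du \ \leq\ \min\Bigl( \tfrac{1}{\ga}(t-s)^{2\ga}, \ \tfrac{1}{\ga+1}s^{\ga-1}(t-s)^{1+\ga}\Bigr),
\end{equation*}
together with the dual estimate $|(\delta g)_{st}| \leq \vvvert g\vvvert_{1;\ga}\min\bigl((t-s)^\ga,\, s^{\ga-1}(t-s)\bigr)$ obtained by integrating $g'$. These plug directly into $A$, $C$ and the product $(\delta g)(\delta z)$ and yield exactly the $\cac_{2,\ga}^{2\ga,1+\ga}$-bound claimed in \eqref{boun-diff-levy-areas}; the $\cac^1$ case is identical with $u^{\ga-1}$ replaced by $\cn[g;\cac^1]$, producing the cleaner estimate \eqref{boun-diff-levy-areas-2}.

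\smallskip

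\textbf{Convergence along dyadic interpolations.} Denoting by $z^n,g^n$ the piecewise linear interpolations of $z$ and $g$ on $\cp_n$, linearity of the interpolation gives $(z+g)^n = z^n + g^n$, hence
\begin{equation*}
\mathfrak{L}((z+g)^n)^{\2}_{st} = \mathfrak{L}(z^n)^{\2}_{st} + \int_s^t (\delta z^n)_{su}\otimes dg^n_u + \int_s^t (\delta g^n)_{su}\otimes dz^n_u + \int_s^t (\delta g^n)_{su}\otimes dg^n_u.
\end{equation*}
The first term converges to $\mathfrak{L}(z)^{\2}$ by assumption. For each of the other three, I would show convergence to its continuous counterpart $A$, $(\delta g)(\delta z) - A^T$, or $C$ by combining the uniform-in-$n$ versions of the estimates of the previous paragraph (obtained by using that $g^n$ inherits the bound $|g^{n\prime}(u)|\lesssim \vvvert g\vvvert_{1;\ga}u^{\ga-1}$ up to a universal constant on each dyadic interval away from $0$) with the pointwise convergence $g^{n\prime}\to g'$ a.e.\ and dominated convergence. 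Once convergence is established in each of the H\"older-type norms appearing in $\|\cdot\|_{(0,\ga');[0,1]}$ for $0<\ga'<\ga$, the limit satisfies the axioms \eqref{levy-area-def} (Chen's identity and the symmetry relation) simply because these are algebraic identities for the smooth interpolants and pass to the limit; this identifies the limiting object as the canonical lift $\mathfrak{L}(z+g)$.

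\smallskip

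\textbf{Main obstacle.} The only delicate point is the handling of the singular endpoint $s=0$ in the $\mathcal{E}^2_\ga$ case: one must verify that the uniform-in-$n$ bound on the second level near $0$ really reproduces the weight $s^{\ga-1}$ (and not a weaker power), and that dominated convergence is applicable uniformly down to $s=0$. Both reduce to the observation $|(\delta z^n)_{su}|\leq \cn[z;\cac_1^\ga]\,(u-s)^\ga$ and the uniform bound $|g^{n\prime}(u)| \leq c\,\vvvert g\vvvert_{1;\ga}\,u^{\ga-1}$, which can be verified directly on dyadic intervals; everything else is routine.
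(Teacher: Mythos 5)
Your proposal is correct and follows essentially the same route as the paper: the explicit formula you write for $\mathfrak{L}(z+g)^{\mathbf 2}-\mathfrak{L}(z)^{\mathbf 2}$ (via $A$, the integrated-by-parts cross term, and $C$) agrees with the paper's three-term decomposition, the weighted estimate $\int_s^t (u-s)^\ga u^{\ga-1}\,du \leq \min\bigl(c(t-s)^{2\ga},\,c\,s^{\ga-1}(t-s)^{1+\ga}\bigr)$ is the same one used there, and the uniform-in-$n$ bound $|(g^n)'(u)| \lesssim \vvvert g\vvvert_{1;\ga}\, u^{\ga-1}$ on dyadic interpolants is precisely what the paper isolates in its preliminary lemma. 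The only stylistic divergence is that the paper proves convergence of the dyadic approximants quantitatively, via the rate $\cn[g^n-g;\cac_1^{\ga'}] \lesssim 2^{-n(\ga-\ga')}$, whereas you invoke pointwise convergence of $(g^n)'$ together with dominated convergence against the uniform weight — both work, and neither introduces a gap.
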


\

The two following results, which are extensively used in our analysis, are immediate consequences of (\ref{boun-diff-levy-areas}) and (\ref{boun-diff-levy-areas-2}).
\begin{corollary}\label{coro:identif-sol}
Let $z\in \cac_1^\ga([0,1];\R^d)$ be a path that can be canonically lifted into a rough path $\mathfrak{L}(z)$ and let $g\in \mathcal{E}^2_\ga([0,1];\R^d)$, resp. $g\in \cac^1([0,1];\R^d)$. Then, in the setting of Definition \ref{def:general-sol} (with $\beta:=\ga$, resp. $\beta=1$), a path $y:[0,1]\to V$ is a solution of
$$
dy_t =B(y_t) \, dh_t+\varSigma(y_t) \, d\mathfrak{L}(z+g)_t \quad , \quad y_{0}=v_0 \ ,
$$
if and only if $y$ is a solution of
$$
dy_t =\big[ B(y_t) \, dh_t+\varSigma(y_t) \, dg_t\big]+\varSigma(y_t) \, d\mathfrak{L}(z)_t \quad , \quad y_{0}=v_0 \ .
$$
\end{corollary}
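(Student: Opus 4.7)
\textbf{Proof strategy for Corollary \ref{coro:identif-sol}.} The corollary is essentially a direct consequence of the improved regularity statement in Proposition \ref{prop:lift-sum}. The plan is to show that the Davie-type remainders associated with the two equations differ by a two-parameter path whose regularity is strictly better than the threshold required by Definition \ref{def:general-sol}, so that a path $y$ is a solution of one equation if and only if it is a solution of the other.

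More precisely, suppose $y\in \mathcal{C}_1^\ga(I;V)$ is a solution of
$$dy_t =B(y_t)\,dh_t+\Sigma(y_t)\,d\mathfrak{L}(z+g)_t\ ,\quad y_0=v_0\ ,$$
so that the associated remainder
$$R^{y,\text{full}}_{st}:=(\delta y)_{st}-B_i(y_s)(\delta h^i)_{st}-\Sigma_j(y_s)(\delta(z+g)^j)_{st}-(D\Sigma_j\cdot\Sigma_k)(y_s)\,\mathfrak{L}(z+g)^{\mathbf{2},jk}_{st}$$
belongs to $\mathcal{C}_{2;\beta}^{\ga,\mu}(I;V)$ for some $\mu>1$, where $\beta=\ga$ (resp. $\beta=1$) in the singular (resp. smooth) case. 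Using the decompositions $\delta(z+g)=\delta z+\delta g$ and $\mathfrak{L}(z+g)^{\mathbf{2}}=\mathfrak{L}(z)^{\mathbf{2}}+\bigl[\mathfrak{L}(z+g)^{\mathbf{2}}-\mathfrak{L}(z)^{\mathbf{2}}\bigr]$, I would rewrite $R^{y,\text{full}}$ as
$$R^{y,\text{full}}_{st}=R^{y,\text{split}}_{st}-(D\Sigma_j\cdot\Sigma_k)(y_s)\,\bigl[\mathfrak{L}(z+g)^{\mathbf{2},jk}_{st}-\mathfrak{L}(z)^{\mathbf{2},jk}_{st}\bigr]\ ,$$
where $R^{y,\text{split}}$ is exactly the Davie remainder associated with the split equation
$$dy_t=\bigl[B(y_t)\,dh_t+\Sigma(y_t)\,dg_t\bigr]+\Sigma(y_t)\,d\mathfrak{L}(z)_t\ ,$$
the path $g$ being absorbed into the drift component (i.e. one augments $h$ into $(h,g)$ and $B$ into $(B,\Sigma)$, exactly as in the two applications performed in the proof of Proposition \ref{prop:consistency}).

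The key step is then to observe that the correction term has strictly better regularity than any admissible remainder. Indeed, by Proposition \ref{prop:lift-sum}, $\mathfrak{L}(z+g)^{\mathbf{2}}-\mathfrak{L}(z)^{\mathbf{2}}$ lies in $\mathcal{C}_{2;\ga}^{2\ga,1+\ga}$ (resp. $\mathcal{C}_2^{1+\ga}$ when $g\in\mathcal{C}^1$), and since $y\in\mathcal{C}_1^\ga$ with $\Sigma$ smooth, the composition $(D\Sigma_j\cdot\Sigma_k)(y_\cdot)$ is bounded. Therefore the correction term belongs to $\mathcal{C}_{2;\beta}^{\ga,1+\ga}(I;V)$ with $1+\ga>1$, and so $R^{y,\text{full}}\in\mathcal{C}_{2;\beta}^{\ga,\mu}$ if and only if $R^{y,\text{split}}\in\mathcal{C}_{2;\beta}^{\ga,\mu\wedge(1+\ga)}$. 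This yields both implications of the corollary at once.

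The main obstacle, if one can call it that, is purely bookkeeping: one must check that taking $\beta=\ga$ in the singular case is genuinely compatible on both sides of the equivalence, in the sense that the two remainders live in the same singular H\"older class. This is automatic here because $g\in\mathcal{E}^2_\ga\subset\mathcal{C}_{1;\ga}^{\ga,1}$ implies that $\Sigma(y_s)(\delta g)_{st}$ already fits into the $\mathcal{C}_{2;\ga}^{\ga,1}$ scale (and likewise, with the improved exponent $1+\ga$, once one invokes the bound $\|\Sigma(y_t)-\Sigma(y_s)\|\lesssim|t-s|^\ga$ to absorb the constant part), so no loss of topological sharpness occurs. The smooth case $g\in\mathcal{C}^1$ is even more direct, since all terms automatically belong to $\mathcal{C}_2^{1+\ga}$.
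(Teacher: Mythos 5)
Your argument is correct and it is exactly the intended one: the paper simply declares the corollary to be an immediate consequence of the bounds \eqref{boun-diff-levy-areas} and \eqref{boun-diff-levy-areas-2}, and your write-up supplies the precise reasoning, namely that the two Davie remainders differ only by $(D\varSigma_j\cdot\varSigma_k)(y_s)\,\bigl[\mathfrak{L}(z+g)^{\mathbf{2},jk}_{st}-\mathfrak{L}(z)^{\mathbf{2},jk}_{st}\bigr]$, a term whose H\"older exponent $1+\ga>1$ (from Proposition \ref{prop:lift-sum}) puts it strictly inside the admissible remainder class. The only cosmetic remark is that the boundedness of $(D\varSigma_j\cdot\varSigma_k)(y_\cdot)$ comes from continuity of $y$ on the compact interval rather than from $y\in\cac_1^\ga$ per se, but this does not affect the validity of the argument.
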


\

\begin{corollary}\label{prop:controlnormXDH}
Let $z\in \cac_1^\ga([0,1];\R^d)$ be a path that can be canonically lifted into a rough path $\mathfrak{L}(z)$ and let $g\in \mathcal{E}^2_\ga([0,1];\R^d)$. Then it holds that
\begin{equation}
\lVert \mathfrak{L}(z+g) \rVert_{\ga;[0,1]} \leq c_\ga \big\{  1+\lVert \mathfrak{L}(z) \rVert_{\ga;[0,1]}^2+\vvvert g\vvvert_{1;\ga}^2\big\} \ ,
\end{equation}
for some constant $c_\ga$ that depends only on $\ga$.
\end{corollary}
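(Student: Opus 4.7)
The plan is to derive the estimate directly from Proposition \ref{prop:lift-sum}, which provides both the existence of the canonical lift $\mathfrak{L}(z+g)$ and quantitative control on the difference of the L\'evy areas. Since $\lVert \mathbf{x}\rVert_{\ga;[0,1]} = \cn[x;\cac_1^\ga([0,1];\R^d)] + \cn[\mathbf{x}^{\mathbf{2}};\cac_2^{2\ga}([0,1];\R^{d,d})]$, it suffices to estimate each part separately.

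For the H\"older seminorm of the first level, I would first observe that any $g\in\mathcal{E}^2_\ga([0,1];\R^d)$ belongs to $\cac^\ga_1([0,1];\R^d)$, with the explicit control $\cn[g;\cac_1^\ga] \leq c_\ga \vvvert g\vvvert_{1;\ga}$. Indeed, from the very definition of $\vvvert g\vvvert_{1;\ga}$ one has $|g'(r)| \leq \vvvert g\vvvert_{1;\ga}\, r^{\ga-1}$, so for $0\leq s<t\leq 1$,
\begin{equation*}
|g(t)-g(s)| \leq \vvvert g\vvvert_{1;\ga}\int_s^t r^{\ga-1}\, dr \leq \tfrac{1}{\ga}\vvvert g\vvvert_{1;\ga}(t^\ga-s^\ga) \leq \tfrac{1}{\ga}\vvvert g\vvvert_{1;\ga}(t-s)^\ga.
\end{equation*}
Combining with the trivial triangle inequality $\cn[z+g;\cac_1^\ga] \leq \cn[z;\cac_1^\ga]+\cn[g;\cac_1^\ga]$ yields $\cn[z+g;\cac_1^\ga] \leq c_\ga\big\{\cn[z;\cac_1^\ga]+\vvvert g\vvvert_{1;\ga}\big\}$.

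For the L\'evy area component, I would write $\mathfrak{L}(z+g)^{\mathbf{2}} = \mathfrak{L}(z)^{\mathbf{2}}+\big(\mathfrak{L}(z+g)^{\mathbf{2}}-\mathfrak{L}(z)^{\mathbf{2}}\big)$ and apply the estimate \eqref{boun-diff-levy-areas} from Proposition \ref{prop:lift-sum}. Noting the continuous embedding $\cac_{2,\ga}^{2\ga,1+\ga}([0,1];\R^{d,d}) \hookrightarrow \cac_2^{2\ga}([0,1];\R^{d,d})$ (immediate from the very definition \eqref{topo-singu-1}, since $\cn[\cdot;\cac_2^{2\ga}]$ corresponds to the first supremum in \eqref{topo-singu-1}), we obtain
\begin{equation*}
\cn[\mathfrak{L}(z+g)^{\mathbf{2}};\cac_2^{2\ga}] \leq \cn[\mathfrak{L}(z)^{\mathbf{2}};\cac_2^{2\ga}]+c_\ga\big\{1+\vvvert g\vvvert_{1;\ga}^2+\cn[z;\cac_1^\ga]^2\big\}.
\end{equation*}

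Summing the two bounds and absorbing the linear quantities into the quadratic ones via the elementary inequality $a\leq 1+a^2$, I arrive at
\begin{equation*}
\lVert \mathfrak{L}(z+g)\rVert_{\ga;[0,1]} \leq c_\ga\big\{1+\lVert \mathfrak{L}(z)\rVert_{\ga;[0,1]}^2+\vvvert g\vvvert_{1;\ga}^2\big\},
\end{equation*}
which is the desired conclusion. Since the entire content of the corollary is already packaged in Proposition \ref{prop:lift-sum}, there is no genuine obstacle here; the only minor point requiring care is the embedding of the singular H\"older space into the usual one, and the elementary $\cac_1^\ga$-bound for paths in $\mathcal{E}^2_\ga$.
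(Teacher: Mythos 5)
Your argument is correct and follows exactly the route the paper intends: the paper itself simply states that the corollary is an ``immediate consequence'' of Proposition~\ref{prop:lift-sum}, and your write-up spells out the two unglamorous steps involved (the $\cac_1^\ga$-embedding of $\mathcal{E}^2_\ga$, which is also a special case of the unnumbered lemma at the start of Section~\ref{sec:rougheqinholder}, and the embedding $\cac_{2;\ga}^{2\ga,1+\ga}\hookrightarrow\cac_2^{2\ga}$). Nothing to add.
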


\

We will only prove Proposition \ref{prop:lift-sum} in the situation where $g\in \mathcal{E}^2_\ga([0,1];\R^d)$, but the proof when $g\in \cac^1([0,1];\R^d)$ could be derived from the very same arguments.
\begin{lemma}
Let $g\in \mathcal{E}^2_\ga([0,1];\R^d)$ and denote by $g^n$ the linear interpolation of $g$ along the dyadic partition $\mathcal{P}_n$ of $[0,1]$. Then it holds that
\begin{equation}\label{unif-bou-g-n}
\sup_n \sup_{t\in (0,1]\backslash \mathcal{P}_n} t^{1-\ga} |(g^n)'_t| \lesssim \vvvert g\vvvert_{1;\ga}
\end{equation}
and for every $0<\ga'<\ga$,
\begin{equation}\label{unif-bou-g-n-g}
\sup_{t\in (0,1]\backslash \mathcal{P}_n} t^{1-\ga'} |(g^n-g)'_t| \lesssim \vvvert g\vvvert_{2;\ga} 2^{-n(\ga-\ga')} \ .
\end{equation}
\end{lemma}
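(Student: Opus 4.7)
The plan is to compute $(g^n)'$ explicitly on each dyadic subinterval and handle the singularity at $0$ by splitting off the first interval. For $t\in(t_i,t_{i+1})$ with $t_i:=i2^{-n}$, one has $(g^n)'(t)=2^n(g(t_{i+1})-g(t_i))$. I will treat the cases $i\geq 1$ (where $g'$ is well-defined and bounded on $[t_i,t_{i+1}]$) and $i=0$ (where the singularity $|g'(s)|\lesssim s^{\ga-1}\vvvert g\vvvert_{1;\ga}$ has to be absorbed by integration) separately.

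For the first bound \eqref{unif-bou-g-n}, when $i\geq 1$ I will apply the mean value theorem to produce $s\in(t_i,t_{i+1})$ with $(g^n)'(t)=g'(s)$, so that
$$|(g^n)'(t)|\leq s^{\ga-1}\vvvert g\vvvert_{1;\ga}\leq t_i^{\ga-1}\vvvert g\vvvert_{1;\ga},$$
and then observe that $t^{1-\ga}\leq t_{i+1}^{1-\ga}\leq (2t_i)^{1-\ga}$, which closes the estimate with a constant $2^{1-\ga}$. When $i=0$, $t\in(0,2^{-n})$ and I bound $|g(2^{-n})-g(0)|\leq\int_0^{2^{-n}}|g'(s)|\,ds\leq \ga^{-1}2^{-n\ga}\vvvert g\vvvert_{1;\ga}$ (the integral converges since $\ga>0$), yielding $|(g^n)'(t)|\lesssim 2^{n(1-\ga)}\vvvert g\vvvert_{1;\ga}$; multiplying by $t^{1-\ga}\leq 2^{-n(1-\ga)}$ gives the claim.

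For the second bound \eqref{unif-bou-g-n-g}, when $i\geq 1$ the mean value theorem again provides $s\in(t_i,t_{i+1})$ such that $(g^n-g)'(t)=g'(s)-g'(t)=\int_t^s g''(u)\,du$; using the assumption $|g''(u)|\leq u^{\ga-2}\vvvert g\vvvert_{2;\ga}\leq t_i^{\ga-2}\vvvert g\vvvert_{2;\ga}$ together with $|s-t|\leq 2^{-n}$, I get
$$t^{1-\ga'}|(g^n-g)'(t)|\leq 2^{-n}\,(2t_i)^{1-\ga'} t_i^{\ga-2}\vvvert g\vvvert_{2;\ga}\lesssim 2^{-n}\,t_i^{\ga-1-\ga'}\vvvert g\vvvert_{2;\ga},$$
and since $t_i\geq 2^{-n}$ and $\ga-1-\ga'<0$, $t_i^{\ga-1-\ga'}\leq 2^{n(1-\ga+\ga')}$, yielding the factor $2^{-n(\ga-\ga')}$. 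For $i=0$, I simply use the triangle inequality $|(g^n-g)'(t)|\leq |(g^n)'(t)|+|g'(t)|$; the first term is controlled by the bound derived above, the second by $|g'(t)|\leq t^{\ga-1}\vvvert g\vvvert_{1;\ga}$, and in both cases the factor $t^{1-\ga'}$ with $t\leq 2^{-n}$ produces the required $2^{-n(\ga-\ga')}$, using $\vvvert g\vvvert_{1;\ga}\leq\vvvert g\vvvert_{2;\ga}$.

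There is no real obstacle here; the only delicate point is the careful bookkeeping at $i=0$, where one cannot differentiate $g$ at the endpoint and must therefore rely on the integral representation of $(g^n)'$ and the precise power-type control of $g'$ near $0$ encoded in the definition of $\vvvert\,\cdot\,\vvvert_{1;\ga}$.
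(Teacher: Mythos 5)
Your proof is correct and follows essentially the same route as the paper: compute $(g^n)'_t = 2^n(g_{t_{i+1}}-g_{t_i})$ on $(t_i,t_{i+1})$, split off the first interval $i=0$ (where the weight $t^{1-\ga}$ or $t^{1-\ga'}$ absorbs the singularity), and for $i\geq 1$ exploit the comparison $t\leq 2t_i$ together with the pointwise bounds $|g'(u)|\leq u^{\ga-1}\vvvert g\vvvert_{1;\ga}$ and $|g''(u)|\leq u^{\ga-2}\vvvert g\vvvert_{2;\ga}$. The only difference from the paper is cosmetic: you invoke the mean value theorem to produce a point $s\in(t_i,t_{i+1})$ with $(g^n)'_t=g'(s)$, whereas the paper writes $g_{t_{i+1}}-g_{t_i}$ as an integral of $g'$ and estimates under the integral sign; the two are equivalent here because $g'$ (resp.\ $g''$) is monotonically bounded by the singular weight on $(t_i,t_{i+1})$, and you are careful to abandon the MVT at $i=0$ (where the location of the intermediate point cannot be controlled) in favour of the integral bound. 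All inequalities check out, including the key steps $t_i^{\ga-1-\ga'}\leq 2^{n(1-\ga+\ga')}$ for $i\geq 1$ and $t^{1-\ga'}=t^{\ga-\ga'}t^{1-\ga}\leq 2^{-n(\ga-\ga')}t^{1-\ga}$ for $i=0$, and the final appeal to $\vvvert g\vvvert_{1;\ga}\leq\vvvert g\vvvert_{2;\ga}$ is legitimate given the definition of the seminorm.
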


\begin{proof}
Pick $t\in (t_i^n,t_{i+1}^n)$, for some $i=0,\ldots,2^n$. One has
$$t^{1-\ga} |(g^n)'_t| =\frac{t^{1-\ga}}{t_{i+1}^n-t_i^n}|g_{t_{i+1}^n}-g_{t_i^n}| \leq \vvvert g\vvvert_{1;\ga}\,  t^{1-\ga}\int_0^1 \frac{dr}{(t_i^n+r(t_{i+1}^n-t_i^n))^{1-\ga}} \ .$$
If $i=0$, then $t\leq t_{i+1}^n-t_i^n$ and so $t^{1-\ga} |(g^n)'_t|\leq \vvvert g\vvvert_{1;\ga} \int_0^1 \frac{dr}{r^{1-\ga}}$. If $i\geq 1$, then $\frac{t}{2} \leq \frac{i+1}{2^{n+1}} \leq \frac{i}{2^n}=t_i^n$, and so $t^{1-\ga} |(g^n)'_t|\leq \vvvert g\vvvert_{1;\ga}(t/t_i^n)^{1-\ga} \leq \vvvert g\vvvert_{1;\ga}2^{1-\ga}$, which completes the proof of (\ref{unif-bou-g-n}). 

\smallskip

\noindent
For (\ref{unif-bou-g-n-g}), note first that if $i=0$, then $t\leq 2^{-n}$ and so by (\ref{unif-bou-g-n}) we get in this case
$$t^{1-\ga'}|(g^n-g)'_t|\leq 2^{-n(\ga-\ga')} \big\{ t^{1-\ga} |(g^n)'_t|+t^{1-\ga}|g'_t|\big\} \lesssim \vvvert g\vvvert_{1;\ga} 2^{-n(\ga-\ga')} \ .$$
If $i\geq 1$, then  
\begin{eqnarray*}
t^{1-\ga'}|(g^n-g)'_t|& =& \frac{t^{1-\ga'}}{t_{i+1}^n-t_i^n}\bigg|\int_{t_i^n}^{t_{i+1}^n}\{g'_r-g'_t\} \, dr\bigg|\\
&\lesssim & \vvvert g\vvvert_{2;\ga}\frac{t^{1-\ga'}}{t_{i+1}^n-t_i^n}\int_{t_i^n}^{t_{i+1}^n} \frac{|t-u|}{(t_i^n)^{2-\ga}} \, du\\
&\lesssim & \vvvert g\vvvert_{2;\ga}\bigg( \frac{t}{t_i^n}\bigg)^{1-\ga'} \bigg( \frac{t_{i+1}^n-t_i^n}{t_i^n}\bigg)^{1-(\ga-\ga')} 2^{-n(\ga-\ga')} \ .
\end{eqnarray*}
As above, we can conclude by using the fact that in this case, one has $\max\big( \frac{t}{2},t_{i+1}^n-t_i^n \big) \leq t_i^n$.
\end{proof}

\

\begin{proof}[Proof of Proposition \ref{prop:lift-sum}]
Denote by $z^n$, resp. $g^n$, the linear interpolation of $z$, resp. $g$, along the dyadic partition $\mathcal{P}_n$. By (\ref{unif-bou-g-n-g}), the convergence of $g^n$ to $g$ (and accordingly the convergence of $z^n+g^n$ to $z+g$) in $\cac_1^{\ga'}([0,1];\R^d)$ is immediate, since 
$$|\delta (g^n-g)_{st}|\leq \int_s^t |(g^n-g)'_u| \, du \lesssim 2^{-n(\ga-\ga')} (t-s)\int_0^1 \frac{dr}{(s+r(t-s))^{1-\ga'}}\lesssim 2^{-n(\ga-\ga')} (t-s)^{\ga'} \ .$$
Then, by setting $x:=z+g$ and using the notation (\ref{approxi-levy-area}), we have the following readily-checked decomposition
\begin{equation}\label{diff-levy-areas}
\mathbf{x}^{\mathbf{2},n}_{st}-\mathbf{z}^{\mathbf{2},n}_{st}=\int_s^t (\delta z^n)_{su} \otimes dg^n_u+\bigg( \int_s^t (\delta z^n)_{ut} \otimes dg^n_u\bigg)^\ast+\int_s^t (\delta g^n)_{su} \otimes dg^n_u \ .
\end{equation}
Now consider the integral $\int_s^t (\delta z)_{su} \otimes dg_u$, which, due to the regularity of $g$, can be interpreted in the classical Lebesgue sense, and use (\ref{unif-bou-g-n})-(\ref{unif-bou-g-n-g}) to assert that
\begin{eqnarray*}
\lefteqn{\bigg| \int_s^t (\delta z^n)_{su} \otimes dg^n_u-\int_s^t (\delta z)_{su} \otimes dg_u \bigg|}\\
 &\leq &\int_s^t |\delta(z^n-z)_{su}| \otimes |dg^n_u|+\int_s^t |(\delta z)_{su}| \otimes |d(g^n-g)_u|\\
&\lesssim & \cn[z^n-z;\cac_1^{\ga'}([0,1];\R^d)] \int_s^t \frac{|u-s|^{\ga'}}{u^{1-\ga}} \, du+\cn[z;\cac_1^\ga([0,1];\R^d)] 2^{-n(\ga-\ga')} \int_s^t \frac{|u-s|^{\ga}}{u^{1-\ga'}} \, du\\
&\lesssim & |t-s|^{\ga+\ga'}  \big\{\cn[z^n-z;\cac_1^{\ga'}([0,1];\R^d)] +2^{-n(\ga-\ga')} \big\}\int_0^1 \frac{dr}{r^{1-(\ga+\ga')}} \ .
\end{eqnarray*}
We can treat the two other summands in (\ref{diff-levy-areas}) along the same lines, which leads us to the desired conclusion, namely $\cn[\mathbf{x}^{\mathbf{2},n}-\mathbf{z}^{\mathbf{2},n};\cac_2^{2\ga'}([0,1];\R^d)] \to 0$ as $n\to \infty$. We even get the explicit description
$$\mathfrak{L}(z+g)^{\mathbf{2}}_{st}-\mathfrak{L}(z)^{\mathbf{2}}_{st}=\int_s^t (\delta z)_{su} \otimes dg_u+\bigg( \int_s^t (\delta z)_{ut} \otimes dg_u\bigg)^\ast+\int_s^t (\delta g)_{su} \otimes dg_u \ .$$
With this decomposition in hand, it is now easy to exhibit the bound (\ref{boun-diff-levy-areas}): for instance, for every $0<s<t$,
\begin{eqnarray*}
\lefteqn{\bigg| \int_s^t (\delta z)_{su} \otimes dg_u \bigg|}\\
 &\leq &\vvvert g\vvvert_{1;\ga} \cn[z;\cac_1^\ga([0,1];\R^d)]\,  \int_s^t \frac{|u-s|^\ga}{u^{1-\ga}} \, du \\
&\leq & \vvvert g\vvvert_{1;\ga} \cn[z;\cac_1^\ga([0,1];\R^d)]\, |t-s|^{1+\ga}  \int_0^1 \frac{r^\ga}{(s+r(t-s))^{1-\ga}} \, dr\\
&\leq & \vvvert g\vvvert_{1;\ga} \cn[z;\cac_1^\ga([0,1];\R^d)]\, \min\bigg(|t-s|^{2\ga}\int_0^1\frac{dr}{r^{1-2\ga}} , s^{\ga-1} |t-s|^{1+\ga} \int_0^1 r^\ga \, dr\bigg) \ .
\end{eqnarray*}
\end{proof}

\section{Proof of Lemma \ref{lem:discr-sewing}}\label{proofoflem:discr-sewing}

The argument relies on the algorithm introduced in \cite[Section 6]{Deya} and which aims at ``removing the points one by one'' between $t_p$ and $t_{q+1}$ in a tricky way. First, just as in \cite[Section 3.1]{Deya}, and given any (not necessarily uniform) subpartition $\Pi$ of $\mathcal{P}_n$, we define the path $G^{\Pi}$ as follows: for every $s\leq t\in \mathcal{P}_n$,
$$G^{\Pi}_{st}:=\begin{cases}
0 & \text{if} \ (s,t) \cap \Pi=\emptyset\\
(\der G)_{sut} & \text{if} \ (s,t) \cap \Pi =u\\
G_{st}-G_{s\tilde{t}_1}-\sum_{k=1}^{\ell-1}G_{\tilde{t}_{k}\tilde{t}_{k+1}}-G_{\tilde{t}_\ell t} & \text{if} \ (s,t) \cap \Pi=\{\tilde{t}_1,...,\tilde{t}_\ell \}
\end{cases} \ .$$
With this notation, if $s=t_p$ and $t=t_{q+1}$, one has in particular
\begin{equation}\label{decompo-m-n}
G_{st}=G^{\llbracket s,t\rrbracket}_{st}+\sum_{i=p}^q G_{t_it_{i+1}} \ .
\end{equation}
As far as the sum is concerned, we have on the one hand, since $\mu_1\geq 1$,
$$
s^{1-\la} \big\| \sum_{i=p}^q G_{t_it_{i+1}} \big\| \leq \cm_{\la}^{\al,\mu_1}\big[G;\llbracket s,t\rrbracket\big] \cdot \sum_{i=p}^q |t_{i+1}-t_i|^{\mu_1}\leq \cm_{\la}^{\al,\mu_1}\big[G;\llbracket s,t\rrbracket\big]\cdot |t-s|^{\mu_1} \ ,
$$
and on the other hand
$$
\big\| \sum_{i=p}^q  G_{t_it_{i+1}} \big\|\leq\cm_{\la}^{\al,\mu_1}\big[G;\llbracket s,t\rrbracket\big]\cdot \bigg\{ \lln t_{p+1}-s\rrn^\al+ \sum_{i=p+1}^q  t_i^{\la-1} |t_{i+1}-t_i|^{\mu_1}\bigg\} \ ,
$$
with
$$
\sum_{i=p+1}^q  t_i^{\la-1} |t_{i+1}-t_i|^{\mu_1} = \frac{1}{2^{n(\la+\mu_1-1)}} \sum_{i=p+1}^q \frac{1}{i^{1-\la}}\lesssim \frac{1}{2^{n(\la+\mu_1-1)}} |q+1-p|^\la \lesssim |t-s|^\la \ .
$$

\smallskip

\noindent
Going back to (\ref{decompo-m-n}), it remains us to bound $\|G^{\llbracket s,t\rrbracket}_{st}\|$. For the sake of clarity, let us temporarily change the notation by setting, for $s,t$ fixed as above,
\begin{equation}\label{notation-times}
t_k:=t-\frac{k}{2^n} \ , \ k=0,\ldots,N \ , \quad \text{where} \ N:=2^n(t-s)\ \  (\, =q+1-p\, ) \  .
\end{equation}
We make this (unnatural) choice to ``reverse'' the time, that is to consider a decreasing function $k\mapsto t_k$, in a such a way that the below notations will be consistent with those of \cite[Section 6]{Deya} (and especially those of \cite[Proposition 6.2]{Deya}). Consider indeed the algorithm described in \cite[Section 6]{Deya} to remove one by one the points between $0$ and $N$, and accordingly the points of $\mathcal{P}_n$ between $s$ and $t$ (just use the transformation (\ref{notation-times}) to connect one with the other). Denote by $(\Pi^m)_{m=0,\ldots,N-1}$ the decreasing sequence of partitions of $\llbracket s,t\rrbracket$ that is associated with this algorithm. With the notations of \cite[Section 6]{Deya}, it is readily checked that
$$G^{\Pi^m}_{st}-G^{\Pi^{m+1}}_{st}=(\der G)_{t_{k_m^+}t_{k_m}t_{k_m^-}} \quad , \quad G^{\Pi^0}_{st}=G^{\llbracket s,t\rrbracket}_{st} \quad , \quad G^{\Pi^{N-1}}_{st}=0 \ ,$$
and so
\begin{equation}\label{decompo-associated}
G^{\llbracket s,t\rrbracket}_{st} =\sum_{m=1}^{N-1} (\der G)_{t_{k_m^+}t_{k_m}t_{k_m^-}} \ .
\end{equation}
Now, still with the notations of \cite[Section 6]{Deya} in mind, write
$$\sum_{m=0}^N (\der G)_{t_{k_m^+}t_{k_m}t_{k_m^-}}=\sum_{r=1}^{M-1} \bigg\{ (\der G)_{st_{k_{A_{r-1}+1}}t_{k^-_{A_{r-1}+1}}} +\sum_{m=A_{r-1}+2}^{A_r} (\der G)_{t_{k_m^+}t_{k_m}t_{k_m^-}} \bigg\} \, $$
and so
\begin{eqnarray}
\lefteqn{\big\|\sum_{m=0}^N (\der G)_{t_{k_m^+}t_{k_m}t_{k_m^-}}\big\|}\nonumber\\
&\leq & \cn[\der G;\cac_{3;\la}^{\al,\mu_2}(\llbracket s,t\rrbracket)] \cdot \sum_{r=1}^{M-1} \bigg\{ |t_{k^-_{A_{r-1}+1}}-s|^\al+\sum_{m=A_{r-1}+2}^{A_r}t_{k_m^+}^{\la-1} |t_{k_m^-}-t_{k_m^+}|^{\mu_2} \  \bigg\} \ .\label{toward-prop}
\end{eqnarray}
Observe at this point that
$$|t_{k^-_{A_{r-1}+1}}-s|^\al=|t-s|^\al\cdot \bigg| 1-\frac{k^-_{A_{r-1}+1}}{N}\bigg|^\al$$
and
$$t_{k_m^+}^{\la-1} |t_{k_m^-}-t_{k_m^+}|^{\mu_2} \leq |t-s|^{\la+\mu_2-1}\cdot \frac{1}{N^{\mu_2}} \bigg|1-\frac{k^+_m}{N}\bigg|^{\la-1} |k_m^+-k_m^-|^{\mu_2} \ .$$
Going back to (\ref{toward-prop}), we get that
\begin{equation}\label{toward-prop-2}
\big\|\sum_{m=0}^N (\der G)_{t_{k_m^+}t_{k_m}t_{k_m^-}}\big\| \leq \lln t-s \rrn^\al \cn[\der G;\cac_{3;\la}^{\al,\mu_2}(\llbracket s,t\rrbracket)] \cdot Q^N_{\al,\la,\mu_2} \ ,
\end{equation}
where we have set
$$Q^N_{\al,\la,\mu_2} :=\sum_{r=1}^{M-1}\bigg\{ \bigg| 1-\frac{k^-_{A_{r-1}+1}}{N}\bigg|^\al+\frac{1}{N^{\mu_2}}\sum_{m=A_{r-1}+2}^{A_r}\bigg|1-\frac{k^+_m}{N}\bigg|^{\la-1} |k_m^+-k_m^-|^{\mu_2} \bigg\} \ . $$
Therefore, we are exactly in a position to apply \cite[Proposition 6.2]{Deya} and assert that $\sup_{N \geq 1} Q^N_{\al,\la,\mu_2} < \infty$. The combination of (\ref{decompo-associated}) and (\ref{toward-prop-2}) then gives us the desired estimate, namely
$$\| G^{\llbracket s,t\rrbracket}_{st}\| \lesssim \lln t-s \rrn^\al \cn[\der G;\cac_{3;\la}^{\al,\mu_2}(\llbracket s,t\rrbracket)]  \ .$$

\smallskip

\noindent
The estimation of $s^{1-\la} \| G^{\llbracket s,t\rrbracket}_{st}\|$ is easier. Indeed, with decomposition (\ref{decompo-associated}) in mind, we simply use the fact that the above algorithm also satisfies
$$|k_m^+-k_m^-| \leq \frac{2N}{(N-m+1)} \quad \text{for every} \ m=1,\ldots,N-1 \ ,$$
and consequently
\begin{eqnarray*}
s^{1-\la} \| G^{\llbracket s,t\rrbracket}_{st}\| &\leq &\sum_{m=1}^{N-1} t_{k_m^+}^{1-\la} \|(\der G)_{t_{k_m^+}t_{k_m}t_{k_m^-}}\|\\
& \leq & \cn[\der G;\cac_{3;\la}^{\al,\mu_2}(\llbracket s,t\rrbracket)]\cdot \sum_{m=1}^{N-1} |t_{k_m^-}-t_{k_m^+}|^{\mu_2}\ \lesssim \ \lln t-s\rrn^{\mu_2} \cn[\der G;\cac_{3;\la}^{\al,\mu_2}(\llbracket s,t\rrbracket)] \ .
\end{eqnarray*}

\bibliographystyle{plain}
\bibliography{biblio-rough-ergodicity}
\end{document}